\renewcommand*\env@matrix[1][*\c@MaxMatrixCols c]{%
  \hskip -\arraycolsep
  \let\@ifnextchar\new@ifnextchar
  \array{#1}}
\newcommand{\R}{\mathbb{R}}
\newcommand{\C}{\mathbb{C}}
\newcommand{\CC}{\mathbb{C}}
\newcommand{\CCC}{\mathcal{C}}
\DeclareMathOperator{\eval}{eval}
\newtheorem{thm}{Theorem}[section]
\newtheorem{cor}[thm]{Corollary}
\newtheorem{lem}[thm]{Lemma}
\newtheorem{prop}[thm]{Proposition}
\theoremstyle{definition}
\newtheorem{defn}[thm]{Definition}
\newtheorem{rmk}[thm]{Remark}
\newcommand{\diag}{\mathrm{diag}}
\newcommand{\St}{\mathrm{St}}
\newcommand{\fin}{\mathrm{fin}}
\newcommand{\crit}{\mathrm{crit}}
\newcommand{\Sl}{\mathrm{Sl}}
\newcommand{\Gl}{\mathrm{Gl}}
\newcommand{\GSp}{\mathrm{GSp}}
\newcommand{\id}{\mathrm{id}}
\newcommand{\sreg}{\mathrm{sreg}}
\newcommand{\reg}{\mathrm{reg}}
\newcommand{\PS}{\mathrm{PS}}
\newcommand{\ex}{\mathrm{ex}}
\newcommand{\Diff}{\;\mathrm{d}}
\renewcommand{\Re}{\mathrm{Re}}
\newcommand\nosf[1]{\begin{footnotesize}\textup{\textsf{#1}}\end{footnotesize}}
\DeclareMathOperator{\Hom}{Hom}
\DeclareMathOperator{\Ext}{Ext}
\DeclareMathOperator{\coker}{coker}
\DeclareMathOperator{\ind}{ind}
\DeclareMathOperator{\Ind}{Ind}
\setlist[enumerate]{itemsep=-0.5ex plus0.1ex minus 0.2ex}
\setlist[description]{itemsep=-0.5ex plus0.1ex minus 0.2ex}
\setlist[itemize]{itemsep=-0.5ex plus0.1ex minus 0.2ex}
\let\svthefootnote\thefootnote
\newcommand\blankfootnote[1]{%
  \let\thefootnote\relax\footnotetext{#1}%
  \let\thefootnote\svthefootnote%
}
\title{\begin{large}
Spinor Euler factors for $\GSp(4)$ in the subregular case
\end{large}}
\author{Mirko~R\"osner \and Rainer Weissauer}
\date{}
\begin{document}
\maketitle

\begin{abstract}
For local non-archimedean fields $k$, Piatetski-Shapiro has defined local spinor $L$-factors for irreducible representations $\Pi$ of $\GSp(4,k)$ of dimension $>1$, attached to a choice of a Bessel model $\Lambda$.
We classify regular poles that do not come from the asymptotic of the Bessel functions in the Bessel model.
For anisotropic Bessel models there are no such subregular poles.
\blankfootnote{2010 \textit{Mathematics Subject Classification.} Primary 22E50; Secondary 11F46, 11F70, 20G05.} 
\blankfootnote{\textit{Key words and phrases.} Symplectic group, Bessel model, spinor $L$-function, regular poles.}
\end{abstract}

\section{Introduction}
Let $k$ be a local non-archimedean field and $\mu$ a smooth character of $k^\times$.
For irreducible smooth representations $\Pi$ of dimension $>1$ of the symplectic group of similitudes $G=\mathrm{GSp}(4,k)$,
Piatetski-Shapiro~\cite{PS-L-Factor_GSp4} has constructed local $L$-factors
$$L^{\mathrm{PS}}(s,\Pi,\mu,\Lambda)$$
attached to a choice of a Bessel model $\Lambda$ of $\Pi$.
It was already expected by Piatetski-Shapiro and Soudry \cite{Soudry_Piatetski_L_Factors} that these $L$-factors do not depend on the choice of the Bessel model.
For certain Borel induced irreducible representations $\Pi$, they proved this \cite[Thm.\,2.4]{Soudry_Piatetski_L_Factors}.
The local $L$-factor $L^{\mathrm{PS}}(s,\Pi,\mu,\Lambda)$ factorizes into a regular and an exceptional part.
Piatetski-Shapiro stated that generically the poles of the regular part should come from the asymptotic of the Bessel functions.
However, the precise conditions of this expectation were left open in the formulation of \cite[thm.~4.1]{PS-L-Factor_GSp4}.
Notice, more or less by definition, the poles coming from the asymptotic of the Bessel functions form the $L$-factor of the Bessel module \cite{RW}.
Dani\c{s}man has shown that Piateskii-Shapiro's expectation holds true for the case of anisotropic Bessel models \cite[proposition~2.5]{Danisman}.
Dani\c{s}man \cite{Danisman, Danisman_Annals, Danisman2, Danisman3} and the authors \cite{Anisotropic_Exceptional} explicitly determined the regular and exceptional $L$-factors.

\begin{table}
\begin{footnotesize}
\caption{The subregular $L$-factor for split Bessel models $\Lambda=\rho\boxtimes\rho^\divideontimes$.\label{tab:subregular_poles}}
\begin{center}
\begin{tabular}{llll}
\toprule
Type       & $\Pi$ & Condition & $L_{\sreg}^{\PS}(s,\Pi,1,\Lambda)$ \\
\midrule
\nosf{I}   & $\chi_1\times\chi_2\rtimes\sigma$  & $\rho\in\Delta_-(\Pi)$ & $L(s,\nu^{1/2}\rho)$ \\
           &         & $\rho^\divideontimes\in\Delta_-(\Pi)$ & $L(s,\nu^{1/2}\rho^\divideontimes)$ \\
\nosf{IIa} & $\St(\chi_1)\rtimes \sigma$ & $\rho\in\Delta_-(\Pi)$ & $L(s,\nu^{1/2}\rho)$ \\
           &         & $\rho^\divideontimes\in\Delta_-(\Pi)$ & $L(s,\nu^{1/2}\rho^\divideontimes)$ \\
\nosf{Va}  & $\delta([\xi,\nu\xi],\nu^{-1/2}\sigma)$& $\rho\in\Delta_-(\Pi)$ & $L(s,\nu^{1/2}\rho)$\\
           &                      & $\rho^\divideontimes\in\Delta_-(\Pi)$ & $L(s,\nu^{1/2}\rho^\divideontimes)$\\
\nosf{VIa} & $\tau(S,\nu^{-1/2}\sigma)$& $\rho\in\Delta_-(\Pi)$ & $L(s,\nu^{1/2}\rho)$\\
           &                      & $\rho^\divideontimes\in\Delta_-(\Pi)$ & $L(s,\nu^{1/2}\rho^\divideontimes)$\\
\nosf{X}   & $\pi\rtimes\sigma$  & $\rho\in\Delta_-(\Pi)$ & $L(s,\nu^{1/2}\rho)$\\
           &                      & $\rho^\divideontimes\in\Delta_-(\Pi)$ & $L(s,\nu^{1/2}\rho^\divideontimes)$\\
\nosf{XIa} &$\delta(\nu^{1/2}\pi,\nu^{-1/2}\sigma)$ & $\rho\in\Delta_-(\Pi)$ & $L(s,\nu^{1/2}\rho)$ \\
           &                      & $\rho^\divideontimes\in\Delta_-(\Pi)$ & $L(s,\nu^{1/2}\rho^\divideontimes)$\\
\nosf{IIIb}& $\chi_1\rtimes \sigma\mathbf{1}_{\GSp(2)}$ & $\rho\in\{\sigma,\chi_1\sigma\}$     & $L(s,\nu^{1/2}\chi_1\sigma)L(s,\nu^{1/2}\sigma)$\\
\nosf{IVc} & $L(\nu^{3/2}\St,\nu^{-3/2}\sigma)$ & $\rho\in\{\nu^{\pm1}\sigma\}$ & $L(s,\nu^{3/2}\sigma)$\\
\nosf{VIc} & $L(\nu^{1/2}\St,\nu^{-1/2}\sigma)$ & $\rho=\sigma$                 & $L(s,\nu^{1/2}\sigma)$\\
\nosf{VId} & $L(\nu,1\rtimes\nu^{-1/2}\sigma)$  & $\rho=\sigma$                 & $L(s,\nu^{1/2}\sigma)$\\
           & otherwise                          &                               &$1$\\
\bottomrule 
\end{tabular}
\end{center}
\end{footnotesize}
\end{table}

In this paper we consider the case of split Bessel models.
In contrast to the anisotropic case certain regular poles are not determined by the asymptotics of the Bessel functions \cite[prop.~6.16]{RW} and we call them \emph{subregular poles}. See section~\ref{s:filtration_of_poles} for the precise definition and for the remaining regular and exceptional poles, see \cite{RW} and \cite{W_Excep}.
We determine the subregular poles for every irreducible $\Pi$ and every split Bessel model, see table~\ref{tab:subregular_poles}. It turns out that there is a one-to-one correspondence between subregular poles and functionals $$\ell:\Pi\to(\CC,\widetilde{\rho})$$ that are equivariant with respect to a certain subgroup $H_+$ of $G$ and a character $\widetilde{\rho}$ of $H_+$ related to the Bessel model. We completely classify these functionals in section~\ref{s:H_+-functionals} and show that they satisfy a multiplicity one property.
Our results clarify the above-mentioned expectations by Piatetski-Shapiro.
Surprisingly, the product of Tate factors that comes from the asymptotic does depend on the choice of the Bessel model.
Only together with the Tate factors from the subregular poles, they define the regular part $L_\reg^\PS$, which is independent of the split Bessel model.
Since the exceptional part is also independent of the split Bessel model, this implies the independence of the full $L$-factor \cite{W_Excep, Anisotropic_Exceptional}.
In the following, we freely use notation and concepts from our previous work on the subject.

It is our main result that the $L$-factor of Piatetski-Shapiro is independent of the choice of a split Bessel model and coincides with the $L$-factor of the representation of the Weil-Deligne group attached to $\Pi$ via the local Langlands correspondence, see table~A.8 of Roberts and Schmidt \cite{Roberts-Schmidt}.
In particular our results are in complete accordance with the local Langlands correspondence for $\GSp(4)$ as established by Gan and Takeda \cite{Gan_Takeda_LLC}. Note that these authors use a different construction for the $L$-factors.
Piatetski-Shapiro's $L$-factors play an important role in the construction of Euler systems for $\GSp(4)$ by Loeffler, Skinner and Zerbes \cite{LSZ_EulerGSp4}, inspired by previous results of Lemma \cite{Lemma08, Lemma17}.
The authors are very grateful to David Loeffler for his comments and suggestions.

\tableofcontents

\subsection{Piatetski-Shapiro's local spinor $L$-factor for $\GSp(4)$}\label{s:filtration_of_poles}
The group of symplectic similitudes $\GSp(4)$ is defined by the equation
$$gJg^t=\lambda_G(g) J$$ for $g\in\Gl(4)$ and $\lambda_G(g)\in \Gl(1)$ where we use Siegel's notation $J=\left(\begin{smallmatrix}0&I_2\\-I_2&0\end{smallmatrix}\right)\,$.
Fix an irreducible smooth representation $\Pi$ of $G=\GSp(4,k)$ with central character $\omega$.
Suppose there is a smooth character $\Lambda=\rho\boxtimes\rho^\divideontimes$ of the split torus
$$\widetilde{T}\quad= \quad\{\diag(t_1,t_2,t_2,t_1)\in G\}\quad\cong\quad \Gl(1)\times \Gl(1)\ $$
that defines a non-zero $(\widetilde{T}N,\Lambda\boxtimes\psi)$-equivariant functional of $\Pi$.
Frobenius reciprocity yields an embedding $\Pi\hookrightarrow \Ind_R^G(\Lambda\boxtimes\psi)$, $v\mapsto W_v$ and its image is a split Bessel model for $\Pi$.
Attached to $v$ and a Schwartz-Bruhat function $\Phi\in C_c^\infty(k^4)$ is the zeta-function
\begin{equation*}
 Z^{\PS}(s,v,\Lambda,\Phi,\mu)=\int_{\widetilde{N}\backslash H}W_v(h)\Phi((0,0,1,1)h)\mu(\lambda_G(h))
 |\lambda_G(h)|^{s+\tfrac12}\Diff h\ .
\end{equation*}
The integral converges for sufficiently large $\Re(s)>0$ and admits a unique meromorphic continuation to the complex plane. The local spinor $L$-factor $$L^{\PS}(s,\Pi,\mu,\Lambda)$$ attached to $\Pi$ and $\Lambda$
by Piatetski-Shapiro~\cite{Soudry_Piatetski_L_Factors} is the regularization $L$-factor of $Z^{\PS}(s,v,\Lambda,\Phi,\mu)$ varying over $v\in\Pi$ and $\Phi\in C_c^\infty(k^4)$.
The isomorphism $k^2\times k^2\to k^4\,,\ ((x_1,y_1),(x_2,y_2))\mapsto (x_1,x_2,y_1,y_2)$ defines inclusions
$$0\ \hookrightarrow\  C_c^\infty(k^2\setminus \{0\} \times k^2\setminus \{0\}) \ \hookrightarrow\   C_c^\infty(k^4\setminus \{0\})\ \hookrightarrow\   C_c^\infty(k^4)$$
and thus a filtration of $ C_c^\infty(k^4)$ with quotients isomorphic to
$$ C_c^\infty(k^2\setminus \{0\})\otimes  C_c^\infty(k^2\setminus \{0\})\quad ,\quad  C_c^\infty(k^2\setminus \{0\})\oplus C_c^\infty(k^2\setminus\{0\})\quad , \quad \C\ .$$
The regularization $L$-factor of the family of zeta-functions $Z^{\PS}(s,v,\Lambda,\Phi,\mu)$ varying over $v\in\Pi$ and $\Phi\in C_c^\infty(k^2\setminus \{0\}\times k^2\setminus\{0\})$
coincides with the regularization $L$-factor of the \emph{regular zeta-functions}
\begin{equation*}Z_{\reg}^{\PS}(s,W_v,\mu)=\int_{T} W_v(x_\lambda) \mu(\lambda)|\lambda|^{s-\tfrac32} \Diff x_\lambda\quad ,\quad v\in\Pi\,,\end{equation*}
again convergent for sufficiently large $\Re(s)>0$ with unique meromorphic continuation, see \cite[prop.~6.16]{RW} and compare Dani\c{s}man~\cite{Danisman_Annals} in the anisotropic case.
This $L$-factor equals $L(s,\mu\otimes M)$ for
$$M=\nu^{-3/2}\otimes\beta_\rho(\Pi)/\beta_\rho(\Pi)^S\ .$$
It has been determined explicitly by the authors \cite{RW} for every $\Pi$ and $\Lambda$.

The \emph{subregular factor} $L_{\sreg}^{\PS}(s,\Pi,\mu,\Lambda)$ is the regularization $L$-factor of
\begin{equation*}
 \frac{Z^{\PS}(s,v,\Lambda,\Phi,\mu)}{L(\mu\otimes M, s)}\ ,
\end{equation*}
varying over $v$ and $\Phi\in C_c^\infty(k^4\setminus \{0\})$. In this work we determine the subregular factor explicitly as a product of Tate factors.

The \emph{exceptional factor} $L_{\ex}^{\PS}(s,\Pi,\mu,\Lambda)$ is the regularization $L$-factor of 
\begin{equation*}
\frac{Z^{\PS}(s,v,\Lambda,\Phi,\mu)}{L(\mu\otimes M, s) L^{\PS}_{\sreg}(s,\Pi,\mu,\Lambda)}\ ,
\end{equation*}
varying over $v$ and $\Phi\in C_c^\infty(k^4)$. For the explicit values, see \cite{W_Excep}. %
Hence the $L$-factor of Piatetski-Shapiro factorizes
\begin{equation*}
 L^{\PS}(s,\Pi,\mu,\Lambda) =  L^{\PS}_{\reg}(s,\Pi,\mu,\Lambda) L^{\PS}_{\ex}(s,\Pi,\mu,\Lambda) \ 
\end{equation*}
where the \emph{regular factor} is
$$ L^{\PS}_{\reg}(s,\Pi,\mu,\Lambda) = L(\mu\otimes M, s) L^{\PS}_{\sreg}(s,\Pi,\mu,\Lambda) \ .$$
It turns out that both the regular and the exceptional factor are independent of the split Bessel model.

\subsection{Notation}\label{s:notation}

Fix a local non-archimedean number field $k$ of characteristic not two with ring of integers $\mathfrak{o}_k$ and its split quadratic extension $K=k\times k$.
We normalize the additive Haar measure on $k$ so that $\mathrm{vol}(\mathfrak{o})=1$ and the multiplicative Haar measure on $k^\times$ so that $\mathrm{vol}(\mathfrak{o}^\times)=1$.

The group $G=\GSp(4,k)$ of symplectic similitudes $g\in \Gl(4,k)$
is defined
by the equation
$gJg^t=\lambda_G(g) J$ with a scalar $\lambda_G(g)\in \Gl(1)$ where we use Siegel's notation $ J=\left(\begin{smallmatrix}0&I_2\\-I_2&0\end{smallmatrix}\right)\ .$
The center $Z_G=\{\lambda\cdot I_4 \mid \lambda\in Gl(1)\}$ is the subgroup of scalar matrices.
Fix the standard Siegel parabolic $P$ and the standard Klingen parabolic $Q$
$$P= \left\{\begin{pmatrix}
\ast & \ast & \ast &\ast \\
\ast & \ast & \ast &\ast \\
0    & 0    & \ast &\ast \\
0    & 0    & \ast &\ast \\
\end{pmatrix}\right\}\cap G\ \ ,\qquad
Q=\left\{\begin{pmatrix}
\ast & \ast & \ast &\ast \\
0    & \ast & \ast &\ast \\
0    & 0    & \ast &0    \\
0    & \ast & \ast &\ast \\
\end{pmatrix}\right\}\cap G$$ and the standard Borel $B_G=P\cap Q$.
The subgroups $T$, $\widetilde{T}$, $\widetilde{N}=S_A\times S_C$, $S$ of $G$ are defined in \cite{RW} and $N=\widetilde{N}S$ is the unipotent radical of $P$.
We denote by $Q_1$ the subgroup of $Q$ of matrices with third diagonal entry $1$.
The Weyl group of $G$ has order eight and is generated by
\begin{equation*}
\mathbf{s}_1=\begin{pmatrix}0&1&0&0\\1&0&0&0\\0&0&0&1\\0&0&1&0\end{pmatrix}\qquad,\qquad
\mathbf{s}_2=\begin{pmatrix}1&0&0&0\\0&0&0&1\\0&0&1&0\\0&-1&0&0\end{pmatrix}\ .
\end{equation*}
The fiber product over the determinant $\Gl(2)\times_{\Gl(1)} \Gl(2)$
is identified with its image $H$ in $G$ under the embedding
\begin{equation*}
\left(\begin{pmatrix}a_1&b_1\\c_1&d_1\end{pmatrix},\begin{pmatrix}a_2&b_2\\c_2&d_2\end{pmatrix}\right)\longmapsto
\begin{pmatrix}
a_1& 0  &b_1&0  \\
0  & a_2&0  &b_2\\
c_1& 0  &d_1&0  \\
0  & c_2&0  &d_2
\end{pmatrix} \in G\ .
\end{equation*}

In the following $\Gl(2)$ will always denote the image of $\Gl(2)$ under the embedding $\Gl(2)\hookrightarrow H$ via $g\mapsto (\diag(\det(g),1),g)$.
The maximal parabolic subgroup $H_+\subseteq H$ defined by $c_1=0$ factorizes as $$H_+ = Z_G\cdot \Gl(2)\cdot S_A\ .$$

The affine linear group is defined as the mirabolic subgroup
\begin{equation*}
\Gl_a(n)=\left\{\begin{bmatrix}[c|c]g & v\end{bmatrix}:=\begin{pmatrix}g&v\\0&1\end{pmatrix} \mid g\in \Gl(n) ,\ v\in k^{n}\right\}
\subseteq \Gl(n+1)\ .
\end{equation*}

In the following we only consider the groups of $k$-valued points. By an abuse of notation we write $\Gl(n)$ or $\GSp(4)$ meaning $\Gl(n,k)$ resp. $\GSp(4,k)$.

\section{Subregular poles}
A subregular pole is a pole of the subregular $L$-factor $L_{\sreg}^{\PS}(s,\Pi,\mu,\Lambda)$.

\begin{prop}\label{2.1}
The subregular poles of $\Pi$ are the poles of the functions
$$\frac{Z_{\reg}^{\PS}(s,W_v,\mu)}{L(\mu\otimes M, s )} L(s,\nu^{1/2}\rho\mu)
\qquad\text{and}\qquad
\frac{Z_{\reg}^{\PS}(s,W_v,\mu)}{L(\mu\otimes M, s )} L(s,\nu^{1/2}\rho^\divideontimes\mu) $$
for $v\in\Pi$ invariant under the compact group $\id\times\Sl(2,\mathfrak{o}_k) \subseteq H$.
\end{prop}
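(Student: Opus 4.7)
The plan is to analyze the zeta integral $Z^{\PS}(s,v,\Lambda,\Phi,\mu)$ via the three-term filtration of $C_c^\infty(k^4)$ recalled in section~\ref{s:filtration_of_poles}. By definition the regularization $L$-factor of the subfamily with $\Phi\in C_c^\infty(k^2\setminus\{0\}\times k^2\setminus\{0\})$ is $L(\mu\otimes M,s)$, so the new poles contributing to the subregular factor arise exclusively from test functions in the quotient $C_c^\infty(k^4\setminus\{0\})/C_c^\infty(k^2\setminus\{0\}\times k^2\setminus\{0\})\cong C_c^\infty(k^2\setminus\{0\})\oplus C_c^\infty(k^2\setminus\{0\})$. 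I would treat the two summands separately; by the symmetry of $H=\Gl(2)\times_{\Gl(1)}\Gl(2)$ exchanging the two $\Gl(2)$-factors (and swapping $\rho\leftrightarrow\rho^{\divideontimes}$) it suffices to analyze one.

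For the summand associated with the degeneration of the second $\Gl(2)$-factor (whose bottom row $(c_2,d_2)$ approaches $0$), I would parametrize $\widetilde{N}\backslash H$ via an Iwasawa decomposition of that second factor, with an Iwasawa coordinate $\gamma\in k^\times$ along its diagonal and a compact piece $k\in\Sl(2,\mathfrak{o}_k)$. Under the invariance assumption $v\in\Pi^{\id\times\Sl(2,\mathfrak{o}_k)}$ the compact direction is absorbed in $W_v$, and the integral factors as an iterated integral in which the inner integration over the first factor modulo $\widetilde{N}$ reproduces the regular zeta integral $Z_{\reg}^{\PS}(s,W_v,\mu)$, while the outer integration in $\gamma$ becomes a one-dimensional Tate integral against $\Phi$ in the character $\rho^{\divideontimes}\mu$, shifted by the modulus $|\gamma|^{s+1/2}$ coming from the factor $|\lambda_G(h)|^{s+1/2}$. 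The regularization $L$-factor of the outer integral is $L(s,\nu^{1/2}\rho^{\divideontimes}\mu)$, and the symmetric treatment of the remaining summand yields $L(s,\nu^{1/2}\rho\mu)$.

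Dividing the factored expression by $L(\mu\otimes M,s)$ converts the inner factor into the entire function $Z_{\reg}^{\PS}(s,W_v,\mu)/L(\mu\otimes M,s)$, so the subregular pole structure is governed exactly by the two products appearing in the proposition. That restricting to $(\id\times\Sl(2,\mathfrak{o}_k))$-invariant vectors loses no subregular poles follows from a standard averaging argument: since $\Sl(2,\mathfrak{o}_k)$ is compact and preserves the two Tate-type subintegrals, integration against its normalized Haar measure replaces any $v$ by its projection onto the invariant subspace without altering the pole set.

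The main obstacle I expect is the precise bookkeeping of modular characters and Jacobians in the Iwasawa decomposition of $\widetilde{N}\backslash H$, needed to confirm that the inner integral is literally $Z_{\reg}^{\PS}$ with the expected character shift and that the outer integral is a Tate integral normalized to produce the $\nu^{1/2}$-shift. A secondary subtlety is the analytic-continuation argument by which the two boundary summands contribute to the pole structure even though the defining zeta integrals are formally supported on measure-zero subloci of $H$; this is best bypassed by working directly from the regularization characterization of $L$-factors rather than from integrals of test functions supported on those subloci.
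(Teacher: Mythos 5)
Your strategy is essentially the paper's: isolate the subregular contribution from the boundary term of the filtration, use Iwasawa decomposition of $H$ to factor the zeta integral into $Z_{\reg}^{\PS}$ times Tate factors, and average over $\id\times\Sl(2,\mathfrak{o}_k)$ to reduce to invariant vectors. The modulus-character and Jacobian bookkeeping you flag as the main obstacle is exactly what the paper carries out, via the two-step decomposition $H=H_+\cdot(\Sl(2,\mathfrak{o}_k)\times\{\id\})$ and $H_+=\widetilde{N}T\widetilde{T}(1\times\Sl(2,\mathfrak{o}_k))$, together with the normalization $\Phi=\Phi_1\otimes\Phi_2$ with $\Phi_2$ the characteristic function of $\mathfrak{o}_k\oplus\mathfrak{o}_k$ and $\Phi_1(0,0)=0$, so that the $t_1$-Tate factor is literally $L(s,\nu^{1/2}\rho)$ and the $t_2$-Tate factor is entire.
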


\begin{proof}
We set $\mu=1$ by a suitable twist.
Recall that a subregular pole is a pole of
$Z^{\PS}(s,v,\Lambda,\Phi,\mu)/L(M, s )$
for some $\Phi$ with $\Phi(0,0,0,0)=0$.
Without loss of generality, we can assume
\begin{enumerate}
\item $\Phi(x_1,x_2,y_1,y_2)=\Phi_1(x_1,y_1)\Phi_2(x_2,y_2)$ for $x,y\in K$, because these $\Phi$ span the tensor product $ C_c^\infty(K\oplus K)= C_c^\infty(k\oplus k)\otimes  C_c^\infty(k\oplus k)$,
\item $\Phi_2(0,0)\neq0$ but $\Phi_1(0,0)=0$, otherwise apply the Weyl involution $\mathbf{s}_1$ 
that switches $\rho$ and $\rho^\divideontimes$, compare \cite[lemma~4.17]{RW},
\item $\Phi_2$ is the characteristic function of $\mathfrak{o}_k\oplus \mathfrak{o}_k$, otherwise add a suitable Schwartz function with support in $k^2\setminus \{0\}\times k^2\setminus \{0\}$.
\end{enumerate}

By Iwasawa decomposition, $H=H_+ \cdot (\Sl(2,\mathfrak{o}_k)\times\{\id\})$, so
for a suitable choice of Haar measure and sufficiently large $\Re(s)$,
\begin{equation*}
 Z^{\PS}(s,v,\Lambda,\Phi,1)= \int_{\Sl(2,\mathfrak{o}_k)} Z^\PS_+(s,\Pi(k_1,1)v,\Lambda,\Phi_1^{k_1}\otimes\Phi_2) \Diff k_1
\end{equation*} with $\Phi_1^{k_1}(x_1,y_1)=\Phi_1((x_1,y_1)k_1)$
and the partial zeta integral
\begin{equation*}
Z^{\PS}_+(s,v,\Lambda,\Phi,1) = \int_{\widetilde{N}\backslash H_+}W_v(h) \Phi((0,0,1,1)h)
 |\lambda_G(h)|^{s+\tfrac12} \delta_{H_+}^{-1}(h)\Diff_R h \ .
\end{equation*}
with a right Haar measure $\Diff_R h$ on $H_+$, see \cite[2.1.5(ii)]{Bump} and \cite[1.21]{Bernstein-Zelevinsky76}.
The modulus factor is defined by $\delta_{H_+}(g) = |\det(g)|$ for $g\in \Gl(2)$ embedded into $Q_1$ as above. $\delta_{H_+}$ is trivial on the center $Z_G$ and on $\widetilde{N}$.
It is defined such that $\delta_{H_+}^{-1}(h)\Diff_R h$ a left-invariant Haar measure.

The $k_1$-integral is a finite sum, so every pole of $Z^\PS(s,v,\Lambda,\Phi,\mu)$ comes from a pole of the partial zeta integral.
Conversely, $\Phi_1$ can be chosen arbitrarily in $ C_c^\infty(k^2\setminus \{0\})$, so every pole of the partial zeta integral gives rise to a pole of $Z^{\PS}(s,v,\Lambda,\Phi,\mu)$ for some $v$ and $\Phi$.
Hence the subregular poles are the poles of the partial zeta integrals varying over $\Phi_1$ and $v$.

By Iwasawa decomposition, every $h\in H_+$ factorizes as
$h=\widetilde{n}x_\lambda \widetilde{t}(1,k_2)\in \widetilde{N} T \widetilde{T} (1\times\Sl(2,\mathfrak{o}_k)$
for $\widetilde{t}=\diag(t_1,t_2,t_2,t_1)$. The Bessel function transforms with $W_v(\widetilde{t}t(1,k_2))=\rho(t_1)\rho^\divideontimes(t_2)W_{\Pi(1,k_2)v}(t)$,
so the partial zeta integral is
\begin{gather*}
 \int_{\Sl(2,\mathfrak{o}_k)} Z^{\PS}_{\reg}(s,W_{\Pi(1,k_2)v})\Diff k_2 \int_{\widetilde{T}}\Phi_1((0,t_2))\Phi_2((0,t_1)) \Lambda(\widetilde{t})|t_1t_2|^{s+1/2}\Diff \widetilde{t} = \\
 Z^{\PS}_{\reg}(s,W_v^{\mathrm{av}}) \int_{k^\times}\Phi_1(0,t_2)\rho^\divideontimes(t_2) |t_2|^{s+1/2}\Diff t_2
\int_{k^\times}\Phi_2(0,t_1)\rho(t_1) |t_1|^{s+1/2}\Diff t_1\ 
\end{gather*}
with the average $W_v^{\mathrm{av}}(g)=\int_{\Sl(2,\mathfrak{o})} W_v(g(1,k_2)) \Diff k_2$
over $\id\times \Sl(2,\mathfrak{o}_k)\subseteq H$.

The integral over $t_1$ in the last line equals $L(s,\nu^{1/2}\rho)$ for unramified $\rho$ and is zero otherwise.
The integral over $t_2$ is holomorphic in $s$ because the integrand has compact support with respect to $t_2$.
By choice of $\Phi_1$ we can arrange that the integral over $t_2$ is a non-zero constant.
\end{proof}

\begin{cor}
Every subregular pole attached to $\Pi$ and its split Bessel model with Bessel datum $\Lambda=\rho\boxtimes\rho^\divideontimes$ is of the form
$L(s,\nu^{1/2}\rho\mu)$ or $L(s,\nu^{1/2}\rho^\divideontimes\mu)$.
\end{cor}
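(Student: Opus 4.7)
The plan is to read off the corollary as a formal consequence of Proposition~\ref{2.1} combined with the very definition of the regularization $L$-factor $L(\mu\otimes M,s)$. First, I would invoke Proposition~\ref{2.1}, which says that every subregular pole of $\Pi$ is a pole of one of the two meromorphic functions
$$
\frac{Z^{\PS}_{\reg}(s,W_v,\mu)}{L(\mu\otimes M,s)}\, L(s,\nu^{1/2}\rho\mu)
\qquad\text{or}\qquad
\frac{Z^{\PS}_{\reg}(s,W_v,\mu)}{L(\mu\otimes M,s)}\, L(s,\nu^{1/2}\rho^\divideontimes\mu)
$$
for some $v\in\Pi$ fixed by $\id\times\Sl(2,\mathfrak{o}_k)$.

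Second, I would recall from Section~\ref{s:filtration_of_poles} that $L(\mu\otimes M,s)$ is by construction the regularization $L$-factor of the family $\{Z^{\PS}_{\reg}(s,W_v,\mu)\}_{v\in\Pi}$. By the standard convention for regularization $L$-factors (cf.\ \cite{RW}), this means that for every single vector $v\in\Pi$ the ratio
$$
\frac{Z^{\PS}_{\reg}(s,W_v,\mu)}{L(\mu\otimes M,s)}
$$
is entire --- in fact a polynomial in $q^{\pm s}$. This is the key input; once it is accepted, the rest is automatic.

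Third, since multiplying an entire function by the Tate factor $L(s,\nu^{1/2}\rho\mu)$ (respectively $L(s,\nu^{1/2}\rho^\divideontimes\mu)$) cannot create poles outside those of the Tate factor itself, every pole of either expression in Proposition~\ref{2.1} is already a pole of the corresponding Tate factor. This yields the stated shape of every subregular pole.

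I do not anticipate any real obstacle: the corollary is a bookkeeping consequence of the preceding proposition and the definition of $M$, with no additional analytic or representation-theoretic input needed. The only mild care required is to notice that the statement is an upper bound --- every subregular pole lies among the poles of one of the two Tate factors --- and does \emph{not} assert that every such Tate pole actually occurs; the finer question of which poles are realized is precisely what Table~\ref{tab:subregular_poles} addresses and is handled elsewhere in the paper.
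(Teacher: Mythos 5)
Your proposal is correct and matches the paper's intent exactly; the paper gives no explicit proof of this corollary because it regards it as an immediate consequence of Proposition~\ref{2.1} together with the defining property of the regularization $L$-factor $L(\mu\otimes M,s)$ (namely that $Z^{\PS}_{\reg}(s,W_v,\mu)/L(\mu\otimes M,s)\in\C[q^s,q^{-s}]$ for every $v$), which is precisely the bookkeeping you spell out.
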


It suffices to discuss the subregular poles $L(s,\nu^{1/2}\rho\mu)$. The remaining poles of the form $L(s,\nu^{1/2}\rho^\divideontimes\mu)$ are then obtained by switching $\rho$ and $\rho^\divideontimes$. This does not change the $TS$-module $M$ by \cite[4.19]{RW}.
Without loss of generality, we will set $\mu=1$ from now on by a suitable twist of $\Pi$.

\subsection{Smooth representations}\label{ss:representations}

For a locally compact totally disconnected group $X$ fix the modulus character $\delta_X:X\to\R_{>0}$ with
$$\int_X f(xx_0) d x = \delta_X(x_0) \int_X f(x) d x\ ,\qquad x,x_0\in X\ ,\quad f\in C^\infty_c(X)$$ with respect to a left-invariant Haar measure on $X$.
The abelian category of smooth complex-valued linear representations $\rho$ of $X$ is denoted $\CCC_X$. The full subcategory of representations with finite length is $\CCC_X^{\fin}$.
The \emph{smooth dual} or \emph{contragredient} of $\rho$ is denoted $\rho^\vee$.
For a closed subgroup $Y\subseteq X$,
the \emph{unnormalized compact induction} of $\sigma\in\CCC_Y$ is the representation $\ind_Y^X(\sigma)\in\CCC_X$,
given by the right-regular action of $X$ on the vector space of functions
$f:X\to \C$ that satisfy $f(yxk) = \sigma(y)f(x)$ for every $y\in Y, x\in X$ and $k$ in some open compact subgroup of $X$ and such that $f$ has compact support modulo $Y$.
This defines an exact functor $\ind_Y^X$ from $\CCC_Y$ to $\CCC_X$.
\begin{lem}[Frobenius reciprocity]\label{lem:Frobenius}
For smooth $\rho\in\CCC_X$ and $\sigma\in\CCC_Y$, there is a canonical isomorphism
$$\Hom_X(\ind_Y^X(\sigma),\rho^\vee) \cong \Hom_Y(\sigma,(\rho|_Y)^\vee\otimes\frac{\delta_Y}{\delta_X})\ .$$
\end{lem}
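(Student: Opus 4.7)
The plan is to establish the standard smooth Frobenius reciprocity for unnormalized compact induction between locally compact totally disconnected groups, by exhibiting an explicit isomorphism in both directions and checking that the two constructions are mutually inverse. Both sides should classify the same data, namely $X$-invariant bilinear pairings $\ind_Y^X(\sigma)\times \rho\to\CC$, with the $Y$-side twisted by the modulus ratio $\delta_Y/\delta_X$ to compensate for the absence of a left-invariant measure on $Y\backslash X$.

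For the forward map $\alpha$, given $T\in\Hom_X(\ind_Y^X(\sigma),\rho^\vee)$, I would construct $\phi_T\in\Hom_Y(\sigma,(\rho|_Y)^\vee\otimes\delta_Y/\delta_X)$ by a canonical \emph{evaluation at the identity} trick: for each vector $v$ in the space of $\sigma$, pick a compact open subgroup $K\subseteq X$ small enough that $\sigma(Y\cap K)v=v$, define the test function $f_{v,K}\in\ind_Y^X(\sigma)$ supported on $YK$ by $f_{v,K}(yk)=\sigma(y)v/\mathrm{vol}(Y\cap K)$, and set $\phi_T(v)(w):=T(f_{v,K})(w)$ for $K$-fixed $w$ in the space of $\rho$. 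One verifies independence of the choice of $K$ and the twisted $Y$-equivariance of $\phi_T$ from the $X$-equivariance of $T$ combined with Haar measure manipulations.

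For the reverse map $\beta$, given $\phi\in\Hom_Y(\sigma,(\rho|_Y)^\vee\otimes\delta_Y/\delta_X)$, I would define
\[
T_\phi(f)(w)\ =\ \int_{Y\backslash X}\phi(f(x))(\rho(x)w)\,d\dot x\ ,
\]
integrated against a quasi-invariant measure $d\dot x$ on $Y\backslash X$. The integrand descends to a function on $Y\backslash X$ precisely because the twist $\delta_Y/\delta_X$ built into $\phi$ cancels the Radon--Nikodym derivative of $d\dot x$ with respect to left $Y$-translations; convergence is ensured by compact support of $f$ modulo $Y$.

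The main obstacle is the bookkeeping of modulus characters, verifying that the transformation rule of $\phi$ is matched exactly by the quasi-invariance of $d\dot x$ so that the integral defining $T_\phi$ is well-posed. Once this is settled, $X$-equivariance of $T_\phi$ is immediate from the right $X$-action on $\ind_Y^X(\sigma)$ and the quasi-invariance of the measure. The final step is to check that $\alpha\circ\beta=\id$ and $\beta\circ\alpha=\id$: the first amounts to $f_{v,K}$ acting as a delta function at the identity under integration, while the second follows from the fact that every $f\in\ind_Y^X(\sigma)$ is a finite sum of right $X$-translates of functions of the form $f_{v,K}$, a consequence of smoothness and compact support modulo $Y$.
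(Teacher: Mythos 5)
Your proposal is correct in outline, and it is essentially the same proof that underlies the paper's reference: the paper's ``proof'' is a bare citation to Bernstein--Zelevinsky~[BZ76, 2.29], and that reference proves the statement by exactly the dichotomy you describe — a forward map by evaluation near the identity (pairing a suitably normalized bump function $f_{v,K}$ supported on $YK$ against $K$-fixed vectors of $\rho$) and a backward map by integrating the twisted pairing over $Y\backslash X$ against the $\delta_Y/\delta_X$-semi-invariant functional. You correctly identify the two nontrivial points: that the $\delta_Y/\delta_X$ twist carried by $\phi$ is precisely what makes the integrand descend to $Y\backslash X$ with the right covariance for the semi-invariant integral, and that mutual inversion reduces to $f_{v,K}$ acting as an approximate identity together with the observation that such bump functions generate $\ind_Y^X(\sigma)$ as an $X$-module. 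One small item you leave implicit but which is worth recording is that $T_\phi(f)$ actually lands in the \emph{smooth} dual $\rho^\vee$: this follows because any compact open subgroup fixing $f$ also fixes $T_\phi(f)$, by the $X$-equivariance of $T_\phi$ and right-invariance of the quotient integral. With that noted, the argument is sound and matches the cited source.
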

\begin{proof}
See \cite[2.29]{Bernstein-Zelevinsky76}.
\end{proof}

Irreducible smooth representations of $\Gl(1)$ are characters $\chi:k^\times\to\C^\times$. The valuation character $\nu:\Gl(1)\to\C^\times$ is defined by $\nu(\lambda)=\vert \lambda\vert$.
Every $X\in\CCC_{\Gl(1)}^\fin$ of finite length is a direct sum of Jordan blocks $\chi^{(n)}$ with respect to the monodromy operator
$\tau_\chi=(X(\varpi)-\chi(\varpi)\id_X)$ with a uniformizer $\varpi\in\mathfrak{o}_k$.

For smooth representations of $\Gl(2)$ we use the established notation of Bernstein and Zelevinski~\cite{Bernstein-Zelevinsky76, Bernstein-Zelevinsky77}. Fix the standard Borel subgroup $B_{\Gl(2)}$ of upper triangular matrices in $\Gl(2)$, then for smooth characters $\chi_1$ and $\chi_2$ of $\Gl(1)$ we denote normalized parabolically induced representations by
$$\chi_1\times\chi_2=\ind_{B_{\Gl(2)}}^{\Gl(2)}(\chi_1\nu^{1/2}\boxtimes\chi_2\nu^{-1/2})\ .$$
For $\nu^{1/2}\times\nu^{-1/2}$ we also write $M_{(\St:1)}$ since this is the unique indecomposable extension of the trivial representation by the Steinberg representation $\St$.
Analogously, we write $M_{(1:\St)}$ for $\nu^{-1/2}\times\nu^{1/2}$.
Extensions between $\Gl(2)$-modules are classified by the $\Ext^1$-groups, see section~\ref{s:Ext}.

Let $\CCC_{n}=\CCC_{\Gl_a(n)}$ denote the category of smooth representations of $\Gl_a(n)$.
As in \cite{RW} we write $j_!, j^!, i_*,i^*$ for the unnormalized functors $\Phi^+,\Phi^-,\Psi^+,\Psi^-$ of \cite{Bernstein-Zelevinsky76}.
\begin{lem}\label{lem:Gelfand_Kazdhan}
There is a functorial short exact sequence of excision type
\begin{equation*}
 0 \longrightarrow j_!j^!\longrightarrow \id\longrightarrow i_*i^* \longrightarrow 0\ .
\end{equation*}
The composition $j^!i_*$ and $i^*j_!$ are both zero.
\end{lem}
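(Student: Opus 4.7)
The lemma is the classical excision sequence from Bernstein–Zelevinsky's theory of derivatives \cite[\S 3]{Bernstein-Zelevinsky76}; the plan is to follow their orbit/Mackey-theoretic argument. Write $U = k^n$ for the abelian unipotent radical of $\Gl_a(n)$, fix a generic smooth character $\psi_n:U\to\C^\times$ whose stabilizer in $\Gl(n)$ is the embedded mirabolic $\Gl_a(n-1)$, and put $H=\Gl_a(n-1)\cdot U$. Under the $\Gl(n)$-action on the Pontryagin dual $\widehat U$ there are exactly two orbits, the closed point $\{0\}$ and its open complement. On objects, $i^*V=V/V(U)$ and $j^!V=V/V(U,\psi_n)$, while $i_*$ is trivial extension from $\Gl(n)$-modules and $j_!W\cong \ind_H^{\Gl_a(n)}(W\otimes\psi_n)$.

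First I would prove the key spectral identification: for any smooth character $\chi$ of $U$, the natural composition $V^{U,\chi}\hookrightarrow V\twoheadrightarrow V_{U,\chi}$ is a canonical isomorphism. This rests on the decomposition $V=\bigcup_{U_1}V^{U_1}$ over open compact $U_1\subset U$ together with the Fourier decomposition $V^{U_1}=\bigoplus_{\chi\in\widehat{U/U_1}}(V^{U_1})^\chi$ on the finite abelian group $U/U_1$. As corollaries, $i^*V\cong V^U$ and $j^!V\cong V^{U,\psi_n}$ sit naturally inside $V$, and $V(U)$ coincides with the union over $U_1$ of all non-trivial isotypic components.

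The two vanishings $j^!i_*=0$ and $i^*j_!=0$ then follow by character bookkeeping. On $i_*W$ the group $U$ acts trivially, so its $\psi_n$-isotypic part is zero, giving $j^!i_*W=0$; on $j_!W$ the vectors transform under $U$ via $\Gl(n)$-translates of $\psi_n$, none of which is trivial, so $(j_!W)^U=0$ and $i^*j_!W=0$ follows from step one. For the excision triangle itself, the surjection $V\twoheadrightarrow i_*i^*V$ is tautological with kernel $V(U)$; and Frobenius reciprocity in the form of Lemma~\ref{lem:Frobenius}, applied to the $H$-equivariant inclusion $V^{U,\psi_n}\otimes\psi_n\hookrightarrow V$, yields the map $j_!j^!V\to V$.

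The main obstacle is matching the image of this last map with $V(U)$ and checking injectivity simultaneously. For the image, the transitivity of $\Gl(n)$ on $\widehat U\setminus\{0\}$ with stabilizer $\Gl_a(n-1)$ shows that $G$-translates of $V^{U,\psi_n}$ exhaust every non-trivial isotypic component, so by step one the image equals $V(U)$. For injectivity I would pass to the compactly supported presentation of $\ind_H^{\Gl_a(n)}$ and reduce to the linear independence of distinct isotypic projections on each finite-dimensional $V^{U_1}$, which is again abelian Fourier analysis. No analytic input beyond this is needed; the argument is purely representation-theoretic.
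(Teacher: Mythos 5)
Your framework (two $\Gl(n)$-orbits on $\widehat U$, the stabilizer $H=\Gl_a(n-1)\cdot U$ of $\psi_n$, excision along the open orbit) is the right one — it is how Bernstein--Zelevinsky prove this, and the paper itself offers no proof, merely quoting the functors $\Phi^\pm,\Psi^\pm$ of \cite{Bernstein-Zelevinsky76}. But the step on which everything else in your argument rests, the ``key spectral identification'' $V^{U,\chi}\xrightarrow{\ \sim\ }V_{U,\chi}$, is false for smooth representations of $U\cong k^n$. The quotient $U/U_1$ by a compact open subgroup is an \emph{infinite} discrete group, so the finite-group Fourier decomposition of $V^{U_1}$ you invoke does not exist, and a smooth $U$-module can have nonzero $\chi$-coinvariants while containing no $\chi$-eigenvector whatsoever. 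Concretely, $V=C_c^\infty(k)$ with $U=k$ acting by translation has $V^{U,\chi}=0$ for every smooth $\chi$ (a nonzero eigenfunction would have full, hence non-compact, support), yet $V_{U,\chi}\cong\C$ via $f\mapsto\int f\chi^{-1}$; inside the paper's own setting, $\mathbb{S}_1=j_!(\C)$ has $j^!(\mathbb{S}_1)\cong\C$ although its restriction to $U$ has no $(U,\psi)$-eigenvectors at all. Consequently $i^*V$ is the coinvariant space $V_U$ and \emph{not} $V^U$, the image of $j_!j^!V\to V$ cannot be computed as the span of translates of eigenspaces (that span may vanish while $V(U)$ does not), the transport of $(j_!W)^U=0$ to $i^*j_!W=0$ has no basis, and even your construction of the canonical map $j_!j^!V\to V$ breaks down, since the subspace $V^{U,\psi_n}\subseteq V$ you propose to induce from is typically zero while $j^!V=V_{U,\psi_n}$ is not.

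The repair is to argue with coinvariants throughout, as in \cite{Bernstein-Zelevinsky76}: membership in $V(U)$ is detected by the integral criterion ($v\in V(U)$ if and only if $\int_{U_1}\pi(u)v\,\Diff u=0$ for some compact open $U_1\subseteq U$), which yields $i^*j_!=0$ directly, because a function in $\ind_H^{\Gl_a(n)}(W\otimes\psi_n)$ transforms at each point under a nontrivial $\Gl(n)$-translate of $\psi_n$ and is therefore killed pointwise by averaging over a large $U_1$; the map $j_!j^!V\to V$ comes from the adjunction between $j_!$ and $j^!$ (not from an eigenspace inclusion); and the identification of its image with $V(U)$, together with injectivity, is obtained from the equivalence between smooth $H$-modules and $\Gl(n)$-equivariant sheaves on the single open orbit $\widehat U\setminus\{0\}$ — localizing the $C_c^\infty(U)$-module structure on $V(U)$, whose fibre at $\psi_n$ is the coinvariant space $j^!(V)$. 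Your computation of $j^!i_*=0$ is the one step that survives essentially as stated, since there $U$ acts trivially and $(1-\psi_n(u_0))$ is invertible for suitable $u_0$, but it too should be phrased with coinvariants. As written, the proposal has a genuine gap and does not establish the lemma.
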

For each smooth character $\rho$ of $\Gl(1)$, the functors $k_\rho, k^\rho\,:$ $\CCC_{n+1}\to \CCC_{n}$ are constructed in \cite{RW} together with a functorial short exact sequence
\begin{equation*}
 0\to j_!k_{\nu\rho} \to k_\rho j_! \to i_*i^*\to 0
\end{equation*}
and a natural equivalence $j_!k^{\nu\rho}\cong k^\rho j_!$.
The projection
\begin{equation*}
Q_1\ni
\begin{pmatrix}ad-bc & sc-ta &\ast&sd-tb\\0&a&s&b\\0&0&1&0\\0&c&t&d\end{pmatrix} \longmapsto \begin{bmatrix}[cc|c]a&b&s\\c&d&t\end{bmatrix}
\in \Gl_a(2)
\end{equation*}
defines a functor $\eta:\CCC_G(\omega)\to\CCC_2$ by taking $S_A$-coinvariant quotients.
We write $\overline{\Pi}=\eta(\Pi)=\Pi_{S_A}\in\CCC_2$ for $\Pi\in\CCC_{G}(\omega)$.
We define the Bessel functors $\CCC_G(\omega)\to\CCC_1$ as $\beta_\rho=k_\rho\circ\eta$ and $\beta^\rho=k^\rho\circ\eta$ .
The Bessel module of an irreducible $\Pi\in\CCC_G(\omega)$ is denoted $\widetilde{\Pi}=\beta_\rho(\Pi)$ if $\rho$ is clear from the context.
The Gelfand-Graev-module is $$\mathbb{S}_n=(j_!)^n(\C)\in\CCC_n\ .$$

\subsection{Sufficient condition}

This condition will be verified in section~\ref{s:Main} in order to show the existence of the subregular poles listed in table~\ref{tab:subregular_poles}.

\begin{lem}\label{lem:suff_cond_subregular_poles}
Fix an irreducible $\Pi\in\CCC_G(\omega)$ and a smooth character $\rho$ of $\Gl(1)$ and let $\widetilde{\Pi}=k_\rho(\overline{\Pi})$ and $K=\Gl(2,\mathfrak{o}_k)$.
If the composition of the canonical morphisms 
\begin{equation*}
\overline{\Pi}^{K}
\to \overline\Pi
\to \widetilde{\Pi}
\to \widetilde{\Pi}/\widetilde{\Pi}^S
\to k_{\nu^2\rho}(\widetilde{\Pi}/\widetilde{\Pi}^S)
\end{equation*}
is non-trivial, then $\rho$ is unramified, the Bessel character $\Lambda=\rho\boxtimes\rho^\divideontimes$ yields a split Bessel model of $\Pi$ and the associated subregular $L$-factor $L^{\PS}_{\sreg}(s,\Pi,\Lambda,1)$ admits a pole of the form
$L(s,\nu^{1/2}\rho)$.
\end{lem}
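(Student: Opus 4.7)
The plan is to apply Proposition~\ref{2.1} with $\mu = 1$: it suffices to produce a vector $\tilde v \in \Pi^{\id \times \Sl(2,\mathfrak{o}_k)}$ for which the entire function $Z^{\PS}_{\reg}(s, W_{\tilde v})/L(M, s)$ is non-vanishing at the pole $s_0$ of $L(s, \nu^{1/2}\rho)$; Proposition~\ref{2.1} will then automatically lift that pole into $L^{\PS}_{\sreg}$. Writing $K = \Gl(2, \mathfrak{o}_k)$ for the natural compact in $H$, the hypothesis gives $v_0 \in \overline\Pi^K$ whose image in $k_{\nu^2\rho}(\widetilde\Pi/\widetilde\Pi^S)$ is non-zero. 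Since $K$ is open compact, the $K$-invariants functor is exact on smooth representations, so the canonical surjection $\Pi \twoheadrightarrow \overline\Pi = \Pi_{S_A}$ restricts to $\Pi^K \twoheadrightarrow \overline\Pi^K$, and we can lift $v_0$ to some $\tilde v \in \Pi^K \subseteq \Pi^{\id \times \Sl(2,\mathfrak{o}_k)}$; in particular $W_{\tilde v}^{\mathrm{av}} = W_{\tilde v}$ up to a non-zero volume constant.

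Non-triviality of the composition also settles the two structural conclusions. The middle term $\widetilde\Pi = k_\rho(\overline\Pi)$ is non-zero, so Lemma~\ref{lem:Frobenius} together with the construction of the Bessel functor in section~\ref{ss:representations} produces a non-zero $(\widetilde T N, \Lambda \boxtimes \psi)$-equivariant functional on $\Pi$ with $\Lambda = \rho \boxtimes \rho^\divideontimes$ (with $\rho^\divideontimes$ pinned down on $Z_G$ by the central character), hence a split Bessel model with datum $\Lambda$. For unramifiedness of $\rho$: the vector $v_0$ is fixed by the compact torus $\mathfrak{o}_k^\times \subset K$, and this invariance is transported through the Bessel-type functors $k_\rho$ and $k_{\nu^2\rho}$; the resulting non-zero vector lives in a space on which the residual $\mathfrak{o}_k^\times$-action factors through the character $\nu^2\rho$, so $\nu^2\rho|_{\mathfrak{o}_k^\times} = 1$, which forces $\rho|_{\mathfrak{o}_k^\times} = 1$ as $\nu$ is unramified.

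The key step is identifying the value at $s_0$ of the entire function $Z^{\PS}_{\reg}(s, W_{\tilde v})/L(M, s)$ with a non-zero multiple of the class of $\tilde v$ in $k_{\nu^2\rho}(\widetilde\Pi/\widetilde\Pi^S)$. The integral $Z^{\PS}_{\reg}(s, W_v) = \int_T W_v(x_\lambda) |\lambda|^{s-3/2} \Diff x_\lambda$ depends only on the class of $v$ modulo $\widetilde\Pi^S$, and the twist $\nu^{-3/2}$ in $M = \nu^{-3/2} \otimes \widetilde\Pi/\widetilde\Pi^S$ converts the character $\nu^{1/2}\rho$ governing the pole of $L(s, \nu^{1/2}\rho)$ into the character $\nu^2\rho$ on $\widetilde\Pi/\widetilde\Pi^S$. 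The corresponding coisotypic quotient is by definition $k_{\nu^2\rho}(\widetilde\Pi/\widetilde\Pi^S)$, so by the choice of $\tilde v$ the entire function does not vanish at $s_0$, the pole survives, and Proposition~\ref{2.1} delivers the subregular pole. The main obstacle is precisely this last identification: one has to bookkeep three $\nu$-shifts (the $-3/2$-twist in $M$, the exponent of the Mellin kernel in $Z^{\PS}_{\reg}$, and the Bessel twist entering through $k_\rho$) and verify they assemble consistently into the target character $\nu^2\rho$. Once that compatibility is in place, everything else is a direct unpacking of Proposition~\ref{2.1} and of the exactness of $K$-invariants.
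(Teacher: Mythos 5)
Your overall strategy matches the paper's: apply Proposition~\ref{2.1} with $\mu=1$, lift $v_0\in\overline\Pi^K$ to $\tilde v\in\Pi^K$ by exactness of $K$-invariants, and show that $Z^{\PS}_{\reg}(s,W_{\tilde v})/L(s,M)$ does not vanish at the pole $s_0$ of $L(s,\nu^{1/2}\rho)$. The structural conclusions are handled similarly; note only that the paper phrases the Bessel model assertion via $M\neq 0$ rather than $\widetilde\Pi\neq 0$ (the precise criterion involves $\widetilde\Pi/\widetilde\Pi^S$), but your hypothesis forces $\widetilde\Pi/\widetilde\Pi^S\neq 0$ anyway, so this is harmless.

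The genuine gap is in your key step. You correctly identify that one must show $\lim_{s\to s_0}Z^{\PS}_{\reg}(s,W_{\tilde v})/L(s,M)$ is a \emph{non-zero} multiple of the class of $\tilde v$ in $k_{\nu^2\rho}(\widetilde\Pi/\widetilde\Pi^S)$, but you treat this as mere bookkeeping of $\nu$-shifts. Matching characters only shows that the functional $I_{s_0}\colon f\mapsto \lim_{s'\to s_0}Z(s',f,1)/L(s',M)$ is $T$-equivariant of character $\chi_\crit=\nu^{1/2}\rho$ and hence factors through $k_{\nu^2\rho}(\widetilde\Pi/\widetilde\Pi^S)$; it does \emph{not} show that $I_{s_0}\neq 0$, nor that $k_{\nu^2\rho}(\widetilde\Pi/\widetilde\Pi^S)$ is one-dimensional so that non-vanishing on a single non-zero element follows. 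These are the substantive facts the paper imports from \cite[lemma~3.31]{RW}, together with the realization $M\hookrightarrow C_b^\infty(k^\times)$ coming from the perfectness of $M$: namely, that $I_{s_0}$ \emph{spans} the one-dimensional space $\Hom_T(M,\chi_\crit)$. Without invoking that lemma your argument does not exclude the possibility that $L(s,M)$ overshoots and the normalized ratio vanishes identically at $s_0$; you should cite it explicitly rather than classify the non-vanishing as a formal shift-counting exercise.
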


\begin{proof}
The character $\rho$ has to be unramified since otherwise the composition cannot be non-trivial.
We write $M=\nu^{-3/2}\otimes (\widetilde \Pi/\widetilde\Pi^S)$ as before.
If $\Lambda$ does not yield a Bessel model, then $M$ would be zero.
Since $M$ is perfect, there is an embedding of $TS$-modules (unique up to scalars)
$$M \hookrightarrow C_b^\infty(k^\times)$$
and we identify $M$ with its image \cite[lemma~3.17]{RW}.
This defines the Bessel model of the representation $\Pi$ with respect to the split Bessel datum $\Lambda$.
Lemma~3.31 of \cite{RW} implies that for each $s\in\C$, the functional
$$I_{s}: M\to \CC \quad , \quad f\mapsto \lim_{s'\to s} \frac{Z(s',f,1)}{L(s',M)} 
 $$
spans the one-dimensional space
$%
\Hom_{T}(M, \nu^{-s})\ .$
Fix a pole $s_0$ of the Tate $L$-factor $L(\nu^{1/2}\rho,s)$, 
then $ \nu^{-s_0}= \nu^{1/2}\rho=:\chi_{\crit}$.
In other words, $I_{s_0}$ spans the one-dimensional space
$$ \Hom_{T}(M, \chi_{\crit}) \cong \Hom_{T}(\tilde\Pi/\tilde\Pi^S, \nu^{3/2}\chi_{\crit}) 
\cong \Hom_T( k_\chi(\widetilde{\Pi}/\widetilde{\Pi}^S),\C)\ .$$ 
By our assumption the image of $\overline{\Pi}^K$ in $k_\chi(\widetilde{\Pi}/\widetilde{\Pi}^S)$ is non-trivial.
The map $\Pi^{K} \to \overline\Pi^{K}$ is surjective, because $K$ is compact. 
Hence there is $v\in\Pi^{K}$ such that its image $f_v\in M$ satisfies $I_{s_0}(f_v)\neq0$, or in other words, such that
$$\lim_{s\to s_0}Z^{\PS}_{\reg}(s,W_v,1)/L(s,M)\neq0\ .$$ 
Now proposition~\ref{2.1} implies the last assertion.
\end{proof}

\subsection{Necessary condition}
This condition will be used in section~\ref{s:H_+-functionals} in order to show that there are not more subregular poles than listed in table~\ref{tab:subregular_poles}.
Fix an irreducible smooth representation $\Pi\in\CCC_G(\omega)$ and a smooth character $\rho$ of $\Gl(1,k)$.
We write $\widetilde{\rho}=\nu\rho$ where $\nu(x)=|x|$ for $x\in k^\times$.

\begin{defn}
An $(H_+,\widetilde{\rho})$-\emph{functional} attached to $(\Pi,\rho)$ is a nontrivial functional $\ell:\Pi\to\C$,
such that
$$ \ell(\Pi(s_Azg)v) = \omega(z) \widetilde\rho(\det(g)) \cdot \ell(v) \ $$
for $z\in Z_G$, $s_A\in S_A$ and $g\in \Gl(2)$.
Every $H_+$-functional factorizes over the projection $\Pi\to \overline{\Pi}=\Pi_{S_A}$ and this defines an isomorphism
\begin{equation*}
\{(H_+,\widetilde{\rho})\text{-functionals}\}
\quad \cong \quad \Hom_{\Gl(2)}(\overline{\Pi},\widetilde{\rho}\circ\det)\ .
\end{equation*}
\end{defn}

\begin{prop}\label{prop:poles_give_functionals}
Assume that $\Lambda=\rho\boxtimes{\rho}^\divideontimes$ yields a split Bessel model for $\Pi$.
If $L(\rho\nu^{1/2},s)$ is a subregular pole of $\Pi$
with unramified $\rho$, then $\Pi$ admits a nontrivial
$(H_+,\widetilde{\rho})$-functional $\ell$.
Especially, $\Hom_{\Gl(2)}(\overline{\Pi},\widetilde{\rho}\circ\det)\neq0\,$.
\end{prop}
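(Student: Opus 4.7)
The plan is to construct the $(H_+,\widetilde{\rho})$-functional $\ell$ as a residue of the partial zeta integral $Z^{\PS}_+$ at $s=s_0$ (with $\nu^{-s_0}=\nu^{1/2}\rho$), and to verify equivariance via the transformation law of $Z^{\PS}_+$. Setting $\mu=1$ by a twist, the hypothesis that $L(s,\nu^{1/2}\rho)$ is a subregular pole forces $\rho$ to be unramified, and Proposition~\ref{2.1} provides a vector $v_0\in\Pi^{\id\times\Sl(2,\mathfrak{o}_k)}$ whose normalized regular zeta integral has a nonzero residue at $s_0$.

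The central ingredient is the transformation law: substituting $h\mapsto hh_0^{-1}$ in the defining integral (using the right invariance of $\Diff_R h$) gives, for every $h_0\in H_+$,
\[
Z^{\PS}_+(s,\Pi(h_0)v,\Lambda,\Phi,1)=|\lambda_G(h_0)|^{-s-\tfrac12}\delta_{H_+}(h_0)\,Z^{\PS}_+(s,v,\Lambda,R(h_0^{-1})\Phi,1),
\]
with $R(h_0^{-1})\Phi(x)=\Phi(xh_0^{-1})$. The critical calculation is that for $h_0=(\diag(\det g,1),g)$, the image of $g\in\Gl(2)$, one has $\lambda_G(h_0)=\det g$ and $\delta_{H_+}(h_0)=|\det g|$, so at $s_0$,
\[
|\lambda_G(h_0)|^{-s_0-\tfrac12}\delta_{H_+}(h_0)=|\det g|^{\tfrac12-s_0}=\widetilde{\rho}(\det g)
\]
by the defining identity $\nu^{-s_0}=\nu^{1/2}\rho$; for $h_0=z\in Z_G$ one obtains $\omega(z)$, and for $h_0=s_A\in S_A$ the factor is trivial since $S_A$ fixes $(0,0,1,1)$ and $\psi|_{\widetilde N}=1$ in the split Bessel setting.

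With $\Phi_0=\Phi_1\otimes\mathbf 1_{\mathfrak{o}_k\oplus\mathfrak{o}_k}$ chosen so that $\int\Phi_1(0,t)\rho^\divideontimes(t)|t|^{s_0+\tfrac12}\Diff t\neq 0$, I would set
\[
\ell(v):=\lim_{s\to s_0}\frac{Z^{\PS}_+(s,v,\Lambda,\Phi_0,1)}{L(s,M)L(s,\nu^{1/2}\rho)}.
\]
Proposition~\ref{2.1} forces $\ell(v_0)\neq 0$. The transformation law then yields
\[
\ell(\Pi(h_0)v)=\widetilde{\rho}(\det g)\omega(z)\lim_{s\to s_0}\frac{Z^{\PS}_+(s,v,\Lambda,R(h_0^{-1})\Phi_0,1)}{L(s,M)L(s,\nu^{1/2}\rho)},
\]
so $(H_+,\widetilde{\rho})$-equivariance reduces to showing that the right-hand residue coincides with $\ell(v)$. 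Unpacking the Iwasawa decomposition $H_+=\widetilde N T\widetilde T(1\times\Sl(2,\mathfrak{o}_k))$ as in the proof of Proposition~\ref{2.1}, the residue factors as a product of a regular zeta on $W_v^{\mathrm{av}}$ and a Tate-type residue $T_{s_0}(\Phi_2):=\lim_{s\to s_0}\int\Phi_2(0,t)\rho(t)|t|^{s+\tfrac12}\Diff t/L(s,\nu^{1/2}\rho)$; a direct calculation using $\nu^{-s_0}=\nu^{1/2}\rho$ should exhibit the precise cancellation between $T_{s_0}(\Phi_2\circ g^{-1})$ and the character $\widetilde{\rho}(\det g)$.

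The hardest step will be this last equivariance check: because $\Phi_0\circ g^{-1}$ is not of the same product form as $\Phi_0$, one must verify by a Tate-integral computation (equivalently, via Frobenius reciprocity applied to $C_c^\infty(k^2\setminus\{0\})\cong\ind_{\Gl_a(1)}^{\Gl(2)}\mathbf 1$) that the $\Gl(2)$-translate of $T_{s_0}$ produces precisely the character needed to cancel the scalar in the transformation law. This technical cancellation is the split-Bessel analogue of the classical Tate identity $\rho(d)|d|^{s_0+\tfrac12}=1$ at a critical exponent, and is what ultimately converts the $H_+$-invariant bilinear form into an honest $(H_+,\widetilde{\rho})$-functional.
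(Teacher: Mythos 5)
Your construction of $\ell$ as the residue functional and your use of the transformation law of the partial zeta integral $Z^{\PS}_+$ are exactly the paper's approach, and your computation $|\lambda_G(h_0)|^{-s_0-1/2}\delta_{H_+}(h_0)=\widetilde{\rho}(\det g)$ at the critical exponent matches the paper. However, the step you flag as the ``hardest'' is in fact not a difficulty, and your claim that ``$\Phi_0\circ g^{-1}$ is not of the same product form as $\Phi_0$'' is wrong. The element $g\in\Gl(2)$ is embedded in $H_+$ as $(\diag(\det g,1),g)$, and $H$ acts block-diagonally on the two coordinate pairs $(x_1,y_1)$ and $(x_2,y_2)$ of $k^4$; so a right-translate of $\Phi_1\otimes\Phi_2$ by $h_0\in H_+$ is again a pure tensor $\Phi_1^{h_0}\otimes\Phi_2^{h_0}$. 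There is thus no Tate-theoretic ``cancellation'' to verify.

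The correct (and short) observation that your argument is missing is the one the paper makes: the residue functional $\ell$ factorizes over the evaluation $\Phi_2\mapsto\Phi_2(0,0)$, because after the Iwasawa decomposition the partial zeta integral is a product of $Z^{\PS}_{\reg}(s,W_v^{\mathrm{av}})$ with two one-variable Tate integrals, and the only source of the pole at $s_0$ is the Tate integral in $\Phi_2$, whose residue at $s_0$ is proportional to $\Phi_2(0,0)$. Since $(0,0)g^{-1}=(0,0)$ for every $g\in\Gl(2)$, the evaluation $\Phi_2\mapsto\Phi_2(0,0)$ is manifestly invariant under translation by elements of $\Gl(2)\subset H_+$, and the $(H_+,\widetilde{\rho})$-equivariance of $\ell$ follows directly from the scalar factor you already computed in the transformation law. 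In short: the argument you sketch is the right one, but you should replace the speculative ``technical cancellation'' paragraph by the factorization-through-$\Phi_2(0,0)$ observation, which closes the proof immediately.
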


\begin{proof}
Assume $L(\rho\nu^{1/2},s)$ divides $L^\PS_{\sreg}(s,\Pi,1,\Lambda)$ and admits a subregular pole $s_0\in\C$.
Without loss of generality we can assume that
this pole comes from a pole of a partial zeta integral
$Z^\PS_+(s,W_v,\Phi,\Lambda,1)$
with $\Phi=\Phi_1\boxtimes\Phi_2$ and $\Phi_2(0,0)\neq0$ as in the proof of proposition~\ref{2.1}.
The functional $\ell$ is then defined as the residue functional
$$\ell:\quad v\mapsto \mathrm{res}_{s=s_0}\ Z_+(s,W_v,\Phi,\Lambda,1)/L(s,M)\ .$$
It is non-trivial because the pole has order one.
This functional $\ell$ must factorize over the evaluation $\Phi_2\mapsto\Phi_2(0,0)$.
The partial zeta-integral is $S_A$-invariant because $S_A$ is a normal in $H_+$.
The center clearly acts with the central character $\omega$ of $\Pi$. 
The action of $g\in \Gl(2)\hookrightarrow H_+$ on $f(v)$ is by multiplication with
$$\nu^{-s_0-\frac12}(\det g)\delta_{H_+}(g)=\nu^{-s_0+\frac{1}{2}}(\det g)\ .$$
The pole at $s=s_0$ is a pole of $L(\rho\nu^{1/2},s)$, this means $\rho\nu^{s_0+\frac{1}{2}}=1$.
Hence $g\in \Gl(2)$ acts by multiplication with $\nu\rho(\det g)=\widetilde{\rho}(\det g)$.
\end{proof}

We did not discuss subregular poles of the form $L(\rho^\divideontimes\nu^{1/2},s)$.
In fact, the Weyl involution $\mathbf{s}_1$ switches $\rho$ and $\rho^\divideontimes$.
Hence we can apply our results also for the remaining case of subregular poles.
The subregular $L$-factor $L(\chi_{\crit},s)$ only depends on $\Pi$
and the pair $(\rho, \rho^\divideontimes)$,
where the roles of $\rho$ and $\rho^\divideontimes$ may switch.

\section{Classification of $H_+$-functionals}\label{s:H_+-functionals}

In this section we classify the $(H_+,\widetilde{\rho})$-functionals attached to irreducible smooth representations $\Pi$ of $G$ and smooth characters $\widetilde{\rho}$ of $k^\times$.
In other words, we determine $\dim\Hom_{\Gl(2)}(\overline{\Pi},\widetilde{\rho}\circ\det)$.
The final result is
\begin{thm}\label{MainL}

If $\Pi$ is non-generic, then for every smooth character $\widetilde{\rho}$ of $\Gl(1)$
\begin{gather*}
\dim\Hom_{\Gl(2)}(\overline\Pi, \tilde\rho\circ\det)  =
\begin{cases}
1 & \text{type \nosf{IIb, IVc, Vbc, XIb} and } \widetilde{\rho}\in\Delta_+(\Pi))\ \,,\\
1 & \text{type \nosf{IIIb}, \nosf{VIc}, \nosf{VId}  and } \widetilde{\rho}\in\Delta_+(\Pi)\cup\nu\Delta_+(\Pi)\ ,\\
1 & \text{type \nosf{IVc}  and } \widetilde{\rho}=\nu^2\sigma\in \nu\Delta_+(\Pi)\ ,\\
1 & \text{type \nosf{IVd} and } \widetilde{\rho} = \sigma\ \,,\\
0 & \text{otherwise}\ .
\end{cases}
\end{gather*}

If $\Pi$ is generic,
then for every smooth character $\widetilde{\rho}$ of $\Gl(1)$
\begin{equation*}
\dim\Hom_{\Gl(2)}(\overline\Pi,\widetilde{\rho}\circ\det)=
\begin{cases}
 1 & \text{type \nosf{I, IIa, Va, VIa, X, XIa} and }\widetilde{\rho}\in\Delta_+(\Pi)\ ,\\
 0 & \text{otherwise}\ .
\end{cases}
\end{equation*}
\end{thm}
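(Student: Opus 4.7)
The plan is to compute $\dim\Hom_{\Gl(2)}(\overline{\Pi},\widetilde{\rho}\circ\det)$ by reducing to the structure of the $\Gl_a(2)$-module $\overline{\Pi}=\Pi_{S_A}$, which has been determined explicitly for every irreducible $\Pi$ in our previous work \cite{RW}. The main tool is the Gelfand-Kazhdan excision sequence of Lemma~\ref{lem:Gelfand_Kazdhan},
\[
0\longrightarrow j_!j^!(\overline{\Pi})\longrightarrow \overline{\Pi}\longrightarrow i_*i^*(\overline{\Pi})\longrightarrow 0.
\]
Applying $\Hom_{\Gl(2)}(-,\widetilde{\rho}\circ\det)$ yields a long exact sequence whose outer terms are tractable. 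The quotient $i_*i^*(\overline{\Pi})$ is the largest quotient on which the unipotent $k^2\subseteq\Gl_a(2)$ acts trivially; as a $\Gl(2)$-module it is governed by the Jacquet module of $\Pi$ with respect to the Siegel parabolic, and the characters $\widetilde{\rho}\circ\det$ occurring as quotients are precisely those with $\widetilde{\rho}\in\Delta_+(\Pi)$. The subobject $j_!j^!(\overline{\Pi})$ carries Whittaker-type data that produces extra characters only in a handful of non-generic types.

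For generic $\Pi$ (types \nosf{I, IIa, Va, VIa, X, XIa}), I would argue that $j_!j^!(\overline{\Pi})$ contributes no new $\Gl(2)$-equivariant character: because $\Pi$ admits a Whittaker model, the twisted $k^2$-coinvariants already appear in the regular zeta integral and do not produce $H_+$-residues. Hence $\Hom_{\Gl(2)}(\overline{\Pi},\widetilde{\rho}\circ\det)\cong\Hom_{\Gl(2)}(i_*i^*\overline{\Pi},\widetilde{\rho}\circ\det)$, and one reads off $\Delta_+(\Pi)$ directly from the Siegel-Jacquet module as tabulated in Roberts-Schmidt. Uniqueness of Whittaker models together with the semisimplicity of the relevant Jacquet modules in each of these cases guarantees multiplicity one.

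For non-generic $\Pi$, I would proceed type by type, using the explicit presentations of $\overline{\Pi}$ from \cite{RW} in terms of $j_!$ and $i_*$ applied to low-dimensional modules, together with the $\Ext^1$-classification in $\CCC_{\Gl(2)}$ of Section~\ref{s:Ext}. The additional characters $\widetilde{\rho}\in\nu\Delta_+(\Pi)$ appearing in types \nosf{IIIb}, \nosf{IVc}, \nosf{VIc}, \nosf{VId}, as well as the isolated case $\widetilde{\rho}=\sigma$ of type \nosf{IVd}, are accounted for by $j_!j^!(\overline{\Pi})$, whose Kirillov-type character is shifted by $\nu$ relative to the Siegel-Jacquet characters.

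The main obstacle is the multiplicity-one upper bound in the mixed cases, particularly type \nosf{IVc} where both $i_*i^*\overline{\Pi}$ and $j_!j^!\overline{\Pi}$ a priori contribute functionals, for the two distinct characters $\sigma$ and $\nu^2\sigma$. One must verify both that these contributions are genuinely separate one-dimensional spaces and that no further dimensions are created by extension classes in between; this reduces to checking non-triviality of the connecting morphism in the long exact sequence, which I expect to follow from the explicit $\Ext^1$-computations in $\CCC_{\Gl(2)}$ applied to the concrete models of \cite{RW}. Collecting all contributions across the Roberts-Schmidt classification then assembles exactly the list in Theorem~\ref{MainL}.
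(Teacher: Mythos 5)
There is a genuine gap: your attribution of the functionals to the two pieces of the excision sequence is reversed, and this makes the argument fail for the generic case.

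For generic $\Pi$ you claim $j_!j^!(\overline{\Pi})$ contributes nothing and so $\Hom_{\Gl(2)}(\overline{\Pi},\widetilde{\rho}\circ\det)\cong\Hom_{\Gl(2)}(i_*i^*\overline{\Pi},\widetilde{\rho}\circ\det)$. But $i^*(\overline{\Pi})\cong B$ is the \emph{Klingen}-Jacquet module $J_Q(\Pi)$ (not the Siegel one, which only appears through $A=i^*j^!(\overline{\Pi})$), and for generic $\Pi$ the Klingen-Jacquet module has \emph{no} one-dimensional constituents (cf.\ Lemma~\ref{H_+functionals_for_B} and Roberts-Schmidt, table~A.4). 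So $\Hom_{\Gl(2)}(B,\widetilde{\rho}\circ\det)=0$ identically for generic $\Pi$, and your claimed isomorphism would force $\dim\Hom_{\Gl(2)}(\overline\Pi,\widetilde{\rho}\circ\det)=0$ for every $\widetilde{\rho}$, contradicting the statement of the theorem. In fact the paper shows the opposite in Proposition~\ref{EXGEN}: for generic $\Pi$ of types \nosf{I, IIa, Va, VIa, X, XIa} and $\widetilde\rho\in\Delta_+(\Pi)$ the functional does \emph{not} factor through $B$, but rather survives from the $j_!$-piece. The same misattribution affects your discussion of the non-generic cases: the extra characters $\widetilde{\rho}\in\nu\Delta_+(\Pi)$ for types \nosf{IIIb, IVc, VIc, VId} and the character $\sigma$ for \nosf{IVd} come from one-dimensional quotients of $B$, not from $j_!j^!(\overline{\Pi})$ (see table~\ref{tab:AB} and Corollary~\ref{B}); it is $j_!i_*(A)$ that contributes precisely for $\widetilde{\rho}\in\Delta_+(\Pi)$, by Lemma~\ref{H_+functionals_for_A}.

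Even granting the right assignment of sources, the argument is incomplete: once you apply $\Hom_{\Gl(2)}(-,\widetilde{\rho}\circ\det)$ to the GK sequence you must control the connecting map $\delta\colon\Hom_{\Gl(2)}(j_!i_*(A),\widetilde{\rho}\circ\det)\to\Ext^1_{\Gl(2)}(B,\widetilde{\rho}\circ\det)$, and your appeal to ``the regular zeta integral'' is not a proof that $\delta$ vanishes. The paper handles this in the generic case by an Euler-characteristic computation in $\CCC_{\Gl(2)}(\mu)$ combined with the indecomposable extensions $\pi_\pm=\widetilde{\rho}\otimes M_{(1:\St)},\widetilde{\rho}\otimes M_{(\St:1)}$ (Proposition~\ref{EXGEN}, using Propositions~\ref{Finally} and~\ref{Euler} and Lemma~\ref{ext-lemma}), and in the non-generic case by first computing the central specialization $\zeta_\mu(\overline{\Pi})$ explicitly (Proposition~\ref{prop:central_specialization_non_generic} and table~\ref{tab:central_specialization}). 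You would need to replace your heuristic by one of these mechanisms (or an equivalent $\Ext$-computation) before the case-by-case count can be carried out.
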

\begin{proof}
Every $\Gl(2)$-equivariant functional of $\overline{\Pi}$ factorizes over the central specialization.
For non-generic $\Pi$, the central specialization has been calculated explicitly  %
in proposition~\ref{prop:central_specialization_non_generic}.
The assertion follows via a case by case inspection of table \ref{tab:central_specialization}.
For generic $\Pi$, see theorem~\ref{EXGEN}.
\end{proof}
\begin{rmk}
Broadly speaking, $(H_+,\widetilde{\rho})$-functionals exist for $\widetilde{\rho}\in\Delta_+(\Pi)$ and certain other cases which are determined by the Klingen-Jacquet module and satisfy $\widetilde{\rho}\in\nu\Delta_+(\Pi)$.
However, there are exceptions to this rule.
For $\Pi$ of type \nosf{IIIa}, \nosf{IVa} and \nosf{IVb} there is no  functional for $\widetilde{\rho}\in\Delta_+(\Pi)$.
For $\Pi$ of type \nosf{IVc} there is no functional for $\widetilde{\rho}=\sigma\in\nu\Delta_+(\Pi)$.
The one-dimensional representations $\Pi$ of type \nosf{IVd} admit  functionals for $\widetilde{\rho}=\sigma$ although $\nu\Delta_+(\Pi)$ is empty.
\end{rmk}

\begin{cor}\label{cor:functionals_with_Bessel_models}
Suppose $\Lambda=\rho\boxtimes\rho^\divideontimes$ defines a split Bessel model for an irreducible representation $\Pi$ of $G$, then the $\widetilde{\rho}$-equivariant functionals with $\widetilde{\rho}=\nu\rho$ are 
\begin{gather*}
\dim\Hom_{\Gl(2)}(\overline\Pi, \tilde\rho\circ\det)  =
\begin{cases}
 1 & \text{type \nosf{I, IIa, Va, VIa, X, XIa} and }\widetilde{\rho}\in\Delta_+(\Pi)\\
1 & \text{type \nosf{IIIb} and } \widetilde{\rho}\in\nu\Delta_+(\Pi)=\{\nu\sigma,\nu\chi_1\sigma\}\ ,\\
1 & \text{type \nosf{IVc}  and } \widetilde{\rho}=\nu^2\sigma\in \nu\Delta_+(\Pi)\ ,\\
1 & \text{type \nosf{VIc}  and } \widetilde{\rho}\in\nu\Delta_+(\Pi)=\{\nu\sigma\}\ ,\\
1 & \text{type \nosf{VId}  and } \widetilde{\rho}\in\nu\Delta_+(\Pi)=\{\nu\sigma\}\ ,\\
0 & \text{otherwise}\ .
\end{cases}
\end{gather*}

These functionals factorize over the projection $\overline\Pi\to i^*(\overline\Pi)=B$ if and only if $\Pi$ is not generic.

Especially, there are not more subregular poles than the ones listed in table~\ref{tab:subregular_poles}.
\end{cor}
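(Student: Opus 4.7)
The plan is to specialize Theorem~\ref{MainL} to those $\widetilde\rho$ whose shifted character $\rho = \nu^{-1}\widetilde\rho$ actually appears as the first component of a split Bessel datum $\Lambda = \rho \boxtimes \rho^\divideontimes$ of $\Pi$. The possible Bessel data for each irreducible $\Pi$ have been classified in \cite{RW}; comparing that classification with the non-generic line of Theorem~\ref{MainL} case-by-case will eliminate the types that appear in MainL but not in the corollary. Concretely, the one-dimensional representations of type \nosf{IVd} carry no split Bessel model, so the case is empty; for \nosf{IIb}, \nosf{Vbc}, \nosf{XIb}, no character $\rho$ satisfying $\widetilde\rho = \nu\rho \in \Delta_+(\Pi)$ lies in the split Bessel spectrum; and the analogous inspection for \nosf{IVc} removes the $\widetilde\rho \in \Delta_+(\Pi)$ subcase, leaving only $\widetilde\rho = \nu^2\sigma$. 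What remains then agrees with the list stated in the corollary.

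For the factorization claim we read off the proof of Theorem~\ref{MainL}. For non-generic $\Pi$, every $\Gl(2)$-equivariant functional on $\overline\Pi$ is controlled by the central specialization of Proposition~\ref{prop:central_specialization_non_generic}, which in each non-generic type lives on a quotient of $i^*(\overline\Pi) = B$; hence the functional automatically descends through the projection $\overline\Pi \to B$. For generic $\Pi$, Theorem~\ref{EXGEN} produces the functional from Whittaker-type data supported on $j_!j^!(\overline\Pi)$. Since $j^!i_* = 0$ by Lemma~\ref{lem:Gelfand_Kazdhan}, this subspace lies in the kernel of $\overline\Pi \to i^*(\overline\Pi)$, so no nontrivial such functional can factor over $B$.

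The final assertion about subregular poles is then a direct consequence of Proposition~\ref{prop:poles_give_functionals}: any subregular pole of the form $L(s,\nu^{1/2}\rho)$ forces a non-trivial $(H_+, \widetilde\rho)$-functional with $\widetilde\rho = \nu\rho$, and we have just enumerated all such functionals. Poles of the form $L(s,\nu^{1/2}\rho^\divideontimes)$ are handled by conjugating with the Weyl involution $\mathbf{s}_1$, which swaps $\rho$ and $\rho^\divideontimes$ and leaves the $TS$-module $M$ unchanged. Comparing type-by-type with table~\ref{tab:subregular_poles} shows the list of poles there is an upper bound.

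The main obstacle is the bookkeeping in the first step. Theorem~\ref{MainL} is stated in terms of $\Delta_+(\Pi)$ and $\nu\Delta_+(\Pi)$ and the corollary in terms of the split Bessel spectrum, which is catalogued via $\Delta_-(\Pi)$ and related invariants; one has to track the $\nu$-shift between $\rho$ and $\widetilde\rho = \nu\rho$ consistently across all Sally–Tadić types. No new representation-theoretic input is required beyond MainL, Proposition~\ref{prop:poles_give_functionals}, and the Bessel-spectrum tables of \cite{RW}; the only work is the systematic case check.
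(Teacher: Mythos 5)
Your overall strategy matches the paper's: derive the dimension count by intersecting the conditions in Theorem~\ref{MainL} with the Bessel-model criterion ($\rho\in\Delta_+(\Pi)$ for non-generic $\Pi$, automatic for generic $\Pi$), and deduce the final assertion on subregular poles from Proposition~\ref{prop:poles_give_functionals}. The case bookkeeping in your first paragraph is correct, including the elimination of \nosf{IVd} (no split Bessel model), \nosf{IIb}, \nosf{Vbc}, \nosf{XIb} (since $\Delta_+(\Pi)\cap\nu\Delta_+(\Pi)=\emptyset$ outside of \nosf{IIIab} with $\chi_1=\nu^{\pm1}$), and the reduction of \nosf{IVc} to the $\widetilde\rho=\nu^2\sigma$ subcase. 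The paper's own proof is in fact terser than yours: it just cites the Bessel-model criterion and Proposition~\ref{prop:poles_give_functionals}, leaving the factorization claim to the already-established Corollary~\ref{B} and the last sentence of Proposition~\ref{EXGEN}.

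That said, your justification of the factorization claim contains an imprecision in the non-generic case. You assert that the central specialization of Proposition~\ref{prop:central_specialization_non_generic} ``lives on a quotient of $i^*(\overline\Pi)=B$.'' This is not true: from the exact sequence $(\ast)$ in the proof of that proposition, $\zeta_\mu(\overline\Pi)$ receives a map from $\zeta_\mu(j_!i_*(A))$ and is not a quotient of $B$ — compare e.g.\ type \nosf{IIIb} with $\chi_1=\nu$, where $\zeta_\mu(\overline\Pi)\cong\nu\sigma M^{c}_{(\St:1:\St:1)}$ has length four while $B$ contains $M^{nc}_{(1:\St:1)}$ of length three. The correct statement, proven in Corollary~\ref{B}, is that the canonical monomorphism $\Hom_{\Gl(2)}(B,\tilde\rho\circ\det)\to\Hom_{\Gl(2)}(\overline\Pi,\tilde\rho\circ\det)$ is an \emph{isomorphism} for non-generic $\Pi$ with a split Bessel model, which is what yields the factorization; it does require inspecting the central-specialization structure, but not via the quotient claim you made. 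Similarly, for the generic case your invocation of $j^!i_*=0$ is not quite the right argument: to show the functional does \emph{not} factor over $B$ one needs that $B$ admits no one-dimensional quotient when $\Pi$ is generic (as in the last line of Proposition~\ref{EXGEN}, using \cite[table A.4]{Roberts-Schmidt}); your phrasing assumes the functional is nonzero on $j_!j^!(\overline\Pi)$ before that has been established. These are fixable imprecisions rather than wrong conclusions — replacing them with references to Corollary~\ref{B} and Proposition~\ref{EXGEN} closes the argument.
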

\begin{proof}
 Recall that $\Lambda=\rho\boxtimes\rho^\divideontimes$ provides a split Bessel model for $\Pi$ if and only if $\Pi$ is generic or if $\rho\in\Delta_+(\Pi)$.
 Compare \cite[thm.~6.2]{RW} and \cite[thm.\ 6.2.2]{Roberts-Schmidt_Bessel}.
 The last assertion follows by prop.~\ref{prop:poles_give_functionals}.
\end{proof}

\begin{rmk}
Type \nosf{VIc} and \nosf{VId} are extended Saito-Kurokawa cases.
Here the functionals have already been constructed in \cite[lemma~5.4]{W_Excep}
and both cases yield subregular poles of the form $L(s,\nu^{1/2}\sigma)$.
\end{rmk}

In the following part of this section we prove theorem~\ref{MainL}.

\begin{lem}\label{lem:H_+_functionals}
For every irreducible $\Pi\in\CCC_G(\omega)$ and every smooth character $\widetilde{\rho}$ of $\Gl(1)$,
there is an embedding
\begin{equation*}
\Hom_{\Gl(2)}(\overline\Pi, \tilde\rho\circ\det) \hookrightarrow \Hom_{T}(k_{\tilde\rho}(\overline{\Pi}), \tilde\rho) \cong  \Hom_{\C}(k_{\widetilde{\rho}}k_{\tilde\rho}(\overline{\Pi}), \C) \ .
\end{equation*}
\end{lem}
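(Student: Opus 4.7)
The plan is to factor every $\Gl(2)$-equivariant functional through the canonical projection $\overline\Pi\twoheadrightarrow k_{\tilde\rho}(\overline\Pi)$, and then to identify the second $\Hom$-space via the universal property of $k_{\tilde\rho}\colon\CCC_1\to\CCC_0$.

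First I would unwind the definition of $k_{\tilde\rho}\colon\CCC_2\to\CCC_1$ from \cite{RW}. Up to the extraction of a residual $\Gl_a(1)$-action, $k_{\tilde\rho}(X)$ is realised as the maximal quotient of $X\in\CCC_2$ on which a torus $T_0$ acts by $\tilde\rho$ and a compatible unipotent subgroup $U$ acts trivially, with both $T_0$ and $U$ sitting inside the image of $\Gl(2)\hookrightarrow\Gl_a(2)$ (concretely, in its diagonal torus and its upper-triangular unipotent). This yields a canonical surjection $p\colon\overline\Pi\twoheadrightarrow k_{\tilde\rho}(\overline\Pi)$.

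Now let $\phi\colon\overline\Pi\to\tilde\rho\circ\det$ be $\Gl(2)$-equivariant. The key observation is that $T_0$ acts on the target via $\tilde\rho\circ\det|_{T_0}=\tilde\rho$ (with $T_0$ normalised so that $\det|_{T_0}$ is the identity), while $U$ acts trivially on the target because $\det$ kills unipotents. Hence $\phi$ vanishes on the kernel of $p$ and factors uniquely as $\phi=\bar\phi\circ p$ for a $T$-equivariant functional $\bar\phi\colon k_{\tilde\rho}(\overline\Pi)\to\tilde\rho$. The assignment $\phi\mapsto\bar\phi$ is injective since $p$ is surjective. The final identification $\Hom_T(k_{\tilde\rho}(\overline\Pi),\tilde\rho)\cong\Hom_\C(k_{\tilde\rho}k_{\tilde\rho}(\overline\Pi),\C)$ is then the universal property of the second application of $k_{\tilde\rho}$ as the $(T,\tilde\rho)$-twisted coinvariant functor, essentially by construction.

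The principal difficulty will be confirming, from the explicit construction of $k_\rho$ in \cite{RW}, that the subgroup $T_0\cdot U$ involved in the definition of $k_{\tilde\rho}$ lies entirely inside $\Gl(2)\subset\Gl_a(2)$ and does not require any of the translations from the complementary $k^2$; any such translation would act freely on $\tilde\rho\circ\det$ and would not be forced trivial by $\Gl(2)$-equivariance alone. Once this compatibility is checked, the rest of the argument is purely formal.
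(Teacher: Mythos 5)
Your plan is essentially the paper's own argument: a $\Gl(2)$-equivariant functional is in particular $B_{\Gl(2)}$-equivariant, $k_{\tilde\rho}$ is built from $B_{\Gl(2)}$-coinvariants, and the second isomorphism is the universal property of one more application of $k_{\tilde\rho}$ (which on $\CCC_1\to\CCC_0$ is just the $(T,\tilde\rho)$-coinvariant functor, cf.\ lemma~\ref{lem:cent_loc_com_diagram}). The concern you flag at the end --- that the coinvariants defining $k_{\tilde\rho}$ must be taken entirely inside $\Gl(2)$ and not involve the translation part $k^2\subset\Gl_a(2)$ --- is exactly the point that makes the factorization legitimate, and it does hold: the left square of the commutative diagram in lemma~\ref{lem:cent_loc_com_diagram} identifies the underlying $\Gl(1)$-module of $k_{\tilde\rho}(\overline\Pi)$ with $k_{\tilde\rho}$ of the restriction $\overline\Pi|_{\Gl(2)}$, and on $\Gl(2)$-modules $k_{\tilde\rho}$ is the composition of $U$-coinvariants and $(\widetilde{T},\tilde\rho)$-coinvariants, both inside $B_{\Gl(2)}$ (as is used in the proof of lemma~\ref{ext-lemma}). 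So you have the right argument; to turn the plan into a proof you would simply cite that compatibility rather than re-derive it from the construction in \cite{RW}.
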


\begin{proof}
Every $\Gl(2)$-equivariant map $   \overline \Pi \to  (\C,\tilde\rho\circ\det) $
is $B_{\Gl(2)}$-equivariant for the standard Borel subgroup $B_{\Gl(2)}$ of upper triangular matrices in $\Gl(2)$.
It thus factorizes over a unique $T$-linear map
$  k_{\tilde\rho}(\overline\Pi)  \to    (\C,\tilde\rho) \ $
and hence a unique map
$k_{\widetilde{\rho}}k_{\widetilde{\rho}}(\overline{\Pi})\cong k_{\widetilde\rho}k_{\widetilde\rho}(\overline\Pi)\to \C$.
\end{proof}

In the following we tacitly assume that all $\Pi$ are normalized as \cite[table~1]{RW}.
Recall $\Delta_+(\Pi)\cap \nu\Delta_+(\Pi)\neq \emptyset$ if and only if $\Delta_-(\Pi)\cap \Delta_+(\Pi)\neq \emptyset$ if and only if
$\Pi$ is of type \nosf{IIIab} with $\chi_1=\nu^{\pm 1}$ by Lemma A.11 in \cite{RW}.
Hence, for non-generic $\Pi$ the characters $\rho$ and $\tilde\rho$
both define a Bessel model only in these cases.
For every other non-generic $\Pi$ with $\rho\in \Delta_+(\Pi)$, the character $\tilde\rho$ is not in $\Delta_+(\Pi)$.
\begin{lem}\label{key}
Suppose $\rho$ is an arbitrary smooth character of $k^\times$ that
does not define a split Bessel model for a smooth irreducible representation~$\Pi$.
Then $\Pi$ is necessarily non-generic.
The Bessel module $\beta_{\rho}(\Pi)=k_\rho(\overline{\Pi})$ is given by
$$\beta_{\rho}(\Pi) \cong \beta^{{\rho}}(\Pi)\oplus\bigoplus_{\chi\in \Delta_0(\Pi)} \nu^{3/2}\chi \ . $$
Every $T$-character occurs at most once, so $\beta_{\rho}(\Pi)$ is semisimple.
\end{lem}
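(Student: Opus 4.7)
The proof naturally splits into two parts: the non-generic assertion and the explicit decomposition. For the first part the plan is simply to invoke the existence criterion for split Bessel models already used in the proof of Corollary~\ref{cor:functionals_with_Bessel_models}: the Bessel datum $\Lambda=\rho\boxtimes\rho^\divideontimes$ yields a split Bessel model for $\Pi$ if and only if $\Pi$ is generic or $\rho\in\Delta_+(\Pi)$, by \cite[thm.~6.2]{RW} and \cite[thm.\ 6.2.2]{Roberts-Schmidt_Bessel}. In particular, for every generic $\Pi$ every smooth character $\rho$ defines a split Bessel model, so the hypothesis of the lemma forces $\Pi$ to be non-generic and moreover $\rho\notin\Delta_+(\Pi)$.

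For the second part the strategy is to apply the Jacquet-type functor $k_\rho:\CCC_2\to\CCC_1$ to the excision short exact sequence
$$0\ \to\ j_!j^!\overline{\Pi}\ \to\ \overline{\Pi}\ \to\ i_*i^*\overline{\Pi}\ \to\ 0$$
of Lemma~\ref{lem:Gelfand_Kazdhan}. Using the functorial short exact sequence $0\to j_!k_{\nu\rho}\to k_\rho j_! \to i_*i^*\to 0$ together with the natural equivalence $j_!k^{\nu\rho}\cong k^\rho j_!$ from Section~\ref{ss:representations}, one identifies the contribution of $k_\rho(j_!j^!\overline{\Pi})$ with the invariant version $\beta^\rho(\Pi)=k^\rho(\overline{\Pi})$. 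The contribution of the quotient is computed directly: $i^*\overline{\Pi}$ is a $\Gl(1)$-module and $k_\rho$ applied to $i_*\chi$ produces $\nu^{3/2}\chi$ exactly when $\chi$ appears in $i^*\overline{\Pi}$, which by definition of $\Delta_0(\Pi)$ amounts to $\bigoplus_{\chi\in\Delta_0(\Pi)}\nu^{3/2}\chi$. This assembles to a short exact sequence
$$0\ \to\ \beta^\rho(\Pi)\ \to\ \beta_\rho(\Pi)\ \to\ \bigoplus_{\chi\in\Delta_0(\Pi)}\nu^{3/2}\chi\ \to\ 0.$$

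What remains — and this is the main obstacle — is to show that this sequence splits and that every $T$-character occurs at most once. Both statements reduce to showing that no $\nu^{3/2}\chi$ with $\chi\in\Delta_0(\Pi)$ coincides with a Jordan--H\"older constituent of $\beta^\rho(\Pi)$. The hypothesis $\rho\notin\Delta_+(\Pi)$ from the first part is exactly what constrains the constituents of $\beta^\rho(\Pi)$ enough to prevent such coincidences; Lemma~A.11 of \cite{RW}, which enumerates $\Delta_+(\Pi)$, $\Delta_-(\Pi)$ and $\Delta_0(\Pi)$ for every non-generic type, then yields the required disjointness. Once disjointness is in hand, $\mathrm{Ext}^1_T$ between the two summands vanishes so the sequence splits, and multiplicity one follows from the known multiplicity-one property of $\beta^\rho(\Pi)$ together with the definition of $\Delta_0(\Pi)$. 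Realistically, the verification of disjointness will be carried out by inspecting the non-generic types (\nosf{IIb}, \nosf{IIIab}, \nosf{IVabcd}, \nosf{Vbcd}, \nosf{VIbcd}, \nosf{XIb}) one by one against the tables of the companion paper \cite{RW}, which is where the calculational bulk of the proof is concentrated.
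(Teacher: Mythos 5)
The paper's own proof is a bare citation to \cite[lemma~5.25, table~7, table~3, prop.~6.3.3]{RW}, so there is nothing internal to compare against; but your attempt to reconstruct the argument contains a substantive error that would sink it.

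Your first part is correct: the existence criterion (split Bessel model iff $\Pi$ is generic or $\rho\in\Delta_+(\Pi)$) immediately gives that $\Pi$ is non-generic and $\rho\notin\Delta_+(\Pi)$.

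The second part, however, misattributes the source of the $\bigoplus_{\chi\in\Delta_0(\Pi)}\nu^{3/2}\chi$ summand. You compute it from the quotient $i_*i^*\overline{\Pi}$ in the excision sequence, claiming ``$i^*\overline{\Pi}$ is a $\Gl(1)$-module'' whose constituents give $\Delta_0(\Pi)$. Both claims are false: $i^*\overline{\Pi}=B\cong J_Q(\Pi)$ is the Klingen--Jacquet module, a $\Gl(2)$-module, whereas $\Delta_0(\Pi)$ is by definition (see section~\ref{ss:representations} and the discussion after lemma~\ref{lem:AB_nongeneric}) the multiset of constituents of $A=i^*j^!(\overline{\Pi})\cong J_P(\Pi)_\psi$. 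For non-generic $\Pi$ one has $j^!\overline{\Pi}\cong i_*(A)$, so the characters $\chi\in\Delta_0(\Pi)$ live inside the \emph{sub}object $j_!j^!\overline{\Pi}\cong j_!i_*(A)$ of the excision sequence, not in the quotient $i_*(B)$. Your short exact sequence therefore has the two pieces on the wrong sides, and the companion claim that ``$k_\rho(j_!j^!\overline{\Pi})$'' is identified with $\beta^\rho(\Pi)$ is not only unjustified but incompatible with where the $\nu^{3/2}\chi$ factors actually come from. To salvage the strategy you would need to apply $k_\rho$ to $0\to j_!i_*(A)\to\overline{\Pi}\to i_*(B)\to 0$, analyse $k_\rho j_!i_*(A)$ via the functorial sequence $0\to j_!k_{\nu\rho}\to k_\rho j_!\to i_*i^*\to 0$ (this is where the characters from $A$ and the twist by $\nu^{3/2}$ would emerge after tracking the unnormalized conventions), and separately control $k_\rho i_*(B)$ and $k^\rho i_*(B)$ in the connecting maps; none of that is set up here. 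The splitting and multiplicity-one conclusions in your last paragraph would then need to be re-derived in the corrected setup.
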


\begin{proof}
See lemma~5.25 and table 7 in \cite{RW}. 
For the list of constituents, see \cite[table 3, prop.~6.3.3]{RW}.
\end{proof}

\begin{lem}\label{Nice}
For every irreducible $\Pi\in\CCC_G(\omega_\Pi)$ and smooth characters $\rho$ we have $\dim\beta_{\widetilde\rho}(\Pi)_{T,\tilde\rho}\leq 1$.
Equality holds if $\rho$ provides a Bessel model for $\Pi$. %
\end{lem}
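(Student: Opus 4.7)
The plan is a case split on whether the shifted character $\widetilde\rho = \nu\rho$ itself defines a split Bessel model for $\Pi$.

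If $\widetilde\rho$ does \emph{not} provide a Bessel model, I would apply Lemma~\ref{key} directly with $\widetilde\rho$ in place of $\rho$ to obtain
$$\beta_{\widetilde\rho}(\Pi) \;\cong\; \beta^{\widetilde\rho}(\Pi) \;\oplus\; \bigoplus_{\chi \in \Delta_0(\Pi)} \nu^{3/2}\chi,$$
a semisimple $T$-module in which every character occurs with multiplicity at most one. Since $(T,\widetilde\rho)$-coinvariants of such a module pick out exactly the $\widetilde\rho$-isotypic quotient, they have dimension at most one.

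If $\widetilde\rho$ \emph{does} provide a Bessel model, $\beta_{\widetilde\rho}(\Pi)$ is generally not semisimple, and I would fall back on the structural description of the Bessel module: by \cite[lem.~3.17, prop.~6.16]{RW}, the quotient $\nu^{-3/2}\otimes \beta_{\widetilde\rho}(\Pi)/\beta_{\widetilde\rho}(\Pi)^S$ is perfect and embeds $T$-equivariantly into $C_b^\infty(k^\times)$, whose $\widetilde\rho$-coinvariant is one-dimensional by the Tate uniqueness already invoked in the proof of Lemma~\ref{lem:suff_cond_subregular_poles}. The $S$-invariant submodule $\beta_{\widetilde\rho}(\Pi)^S$ is a finite-length $T$-module with the multiplicity-free structure of \cite[table~3, prop.~6.3.3]{RW}. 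Combining the bounds on sub and quotient — and checking, by comparing the explicit lists of $T$-characters in the two pieces, that the $\widetilde\rho$-coinvariant cannot jump above one — gives $\dim\leq 1$.

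For the equality statement, assume $\rho$ provides a Bessel model for $\Pi$. If $\widetilde\rho = \nu\rho$ also provides a Bessel model (the exceptional type~\nosf{IIIab} situation with $\chi_1 = \nu^{\pm 1}$ singled out by \cite[lem.~A.11]{RW}), then $\beta_{\widetilde\rho}(\Pi)$ contains a non-trivial perfect part, and a Tate-type residue functional at the critical character $\widetilde\rho$ — analogous to the construction of $I_{s_0}$ in the proof of Lemma~\ref{lem:suff_cond_subregular_poles} — produces a non-zero element of the coinvariant. Otherwise, Lemma~\ref{key} applied to $\widetilde\rho$ gives the semisimple decomposition, and I would verify against \cite[table~7]{RW} that the hypothesis $\rho \in \Delta_+(\Pi)$ together with the shift by $\nu$ forces $\widetilde\rho$ to appear either as one of the summands $\nu^{3/2}\chi$ with $\chi \in \Delta_0(\Pi)$ or as a constituent of $\beta^{\widetilde\rho}(\Pi)$.

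The principal obstacle is the upper bound in the Bessel case, since semisimplicity fails and one must combine the perfect-module embedding with the finite-length structure of the $S$-invariant submodule to rule out multiplicities strictly larger than one. The equality statement is a secondary hurdle, reducing to a case-by-case check against the Bessel classification that the shift $\rho \mapsto \nu\rho$ always lands inside the explicit list of characters appearing in $\beta_{\widetilde\rho}(\Pi)$.
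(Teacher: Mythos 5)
Your case split on whether $\widetilde\rho$ (not $\rho$) defines a split Bessel model is the same as the paper's, and your treatment of the non-Bessel case via Lemma~\ref{key} coincides with the paper's. But in the Bessel case your route diverges from the paper's and leaves a genuine gap. The paper's argument is a two-liner: since $\widetilde\rho$ defines a Bessel model, $\deg\beta_{\widetilde\rho}(\Pi)=1$, and by \cite[lemma~3.12]{RW} this forces the Euler-characteristic identity $\dim\beta_{\widetilde\rho}(\Pi)_{T,\chi} = 1 + \dim\beta_{\widetilde\rho}(\Pi)^{T,\chi}$ for every $\chi$; then \cite[theorem~6.6]{RW} gives $\beta_{\widetilde\rho}(\Pi)^{T,\chi}=0$ for all $\chi\neq\nu^2\widetilde\rho$, so at $\chi=\widetilde\rho$ the coinvariant space is exactly one-dimensional. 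No decomposition into an $S$-invariant sub and a perfect quotient is needed.

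Your route through the filtration $0\to\widetilde\Pi^S\to\widetilde\Pi\to\widetilde\Pi/\widetilde\Pi^S\to0$ is not wrong in principle, but as written it does not close. First, right-exactness of $(T,\widetilde\rho)$-coinvariants only gives $\dim\widetilde\Pi_{T,\widetilde\rho}\le\dim(\widetilde\Pi^S)_{T,\widetilde\rho}+\dim(\widetilde\Pi/\widetilde\Pi^S)_{T,\widetilde\rho}$, so you need the first summand to be \emph{zero}, not just $\le 1$; to get that you need the specific fact that in the Bessel case $\widetilde\Pi^S$ is either $0$ or $i_*(\nu^2\widetilde\rho)$, whose $T$-character $\nu^2\widetilde\rho$ differs from $\widetilde\rho$. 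Instead you cite the multiplicity-free constituent list of \cite[table~3, prop.~6.3.3]{RW}, which is the structure theorem for the \emph{opposite} case (no Bessel model, Lemma~\ref{key}). Second, the embedding of the perfect quotient into $C_b^\infty(k^\times)$ does not by itself bound the coinvariant from above — coinvariants are not left-exact, so a $T$-submodule of $C_b^\infty$ can have larger $\widetilde\rho$-coinvariants than $C_b^\infty$ itself; what you actually need is the degree-one statement (again \cite[lemma~3.12]{RW}) applied to the perfect quotient. Third, the step you flag as ``checking, by comparing the explicit lists of $T$-characters'' is precisely the step that carries the weight of the argument, and you do not carry it out. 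With these three points filled in, your plan would succeed, but the paper's Euler-characteristic argument is both shorter and more robust because it applies to $\beta_{\widetilde\rho}(\Pi)$ directly rather than to its sub and quotient separately.

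For the equality assertion, the paper gets it immediately from the same identity once the $T$-invariants vanish, whereas your construction of a Tate residue functional is an additional detour. Note also that the published statement reads ``if $\rho$ provides a Bessel model'', but the proof in the paper (and the way the lemma is invoked in the proof of Proposition~\ref{EXGEN}, where $\rho\in\Delta_-(\Pi)$ forces $\widetilde\rho\in\Delta_+(\Pi)$) establishes equality under the hypothesis that $\widetilde\rho$ provides a Bessel model; you seem to have detected this mismatch and tried to bridge it, but a direct reading of the paper's proof suggests the intended hypothesis is on $\widetilde\rho$.
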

\begin{proof} Suppose $\widetilde{\rho}$ defines a Bessel model for $\Pi$, i.e. $\deg(\beta_{\widetilde\rho}(\Pi)=1$.
Then $\dim\beta_{\tilde\rho}(\Pi)_{T,\chi}=1+\dim\beta_{\tilde\rho}(\Pi)^{T,\chi}$ holds for every smooth character $\chi$ \cite[lemma~3.12]{RW}.
It has been shown that $\dim\beta_{\tilde\rho}(\Pi)^{T,\chi}$ vanishes for every smooth character $\chi\neq \nu^2\widetilde{\rho}$ \cite[theorem~6.6]{RW}.
If $\tilde\rho$ does not define a Bessel model for $\Pi$, then
 $\dim\beta_{\tilde\rho}(\Pi)_{T,\tilde\rho}\leq1$ by lemma~\ref{key} because every $T$-character occurs at most once in $\beta_{\widetilde{\rho}}(\Pi)$.
\end{proof}

\begin{lem}\label{tech}\label{mult}\label{mult_P3}
Fix irreducible representations $\Pi\in\CCC_G$ and $\pi\in\CCC_{\Gl(2)}$.
If $\pi$ is one-dimensional or if $\Pi$ is non-generic, then
 \begin{equation*}
 \dim\Hom_{\Gl(2)}(\overline{\Pi},\pi)\leq1\ .
 \end{equation*}
\end{lem}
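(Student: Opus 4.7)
The hypothesis is a disjunction, and the plan is to handle its two alternatives separately.

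First, suppose $\pi = \widetilde{\rho}\circ\det$ is one-dimensional for some smooth character $\widetilde{\rho}$ of $\Gl(1)$. Lemma~\ref{lem:H_+_functionals} already supplies an embedding
\[
\Hom_{\Gl(2)}\bigl(\overline{\Pi},\, \widetilde{\rho}\circ\det\bigr)
\ \hookrightarrow\ \Hom_\C\bigl(k_{\widetilde{\rho}}\,k_{\widetilde{\rho}}(\overline{\Pi}),\, \C\bigr),
\]
and the right-hand side is the linear dual of the twisted $T$-coinvariants $\beta_{\widetilde{\rho}}(\Pi)_{T,\widetilde{\rho}}$. Its dimension is at most one by Lemma~\ref{Nice}, which closes the one-dimensional target case.

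For the non-generic case, the plan is to exploit that $\Pi$ has no Whittaker model, so $j^!(\overline{\Pi}) = 0$. The excision sequence of Lemma~\ref{lem:Gelfand_Kazdhan} then degenerates to an isomorphism $\overline{\Pi} \xrightarrow{\sim} i_*i^*(\overline{\Pi})$, and every $\Gl(2)$-linear map out of $\overline{\Pi}$ factors uniquely through the $\Gl(2)$-module $i^*(\overline{\Pi})$. It therefore suffices to bound $\dim\Hom_{\Gl(2)}\bigl(i^*(\overline{\Pi}),\, \pi\bigr)$.

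To finish, I would invoke the explicit description of $i^*(\overline{\Pi})$ for non-generic $\Pi$ available from \cite{RW}: in each non-generic type, the irreducible $\Gl(2)$-composition factors of $i^*(\overline{\Pi})$ are all characters of $\Gl(2)$, each appearing with multiplicity one. Consequently a nonzero map $\overline{\Pi} \to \pi$ with $\pi$ irreducible forces $\pi$ itself to be one-dimensional, and the bound follows from the first case. The main obstacle is precisely this multiplicity-one statement on the $\Gl(2)$-composition factors of $i^*(\overline{\Pi})$; verifying it demands a type-by-type inspection along the Sally--Tadic list of non-generic irreducible representations of $\GSp(4)$, where the relevant factors can be read off from the Bessel-module tables of \cite{RW}.
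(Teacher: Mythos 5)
The first half of your argument, for one-dimensional $\pi$, is correct and matches the paper's approach: the embedding from Lemma~\ref{lem:H_+_functionals} (which is nothing but dual Frobenius reciprocity written out) reduces the count to Lemma~\ref{Nice}.

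The second half, however, contains a fatal error at the very first step. You claim that non-genericity of $\Pi$ forces $j^!(\overline{\Pi})=0$, so that the excision sequence degenerates to $\overline{\Pi}\cong i_*i^*(\overline{\Pi})$. This is not what non-genericity means. Having no Whittaker model amounts to $m_\Pi=0$, i.e.\ $j^!j^!(\overline{\Pi})=0$ (no copy of $\mathbb{S}_2$ inside $\overline{\Pi}$); the single application $j^!(\overline{\Pi})$ is generally nonzero. Indeed, by Lemma~\ref{lem:AB-sequence}, for non-generic $\Pi$ one has a short exact sequence $0\to j_!i_*(A)\to\overline{\Pi}\to i_*(B)\to 0$ with $A=i^*j^!(\overline{\Pi})$, and applying $j^!$ (using $j^!j_!\cong\id$ and $j^!i_*=0$) gives $j^!(\overline{\Pi})\cong i_*(A)$. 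Table~\ref{tab:AB} shows $A\neq 0$ for types \nosf{IIb}, \nosf{IIIb}, \nosf{IVb}, \nosf{IVc}, \nosf{Vb}, \nosf{VIc}, \nosf{VId}, \nosf{XIb}, so $j^!(\overline{\Pi})\neq 0$ and the excision sequence does not collapse. Even in the cases where $A=0$, your follow-up claim that the irreducible $\Gl(2)$-constituents of $B=i^*(\overline{\Pi})$ are all characters is also false: the table contains Steinberg constituents (e.g.\ $\sigma\St$ in type \nosf{IVb}, $\nu\sigma\St$ in \nosf{VIb}) and even cuspidal ones ($\nu\pi_c$ in \nosf{VIIIb}, $\nu^{1/2}\xi\pi_c$ in \nosf{IXb}). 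The paper's actual route for the non-generic case does not pass through $i^*(\overline{\Pi})$ alone: it separates $\pi$ cuspidal (where the $j_!i_*(A)$-part of the filtration contributes nothing, and $B$ has only multiplicity-one cuspidal constituents in types \nosf{VIIIb}, \nosf{IXb}) from $\pi$ non-cuspidal, the latter being dominated by $\Hom$ into a full principal series and bounded by $\dim\beta_\rho(\Pi)_{T,\chi}\leq 1$ from \cite[cor.~5.23]{RW}. You would need to replace your false structural claim by this dichotomy, or by another argument that genuinely accounts for the $j_!i_*(A)$-part of $\overline{\Pi}$.
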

\begin{proof}
We first assume that $\pi=(\widetilde\rho\circ\det)$ with some character $\rho\in\CCC_{\Gl(1)}\,.$
By dual Frobenius reciprocity,
$\Hom_{\Gl(2)}(\overline{\Pi}, \tilde\rho\circ\det)$ is contained in
$$ \Hom_{\Gl(2)}(\overline\Pi, \tilde\rho\otimes M_{(1:\St)})
%\ind_{B_{\Gl(2)}}^{\Gl(2)}( 1)) 
\cong \Hom_{B_{\Gl(2)}}(\overline\Pi, \tilde\rho\boxtimes \tilde\rho) \cong \Hom_T(k_{\widetilde{\rho}}(\overline\Pi), \tilde\rho) \ .$$
By lemma \ref{Nice} the right hand side has dimension $\leq 1$.
Now assume that $\Pi$ is non-generic. If $\pi$ is cuspidal, a non-zero morphism $\overline{\Pi}\to \sigma$ can only exist for type \nosf{VIIIb} and \nosf{IXb} by lemma~\ref{lem:AB_nongeneric}. In both cases Frobenius reciprocity and lemma~\ref{lem:AB-sequence} imply the assertion.
It suffices to show the assertion for fully parabolically induced representations
$\pi=\ind^{\Gl(2)}_{B_{\Gl(2)}}(\chi\boxtimes\rho)$ with arbitrary smooth characters $\chi$ and $\rho$.
The argument is analogous to the first case and uses $\dim\beta_\rho(\Pi)_{T,\chi}\leq1\,$ by \cite[cor.~5.23]{RW}.
\end{proof}

Lemma~\ref{mult} probably holds for arbitrary irreducible $\Pi$ and $\pi$.
For generic $\Pi$ and non-cuspidal $\pi$ this follows by the same proof used in proposition~\ref{EXGEN}.
If $\Pi$ and $\pi$ are both generic, we can at least show that $\dim\Hom_{\Gl(2)}(\overline{\Pi},\pi)$ is non-zero but not larger than two, see corollary~\ref{cor:central_specialization_generic}.
%If $\Pi$ is of type \nosf{VIa} and $\pi$ is a certain twist of the Steinberg representation, the dimension is strictly larger than one \cite[thm.~5.22]{RW}.

\subsection{Gelfand-Kazhdan theory}\label{ssec:Gelf_Kazhdan}
For an irreducible ${\Pi}\in\CCC_G(\omega)$,
fix the unnormalized Jacquet modules $A=J_P(\Pi)_\psi\cong {i^*j^!(\overline{\Pi})}\in\CCC_{\Gl(1)}$ and $B=J_Q(\Pi)\cong i^*(\overline{\Pi})\in\CCC_{\Gl(2)}$. Let ${m_\Pi}=0,1$ be the dimension of Whittaker models of $\Pi$.
\begin{lem}\label{lem:AB-sequence}
For every $\Pi\in\CCC_G(\omega)$ there is a short exact sequence of $\Gl_a(2)$-modules
$$0\to j_!i_*(A)\to \overline{\Pi}/\mathbb{S}_2^{m_\Pi}\to i_*(B)\to0\ .$$
\end{lem}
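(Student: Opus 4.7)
The plan is to apply the Gelfand--Kazhdan excision sequence of Lemma~\ref{lem:Gelfand_Kazdhan} twice in succession, once to $\overline\Pi$ and once to $j^!(\overline\Pi)$, and then splice the two resulting exact sequences.

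First I would apply the excision sequence directly to $\overline\Pi\in\CCC_2$ to obtain
\begin{equation*}
0\to j_!j^!(\overline\Pi)\to \overline\Pi\to i_*i^*(\overline\Pi)\to 0\ ,
\end{equation*}
where $i^*(\overline\Pi)=B$ by the definition of $B$, so the right-hand end of the target sequence is already in place. Applying the excision sequence next to $j^!(\overline\Pi)\in\CCC_1$ yields
\begin{equation*}
0\to j_!j^!j^!(\overline\Pi)\to j^!(\overline\Pi)\to i_*A\to 0\ ,
\end{equation*}
using $i^*j^!(\overline\Pi)=A$. The key identification here is $j^!j^!(\overline\Pi)\cong\C^{m_\Pi}$ in $\CCC_0$. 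To establish it I would unwind the Bernstein--Zelevinsky functors: the two iterated $j^!$ take $\psi$-twisted coinvariants along the affine parts of $\Gl_a(2)$ and $\Gl_a(1)$ (three dimensions in total), and together with the $S_A$-coinvariants already built into $\eta$ this amounts to the full coinvariants of $\Pi$ with respect to a generic character of the Borel unipotent of $G$. By the very definition of $m_\Pi$, that space has dimension $m_\Pi\in\{0,1\}$.

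I would then apply the exact functor $j_!$ to the second sequence. Since $j_!\C=\mathbb{S}_1$ and $j_!\mathbb{S}_1=\mathbb{S}_2$, this produces
\begin{equation*}
0\to \mathbb{S}_2^{m_\Pi}\to j_!j^!(\overline\Pi)\to j_!i_*A\to 0\ .
\end{equation*}
Composing the resulting embedding $\mathbb{S}_2^{m_\Pi}\hookrightarrow j_!j^!(\overline\Pi)$ with the injection $j_!j^!(\overline\Pi)\hookrightarrow\overline\Pi$ from the first excision sequence defines a canonical inclusion $\mathbb{S}_2^{m_\Pi}\hookrightarrow\overline\Pi$. Quotienting the first sequence by this copy, and identifying $j_!j^!(\overline\Pi)/\mathbb{S}_2^{m_\Pi}$ with $j_!i_*A$ via the sequence just displayed, produces the desired short exact sequence.

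The only nontrivial point is the identification $j^!j^!(\overline\Pi)\cong\C^{m_\Pi}$; everything else is a formal diagram chase from Gelfand--Kazhdan together with the exactness of $j_!$ in Bernstein--Zelevinsky theory.
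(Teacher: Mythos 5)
Your proposal is correct and supplies the argument that the paper merely cites (the $P_3$-filtration of Gelfand--Kazhdan, compare \cite[\S4.2]{RW}). The two applications of the excision sequence of Lemma~\ref{lem:Gelfand_Kazdhan} to $\overline\Pi$ and to $j^!(\overline\Pi)$, the exactness of $j_!$, and the splicing are exactly the standard derivation; the only input beyond formal nonsense is the identification $j^!j^!(\overline\Pi)\cong\C^{m_\Pi}$, which — as you say — amounts to the observation that iterating $\eta$, $j^!$, $j^!$ computes the $\psi$-twisted coinvariants of $\Pi$ for a generic character of the full unipotent radical of the Borel, i.e.\ the Whittaker multiplicity.
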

This is the $P_3$-filtration of Gelfand and Kazhdan, compare \cite[\S4.2]{RW}.

\begin{lem}\label{lem:H_+functional_decomposition}
For every $\Pi$ in $\CCC_G(\omega)$ there is an isomorphism
$$ \Hom_{\Gl(2)}(\overline{\Pi}, \tilde\rho\circ\det) \cong 
\Hom_{\Gl(2)}(\overline{\Pi}/\mathbb{S}_2^{m_\Pi}, \tilde\rho\circ \det) \ .$$
Further, there is an exact sequence
$$  0\to \Hom_{\Gl(2)}(B,\tilde\rho\!\circ\!\det) \to \Hom_{\Gl(2)}(\overline{\Pi}/\mathbb{S}_2^{m_\Pi}, \tilde\rho\!\circ\!\det)  \to \ker(\delta) \to 0 $$
with boundary map
$  \delta:  \Hom_{\Gl(2)}(j_!i_*(A), \tilde\rho\circ\det) \to \Ext^1_{\Gl(2)}(B, \tilde\rho\circ\det)
\ .$
\end{lem}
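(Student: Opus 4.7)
The plan is to apply $\Hom_{\Gl(2)}(-,\widetilde\rho\circ\det)$ to two short exact sequences of $\Gl(2)$-modules: the quotient sequence $0\to\mathbb{S}_2^{m_\Pi}\to\overline\Pi\to\overline\Pi/\mathbb{S}_2^{m_\Pi}\to 0$ for the first assertion, and the $P_3$-filtration sequence of lemma~\ref{lem:AB-sequence}, restricted from $\Gl_a(2)$ along the Levi embedding $\Gl(2)\hookrightarrow\Gl_a(2)$, for the second.

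For the first assertion, left exactness of $\Hom$ reduces the problem to showing $\Hom_{\Gl(2)}(\mathbb{S}_2,\widetilde\rho\circ\det)=0$ when $m_\Pi=1$ (the case $m_\Pi=0$ being trivial). Concretely, the Gelfand-Graev-module $\mathbb{S}_2=(j_!)^2(\C)$ is realized as the compactly induced representation $\ind_{N_a}^{\Gl_a(2)}(\psi)$, where $N_a=U\cdot V$ is the maximal unipotent of $\Gl_a(2)=\Gl(2)\ltimes V$, with $U$ the upper-triangular unipotent of $\Gl(2)$ and $V\cong k^2$, and where $\psi$ is a non-degenerate Whittaker character of $N_a$. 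Since $V$ is normal in $\Gl_a(2)$ and contained in $N_a$, one has $\Gl_a(2)=\Gl(2)\cdot N_a$ and $\Gl(2)\cap N_a=U$, so the Mackey double coset space $\Gl(2)\backslash\Gl_a(2)/N_a$ is a single point. Mackey's formula yields
\begin{equation*}
\mathbb{S}_2\big|_{\Gl(2)}\;\cong\;\ind_U^{\Gl(2)}\bigl(\psi|_U\bigr)\ .
\end{equation*}
The restriction $\psi|_U$ is a non-trivial character of $k$ while $\widetilde\rho\circ\det$ is trivial on $U$, so Frobenius reciprocity (lemma~\ref{lem:Frobenius}) forces the required vanishing.

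For the second assertion, I apply $\Hom_{\Gl(2)}(-,\widetilde\rho\circ\det)$ to the restricted short exact sequence from lemma~\ref{lem:AB-sequence}. Since $V$ acts trivially on $i_*(B)$, restriction to $\Gl(2)$ identifies $i_*(B)|_{\Gl(2)}=B$, giving the natural isomorphisms $\Hom_{\Gl(2)}(i_*(B),\widetilde\rho\circ\det)\cong\Hom_{\Gl(2)}(B,\widetilde\rho\circ\det)$ and $\Ext^1_{\Gl(2)}(i_*(B),\widetilde\rho\circ\det)\cong\Ext^1_{\Gl(2)}(B,\widetilde\rho\circ\det)$. Truncating the long exact Ext-sequence at the image of the connecting morphism $\delta$ yields the claimed short exact sequence.

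The only non-formal input is the Mackey decomposition of $\mathbb{S}_2|_{\Gl(2)}$; the triviality of the double coset count makes this immediate, so I anticipate no substantial obstacle.
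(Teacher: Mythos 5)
Your proof is correct and follows the same overall structure as the paper's: reduce to the $P_3$-filtration from lemma~\ref{lem:AB-sequence} and take the long exact Hom-Ext sequence. The one point of genuine difference is the vanishing $\Hom_{\Gl(2)}(\mathbb{S}_2,\widetilde\rho\circ\det)=0$: the paper simply cites Lemma~2.2 of \cite{W_Excep}, while you give a self-contained proof by Mackey restriction. Your Mackey computation is sound — $\Gl_a(2)=\Gl(2)\ltimes V$ with $V\subset N_a$ and $\Gl(2)\cap N_a=U$ forces a single double coset, so $\mathbb{S}_2|_{\Gl(2)}\cong\ind_U^{\Gl(2)}(\psi|_U)$ (up to a modulus twist that is trivial on the unipotent group $U$), and Frobenius reciprocity (lemma~\ref{lem:Frobenius}) kills the Hom since $\psi|_U$ is a nontrivial additive character while $\widetilde\rho\circ\det$ is trivial on $U$. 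The rest (identifying $i_*(B)|_{\Gl(2)}$ with $B$ and truncating the long exact sequence at $\ker\delta$) is the same formal step the paper intends when it invokes exactness of $\Hom_{\Gl(2)}(-,\widetilde\rho\circ\det)$. Net effect: you have supplied a local argument for the one external citation, which makes the lemma self-contained; otherwise the routes coincide.
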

\begin{proof}
Lemma~2.2 in \cite{W_Excep} shows $\Hom_{\Gl(2)}(\mathbb{S}_2,\widetilde{\rho}\circ\det)=0$.
By lemma~\ref{lem:AB-sequence}, the $(H_+,\widetilde{\rho})$-functionals of $\Pi$ come from those of $A$ or $B$.
Now both assertions follow from right-exactness of the functor $\Hom_{\Gl(2)}(-,\widetilde{\rho}\circ\det)$.
\end{proof}

\begin{lem}\label{H_+functionals_for_B}
For irreducible $\Pi\in\CCC_G(\omega)$ and every smooth character $\widetilde{\rho}$ of $k^\times$,
\begin{equation*}
\dim\Hom_{\Gl(2)}(B,\tilde\rho\circ\det)=
\begin{cases}
 1 & \text{ type \nosf{IIIb}}\text{ and }\widetilde{\rho}\in\{\nu\sigma,\chi_1\nu\sigma\},\\
 1 & \text{ type \nosf{IVc}} \text{ and }\widetilde{\rho}=\nu^2\sigma,\\
 1 & \text{ type \nosf{IVd}} \text{ and }\widetilde{\rho}=\sigma,\\
 1 & \text{ type \nosf{VIc}} \text{ and }\widetilde{\rho}=\nu\sigma,\\
 1 & \text{ type \nosf{VId}} \text{ and }\widetilde{\rho}=\nu\sigma,\\
 0 & \text{ otherwise}\ .
\end{cases}
\end{equation*}
\end{lem}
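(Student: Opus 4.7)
The plan is to prove the lemma by a case-by-case analysis through the Roberts-Schmidt classification of irreducible $\Pi\in\CCC_G(\omega)$. The strategy is to compute the Klingen-Jacquet module $B = J_Q(\Pi) = i^*(\overline\Pi) \in \CCC_{\Gl(2)}$ for each type, and to read off its one-dimensional quotients.

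First I would record the general criterion for irreducible subquotients of $B$. A Steinberg twist $\St\chi$ or a cuspidal representation of $\Gl(2)$ has no one-dimensional quotient, and a one-dimensional $\chi\circ\det$ maps to $\widetilde\rho\circ\det$ precisely when $\chi=\widetilde\rho$. For an irreducible principal series, Frobenius reciprocity applied to $\chi_1\times\chi_2 = \ind_{B_{\Gl(2)}}^{\Gl(2)}(\chi_1\nu^{1/2}\boxtimes\chi_2\nu^{-1/2})$ yields
\begin{equation*}
\Hom_{\Gl(2)}(\chi_1\times\chi_2,\ \widetilde\rho\circ\det) \ne 0 \iff \chi_1 = \widetilde\rho\nu^{-1/2}\ \text{ and }\ \chi_2=\widetilde\rho\nu^{1/2},
\end{equation*}
in which case the unique one-dimensional quotient is $\widetilde\rho\circ\det$. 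Using the tables of Klingen-Jacquet modules in Roberts-Schmidt, this criterion immediately rules out the supercuspidal types \textsf{VII}--\textsf{IX} (where $B$ vanishes or is cuspidal) and the families \textsf{I}, \textsf{IIab}, \textsf{Vbc}, \textsf{X}, \textsf{XIab} (where every constituent of $B$ is an irreducible principal series with $\chi_2/\chi_1\ne\nu$ or a supercuspidal).

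The main obstacle is an extension-order analysis for the remaining types \textsf{IIIab}, \textsf{IVabcd}, \textsf{VIabcd}, in which $B$ has mixed Steinberg- and one-dimensional-type constituents: one must decide whether a given one-dimensional subquotient sits at the top of $B$ (giving a functional) or only in the socle (giving none). I would resolve this by presenting $\Pi$ as a Langlands quotient or subrepresentation of a standard induced module whose full Klingen-Jacquet module is computed by the geometric lemma, and then using exactness of $J_Q$ to transport the cosocle information through this standard presentation. Concretely, type \textsf{IIIb} yields a direct sum of two principal series of the form $M_{(1:\St)}\cdot\widetilde\rho$ with one-dimensional quotients at $\widetilde\rho\in\{\nu\sigma,\chi_1\nu\sigma\}$; types \textsf{IVc}, \textsf{VIc}, \textsf{VId} are Langlands quotients of standard modules, forcing the one-dimensional piece to the top of $B$ with character determined by the central exponents of the standard module ($\nu^2\sigma$, $\nu\sigma$, $\nu\sigma$ respectively); types \textsf{IIIa}, \textsf{IVab}, \textsf{Va}, \textsf{VIab} realise the opposite Langlands direction, placing the one-dimensional pieces in the socle of $B$ and yielding $\Hom=0$; finally type \textsf{IVd} is the one-dimensional $\Pi = \sigma\mathbf{1}_{\GSp(4)} = \sigma\circ\lambda_G$, whose restriction to $\Gl(2)\subset H_+$ is $\sigma\circ\det$, giving the functional at $\widetilde\rho=\sigma$ directly.
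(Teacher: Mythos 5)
Your overall strategy -- compute the Klingen--Jacquet module $B=i^*(\overline\Pi)$ and read off its one-dimensional quotients -- is the same route the paper takes; the paper's proof just outsources the hard part to its own Lemma~\ref{lem:AB_nongeneric} (Table~\ref{tab:AB}), which pins down $B$ as a $\Gl(2)$-module including its extension structure, and then cites table A.4 of Roberts--Schmidt for generic $\Pi$ and Lemma~\ref{mult} for the multiplicity bound. However the execution of your plan has several concrete errors.

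First, the Frobenius criterion is stated backwards. With the paper's normalization $\chi_1\times\chi_2=\ind_{B_{\Gl(2)}}^{\Gl(2)}(\chi_1\nu^{1/2}\boxtimes\chi_2\nu^{-1/2})$, a nonzero map to $\widetilde\rho\circ\det$ requires $\chi_1=\widetilde\rho\nu^{1/2}$ and $\chi_2=\widetilde\rho\nu^{-1/2}$ (equivalently $\chi_1/\chi_2=\nu$), because $\widetilde\rho\circ\det$ is the quotient when the one-dimensional constituent sits in the \emph{cosocle}, which corresponds to $M_{(\St:1)}$ and not $M_{(1:\St)}$. Your condition $\chi_1=\widetilde\rho\nu^{-1/2}$, $\chi_2=\widetilde\rho\nu^{1/2}$ picks out the case where $\widetilde\rho\circ\det$ is the \emph{submodule}, yielding $\Hom=0$. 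Correspondingly, your description of $B$ for type \textsf{IIIb} as a sum of copies of $M_{(1:\St)}\cdot\widetilde\rho$ is wrong: for $\chi_1\neq\nu^{\pm1}$ the module $B$ is a direct sum of two one-dimensional characters and an irreducible principal series, and for $\chi_1=\nu^{\pm1}$ one summand is the length-three indecomposable $M^{nc}_{(1:\St:1)}$ with trivial socle \emph{and} cosocle (see Table~\ref{tab:AB}).

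Second, the claim that types \textsf{IIIa}, \textsf{IVab}, \textsf{Va}, \textsf{VIab} have one-dimensional pieces sitting only in the socle of $B$ is false. For these types $B$ has \emph{no} one-dimensional constituents at all: the generic cases \textsf{IIIa}, \textsf{IVa}, \textsf{Va}, \textsf{VIa} are handled by the paper via Roberts--Schmidt table A.4, and for \textsf{IVb}, \textsf{VIb} one checks from Table~\ref{tab:AB} that $B$ is built from Steinberg twists and irreducible principal series only (the summand $\sigma(\nu^{5/2}\times\nu^{1/2})$ in \textsf{IVb} and the factor $1\times1$ in \textsf{VIa} are irreducible, not degenerate). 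So your extension-order analysis does not describe what actually happens there.

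Third, your proposed mechanism for resolving the extension ambiguity -- "presenting $\Pi$ as a Langlands quotient or subrepresentation of a standard induced module" -- does not apply to type \textsf{IIIb}, since $\chi_1\rtimes\sigma\mathbf{1}_{\GSp(2)}$ is a fully induced \emph{irreducible} representation, not a proper Langlands quotient. Precisely in the delicate case $\chi_1=\nu^{\pm1}$ (and similarly \textsf{IIb} with $\chi^2=1$), the paper's Lemma~\ref{lem:AB_nongeneric} determines the indecomposable $M^{nc}_{(1:\St:1)}$ by a genuinely different argument, combining Frobenius reciprocity applied to $J_Q(\Pi)$ with the long exact sequence of central specialization; your outline offers no substitute for that step. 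Finally, the multiplicity bound $\dim\Hom\leq 1$ (Lemma~\ref{mult}) is used implicitly but not established in your plan, though this is a minor omission.
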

\begin{proof}For generic $\Pi$ the Klingen-Jacquet-module has no one-dimensional constituents~\cite[table A.4]{Roberts-Schmidt}. For non-generic $\Pi$ see lemma~\ref{lem:AB_nongeneric}.
The dimension cannot exceed one by lemma~\ref{mult}.
\end{proof}

\begin{lem}\label{lem:A_cyclic}
For every irreducible $\Pi\in\CCC_G(\omega)$, the $T$-module $A=i^*j^!(\overline{\Pi})$ is cyclic. In other words, every $T$-character occurs with at most a single Jordan block in $A$.
\end{lem}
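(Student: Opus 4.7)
My plan is to reduce cyclicity of the finite-length $T$-module $A=i^*j^!(\overline\Pi)$ to the bound $\dim\Hom_T(A,\chi)\le 1$ for every smooth character $\chi$ of $T=\Gl(1)$, and to verify this bound by a case-by-case analysis. Cyclicity and this bound are equivalent because any $X\in\CCC_{\Gl(1)}^\fin$ splits canonically into Jordan blocks $\chi^{(n)}$ and $\dim\Hom_T(\chi^{(n)},\chi)=1$, so the claimed inequality precisely rules out two distinct blocks sharing a character.

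The first step is to pin down $A$ intrinsically as a twisted Jacquet module of $\Pi$ along the Siegel parabolic: $\eta$ takes $S_A$-coinvariants, and by the construction of $j^!$ and $i^*$ in section~\ref{ss:representations} the composition $i^*j^!\circ\eta$ then takes $\psi$-twisted coinvariants under the off-diagonal root subgroup $S$ followed by untwisted coinvariants under $S_C$. This matches the identification $A=J_P(\Pi)_\psi$ in section~\ref{ssec:Gelf_Kazhdan} and lets one read the composition factors of $A$ off the Siegel Jacquet module $J_P(\Pi)$ tabulated in Roberts--Schmidt, Table~A.3.

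I would then localize the computation inside $\overline\Pi$ using the $P_3$-filtration of Lemma~\ref{lem:AB-sequence} combined with the vanishings $j^!i_*=0$ and $i^*j_!=0$ from Lemma~\ref{lem:Gelfand_Kazdhan}: the Klingen quotient $i_*(B)$ is killed by $j^!$ and the Gelfand--Graev submodule $\mathbb{S}_2^{m_\Pi}$ is killed by $i^*j^!$ (since $j^!(\mathbb{S}_2)=\mathbb{S}_1=j_!(\C)$ and $i^*j_!=0$). Hence $A$ is entirely controlled by the $j_!i_*(A)$-piece of the filtration, with $i^*j^!(j_!i_*(A))=i^*i_*(A)=A$ confirming the labelling. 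For generic $\Pi$ (types \nosf{I}, \nosf{IIa}, \nosf{Va}, \nosf{VIa}, \nosf{X}, \nosf{XIa} and the supercuspidal types), the $\psi$-twist singles out the one-dimensional Whittaker space on the $\Gl(2)$-factor of the Siegel Levi, so $A$ turns out to be semisimple with pairwise distinct $T$-characters and cyclicity is immediate.

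The main obstacle is the non-generic types whose Siegel Jacquet module carries a non-semisimple $T$-action, especially \nosf{IIIb}, \nosf{IVa--d}, \nosf{VIc} and \nosf{VId}, where indecomposable extensions in $J_P(\Pi)$ could a priori produce two Jordan blocks with the same character. In each of these cases I would argue by direct inspection of $\overline\Pi$ as a $\Gl_a(2)$-module: each indecomposable constituent of $J_P(\Pi)$ contributes a single Jordan block $\chi^{(n)}$ to $A$ under $i^*j^!$, and the explicit central characters in the Sally--Tadi\'c/Roberts--Schmidt tables never coincide across two such constituents. This reduces the claim to a finite verification and completes the proof.
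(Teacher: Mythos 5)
Your reduction of cyclicity to $\dim\Hom_T(A,\chi)\le 1$ is correct, and for non-generic $\Pi$ your appeal to the explicit Jacquet module tables matches the paper's approach exactly (the paper cites table~A.3 of \cite{RW}, which records that the constituents of $A$ are pairwise distinct in the non-generic cases). The problem is in your treatment of the generic case.

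You assert that for generic $\Pi$ ``the $\psi$-twist singles out the one-dimensional Whittaker space on the $\Gl(2)$-factor of the Siegel Levi, so $A$ turns out to be semisimple with pairwise distinct $T$-characters.'' That claim is false. The paper itself contemplates generic representations $\Pi$ for which the critical character $\chi_{\crit}=\nu^{1/2}\rho$ occurs in the multiset $\Delta_0(\Pi)$ with multiplicity $\ge 2$ (this is precisely ``Case 2'' in the discussion preceding Proposition~\ref{prop:VIa_surjection_version2}, and type~\nosf{VIa} is the standing example). Since $\Delta_0(\Pi)$ is by definition the multiset of constituents of $A$, for such $\Pi$ the module $A$ contains a repeated $T$-character; in the \nosf{VIa} situation one gets a single Jordan block of length two, so $A$ is cyclic but \emph{not} semisimple, and its characters are \emph{not} pairwise distinct. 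Your proof, which assumes distinctness, would simply not apply here — and it also does not explain why a repeated character must land in one indecomposable block rather than splitting into two.

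The paper's argument for the generic case is structural and avoids this pitfall: because $\Pi$ is generic, the $\Gl_a(1)$-module $j^!(\overline\Pi)$ is perfect of degree one (\cite[4.14]{RW}), and the structure theory of perfect modules developed in \cite[3.17]{RW} then forces $A=i^*j^!(\overline\Pi)$ to be cyclic regardless of whether characters repeat. That is the missing ingredient in your sketch; without it, your ``finite verification'' does not cover the generic types with a repeated $T$-character in $\Delta_0(\Pi)$.
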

\begin{proof}
 If $\Pi$ is generic, then the $\Gl_a(1)$-module $j^!(\overline{\Pi})$ is perfect of degree one \cite[4.14]{RW}, so $A$ is cyclic by \cite[3.17]{RW}.
 If $\Pi$ is non-generic, then the constituents of $A$ are pairwise distinct, see \cite[table A.3]{RW}.
\end{proof}

\begin{lem}[{\cite[lemma~2.2.2]{W_Excep}}]\label{H_+functionals_for_A}
For irreducible $\Pi\in\CCC_G(\omega)$ and every smooth character $\widetilde{\rho}$ of $k^\times$,
\begin{equation*}
\dim\Hom_{\Gl(2)}(j_!i_*(A)), \tilde\rho\circ\det)=
\begin{cases}
1 & \widetilde{\rho}  \in \Delta_+(\Pi)\ ,\\
0 & \text{ otherwise }\ .\end{cases}
\end{equation*}
\end{lem}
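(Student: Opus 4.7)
The plan is to identify $j_!i_*(A)$ concretely as a compactly induced $\Gl(2)$-module via Mackey theory, reduce the Hom to a $T$-equivariant statement via Frobenius reciprocity, and finish using the cyclicity of $A$.

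First I would unpack $j_!$ and $i_*$ from Bernstein-Zelevinsky. The module $i_*(A)$ is $A$ regarded as a smooth $\Gl_a(1)=\Gl(1)\ltimes k$-module with trivial action of the unipotent $k$ and the given $T$-action. Unnormalized $\Phi^+$ then gives
\begin{equation*}
j_!i_*(A) \;=\; \ind_{\Gl_a(1)\cdot V_2}^{\Gl_a(2)}\bigl(i_*(A)\otimes\psi\bigr),
\end{equation*}
where $V_2=k^2$ is the unipotent radical of $\Gl_a(2)=\Gl(2)\ltimes V_2$ and $\psi$ is the standard generic character of $V_2$, whose stabilizer in $\Gl(2)$ is the mirabolic $\Gl_a(1)$. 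Because $\Gl(2)$ acts transitively on $\Gl_a(2)/(\Gl_a(1)V_2)$ with stabilizer $\Gl_a(1)$, and $\psi|_{V_2\cap\Gl_a(1)}=1$, a single-orbit Mackey computation yields
\begin{equation*}
j_!i_*(A)\bigr|_{\Gl(2)} \;\cong\; \ind_{\Gl_a(1)}^{\Gl(2)}\bigl(i_*(A)\bigr).
\end{equation*}

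Next I would apply Frobenius reciprocity (lemma~\ref{lem:Frobenius}). Since $\Gl(2)$ is unimodular, $\delta_{\Gl(2)}=1$, and a direct computation in the $ax+b$-group gives $\delta_{\Gl_a(1)}(a,b)=|a|=\nu(a)$. Using that $k\subseteq\Gl_a(1)$ acts trivially on both $i_*(A)$ and on the restriction of $\widetilde\rho\circ\det$, the Hom collapses to
\begin{equation*}
\Hom_{\Gl(2)}\bigl(j_!i_*(A),\widetilde\rho\circ\det\bigr) \;\cong\; \Hom_{T}\bigl(A,\widetilde\rho\nu\bigr).
\end{equation*}

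Finally, lemma~\ref{lem:A_cyclic} says that $A$ is cyclic over $T$, so every $T$-character occurs with at most one Jordan block; hence $\dim\Hom_T(A,\chi)\in\{0,1\}$ for every character $\chi$, with equality precisely when $\chi$ is a generalized eigenvalue of $A$. By the definition of $\Delta_+(\Pi)$ from \cite{RW}, $\widetilde\rho\nu$ appears in $A$ exactly when $\widetilde\rho\in\Delta_+(\Pi)$, which would establish the dichotomy. The main technical point will be the Mackey bookkeeping together with the correct normalization of $\delta_{\Gl_a(1)}$; matching the resulting set of characters with $\Delta_+(\Pi)$ is essentially built into its definition and reduces to inspecting the character list of $A=J_P(\Pi)_\psi$ from the type-by-type tables of \cite{RW}.
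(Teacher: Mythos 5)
Your proposal is correct and takes essentially the same route as the paper: both proofs reduce $\Hom_{\Gl(2)}(j_!i_*(A),\widetilde{\rho}\circ\det)$ via Frobenius reciprocity to $\Hom_{\Gl(1)}(A,\widetilde{\rho}\nu)$, with the modulus character of the mirabolic supplying the $\nu$-shift, and then invoke lemma~\ref{lem:A_cyclic} and the identification of the constituents of $A$ with $\nu\Delta_+(\Pi)$ to conclude. You make the Mackey restriction step $j_!i_*(A)\vert_{\Gl(2)}\cong\ind_{\Gl_a(1)}^{\Gl(2)}(i_*(A))$ fully explicit and bypass the paper's preliminary appeal to central specialization (lemma~\ref{lem:cent_loc_j_!i_*}), but the underlying argument is the same.
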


\begin{proof}
Every functional factorizes over the central specialization given by lemma~\ref{lem:cent_loc_j_!i_*}.
For every Jordan block $\chi^{(n)}$ in $A$,
Frobenius reciprocity implies
$$\Hom_{\Gl(2)}( j_! i_*(\chi^{(n)}), \tilde\rho\circ \det)
\cong
\Hom_{\Gl(1)}(\chi^{(n)},\widetilde{\rho}\nu)\ .
$$
This space has dimension one for %
$\widetilde{\rho}=\nu^{-1}\chi\in\Delta_+(\Pi)$ and is zero otherwise.
The assertion follows by lemma~\ref{lem:A_cyclic}.
\end{proof}

\begin{table}
\begin{footnotesize}
\caption{Modules $A$ and $B$ for non-generic $\Pi$. \label{tab:AB}}
\begin{tabular}{llll}
\toprule
Type        & $\Pi\in\CCC_G$          & $A\in\CCC_{T}$ & $B\in\CCC_{\Gl(2)}$ %
\\
\midrule
\nosf{IIb}  & $(\chi\circ\det)\rtimes\sigma$, $\chi^2\neq1$ & $\nu\chi\sigma $ & $\sigma(\nu^{1/2}\chi\times\nu\chi^2)\oplus \sigma(\nu^{1/2}\chi\times\nu)$ \\%&  $\nu^{1/2}\chi_1\times\nu^{-1/2}\chi^{-1}\mu$ for all $\mu$\\
\nosf{IIb}  & $(\chi\circ\det)\rtimes\sigma$, $\chi^2=1$ & $\nu\chi\sigma $ & $\sigma (\nu^{1/2}\chi\times\nu^{(2)})$ \\%&  $\nu^{1/2}\chi_1\times\nu^{-1/2}\chi^{-1}\mu$ for all $\mu$\\
\nosf{IIIb} & $\chi_1\rtimes(\sigma\circ\det)$, $\chi_1 \neq\nu^{\pm1}$ & $\nu\sigma\oplus\chi_1\nu\sigma$ & $(\nu\chi_1\sigma\circ\det)\oplus (\nu\sigma\circ\det) \oplus \nu^{1/2}\sigma(\chi_1\times1)$ \\
\nosf{IIIb} & $\nu\rtimes(\sigma\circ\det)$ &$\nu\sigma\oplus\nu^2\sigma$ & $(\nu^2\sigma\circ\det)\oplus \nu\sigma M_{(1:\St:1)}^{nc}$\\%& $\begin{cases}\sigma(\nu^{1/2}\times\nu^{7/2})\oplus\sigma\nu^2 M_{(\St:1)}&\text{for}\ \nu^4\sigma^2\\
\nosf{IVb}  & $L(\nu^2,\nu^{-1}\sigma\St)$ &$\nu\sigma$&$\sigma\St\oplus\sigma(\nu^{5/2}\times\nu^{1/2})$ \\%& $\begin{cases}\sigma M_{(1:\St)} & \text{for }\mu=\sigma^2\\\nu^{1/2}\sigma\times \nu^{-1/2 }\sigma^{-1}\mu&\text{else}\end{cases}$\\
\nosf{IVc}  & $L(\nu^{3/2}\St,\nu^{-3/2}\sigma)$ &$\sigma\oplus\nu^2\sigma$&$(\nu^2\sigma\circ\det)\oplus \sigma(\nu^{3/2}\times\nu^{-1/2})$ \\%& $\begin{cases}\nu^{-1/2}\sigma\times\nu^{9/2}\sigma\oplus \nu^2\sigma M_{(\St:1)} & \text{for}\ \mu=\nu^4\sigma^2\\
\nosf{IVd}  & $\sigma\circ\lambda$         &$0$&$(\sigma\circ\det)$\\%& $\begin{cases}(\sigma\circ\det)&\text{for}\ \mu=\sigma^2\\ 0 & \text{else}\end{cases}$\\
\nosf{Vb}   & $L(\nu^{1/2}\xi\St,\nu^{-1/2}\sigma)$ &$\nu\sigma$&$\nu^{1/2}\sigma(\nu\xi\times1)$\\%&  $\nu^{1/2}\sigma\times\nu^{-1/2}\sigma^{-1}\mu$ for all $\mu$\\
\nosf{Vd}   & $L(\nu\xi,\xi\rtimes\nu^{-1/2}\sigma)$&$0$&$\nu\sigma(\xi\times1)$\\%& $\begin{cases}\nu\sigma(1\times\xi)& \text{for}\ \mu=\nu^2\xi\sigma^2\\0 & \text{else}\end{cases}$\\
\nosf{VIb}  & $\tau(T,\nu^{-1/2}\sigma)$        &$0$    &$\nu\sigma\St$            \\%& $\begin{cases}\sigma\nu\St & \text{for}\ \mu=\sigma^2\nu^2\\0&\text{else}\end{cases}$            \\

\nosf{VIc}  & $L(\nu^{1/2}\St,\nu^{-1/2}\sigma)$&$\nu\sigma$&$(\nu\sigma\circ\det)$\\%& $\begin{cases}\sigma\nu M_{(\St:1)}&\text{for}\ \mu=\nu^2\sigma^2\\\sigma\nu^{1/2}\times \nu^{-1/2}\sigma^{-1}\mu&\text{else}\end{cases}$\\
\nosf{VId}  & $L(\nu,1\rtimes\nu^{-1/2}\sigma)$ &$\nu\sigma$&$(\nu\sigma\circ\det)\oplus \nu^{1/2}\sigma(1\times1)$\\%& $\begin{cases}\nu^{1/2}\sigma^2(1\times1)&\text{for}\ \mu=\nu\sigma^2\\\sigma\nu M_{(\St:1)}&\text{for}\ \mu=\nu^2\sigma^2\\ \nu^{1/2}\sigma\times\nu^{-1/2}\sigma^{-1}\mu&\text{else}\end{cases}$\\
\nosf{VIIIb}& $\tau(T,\pi_c)$                   &$0$&$\nu\pi_c$         \\%& $\begin{cases}\nu\pi_c&\text{for}\ \mu=\nu^2\omega_{\pi_c}\\0&\text{else}     \end{cases}$\\
\nosf{IXb}  &$L(\nu\xi,\nu^{-1/2}\pi_c)$        &$0$&$\nu^{1/2}\xi\pi_c$\\%&$\begin{cases}\nu^{1/2}\sigma\xi\pi_c &\text{for}\ \mu=\nu\sigma\omega_{\pi_c}\\0&\text{else}\end{cases}$\\ %
\nosf{XIb}  &$L(\nu^{1/2}\pi_c,\nu^{-1/2}\sigma)$&$\nu\sigma$&$0$       \\%& $\nu^{1/2}\sigma\times \nu^{-1/2}\sigma^{-1}\mu$ for all $\mu$                        \\
\bottomrule
\end{tabular}

\end{footnotesize}
\end{table}

\subsection{Non-generic cases}

For the non-generic $\Pi\in\CCC_G(\omega)$ we completely determine the central specialization $\zeta_\mu(\overline{\Pi}$. It is then easy to determine the $(H_+,\widetilde{\rho})$-functionals of $\Pi$ because they all factorize over $\zeta_\mu(\overline{\Pi})$ with $\mu=\widetilde{\rho}^2$.

\begin{lem}\label{lem:AB_nongeneric}
 For non-generic irreducible representations $\Pi$ of $G$ the modules $A$ and $B$ are explicitly given by table~\ref{tab:AB}, where $M^{nc}_{(1:\St:1)}$ is uniquely determined as the kernel of the projection $\nu^{-1/2}\times(\nu^{1/2})^{(2)}\twoheadrightarrow\St$.
\end{lem}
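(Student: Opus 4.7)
The plan is to treat the fourteen non-generic types in table~\ref{tab:AB} one by one, and for each to read off $B = J_Q(\Pi)$ and $A = i^{*}j^{!}(\overline\Pi)$ from standard Jacquet-module computations, already substantially available in the literature we use.

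For the column $B$, the starting input is the Klingen-Jacquet module of $\Pi$, whose semisimplification is tabulated in \cite[Table A.4]{Roberts-Schmidt}. Since $\eta(\Pi) = \overline\Pi$ is obtained from $\Pi$ by taking $S_A$-coinvariants via the projection $Q_1 \twoheadrightarrow \Gl_a(2)$ and $i^*$ subsequently takes the full unipotent coinvariants of the Klingen radical, the module $B = i^*(\overline\Pi)$ coincides as a $\Gl(2)$-module with $J_Q(\Pi)$ (up to the standard normalization fixed in \cite[\S4.2]{RW}). For each line of the table I would therefore match the semisimple constituents against \cite[Table A.4]{Roberts-Schmidt} and then determine whether the non-split pieces in lines \nosf{IIb} ($\chi^2=1$) and \nosf{IIIb} ($\chi_1=\nu$) really are indecomposable. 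This is the place where real work happens: the indecomposable $\nu\sigma M^{nc}_{(1:\St:1)}$ in type \nosf{IIIb}, $\chi_1=\nu$, is uniquely characterised as the kernel of $\nu^{-1/2}\times(\nu^{1/2})^{(2)} \twoheadrightarrow \St$, and non-splitness follows either from an $\Ext^1_{\Gl(2)}$-computation carried out via the results of section~\ref{s:Ext}, or from observing that a splitting would contradict the indecomposability of $\nu\rtimes(\sigma\circ\det)$ itself.

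For the column $A$, I would use the composition $A = i^{*}j^{!}\eta(\Pi)$, which selects the $\psi$-part of the Siegel-Jacquet module of $\Pi$, and is therefore essentially the set of $T$-characters $\chi$ with $\Delta_{+}(\Pi) \ni \nu^{-1}\chi$ (with appropriate multiplicity). By lemma~\ref{lem:A_cyclic} each $T$-character occurs in $A$ only through a single Jordan block, and for non-generic $\Pi$ these blocks are one-dimensional because the constituents of $A$ are pairwise distinct. The computation is then a bookkeeping exercise: for each type one either reads off the Siegel-Jacquet module $J_P(\Pi)$ from the standard Bernstein-Zelevinsky geometric-lemma calculation applied to the inducing data and retains only the $\psi$-generic characters, or equivalently one reads off $\Delta_{+}(\Pi)$ from \cite[Table 3 and Prop.~6.3.3]{RW}. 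For the types \nosf{IVd}, \nosf{Vd}, \nosf{VIb}, \nosf{VIIIb}, \nosf{IXb}, the vanishing $A = 0$ reflects the absence of $\psi$-generic characters in the Siegel-Jacquet module.

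The main obstacle is the identification of the precise indecomposable $\Gl(2)$-structure of $B$ for the types where several Langlands constituents share a common character of the torus after Klingen restriction, notably \nosf{IIIb} ($\chi_1 = \nu$). A secondary subtlety is the correct normalization of the $\psi$-twist involved in $j^{!}$, which must be tracked carefully so that the computed $A$ matches the tabulated $\Delta_+(\Pi)$, and in particular reproduces the factor $\nu$ in the listed characters. Once these two issues are handled, the entire table is a case-by-case verification against already-published Jacquet-module data.
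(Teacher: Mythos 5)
Your overall division of labor matches the paper's: read off the semisimple constituents of $B$ from \cite[Table A.4]{Roberts-Schmidt} and those of $A$ from $\nu^{3/2}\Delta_0(\Pi)$ via \cite[Table 3]{RW}, then treat the two types where $B$ has a non-split piece (\nosf{IIb} with $\chi^2=1$ and \nosf{IIIb} with $\chi_1=\nu^{\pm1}$). But your proposal has a genuine gap exactly at the point you identify as ``where real work happens.''

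Knowing that some indecomposable extension exists (your $\Ext^1$-route) is not enough to pin down the isomorphism class of $B$. In type \nosf{IIIb} with $\chi_1=\nu$ the module $B$ has two trivial constituents and one Steinberg constituent, and there is more than one indecomposable $\Gl(2)$-module of length three with socle and top the trivial representation and Steinberg in the middle --- the key distinction, which the lemma cares about, is whether it admits a central character or not (the paper reserves $M^{nc}_{(1:\St:1)}$ for the one that does not, and elsewhere needs $M^{c}_{(\St:1:\St:1)}$ for a self-extension that does). Your proposal never engages with this choice. The paper settles it by a specific argument: it first shows via Frobenius reciprocity on the Klingen-Jacquet module that $\St(\nu)$ appears neither as submodule nor quotient, hence $B\cong(\nu^2\circ\det)\oplus((\nu\circ\det)\otimes M)$ with $M$ an extension panach\'{e}e; it then specializes the excision sequence of lemma~\ref{lem:AB-sequence} at the central character $\mu=\nu^2\sigma^2$, uses $\zeta^\mu(\overline\Pi)=0$ to embed $\zeta^\mu(i_*(B))$ into $\zeta_\mu(j_!i_*(A))$, bounds its length by two using lemma~\ref{lem:cent_loc_j_!i_*}, and concludes that $M$ cannot have a central character. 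It then uses a second (Jordan-block-twisted) specialization $\zeta_{\mu^{(2)}}$ to get the uniqueness of the embedding. None of this is supplied or replaced by your sketch.

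Your alternative suggestion --- that ``a splitting would contradict the indecomposability of $\nu\rtimes(\sigma\circ\det)$ itself'' --- does not work in principle: $\overline\Pi=\Pi_{S_A}$ is a coinvariant quotient, and its subquotient $B=i^*(\overline\Pi)$ can, and in this very case does, split as a direct sum even though $\Pi$ is irreducible. Indeed the paper's answer for $B$ in type \nosf{IIIb} is $(\nu^2\sigma\circ\det)\oplus\nu\sigma M^{nc}_{(1:\St:1)}$, a non-trivial direct sum. You also do not address the \nosf{IIb}, $\chi^2=1$, case concretely, where the conclusion is instead that the comparable embedding into $\zeta_{\mu^{(2)}}(j_!i_*(\nu\chi\sigma))\cong\sigma(\nu^{1/2}\chi\times\nu^{(2)})$ is an isomorphism by length count.
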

The $\Gl(2)$-module $M^{nc}_{(1:\St:1)}$ is indecomposable of length three without a central character.
Its socle and top are both the trivial representation and the third constituent is the Steinberg representation.
\begin{proof}
The constituents of $B\in\CCC_{\Gl(2)}$ are isomorphic to $(\nu\chi'\circ\det)\otimes\tau$ for the $\Gl(1)\times\Gl(2)$-modules $\chi'\otimes\tau$ that occur in table~A.4 of \cite{Roberts-Schmidt}.
The constituents of $A$ are given by $\nu^{3/2}\Delta_0(\Pi)$, see table~3 in \cite{RW}.
Non-trivial extensions can only exist between the constituents of $B$ for the representation $\chi_1\rtimes(\sigma\circ\det)$ in case \nosf{IIIb} where $\chi_1=\nu^{\pm1}$ and for the representation $\Pi=(\chi\circ\det)\rtimes\sigma$ of type \nosf{IIb} with $\chi^2=1$.

Suppose $\Pi=\chi_1\rtimes(\sigma\circ\det)$ is of type \nosf{IIIb} with $\chi_1=\nu^{\pm1}$, then we can assume $\chi_1=\nu$ and $\sigma=1$ without loss of generality by a Weyl reflection and a twist. Indeed, the Weyl involution $\mathbf{s}_1\mathbf{s}_2\mathbf{s}_1$ defines an isomorphism $\nu^{-1}\rtimes(\sigma\circ\det)\cong \nu\rtimes(\nu^{-1}\sigma\circ\det)$.
By table~A.4 in \cite{Roberts-Schmidt},
the normalized Klingen-Jacquet-module $J_Q(\Pi)$ has length four and admits a constituent $\nu^{-1}\boxtimes\St(\nu)$, but neither as a submodule nor as a quotient by Frobenius reciprocity.
In other words, the $\Gl(2)$-module $B$ contains the constituent $\St(\nu)$, but neither as a submodule nor as a quotient, and is thus isomorphic to
$$B\cong(\nu^2\circ\det)\oplus ((\nu\circ\det)\otimes M)\ ,$$
where $M$ is an indecomposable $\Gl(2)$-module whose socle and top are both the trivial representation.
This is an extension panach\'{e}e in the sense of \cite[9.3]{SGAVII}.
Central specialization at $\mu=\nu^2\sigma^2$ of the short exact sequence of lemma~\ref{lem:AB-sequence} yields an long exact sequence
\begin{equation*}
 \zeta^\mu(\overline{\Pi})\to \zeta^\mu(i_*(B))\to \zeta_\mu(j_!i_*(A))\to \zeta_\mu(\overline{\Pi})\to \zeta_\mu(i_*(B))\to 0\ .
\end{equation*}
The left-most term $\zeta^\mu(\overline{\Pi})=0$ vanishes \cite[lemma~6.1]{W_Excep}, so $\zeta^\mu(i_*(B))$ is an indecomposable submodule of $\zeta_\mu(j_!i_*(A))$ and thus has length at most two by lemma~\ref{lem:cent_loc_j_!i_*}.
Hence $M=M^{nc}_{(1:\St:1)}$ does not admit a central character.
The exact sequence $(\ast)$ of \cite[p.\ 34f]{W_Excep} associated with the functor $\zeta_{\mu^{(2)}}$ for $\mu=\nu^2\sigma^2$ yields an analogous long exact sequence and thus an embedding
$B=\zeta^{\mu^{(2)}}(B)\hookrightarrow \zeta_{\mu^{(2)}}(j_!i_*(A))$.
By an argument analogous to the proof of lemma~\ref{lem:cent_loc_j_!i_*} there is an isomorphism
$$\zeta_{\mu^{(2)}}(j_!i_*(A)) \cong \sigma\nu(\nu^{-1/2}\times (\nu^{1/2})^{(2)}) \oplus \sigma\nu(\nu^{1/2}\times (\nu^{-1/2})^{(2)})\ .$$
Up to scalars, there is only one embedding of $B\hookrightarrow\zeta_{\mu^{(2)}}(j_!i_*(A))$ and this characterises $B$ uniquely.

Now suppose $\Pi=(\chi\circ\det)\rtimes\sigma$ is type \nosf{IIb} with $\chi_1^2=1$.
Then $B$ is an extension of $\sigma(\nu^{1/2}\chi\times\nu)$ by itself and thus has length two, so $\zeta_{\mu^{(2)}}(B)=B$ for $\mu=\nu^{3/2}\chi\sigma^2$.
The rest of the argument is analogous to the previous case, but it turns out that the embedding
$B\hookrightarrow \zeta_{\mu^{(2)}}(j_!i_*(\nu\chi\sigma))\cong \sigma(\nu^{1/2}\chi\times\nu^{(2)})$
is actually an isomorphism by comparing the lengths.
\end{proof}

\begin{prop}\label{prop:central_specialization_non_generic}
For non-generic irreducible $\Pi\in \CCC_{G}(\omega)$ and every smooth character $\mu$ of $\Gl(1)$, the central specialization $\widehat{\Pi}=\zeta_\mu(\overline{\Pi})$ has finite length and is given by table~\ref{tab:central_specialization}.
\end{prop}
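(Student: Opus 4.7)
The plan is to reduce the computation to the two ends of the $P_3$-filtration of Lemma \ref{lem:AB-sequence}. Since $\Pi$ is non-generic, $m_\Pi = 0$ and the filtration collapses to the short exact sequence
\begin{equation*}
 0 \to j_!i_*(A) \to \overline{\Pi} \to i_*(B) \to 0,
\end{equation*}
with $A \in \CCC_T$ and $B \in \CCC_{\Gl(2)}$ already given in Table~\ref{tab:AB}. Applying the pair of central specialization functors $(\zeta^\mu, \zeta_\mu)$ produces the six-term exact sequence
\begin{equation*}
 \zeta^\mu(j_!i_*A) \to \zeta^\mu(\overline\Pi) \to \zeta^\mu(i_*B) \to \zeta_\mu(j_!i_*A) \to \zeta_\mu(\overline\Pi) \to \zeta_\mu(i_*B) \to 0.
\end{equation*}
The vanishing $\zeta^\mu(\overline\Pi) = 0$ from Lemma 6.1 of \cite{W_Excep} leaves only the four outer terms and the connecting boundary map to control.

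First I would evaluate these outer terms. The $\Gl_a(2)$-modules $\zeta_\mu(j_!i_*A)$ and $\zeta^\mu(j_!i_*A)$ are computed by Lemma \ref{lem:cent_loc_j_!i_*} directly from the Jordan-block structure of $A$, which by Lemma \ref{lem:A_cyclic} has at most one block per $T$-character and whose explicit form is tabulated in Table~\ref{tab:AB}. The modules $\zeta_\mu(i_*B)$ and $\zeta^\mu(i_*B)$ are read off from the constituents of $B$ using exactness of $i_*$ together with their central characters. For most non-generic types the constituents of $B$ and the simple quotients of $j_!i_*A$ fall into pairwise disjoint central blocks, so the six-term sequence splits cleanly into two short exact sequences, the boundary vanishes, and $\widehat{\Pi}$ is forced to be the direct sum of the surviving outer terms in the block corresponding to $\mu$.

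The main obstacle is the cases where $B$ itself is non-split, namely type \nosf{IIIb} with $\chi_1 = \nu^{\pm 1}$ (involving $M_{(1:\St:1)}^{nc}$) and type \nosf{IIb} with $\chi^2 = 1$ (a non-trivial self-extension of $\sigma(\nu^{1/2}\chi\times\nu)$). For these I would imitate the argument already used inside the proof of Lemma \ref{lem:AB_nongeneric}: apply the larger functor $\zeta_{\mu^{(2)}}$, use again the vanishing of $\zeta^{\mu^{(2)}}(\overline\Pi)$, and exploit the resulting embedding $B \hookrightarrow \zeta_{\mu^{(2)}}(j_!i_*A)$ to pin down the extension class of $\overline\Pi$ in the relevant $\Ext^1$ group. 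Pushing this class forward through $\zeta_\mu$ identifies $\widehat\Pi$ uniquely up to isomorphism, and a final case-by-case comparison against Table~\ref{tab:AB} produces the entries of Table~\ref{tab:central_specialization}. Finiteness of length follows immediately from the finite length of $A$, $B$ and the exactness of $\zeta_\mu$.
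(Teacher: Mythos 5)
The high-level setup is correct (the $P_3$-filtration of Lemma~\ref{lem:AB-sequence}, the vanishing of $\zeta^\mu(\overline\Pi)$, and the resulting four-term exact sequence
$0\to\zeta^\mu(B)\to\zeta_\mu(j_!i_*A)\to\zeta_\mu(\overline\Pi)\to\zeta_\mu(B)\to0$),
but the argument after that has a genuine gap.

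The claim that in the generic situation ``$\widehat\Pi$ is forced to be the direct sum of the surviving outer terms'' is false, and this is exactly the crux of the proposition. The four-term sequence pins down the semisimplification of $\widehat\Pi$ (it agrees with that of $\zeta_\mu(j_!i_*A)$, using $\zeta^\mu(B)\cong\zeta_\mu(B)$), but not the extension class. Indeed, whenever a constituent $\chi$ of $A$ satisfies $\mu=\chi^2$, one has $\zeta_\mu(j_!i_*(\chi))\cong\chi\otimes M_{(1:\St)}$ (Steinberg as submodule, trivial as quotient), yet Table~\ref{tab:central_specialization} records the \emph{opposite} extension $\chi\otimes M_{(\St:1)}$ as a summand of $\widehat\Pi$; so $\widehat\Pi$ is not isomorphic to $\zeta_\mu(j_!i_*A)$, nor is it semisimple. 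Determining this monodromy is precisely what the paper does via the Bernstein-block decomposition of $\overline{\Pi}$ into indecomposable $\Gl(2)$-summands $Q$ (extensions of $j_!i_*(\chi)$ by a constituent of $B$) and then computing each $\zeta_\mu(Q)$ with Lemma~\ref{lem:cent_spez_indecomposable}, together with the multiplicity-one input from Lemma~\ref{mult_P3}. Your proposal has no replacement for this step.

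The list of hard cases you give is also not the right one. Non-splitness of $B$ (type \nosf{IIIb} with $\chi_1=\nu^{\pm1}$, type \nosf{IIb} with $\chi^2=1$) is orthogonal to the difficulty: type \nosf{IIb} with $\chi^2=1$ is in fact routine for the central specialization, and, conversely, type \nosf{IVc} with $\mu=\nu\sigma^2$ and type \nosf{IIIb} with $\mu=\nu\chi_1\sigma^2$ for \emph{generic} $\chi_1$ (where $B$ is a direct sum) both produce the delicate self-extensions $M^c_{(\cdot:\cdot)}$ in Table~\ref{tab:central_specialization}. The relevant dichotomy is whether the Grothendieck-group constituent $\chi\nu^{-1/2}\times\chi^{-1}\nu^{1/2}\mu$ of $\zeta_\mu(j_!i_*A)$ is repeated, forcing an indecomposable self-extension; the paper isolates these as \nosf{IIIb} with $\mu=\nu\chi\sigma^2$ and \nosf{IVc} with $\mu=\nu\sigma^2$, and resolves them not with the $\zeta_{\mu^{(2)}}$ trick (which in Lemma~\ref{lem:AB_nongeneric} serves only to determine $B$ itself) but with Lemma~\ref{mult_P3} together with an $\Ext^1$ computation (via Lemma~\ref{lem:Dual_Frob_Ext}) that shows there is a unique indecomposable self-extension. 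Finally, a minor point: $\zeta_\mu$ is only right exact, so the finite-length claim should be deduced from the four-term sequence and Lemma~\ref{lem:cent_loc_j_!i_*}, not from ``exactness of $\zeta_\mu$''.
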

Here the $\Gl(2)$-module $M^c_{(\chi\times1:\chi\times1)}$ (resp.\ $M^{c}_{(\St:1:\St:1)}$) is the unique indecomposible extension of $\chi\times1$ (resp.\ $M_{(\St:1)}$) with itself that admits a central character.

\begin{proof}
Fix the exact sequence $0\to j_!i_*(A)\to\overline{\Pi}\to i_*(B)\to0$ of lemma~\ref{lem:AB-sequence}, where $A$ and $B$ are given in lemma~\ref{lem:AB_nongeneric}.
For $A=0$ the assertion is clear.
For $A\neq0$ lemma~6.1 of \cite{W_Excep} implies $\zeta^\mu(\overline{\Pi})=0$, so there is an exact sequence
\begin{equation}\label{eq:specialization_seq}
0\to\zeta^\mu(B)\to \zeta_\mu(j_!i_*(A))\to\zeta_\mu(\overline\Pi)\to \zeta_\mu(B)\to0\ .\tag{$\ast$}
\end{equation}
Especially, $\widehat{\Pi}=\zeta_\mu(\overline{\Pi})$ has the same constitutents as $\zeta_\mu(j_!i_*(A))$, which are all non-cuspidal and explicitly given by lemma~\ref{lem:cent_loc_j_!i_*}.
Furthermore, if $\zeta_\mu(B)$ vanishes, then so does $\zeta^\mu(B)$ by lemma~\ref{list}, there is an obvious isomorphism $\zeta_\mu(j_!i_*(A))\cong\zeta_\mu(\Pi)$.

It remains to determine the monodromy.
Lemma~\ref{mult_P3} implies that the maximal semisimple quotient of $\zeta_\mu(\overline{\Pi})$ contains every constituent at most once.
We first assume that neither is $\Pi$ of type \nosf{IIIb} with $\mu=\nu\chi\sigma^2$ nor is $\Pi$ of type \nosf{IVc} with $\mu=\nu\sigma^2$.
Lemma~\ref{lem:cent_loc_j_!i_*} implies
\begin{equation*}
\zeta_\mu(j_!i_*(A))\cong\bigoplus_{\chi}(\chi\nu^{-1/2}\times \chi^{-1}\nu^{1/2}\mu) ,\\
\end{equation*}
where the sum runs over constituents $\chi$ of $A$.
By Bernstein decomposition, the $\Gl(2)$-module $\overline{\Pi}$ splits as a direct sum of smooth $\Gl(2)$-modules $Q$ that are either isomorphic to $j_!i_*(\chi)$ for some $\chi$ in $A$ or a constituent of $i_*(B)$ or to extensions of $\Gl(2)$-modules
\begin{equation*}
0\to j_!i_*(\chi) \to Q \to (\widetilde{\rho}\circ\det)\to0\ %
\end{equation*} or
\begin{equation*}
0\to j_!i_*(\chi) \to Q \to \nu^{-1}\chi\otimes \St\to0\ .
\end{equation*}
Lemma~\ref{lem:cent_spez_indecomposable} determines $\zeta_\mu(Q)$ uniquely in each case.

Among the remaining cases, we focus on type \nosf{IIIb} with $\chi_1=\nu$ and $\mu=\nu^2\sigma^2$.
The others are analogous.
Then we have $B=\nu\sigma M^{nc}_{(1:\St:1)}\oplus (\nu^2\sigma\circ\det)$ and $A=\nu\sigma\oplus\nu^2\sigma$, where $M^{nc}_{(1:\St:1)}$ is an extension of length three that does not admit a central character.
We obtain isomorphisms $\zeta_\mu(i_*(B))\cong \nu\sigma M_{(\St:1)}$ and $\zeta^\mu(i_*(B))\cong \nu\sigma M_{(1:\St)}$ and $\zeta_\mu(j_!i_*(A))\cong \sigma\nu M_{(1:\St)}\oplus \sigma\nu M_{(\St:1)}$ and thus \eqref{eq:specialization_seq} becomes
$$0\to\nu\sigma M_{(1:\St)}\to \sigma\nu M_{(1:\St)}\oplus \sigma\nu M_{(\St:1)}\to\zeta_\mu(\overline\Pi)\to \nu\sigma M_{(\St:1)}\to0\ .$$
Hence $\zeta_\mu(\overline{\Pi})$ is an extension of $\nu\sigma M_{(\St:1)}$ by itself
in the category of smooth $\Gl(2)$-modules with central character $\mu$.
By lemma~\ref{mult_P3}, this extension does not split as a direct sum.
Lemma~\ref{lem:Dual_Frob_Ext} implies $$\Ext^1_{\mathrm{PGl}(2)}(M_{(\St:1)},M_{(\St:1)})=1\ ,$$ so there is only one such indecomposable extension up to isomorphism.
\end{proof}

\begin{cor} \label{B}\label{A_vanishes_generic}
Suppose $\Pi$ is non-generic and $\Lambda=\rho\boxtimes\rho^{\divideontimes}$ provides a split Bessel model for $\Pi$.
Then the canonical monomorphism
\begin{gather*}
 \Hom_{\Gl(2)}(B,\tilde\rho\circ\det) \to  \Hom_{\Gl(2)}(\overline\Pi, \tilde\rho\circ\det)
\end{gather*} is an isomorphism and the functionals $\overline{\Pi}\to (\widetilde{\rho}\circ\det)$ are given by lemma~\ref{H_+functionals_for_B}.
Further, $\Hom_{\Gl(2)}(j_!i_*(A), \tilde\rho\circ\det)=0$ vanishes,
unless $\Pi$ is of type \nosf{IIIb} with $\chi_1=\nu^{\pm1}$.
\end{cor}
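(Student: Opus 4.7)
The plan is to reduce everything to the exact sequence of lemma~\ref{lem:H_+functional_decomposition}. Since $\Pi$ is non-generic we have $m_\Pi=0$, so that sequence becomes
$$0\to \Hom_{\Gl(2)}(B,\tilde\rho\circ\det)\to \Hom_{\Gl(2)}(\overline{\Pi},\tilde\rho\circ\det)\to \ker(\delta)\to 0,$$
with $\delta:\Hom_{\Gl(2)}(j_!i_*(A),\tilde\rho\circ\det)\to \Ext^1_{\Gl(2)}(B,\tilde\rho\circ\det)$. Consequently, the canonical monomorphism is an isomorphism if and only if $\ker(\delta)=0$, and both assertions in the corollary will follow from a careful analysis of this kernel.

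I would first dispatch the vanishing assertion. Since $\Pi$ is non-generic, the existence of the split Bessel datum $\Lambda=\rho\boxtimes\rho^\divideontimes$ is equivalent to $\rho\in\Delta_+(\Pi)$, as already noted in the proof of cor.~\ref{cor:functionals_with_Bessel_models}. By lemma~\ref{H_+functionals_for_A}, the group $\Hom_{\Gl(2)}(j_!i_*(A),\tilde\rho\circ\det)$ is nonzero precisely when $\widetilde{\rho}=\nu\rho\in\Delta_+(\Pi)$. Both conditions together amount to $\Delta_+(\Pi)\cap\nu\Delta_+(\Pi)\neq\emptyset$, and the text preceding lemma~\ref{key} recalls that this intersection is nonempty only for types \nosf{IIIab} with $\chi_1=\nu^{\pm 1}$; restricting to non-generic $\Pi$, only type \nosf{IIIb} with $\chi_1=\nu^{\pm 1}$ survives.

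For the isomorphism, outside this exceptional type the vanishing just established forces $\ker(\delta)=0$ automatically, so the monomorphism is an isomorphism. In the exceptional case \nosf{IIIb} with $\chi_1=\nu^{\pm 1}$, both $\Hom$-spaces in the sequence may be nonzero and one must argue by dimension: lemma~\ref{mult} provides the bound $\dim\Hom_{\Gl(2)}(\overline{\Pi},\tilde\rho\circ\det)\leq 1$, while lemma~\ref{H_+functionals_for_B} already gives $\dim\Hom_{\Gl(2)}(B,\tilde\rho\circ\det)=1$ for the relevant values of $\widetilde{\rho}=\nu\rho\in\{\nu\sigma,\nu\chi_1\sigma\}$. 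Plugging these into the exact sequence forces $\ker(\delta)=0$, and the monomorphism is again an isomorphism.

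The principal obstacle is thus the type \nosf{IIIb} exceptional case, where a direct vanishing argument is unavailable and one must rely on the nontrivial multiplicity-one bound of lemma~\ref{mult}; this is exactly why that lemma was proved first. Once $\ker(\delta)=0$ has been secured in all cases, the identification of the $(H_+,\widetilde{\rho})$-functionals on $\overline{\Pi}$ with those on $B$ is immediate, and lemma~\ref{H_+functionals_for_B} supplies the explicit list.
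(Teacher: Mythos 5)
Your proof is correct, and for the first assertion it takes a genuinely different (and somewhat lighter) route than the paper. The paper establishes that the canonical monomorphism is an isomorphism by appealing directly to the explicit description of the central specialization $\zeta_\mu(\overline{\Pi})$ in proposition~\ref{prop:central_specialization_non_generic} (together with lemma~\ref{lem:AB_nongeneric}), so its argument reads the answer off the table. You instead exploit the exact sequence of lemma~\ref{lem:H_+functional_decomposition} and reduce the isomorphism to showing $\ker(\delta)=0$: outside the exceptional type~\nosf{IIIb} with $\chi_1=\nu^{\pm1}$ this is forced by the vanishing statement, and inside it you pinch the dimensions between the lower bound $\dim\Hom_{\Gl(2)}(B,\widetilde{\rho}\circ\det)=1$ (lemma~\ref{H_+functionals_for_B}) and the multiplicity-one bound of lemma~\ref{mult}. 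This bypasses the computation of the monodromy/extension structure carried in table~\ref{tab:central_specialization}, relying only on the structure of $B$ from lemma~\ref{lem:AB_nongeneric} and on the multiplicity-one result. Your argument for the second assertion (vanishing of $\Hom_{\Gl(2)}(j_!i_*(A),\widetilde{\rho}\circ\det)$ via lemma~\ref{H_+functionals_for_A} and the emptiness of $\Delta_+(\Pi)\cap\nu\Delta_+(\Pi)$ outside type~\nosf{IIIb} with $\chi_1=\nu^{\pm1}$) coincides with the paper's.
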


\begin{proof}A Bessel model exists if and only if $\rho\in\Delta_+(\Pi)$ \cite[thm.\ 6.2.2]{Roberts-Schmidt_Bessel}, \cite[thm.~6.2]{RW}.
For the first assertion, note that the functionals all
factorize over the central specialization at $\mu=\widetilde{\rho}^2$.
Thus the assertion is a direct consequence of lemma~\ref{lem:AB_nongeneric} and proposition~\ref{prop:central_specialization_non_generic}.
The second assertion follows from lemma~\ref{H_+functionals_for_A},
because the intersection of $\Delta_+(\Pi)$ with $\nu\Delta_+(\Pi)$ is empty by \cite[lemma~A.8]{RW} unless $\Pi$ belongs to type \nosf{IIIb} with $\chi_1=\nu^{\pm 1}$.
\end{proof}

\subsection{Generic cases}
For generic $\Pi$ we show that $(H_+,\widetilde{\rho})$-functionals exist exactly for the generic exceptional cases discussed in \cite{RW}.
\begin{prop} \label{EXGEN}
Fix a generic irreducible representation $\Pi\in\CCC_G(\omega)$ and
a smooth character $\widetilde{\rho}$ of ${\Gl(1)}$ and let $\widetilde{\rho}=\nu\rho$,
then
\begin{equation*}
\dim\Hom_{\Gl(2)}(\overline\Pi,\widetilde{\rho}\circ\det)=
\begin{cases}
 1 & \text{type \nosf{I, IIa, Va, VIa, X, XIa} and }\rho\in\Delta_-(\Pi) \ ,\\%\cup \Delta^\divideontimes_-(\Pi)\ ,\\
 0 & \text{otherwise}\ .
\end{cases}
\end{equation*}
These functionals do not factorize over $B$.
\end{prop}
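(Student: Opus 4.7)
The plan is to apply $\Hom_{\Gl(2)}(-,\widetilde{\rho}\circ\det)$ to the $P_3$-filtration of Lemma \ref{lem:AB-sequence} and reduce everything to the two explicitly computable pieces $A$ and $B$. For generic $\Pi$ we have $m_\Pi = 1$, so Lemma \ref{lem:H_+functional_decomposition} yields the four-term exact sequence
\begin{equation*}
0 \to \Hom_{\Gl(2)}(B,\widetilde{\rho}\circ\det) \to \Hom_{\Gl(2)}(\overline{\Pi},\widetilde{\rho}\circ\det) \to \Hom_{\Gl(2)}(j_!i_*(A),\widetilde{\rho}\circ\det) \xrightarrow{\delta} \Ext^1_{\Gl(2)}(B,\widetilde{\rho}\circ\det).
\end{equation*}
The leftmost term vanishes by Lemma \ref{H_+functionals_for_B}, because the Klingen--Jacquet module of a generic representation admits no one-dimensional constituent. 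The third term has dimension at most one by Lemma \ref{H_+functionals_for_A}, and is non-zero precisely when $\widetilde{\rho}\in\Delta_+(\Pi)$. This already gives the upper bound $\dim\Hom_{\Gl(2)}(\overline{\Pi},\widetilde{\rho}\circ\det)\le1$, the vanishing whenever $\widetilde{\rho}\notin\Delta_+(\Pi)$, and the non-factorization claim: every non-zero functional is detected by its restriction to the $A$-quotient and hence cannot come from $B$.

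To achieve the lower bound in the six listed types with $\rho\in\Delta_-(\Pi)$, the plan is to exhibit an explicit non-trivial functional as a residue of the partial zeta integral. For these Langlands parameters the relation $\widetilde{\rho}=\nu\rho$ turns the condition $\rho\in\Delta_-(\Pi)$ into $\widetilde{\rho}\in\Delta_+(\Pi)$, so Lemma \ref{H_+functionals_for_A} already provides the unique candidate functional on $j_!i_*(A)$; what must be shown is that it lifts across $\delta$ to $\overline{\Pi}$. The most efficient route is to invoke Lemma \ref{lem:suff_cond_subregular_poles}: verifying its hypothesis --- non-triviality of the composition $\overline{\Pi}^K\to k_{\nu^2\rho}(\widetilde{\Pi}/\widetilde{\Pi}^S)$ --- produces a subregular pole of the form $L(s,\nu^{1/2}\rho)$, and Proposition \ref{prop:poles_give_functionals} then delivers the desired functional as the residue at that pole. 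This hypothesis will be checked case-by-case in Section \ref{s:Main} using the explicit Bessel modules of \cite{RW}.

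It remains to rule out functionals for the remaining generic types, which amounts to showing that $\delta$ is injective. Whenever $\widetilde{\rho}\notin\Delta_+(\Pi)$ this is automatic from the upper bound, so the main obstacle is the types (notably IIIa and IVa) for which $\Delta_+(\Pi)$ is non-empty yet no subregular pole exists. The plan here is to analyze the extension class of $0\to j_!i_*(A)\to\overline{\Pi}/\mathbb{S}_2\to i_*(B)\to 0$ directly: push it forward along the unique surjection $j_!i_*(A)\twoheadrightarrow\widetilde{\rho}\circ\det$ and show that the resulting element of $\Ext^1_{\Gl(2)}(B,\widetilde{\rho}\circ\det)$ is non-zero, by combining the explicit constituents of $B$ tabulated in \cite[table A.4]{Roberts-Schmidt} with $\Ext^1$-computations parallel to those appearing in the proof of Proposition \ref{prop:central_specialization_non_generic} and Lemma \ref{lem:Dual_Frob_Ext}. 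The delicate point is that the monodromy on $A$ together with the structure of the Klingen--Jacquet module must conspire to keep this class non-trivial precisely for types IIIa and IVa, which is where those specific Langlands parameters enter the argument.
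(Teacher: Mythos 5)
Your upper-bound argument via Lemma~\ref{lem:H_+functional_decomposition}, Lemma~\ref{H_+functionals_for_B} and Lemma~\ref{H_+functionals_for_A} is sound and matches the paper's handling of the case $\rho\notin\Delta_-(\Pi)$. But your proposed route to the lower bound has two genuine problems, and your treatment of types \nosf{IIIa}/\nosf{IVa} is only a sketch of a plan.

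First, \emph{circularity}. You propose to obtain the existence of the $(H_+,\widetilde{\rho})$-functional by invoking Lemma~\ref{lem:suff_cond_subregular_poles} and then Proposition~\ref{prop:poles_give_functionals}, i.e.\ by first producing a subregular pole. But the verification of the hypothesis of Lemma~\ref{lem:suff_cond_subregular_poles} for generic $\Pi$ in Case~2 (the critical character occurring in $\Delta_0(\Pi)$ with multiplicity $\geq 2$, which is exactly type \nosf{VIa}) is carried out in Proposition~\ref{prop:VIa_surjection_version2}, whose ``Assumption'' explicitly requires $\Hom_{\Gl(2)}(\overline{\Pi},\widetilde{\rho}\circ\det)\neq 0$ as an input. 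The paper proves Proposition~\ref{EXGEN} first, purely representation-theoretically, and only then feeds it into the zeta-integral machinery. Reversing the order, as you propose, would make the argument circular for type \nosf{VIa}.

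Second, \emph{ramification}. Lemma~\ref{lem:suff_cond_subregular_poles} and Proposition~\ref{prop:poles_give_functionals} both require $\rho$ to be unramified: the zeta integral $\int_{k^\times}\Phi_2(0,t_1)\rho(t_1)|t_1|^{s+1/2}\Diff t_1$ vanishes identically for ramified $\rho$, so no subregular pole exists. But Proposition~\ref{EXGEN} is a statement about arbitrary smooth $\widetilde{\rho}$, and the relevant $\rho\in\Delta_-(\Pi)$ need not be unramified (the inducing data $\chi_1,\chi_2,\sigma$ of types \nosf{I, IIa}, etc.\ can be ramified). Your route simply cannot see these cases, whereas the paper's Euler-characteristic argument is blind to ramification.

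The paper's own proof avoids both pitfalls. It passes to the central specialization $\widehat{\Pi}=\zeta_\mu(\overline{\Pi})$ and compares $\Hom$ and $\Ext^1$ groups against the two indecomposable length-two extensions $\pi_\pm=\widetilde{\rho}\otimes M_{(1:\St)}$ and $\widetilde{\rho}\otimes M_{(\St:1)}$, using Lemma~\ref{ext-lemma}, Lemma~\ref{Nice} and the Euler characteristic computation of Proposition~\ref{Finally}. For the six listed types with $\rho\in\Delta_-(\Pi)$ the Bessel module is non-perfect, which forces $\dim\Hom_{\CCC}(\widehat{\Pi},\pi_-)=2$ while $\dim\Hom_{\CCC}(\widehat{\Pi},\pi_+)=1$; assuming $\Hom_{\CCC}(\widehat{\Pi},\widetilde{\rho}\circ\det)=0$ then leads to two incompatible values for $\dim\Hom_{\CCC}(\widehat{\Pi},\St(\widetilde{\rho}))$, a contradiction. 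For types \nosf{IIIa}, \nosf{IVa} the Bessel module is perfect, the Euler characteristics vanish, and the single available dimension is absorbed by the surjection $\overline{\Pi}\twoheadrightarrow B\twoheadrightarrow\St(\widetilde{\rho})$, so nothing is left for $\widetilde{\rho}\circ\det$. This is a direct dimension count; your alternative of tracking the pushforward of the extension class along $j_!i_*(A)\twoheadrightarrow(\widetilde{\rho}\circ\det)$ and proving its non-vanishing is plausible in spirit but is precisely the ``delicate point'' you flag without resolving, so as written it is a gap rather than a proof.

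In short: the upper bound and the non-factorization over $B$ are fine and agree with the paper, but the lower bound needs to be established by the Euler-characteristic contradiction (or an equally intrinsic argument), not by importing subregular poles from Section~\ref{s:Main}.
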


\begin{proof}
Every $\widetilde{\rho}$-equivariant functional factorizes over the central specialization $\widehat{\Pi}=\zeta_{\mu}(\Pi)\in\CCC=\CCC_{\Gl(2)}(\mu)$
with central character $\mu=\widetilde{\rho}^2$.
The indecomposable extensions 
\begin{equation*}
 0 \to (\widetilde{\rho}\circ\det)\to \pi_+ \to \St(\widetilde{\rho})\to 0\ ,
\end{equation*}
\begin{equation*}
 0 \to \St(\widetilde{\rho}) \to \pi_- \to (\widetilde{\rho}\circ\det)\to 0\ .
\end{equation*}
given by $\pi_+%
=\widetilde{\rho}\otimes M_{(1:\St)}$
and $\pi_-
=\widetilde{\rho}\otimes M_{(\St:1)}$
yield long exact sequences
\begin{equation*} 
\xymatrix{
0\ar[r] & \Hom_{\cal C}(\widehat{\Pi}, \tilde\rho\circ\det)  \ar[r] & \Hom_{\cal C}(\widehat{\Pi}, \pi_+)  \ar[r] & \Hom_{\cal C}(\widehat{\Pi}, \St(\tilde\rho)) \ar@{->}`r[d]`d[l]`^d[lll]_\delta`d[l] [dll] \\
 & \Ext^1_{\cal C}(\widehat{\Pi}, \tilde\rho\circ\det) \ar[r] & \Ext^1_{\cal C}(\widehat{\Pi}, \pi_+)  \ar[r] &  \Ext^1_{\cal C}(\widehat{\Pi}, \St(\tilde\rho))  \ar[r] &  0\ ,
}
\end{equation*}
\begin{equation*}
\xymatrix{
 0 \ar[r] & \Hom_{\cal C}(\widehat{\Pi}, \St(\tilde\rho)) \ar[r] & \Hom_{\cal C}(\widehat{\Pi}, \pi_-)  \ar[r] & \Hom_{\cal C}(\widehat{\Pi}, \tilde\rho\circ\det )  \ar@{->}`r[d]`d[l]`^d[lll]_\delta`d[l] [dll] \\
      & \Ext^1_{\cal C}(\widehat{\Pi}, \St(\tilde\rho))  \ar[r] & \Ext^1_{\cal C}(\widehat{\Pi}, \pi_-)  \ar[r] & \Ext^1_{\cal C}(\widehat{\Pi}, \tilde\rho\circ\det) \ar[r] & 0 \ .
}
\end{equation*}

Suppose $\Pi$ is of type \nosf{I}, \nosf{IIa}, \nosf{Va}, \nosf{VIa}, \nosf{X} or \nosf{XIa} and $\rho\in\Delta_-(\Pi)$.
Then the Bessel module $k_{\rho}(\overline\Pi)$ is non-perfect \cite[\S6]{RW}, so
$\dim \Hom_{\CCC}(\widehat\Pi, \pi_+)=1$ and $\dim \Hom_{\CCC}(\widehat\Pi, \pi_-)=2$
by lemma~\ref{ext-lemma} and lemma~\ref{Nice}.
If $\Hom_{\CCC}(\widehat\Pi, \tilde\rho\circ\det)=0$ vanishes, then by proposition~\ref{Finally} $\Ext_{\CCC}(\widehat\Pi, \tilde\rho\circ\det)=0$ also vanishes.
The first long exact sequence
implies $\dim\Hom_{\CCC}(\widehat\Pi, \St(\tilde\rho))=1$ by proposition~\ref{Finally}.
The second long exact sequence implies $\Hom_{\CCC}(\widehat\Pi, \St(\tilde\rho))=2$.
This is a contradiction, so $\Hom_{\CCC}(\widehat\Pi, \tilde\rho\circ\det)$ is non-zero and thus one by lemma~\ref{mult}.

For $\Pi$ of type \nosf{IIIa}, \nosf{IVa} the Bessel module $k_{\rho}(\overline\Pi)$ is perfect for every $\rho$.
In this case lemma~\ref{ext-lemma} implies $\dim \Hom_\CCC(\widehat\Pi, Y_\pm)=1$.
By proposition~\ref{Euler}, $\Ext_\CCC(\widehat{\Pi},Y_\pm)=0$ vanishes.
The long exact sequences imply
$\Ext_\CCC(\widehat{\Pi},\widetilde{\rho}\circ\det)=0$
and $\Ext_\CCC(\widehat{\Pi},\St(\widetilde{\rho}))=0$ .
Counting dimensions shows
$$\dim \Hom_\CCC(\widehat\Pi, \St(\tilde\rho))+\dim \Hom_\CCC(\widehat\Pi, \tilde\rho\circ\det)=1\ .$$
For $\rho\in\Delta_-(\Pi)$ there are nontrivial $\Gl(2)$-homomorphisms $\overline{\Pi}\twoheadrightarrow B\twoheadrightarrow  \St(\tilde\rho)$ \cite[table~A.4]{Roberts-Schmidt},
so the assertion follows.

The case $\rho\notin\Delta_-(\Pi)%
$
follows by lemma~\ref{lem:H_+functional_decomposition}, lemma~\ref{H_+functionals_for_B} and lemma~\ref{H_+functionals_for_A}.

Finally, these functionals cannot factorizes over $B$ since the Klingen-module of a generic representation $\Pi$ does not admit one-dimensional quotients.
\end{proof}

% \newpage

\section{Construction of subregular poles}\label{s:Main}
In this section we show that the subregular $L$-factors occuring in table~\ref{tab:subregular_poles} actually exist by verifying the sufficient condition of lemma~\ref{lem:suff_cond_subregular_poles}.

\subsection{Non-generic cases}
Fix a non-generic irreducible $\Pi\in\CCC_G(\omega)$, normalized as in \cite[table~1]{RW}, and a smooth character $\rho$ of $k^\times$ such that $\Lambda=\rho\boxtimes\rho^\divideontimes$ defines a split Bessel model of $\Pi$, i.e.\ $\rho\in\Delta_+(\Pi)$. We assume that there is a $\Gl(2)$-equivariant functional $\overline{\Pi}\to (\C,\widetilde{\rho}\circ\det)$ for $\widetilde{\rho}=\nu\rho$, so by corollary~\ref{B} implies that $\Pi$
is of type \nosf{IIIb}, \nosf{IVc}, \nosf{VIc}, \nosf{VId}.
\begin{lem}\label{dime}
The Bessel module $\widetilde{\Pi}=k_\rho(\overline{\Pi})$ is perfect of degree one.
Especially, $\widetilde{\Pi}^S=0$
and $\dim k_{\chi}(\widetilde{\Pi})= 1$ for every smooth character $\chi$ of $\Gl(1)$.
\end{lem}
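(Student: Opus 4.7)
The plan is to invoke the explicit classification of Bessel modules from \cite[thm.~6.2 and table~7]{RW}, reducing the statement to a case-by-case verification for the four relevant types.

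First, the degree-one statement is immediate: by hypothesis $\rho\in\Delta_+(\Pi)$, which by \cite[thm.~6.2]{RW} (equivalently \cite[thm.~6.2.2]{Roberts-Schmidt_Bessel}) is precisely the condition for $\Lambda=\rho\boxtimes\rho^\divideontimes$ to define a split Bessel model of $\Pi$, i.e.\ $\deg\widetilde{\Pi}=1$.

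For perfection, I would inspect the four remaining types IIIb, IVc, VIc, VId with their respective admissible characters $\rho$ (namely $\rho\in\{\sigma,\chi_1\sigma\}$ for IIIb, $\rho=\nu\sigma$ for IVc, $\rho=\sigma$ for VIc and VId). The Bessel module $\widetilde{\Pi}$ has been made explicit in \cite[table~7]{RW} and in each case embeds into $C_b^\infty(k^\times)$, which by \cite[lemma~3.17]{RW} is the defining property of a perfect $TS$-module. This contrasts sharply with lemma~\ref{key}: when $\rho$ fails to define a Bessel model, the module $\beta_\rho(\Pi)$ acquires $T$-character summands of the form $\nu^{3/2}\chi$ that obstruct perfection, whereas no such summands appear in the cases at hand.

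The consequences then follow formally. The embedding $\widetilde{\Pi}\hookrightarrow C_b^\infty(k^\times)$ immediately yields $\widetilde{\Pi}^S=0$, since the right-regular $S$-action on $C_b^\infty(k^\times)$ has no nonzero invariants. The identity $\dim k_\chi(\widetilde{\Pi})=1$ for every smooth character $\chi$ of $\Gl(1)$ follows from lemma~\ref{Nice}, which for modules of degree one gives $\dim k_\chi(\widetilde{\Pi})=1+\dim\widetilde{\Pi}^{T,\chi}$, combined with the vanishing $\widetilde{\Pi}^{T,\chi}=0$ forced by perfection. The main obstacle is confirming the vanishing at the critical character $\chi=\nu^2\widetilde{\rho}$, since \cite[thm.~6.6]{RW} used in the proof of lemma~\ref{Nice} only gives vanishing a priori for $\chi\neq\nu^2\widetilde{\rho}$; this last check must also be extracted from the explicit description in \cite[table~7]{RW}, and is where the non-triviality of the argument actually resides.
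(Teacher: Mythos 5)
The paper's own proof is a single citation to \cite[cor.~6.10]{RW}, so your reconstruction necessarily takes a more granular route through thm.~6.2, table~7, and the surrounding structural lemmas of \cite{RW}; the overall plan — reduce to the four types \nosf{IIIb}, \nosf{IVc}, \nosf{VIc}, \nosf{VId}, read off the explicit Bessel modules, and then unwind the formal consequences — is the right idea. However there are a few genuine slips. You invert \cite[lemma~3.17]{RW}: that lemma produces the embedding $\widetilde{\Pi}\hookrightarrow C_b^\infty(k^\times)$ as a \emph{consequence} of perfection (as the paper uses it in the proof of lemma~\ref{lem:suff_cond_subregular_poles}), not as the defining property, so exhibiting such an embedding from table~7 does not by itself establish perfection without an explicit converse statement. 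Second, for $\widetilde{\Pi}=k_\rho(\overline{\Pi})=\beta_\rho(\Pi)$ the critical character is $\nu^2\rho$, not $\nu^2\widetilde{\rho}=\nu^3\rho$; you have carried over the critical character from the proof of lemma~\ref{Nice}, which concerns the different module $\beta_{\widetilde{\rho}}(\Pi)$.

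Third, the sentence asserting that $\widetilde{\Pi}^{T,\chi}=0$ is ``forced by perfection'' and the subsequent caveat that the critical character must be handled by hand contradict one another. In fact the caveat is unnecessary: once perfection and degree one are in place, $\dim k_{\nu^2\rho}(\widetilde{\Pi})=1$ follows directly from \cite[lemma~3.13]{RW} (as the paper itself invokes in the proof of lemma~\ref{lem:hexagon}), so no table inspection is needed for that value of $\chi$; the only place where one genuinely has to appeal to the explicit description of the Bessel module is in establishing perfection itself. If you repair the direction of the argument for perfection and use the correct critical character together with \cite[lemma~3.13]{RW}, the proof closes cleanly and matches what \cite[cor.~6.10]{RW} must package.
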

\begin{proof} 
See corollary 6.10 in \cite{RW}.
\end{proof}

\begin{lem}Under the above assumptions,
the $\Gl(2)$-module $\overline{\Pi}\in\CCC_{\Gl(2)}$ has a quotient $Q$ isomorphic to $(\widetilde{\rho}\circ\det) \otimes M_{(\St:1)}$, for the induced representation
$$M_{(\St:1)} = (\nu^{1/2}\times\nu^{-1/2})\ .$$
\end{lem}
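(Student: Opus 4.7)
The plan is to pass to the central specialization $\widehat{\Pi} := \zeta_\mu(\overline{\Pi})$ with $\mu := \widetilde{\rho}^2$ and read the quotient off from the explicit computation in Proposition~\ref{prop:central_specialization_non_generic}. Since the target $\widetilde{\rho}\otimes M_{(\St:1)}$ has central character $\mu$, any $\Gl(2)$-map from $\overline{\Pi}$ into it factors uniquely through $\widehat{\Pi}$, so it suffices to produce $\widetilde{\rho}\otimes M_{(\St:1)}$ as a $\Gl(2)$-quotient of $\widehat{\Pi}$.

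By Corollary~\ref{B} the hypothesis forces $\Pi$ to be of type \nosf{IIIb}, \nosf{IVc}, \nosf{VIc}, or \nosf{VId}, and inspection of Table~\ref{tab:AB} shows that in each of these cases $\chi := \widetilde{\rho} = \nu\rho$ occurs as a Jordan block of $A$. Combined with the formula $\zeta_\mu(j_!i_*(\chi))\cong \chi\nu^{-1/2}\times\chi^{-1}\nu^{1/2}\mu$ established in the proof of Proposition~\ref{prop:central_specialization_non_generic}, this produces a direct summand
\begin{equation*}
\zeta_\mu(j_!i_*(\widetilde{\rho}))\;\cong\; \widetilde{\rho}\nu^{-1/2}\times\widetilde{\rho}\nu^{1/2}\;=\;\widetilde{\rho}\otimes M_{(1:\St)}
\end{equation*}
of $\zeta_\mu(j_!i_*(A))$. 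Plugging into the exact sequence~\eqref{eq:specialization_seq},
\begin{equation*}
0\to \zeta^\mu(i_*(B))\to \zeta_\mu(j_!i_*(A))\to \widehat{\Pi}\to \zeta_\mu(i_*(B))\to 0,
\end{equation*}
the image of $\zeta^\mu(i_*(B))$ is forced by central character considerations to land in the one-dimensional submodule $\widetilde{\rho}\circ\det$ of this $M_{(1:\St)}$ summand, while the right-hand term $\zeta_\mu(i_*(B))$ contains $\widetilde{\rho}\circ\det$ as a quotient. Together these identify a quotient $\widehat{\Pi}'$ of $\widehat{\Pi}$ fitting in
\begin{equation*}
0\to \St(\widetilde{\rho})\to \widehat{\Pi}'\to \widetilde{\rho}\circ\det\to 0,
\end{equation*}
where the constituent $\St(\widetilde{\rho})$ comes from the cokernel of $\zeta^\mu(i_*(B)) \hookrightarrow \widetilde{\rho}\otimes M_{(1:\St)}$.

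The main obstacle is to verify that this extension is non-split, so that $\widehat{\Pi}'\cong \widetilde{\rho}\otimes M_{(\St:1)}$ rather than $\St(\widetilde{\rho})\oplus(\widetilde{\rho}\circ\det)$. I would argue this by the same monodromy technique as at the end of the proof of Proposition~\ref{prop:central_specialization_non_generic}: the long exact sequence of Hom attached to the $P_3$-filtration, combined with the fact that $\zeta^\mu(i_*(B))\hookrightarrow\zeta_\mu(j_!i_*(A))$ lands in the trivial submodule rather than the Steinberg quotient of $M_{(1:\St)}$, shows that the connecting homomorphism $\Hom_{\Gl(2)}(j_!i_*(A),\St(\widetilde{\rho}))\to\Ext^1_{\Gl(2)}(i_*(B),\St(\widetilde{\rho}))$ is nonzero and hence $\Hom_{\Gl(2)}(\overline{\Pi},\St(\widetilde{\rho}))=0$. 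Together with the multiplicity-one bound of Lemma~\ref{mult_P3} this rules out a direct sum decomposition of $\widehat{\Pi}'$, leaving only the unique non-split extension $\widetilde{\rho}\otimes M_{(\St:1)}$.
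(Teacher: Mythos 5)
Your opening sentence announces the right plan — pass to the central specialization $\widehat{\Pi}=\zeta_\mu(\overline{\Pi})$, $\mu=\widetilde{\rho}^2$, and read the answer off Proposition~\ref{prop:central_specialization_non_generic} — and this is exactly what the paper does: for each of the types \nosf{IIIb}, \nosf{IVc}, \nosf{VIc}, \nosf{VId} admitting an $(H_+,\widetilde{\rho})$-functional, the entry in Table~\ref{tab:central_specialization} at $\mu=\widetilde{\rho}^2$ visibly has $\widetilde{\rho}\otimes M_{(\St:1)}$ as a direct summand or (for \nosf{IIIb}, $\chi_1=\nu^{\pm1}$, $\rho=\sigma$) as the top of the self-extension $M^c_{(\St:1:\St:1)}$. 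Had you stopped there, the proof would be complete.

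Instead you try to re-derive the relevant part of Proposition~\ref{prop:central_specialization_non_generic} from the sequence~\eqref{eq:specialization_seq}, and two steps go wrong. First, the assertion that the image of $\zeta^\mu(i_*(B))$ is ``forced by central character considerations to land in the one-dimensional submodule $\widetilde{\rho}\circ\det$'' of the $M_{(1:\St)}$-summand is false in general: every constituent of $\zeta_\mu(j_!i_*(A))$ has the same central character $\mu$, so central-character considerations cut nothing out, and for type \nosf{IIIb} with $\chi_1=\nu^{\pm1}$ and $\rho=\sigma$ one has $\zeta^\mu(i_*(B))\cong\nu\sigma M_{(1:\St)}$ (length two, indecomposable — this is worked out in the paper's own proof of the proposition); it fills the whole summand $\zeta_\mu(j_!i_*(\widetilde\rho))=\nu\sigma M_{(1:\St)}$, no $\St(\widetilde\rho)$ survives in the cokernel, and the desired quotient actually comes from the other summand $\zeta_\mu(j_!i_*(\nu^2\sigma))$ together with $\zeta_\mu(i_*(B))$. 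Second, the non-splitness step is circular: showing the connecting map $\Hom(j_!i_*(A),\St(\widetilde\rho))\to\Ext^1(i_*(B),\St(\widetilde\rho))$ is non-zero is essentially the same as showing the extension does not split, and the honest argument requires the monodromy analysis (via $\zeta_{\mu^{(2)}}$, the \'{e}tude of $M^{nc}_{(1:\St:1)}$, and Lemma~\ref{lem:cent_spez_indecomposable}) that is exactly what Proposition~\ref{prop:central_specialization_non_generic} provides. In short, the right move is to invoke the proposition, not to re-prove it.
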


\begin{proof}
Indeed, the central specialization $\zeta_\mu(\overline{\Pi})$ for $\mu=\widetilde{\rho}^2$ has such a quotient by proposition~\ref{prop:central_specialization_non_generic}.
\end{proof}

The Waldspurger-Tunnell functor ${\cal C}_{\Gl(2)} \to {\cal C}_{T}$ for $T=\{\diag(\ast,1)\in\Gl(2)\}$
sends a smooth $\Gl(2)$-module $X$ to its maximal quotient $X_{\widetilde{T},\rho}$ 
on which $\widetilde{T}=\{(\diag(1,\ast))\in \Gl(2)\}$ acts by the character $\rho$.

\begin{lem}\label{lem:hexagon}
Under the above assumptions,
there is a commutative diagram with the canonical projections
\begin{equation*}
\xymatrix{
& {\overline \Pi} \ar[dl] \ar[dr] \in {\cal C}_{\Gl_a(2)}  & \\
{\CCC}_{TS} \ni k_\rho(\overline{\Pi}) \ar[d] & & \quad \ Q\in {\cal C}_{\Gl(2)} \ar[d] \\
{\CCC}_{T} \ni k_{\nu^2\rho}k_\rho(\overline{\Pi}) \ar[dr]_\cong & & \quad  Q_{\widetilde{T},\rho} \in {\cal C}_{T} \ar[dl]^{\cong} \\
          & k_{\nu^2\rho}k_\rho(Q) \ . & } %
\end{equation*}
Both lower diagonal arrows are isomorphisms.
\end{lem}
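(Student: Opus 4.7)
My strategy splits the lemma into three parts: commutativity of the hexagon, and the claim that each of the two lower diagonal arrows is an isomorphism.

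For commutativity, the key observation is that every arrow in the diagram is a canonical coinvariant/quotient map. The left column applies the coinvariant functors $k_\rho$ and then $k_{\nu^2\rho}$ to $\overline{\Pi}$, while the right column first projects to $Q$ and then takes Waldspurger--Tunnell coinvariants. The crucial identification is that the Waldspurger--Tunnell functor $(-)_{\widetilde{T},\rho}$ on $\CCC_{\Gl(2)}$ agrees with the restriction of the composite $k_{\nu^2\rho}k_\rho \circ i_*$ to $\Gl(2)$-modules regarded as $\Gl_a(2)$-modules via $i_*$. With this identification, the right diagonal is an instance of the identification itself, and the left diagonal arises by applying the functor $k_{\nu^2\rho}k_\rho$ to the surjection $\overline{\Pi}\twoheadrightarrow i_*(Q)$; commutativity is then automatic from the universal property of coinvariants.

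For the left diagonal $k_{\nu^2\rho}k_\rho(\overline{\Pi})\to k_{\nu^2\rho}k_\rho(Q)$, I would combine Lemma~\ref{dime} with right-exactness. Lemma~\ref{dime} asserts that $\widetilde{\Pi}=k_\rho(\overline{\Pi})$ is perfect of degree one, hence $\dim k_{\nu^2\rho}(\widetilde\Pi)=1$. Since $k_\rho$ and $k_{\nu^2\rho}$ are right-exact (being coinvariant functors), the surjection $\overline{\Pi}\twoheadrightarrow i_*(Q)$ induces a surjection out of a one-dimensional source. It therefore suffices to exhibit a nonzero element in the target, which is provided by the right diagonal being an isomorphism onto a nonzero space.

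For the right diagonal $Q_{\widetilde{T},\rho}\to k_{\nu^2\rho}k_\rho(Q)$ I would compute both sides directly. Twisting by $(\widetilde{\rho}\circ\det)^{-1}$ reduces the computation to the principal series $M_{(\St:1)}=\nu^{1/2}\times\nu^{-1/2}$. A Mackey-type analysis of the Bruhat cells of $\Gl(2)/\widetilde{T}$, in which only the open cell survives the $\rho$-equivariance condition, yields a one-dimensional $T$-module for the Waldspurger--Tunnell coinvariants. The other side can be computed by applying $k_\rho$ via the excision sequence $0\to j_!k_{\nu\rho}\to k_\rho j_!\to i_*i^*\to 0$ of \cite{RW} and then $k_{\nu^2\rho}$. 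The main obstacle will be this last step: the functors $k_\rho,k^\rho$ are unnormalized and live on $\Gl_a$-modules, whereas the Waldspurger--Tunnell functor is phrased for $\Gl(2)$-modules, so one must carefully track the twists by modulus characters on $\Gl_a$ versus $\Gl$ in order to verify that the canonical map is an isomorphism of $T$-modules on the nose, and not merely up to a character twist.
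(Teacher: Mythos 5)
Your treatment of the \emph{left} diagonal (combine Lemma~\ref{dime}, $\dim k_{\nu^2\rho}k_\rho(\overline\Pi)=1$, with right-exactness and the non-vanishing of the target) and the reduction of commutativity to transitivity of coinvariant functors both match the paper. But there are two genuine gaps on the \emph{right} side of the diagram.

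First, your ``crucial identification'' is false as stated: the Waldspurger--Tunnell functor $(-)_{\widetilde{T},\rho}$ takes only $\widetilde{T}$-coinvariants and lands in $\CCC_T$, whereas $k_{\nu^2\rho}k_\rho\circ i_*$ additionally takes $U$- and $(T,\nu^2\rho)$-coinvariants; so in general $Q_{\widetilde{T},\rho}$ only \emph{surjects} onto $k_{\nu^2\rho}k_\rho(i_*(Q))$, and equality is precisely what has to be proven. The paper's argument does not claim such a general identification; it shows that in this particular situation both sides are one-dimensional and the surjection is therefore an isomorphism.

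Second, your plan for the right diagonal (Mackey analysis of Bruhat cells, excision $0\to j_!k_{\nu\rho}\to k_\rho j_!\to i_*i^*\to 0$, then track all normalizations) is not carried out, and as proposed it does not immediately apply: the sequence you quote controls $k_\rho\circ j_!$, while $Q$ enters the $\Gl_a(2)$-picture via $i_*$, not $j_!$, so you would want $k_\rho\circ i_*$ (cf.\ $k_\rho i_*\cong k^\rho i_*$ from \cite[lem.~4.3]{RW}). The paper bypasses this entirely: since $Z_G$ acts on $Q$ by $\nu^2\rho^2=\widetilde\rho^2$ and $\widetilde{T}$ acts on $Q_{\widetilde{T},\rho}$ by $\rho$, the $T$-action on $Q_{\widetilde{T},\rho}$ is \emph{forced} to be $\nu^2\rho$ --- no Mackey computation or normalization-tracking is needed --- and $Q_{\widetilde{T},\rho}$ is one-dimensional by \cite[lem.~5.3]{W_Excep}. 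Together these two facts give the isomorphism $Q_{\widetilde{T},\rho}\xrightarrow{\ \sim\ }k_{\nu^2\rho}k_\rho(Q)$ directly, which is exactly what your argument still needs.
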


\begin{proof}
The center acts on $Q$ by the central character $\mu=\nu^2\rho^2=\tilde\rho^2$, so
the diagonal torus $T\widetilde{T}$ of $\Gl(2)$ acts on 
$Q_{\widetilde{T},\rho}$ by the character $\nu^2\rho\boxtimes\rho$.
It is well-known that $Q_{\widetilde{T},\rho}$ is one-dimensional, compare \cite[lemma~5.3]{W_Excep}.
By construction, $k_{\nu^2\rho}k_\rho(Q)$ is one-dimensional, so the natural projection
from $Q_{\widetilde{T},\rho}$ to $k_{\nu^2\rho}k_\rho(Q)$
is an isomorphism.
The Bessel module $\widetilde{\Pi}$ is perfect by lemma~\ref{dime} and this implies $\dim k_{\nu^2\rho}k_\rho(\overline{\Pi})=1$, see \cite[lemma~3.13]{RW}.
Therefore the natural projection from $k_{\nu^2\rho}k_\rho(\overline{\Pi})$ to $k_{\nu^2\rho}k_\rho(Q)$ is also an isomorphism.
By transitivity of coinvariant functors, both sides of the diagram yield the natural projection to the coinvariant quotient $k_{\nu^2\rho}k_{\rho}(Q)$ on which the standard Borel group $B_{\Gl(2)}$ acts with the character $\nu^2\rho\boxtimes\rho$.
\end{proof}

\begin{prop}\label{reg-Int}
Fix a non-generic irreducible representation $\Pi\in\CCC_G(\omega)$ and an unramified character $\rho$ such that
the Bessel module $\widetilde{\Pi}=k_\rho(\overline{\Pi})$ has degree one. In other words, assume that
$\Lambda=\rho\boxtimes\rho^\divideontimes$ defines a split Bessel model for $\Pi$.
If $\Pi$ admits a non-trivial $(H_+,\widetilde{\rho})$-functional, then for $K=\Gl(2,\mathfrak{o}_k)$ the canonical morphism
$$\overline{\Pi}^K \longrightarrow \overline\Pi \longrightarrow\widetilde{\Pi}\longrightarrow  k_{\nu^2\rho}(\widetilde\Pi/\widetilde{\Pi}^S)$$
is nontrivial.
\end{prop}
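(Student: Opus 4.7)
The plan is to invoke the hexagon of lemma~\ref{lem:hexagon} to reduce the claim to showing that the spherical vector of a suitable quotient of $\overline{\Pi}$ survives the projection onto a one-dimensional Waldspurger-Tunnell quotient. First, lemma~\ref{dime} gives $\widetilde{\Pi}^S=0$, so the target $k_{\nu^2\rho}(\widetilde{\Pi}/\widetilde{\Pi}^S)$ simplifies to $k_{\nu^2\rho}k_\rho(\overline{\Pi})$. The hypothesis on $(H_+,\widetilde{\rho})$-functionals supplies, via the preceding lemma, a $\Gl(2)$-equivariant surjection $\overline{\Pi}\twoheadrightarrow Q$ with $Q\cong\widetilde{\rho}\otimes(\nu^{1/2}\times\nu^{-1/2})$. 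Lemma~\ref{lem:hexagon} then identifies the map in question with the composition $\overline{\Pi}\twoheadrightarrow Q\twoheadrightarrow Q_{\widetilde{T},\rho}$, and it is enough to exhibit a vector in $\overline{\Pi}^K$ whose image in the one-dimensional space $Q_{\widetilde{T},\rho}$ is nonzero.

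Since $K=\Gl(2,\mathfrak{o}_k)$ is compact, the functor $(-)^K$ is exact, so the surjection $\overline{\Pi}\twoheadrightarrow Q$ restricts to a surjection $\overline{\Pi}^K\twoheadrightarrow Q^K$. Because $\rho$, and therefore $\widetilde{\rho}=\nu\rho$, is unramified, $Q$ is an unramified twist of an unramified principal series of $\Gl(2)$, and its $K$-fixed subspace $Q^K$ is one-dimensional, spanned by the spherical vector $v_0$, normalized to satisfy $v_0(e)=1$ in the induced-function realization.

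The remaining step, which is the crux of the argument, is to identify the quotient $Q\twoheadrightarrow Q_{\widetilde{T},\rho}$ with the evaluation-at-identity functional $\mathrm{ev}_e:f\mapsto f(e)$ on the induced model. The latter is $B_{\Gl(2)}$-equivariant, with the diagonal torus $T\widetilde{T}$ acting on its image by $\widetilde{\rho}\nu^{1/2}\delta_B^{1/2}\boxtimes\widetilde{\rho}\nu^{-1/2}\delta_B^{1/2}=\nu^2\rho\boxtimes\rho$, and this is precisely the character of $T\widetilde{T}$ on $Q_{\widetilde{T},\rho}$ computed in lemma~\ref{lem:hexagon}. Hence $\mathrm{ev}_e$ factors through $Q_{\widetilde{T},\rho}$ as an isomorphism, and $\mathrm{ev}_e(v_0)=1\neq 0$. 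Any preimage of $v_0$ in $\overline{\Pi}^K$ then has nonzero image in $k_{\nu^2\rho}(\widetilde{\Pi}/\widetilde{\Pi}^S)$, which proves the proposition. The main technical burden is the $\delta_B$-bookkeeping in the character computation that matches $\mathrm{ev}_e$ with $Q_{\widetilde{T},\rho}$; once that is settled, all the remaining assertions are formal consequences of the hexagon diagram, exactness of $(-)^K$ on compact groups, and the standard theory of spherical vectors in unramified principal series.
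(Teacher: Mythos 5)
Your proposal is correct and follows essentially the same route as the paper: reduce via lemma~\ref{dime} and the commutative hexagon of lemma~\ref{lem:hexagon} to nonvanishing of $Q^K \to Q_{\widetilde{T},\rho}$, using exactness of $K$-invariants to pass from $\overline{\Pi}^K$ to $Q^K$. The only difference is that where the paper simply cites lemma~5.3 of \cite{W_Excep} for the nonvanishing on $Q^K$, you unwind it into the explicit evaluation-at-identity / spherical-vector argument, which is a legitimate (and self-contained) way to establish the same fact.
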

\begin{proof}
The Bessel module is perfect of degree one by lemma~\ref{dime}, so $\widetilde{\Pi}^S=0$.
Exactness of $K$-invariants implies that the natural projection $\overline{\Pi}^K\to Q^{K}$ is surjective.
The projection $Q\to Q_{\widetilde{T},\rho}$
is nonzero on $Q^K$ as shown in lemma 5.3 of \cite{W_Excep}.
Since the diagram in lemma~\ref{lem:hexagon} is commutative, by going over the left side
of the diagram we obtain the assertion.
\end{proof}

\begin{thm}\label{thm:main_nongeneric}
For all non-generic irreducible $\Pi\in\CCC_G(\omega)$ with a split Bessel model attached to $\Lambda=\rho\boxtimes\rho^\divideontimes$,
the subregular factor $L_\sreg^\PS(s,\Pi,1,\Lambda)$ is given in table \ref{tab:subregular_poles}.
\end{thm}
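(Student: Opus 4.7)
The plan is to derive the theorem by combining the necessary condition of proposition~\ref{prop:poles_give_functionals} with the sufficient condition of lemma~\ref{lem:suff_cond_subregular_poles}, using the $H_+$-functional classification from corollary~\ref{cor:functionals_with_Bessel_models} and the non-triviality statement of proposition~\ref{reg-Int}.

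First I would establish the upper bound. Proposition~\ref{prop:poles_give_functionals} shows that every subregular pole $L(s,\nu^{1/2}\rho)$ of a non-generic $\Pi$ forces the existence of a non-trivial $(H_+,\widetilde{\rho})$-functional with $\widetilde{\rho}=\nu\rho$. Corollary~\ref{cor:functionals_with_Bessel_models} then tells us which non-generic $\Pi$ admit such functionals with $\rho\in\Delta_+(\Pi)$: only types \nosf{IIIb}, \nosf{IVc}, \nosf{VIc}, \nosf{VId} do so, and only for the explicit $\widetilde{\rho}\in\nu\Delta_+(\Pi)$ listed there. Applying the same analysis to $\rho^\divideontimes$ via the Weyl reflection $\mathbf{s}_1$ (as in the proof of proposition~\ref{2.1}) covers the poles of the form $L(s,\nu^{1/2}\rho^\divideontimes)$. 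Consequently the subregular Tate factors are at most those listed in table~\ref{tab:subregular_poles}, and the factor is trivial for all other non-generic $\Pi$.

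Next I would establish existence. Fix a non-generic $\Pi$ of one of the four listed types and choose $\rho\in\Delta_+(\Pi)$ such that $\widetilde{\rho}=\nu\rho\in\nu\Delta_+(\Pi)$. By lemma~\ref{dime} the Bessel module $\widetilde{\Pi}=k_\rho(\overline{\Pi})$ is perfect of degree one, so $\widetilde{\Pi}^S=0$, and $\rho$ is unramified in each of these cases. Corollary~\ref{cor:functionals_with_Bessel_models} supplies the non-trivial $(H_+,\widetilde{\rho})$-functional, so proposition~\ref{reg-Int} applies and yields that the composition
$$\overline{\Pi}^K\to\overline{\Pi}\to\widetilde{\Pi}\to k_{\nu^2\rho}(\widetilde{\Pi}/\widetilde{\Pi}^S)$$
is non-trivial. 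Lemma~\ref{lem:suff_cond_subregular_poles} then produces a subregular pole of the form $L(s,\nu^{1/2}\rho)$. A symmetric argument via $\mathbf{s}_1$ delivers $L(s,\nu^{1/2}\rho^\divideontimes)$ whenever $\widetilde{\rho}^\divideontimes=\nu\rho^\divideontimes\in\nu\Delta_+(\Pi)$.

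The only genuine difficulty is bookkeeping: for each type one lists the admissible $\rho\in\Delta_+(\Pi)$, checks which of $\widetilde{\rho},\widetilde{\rho}^\divideontimes$ lies in $\nu\Delta_+(\Pi)$, and multiplies the resulting Tate factors. Care is required in type \nosf{IIIb} with $\chi_1=\nu^{\pm1}$, where $\Delta_+(\Pi)\cap\nu\Delta_+(\Pi)$ is non-empty by lemma A.11 of \cite{RW}, so that both $\rho$ and $\rho^\divideontimes$ contribute; the one-dimensionality granted by lemma~\ref{mult_P3} prevents any double-counting. Matching the resulting products against the rows of table~\ref{tab:subregular_poles} then completes the proof.
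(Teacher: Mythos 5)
Your argument is correct and follows essentially the same route as the paper's own proof: the paper also deduces the one-to-one correspondence between subregular poles and $(H_+,\widetilde{\rho})$-functionals from Proposition~\ref{prop:poles_give_functionals} (necessity), Proposition~\ref{reg-Int} together with Lemma~\ref{lem:suff_cond_subregular_poles} (sufficiency), and then reads off the answer from the classification of functionals. The only cosmetic difference is that you invoke Corollary~\ref{cor:functionals_with_Bessel_models} for the classification step while the paper cites Corollary~\ref{B}; these encode the same information (via Lemma~\ref{H_+functionals_for_B}) once one restricts attention to $\widetilde{\rho}=\nu\rho$ with $\rho\in\Delta_+(\Pi)$, so the substance is identical, and your extra remarks about the Weyl reflection $\mathbf{s}_1$, the perfectness of $\widetilde{\Pi}$ from Lemma~\ref{dime}, and the special handling of type \nosf{IIIb} with $\chi_1=\nu^{\pm1}$ simply make explicit what the paper leaves implicit.
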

\begin{proof}
By proposition~\ref{prop:poles_give_functionals}, proposition~\ref{reg-Int} and lemma~\ref{lem:suff_cond_subregular_poles}, there is a one-to-one correspondence between $(H_+,\widetilde{\rho})$-functionals and subregular poles.
The functionals have been classified in corollary~\ref{B}.
\end{proof}

\subsection{Preparations for the generic cases}
In the next section we will verify the criterion of lemma~\ref{lem:suff_cond_subregular_poles} using the Gelfand-Kazdhan filtration in the sense of lemma~\ref{lem:AB-sequence}.
We now discuss its constituents.
Recall that $B_{\Gl_a(2)}S$ denotes $\{\left[\begin{smallmatrix}\ast&\ast\\0&\ast\end{smallmatrix}\middle|\begin{smallmatrix}\ast\\0\end{smallmatrix}\right]\in \Gl_a(2)\}$.

\begin{lem}\label{lem:induced_J}
For every smooth character $\rho$ of $\Gl(1)$ there is an isomorphism of $\Gl_a(2)$-modules $I\cong j_!(\mathbb{E}[\nu^{2}\rho])$ where $I=\ind_{B_{\Gl(2)}S}^{\Gl_a(2)}(\sigma)$ is compactly induced from $\sigma=\mathbb{S}_1\boxtimes\rho\in\CCC_{TS\times \widetilde{T}}$.
\end{lem}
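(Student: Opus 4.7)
The plan is to apply the excision sequence $0\to j_!j^!\to\mathrm{id}\to i_*i^*\to 0$ of Lemma~\ref{lem:Gelfand_Kazdhan} to the $\Gl_a(2)$-module~$I$. It will suffice to verify the two claims $i^*(I)=0$ and $j^!(I)\cong\mathbb{E}[\nu^2\rho]$: the first forces $i_*i^*(I)=0$ and hence $I\cong j_!j^!(I)$ by exactness, and the second then yields the desired isomorphism $I\cong j_!(\mathbb{E}[\nu^2\rho])$.

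First I would recast $I$ as a compact induction from a smaller subgroup, so that the subsequent Mackey analysis becomes tractable. Using the realization $\mathbb{S}_1=\ind_S^{TS}(\psi)$ of the Gelfand--Graev module of $\Gl_a(1)\cong TS$ together with transitivity of compact induction, the representation $\sigma$ extends canonically to $B_{\Gl(2)}S$ with $U$ acting trivially --- this is the only extension compatible with $\widetilde T$-conjugation, since no nondegenerate character of $U$ is fixed by $\widetilde T$ --- and this extension itself coincides with $\ind_{US\widetilde T}^{B_{\Gl(2)}S}(\mathbf{1}_U\boxtimes\psi_S\boxtimes\rho_{\widetilde T})$. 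Transitivity then gives
$$I\;\cong\;\ind_{US\widetilde T}^{\Gl_a(2)}(\mathbf{1}_U\boxtimes\psi_S\boxtimes\rho_{\widetilde T}).$$

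Both $i^*(I)$ and $j^!(I)$ are now computed by Mackey's formula along the double coset space $US\widetilde T\backslash \Gl_a(2)/U_3$, which by the Bruhat decomposition of~$\Gl(2)$ is parametrized by $\mathbb{P}^1(k)$ and consists of essentially two cells. For the vanishing $i^*(I)=0$, on each cell the nondegenerate factor~$\psi_S$ restricts nontrivially to $(gU_3g^{-1})\cap US\widetilde T$, so every local contribution to the untwisted $U_3$-coinvariants vanishes. For $j^!(I)$, the nondegenerate twist~$\psi_3$ on~$U_3$ is compatible with the induced character only on the big-cell coset represented by the long Weyl element of~$\Gl(2)$, and the surviving one-dimensional contribution, being a character of $\Gl_a(1)$, automatically factors through $\Gl(1)$ and so takes the form $\mathbb{E}[\chi]$ for some character~$\chi$.

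The main obstacle is to identify $\chi=\nu^2\rho$. This is a Jacobian computation: the factor~$\nu^2$ arises as the ratio of modular characters between the inducing subgroup $US\widetilde T$ and its Weyl-conjugate inside the stabilizer of~$\psi_3$ in $\Gl_a(2)$, encoding the change of Haar measures under the Weyl involution restricted to the $\Gl(1)$-direction. Once this Jacobian is evaluated correctly, the remaining identification is routine Mackey bookkeeping.
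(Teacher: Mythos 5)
Your overall strategy matches the paper's exactly: compute $i^*(I)=0$ and $j^!(I)\cong\mathbb{E}[\nu^2\rho]$ via a Bruhat/Mackey analysis of the double cosets, then invoke the excision sequence of Lemma~\ref{lem:Gelfand_Kazdhan} to conclude $I\cong j_!j^!(I)$. The paper works directly with the double cosets $B_{\Gl(2)}S\backslash\Gl_a(2)/\Gl_a(1)S$ (which is a two-element set, $\{\id, w\}$) and applies the Bernstein--Zelevinsky orbit filtration to get a two-term exact sequence for $j^!(I)$; your variant, descending one more step to induce from $US\widetilde T$, is a legitimate rewording of the same Mackey computation.

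One point needs correction: you describe $j^!(I)$ as \emph{``a character of $\Gl_a(1)$''} that \emph{``factors through $\Gl(1)$''}. That cannot be right. The module $\mathbb{E}[\nu^2\rho]$ is not one-dimensional --- it is the compactly induced $\Gl_a(1)$-module $\ind_{T}^{TU}(\nu\rho)$ arising from the big-cell contribution (where $T\cong\Gl(1)$ and $TU\cong\Gl_a(1)$), hence infinite-dimensional. What is one-dimensional is the inducing datum on the big cell, namely a character of $T$, and the module $\mathbb{E}[\chi]$ is by definition the induction of such a character up to $\Gl_a(1)$. Your conclusion is correct but the intermediate description conflates the character with its induced module; as written, if $j^!(I)$ were literally a character of $\Gl_a(1)$, applying $j_!$ to it would not recover $I$. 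Finally, note that the shift $\nu^2$ comes in two steps in the paper --- a modulus factor $\nu$ in the Mackey computation yielding $\ind_T^{TU}(\nu\rho)$, and a second $\nu$ built into the identification $\ind_T^{TU}(\nu\rho)\cong\mathbb{E}[\nu^2\rho]$ --- so the ``Jacobian'' you allude to must be tracked carefully at both stages; you flag this as the main obstacle but do not carry it out.
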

\begin{proof}
By Bruhat decomposition, the double coset space
\begin{equation*}
B_{\Gl(2)}S\setminus \Gl_a(2)/\left\{\left[\begin{smallmatrix}\ast&\ast\\0&1\end{smallmatrix}\middle|\begin{smallmatrix}\ast\\\ast\end{smallmatrix}\right]\right\}
\end{equation*}
is represented by $\id$ and $\left[\begin{smallmatrix}0&1\\1&0\end{smallmatrix}\middle|\begin{smallmatrix}0\\0\end{smallmatrix}\right]$. The orbit filtration of Bernstein and Zelevinski \cite[5.2]{Bernstein-Zelevinsky77} yields an exact sequence of $TU$-modules
\begin{equation*}                                                                         
0\to \ind_{T}^{TU}(\mathbb{S}_{S,\psi}\otimes \nu\rho) \to j_!(I)\to \ind_{TU}^{TU}(\mathbb{S}_S\otimes i_*(\nu\rho))\to 0\ .
\end{equation*}
Since $\mathbb{S}_S=0$ and $\mathbb{S}_{S,\psi}\cong\C$ we obtain an isomorphism
$$j_!(I)\cong \ind_{T}^{TU}(\nu\rho)\ .$$
Finally, $\ind_{T}^{TU}(\nu\rho)$ is isomorphic to the space 
$\nu^2\rho\otimes\mathbb{E}\cong \mathbb{E}[\nu^2\rho]$ in the sense of \cite[example 3.4]{RW}.
An analogous argument shows $i^*(I)=0$. %
The functorial short exact sequence of lemma~\ref{lem:Gelfand_Kazdhan} implies $I\cong j_!(\mathbb{E}[\nu^2\rho])$.
\end{proof}

\begin{prop}\label{prop:key_result}
Fix unramified characters $\rho$ and $\chi$ of $\Gl(1)$, the representation
$I=j_!(\mathbb{E}[\nu^2\rho])$ of $\Gl_a(2)$, let $\widetilde{I}=k_\rho(I)$ and fix the maximal compact subgroup $K=\Gl(2,\mathfrak{o}_k)$ of $\Gl(2)$.
Then the composition of the natural morphisms
\begin{equation*}
I^K\longrightarrow I\longrightarrow \widetilde{I}\longrightarrow (\widetilde{I}/\widetilde{I}^S)_\chi
\end{equation*}
is non-zero.
\end{prop}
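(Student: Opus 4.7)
The plan is to exhibit an explicit $K$-spherical function $f_{0}\in I^{K}$ coming from the Iwasawa decomposition and verify that its image is nonzero at every step of the composition.

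First I would identify $f_{0}$ concretely. By Iwasawa $\Gl(2)=B_{\Gl(2)}K$ combined with $\Gl_a(2)=\Gl(2)\ltimes k^{2}$, the identity coset in $B_{\Gl(2)}S\backslash\Gl_a(2)/K$ is open, and because $\rho$ is unramified the inducing datum $\sigma=\mathbb{S}_{1}\boxtimes\rho$ is trivial on $B_{\Gl(2)}S\cap K$ in its $\widetilde{T}$-direction. Hence there is a unique right-$K$-invariant function $f_{0}\in I$ supported on $B_{\Gl(2)}SK$ with $f_{0}(\id)$ equal to the canonical generator of $\mathbb{S}_{1}$; by construction $f_{0}\in I^{K}$.

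Second I would describe the target structurally. Combining the isomorphism $I\cong j_{!}(\mathbb{E}[\nu^{2}\rho])$ with $i^{*}(I)=0$ from Lemma~\ref{lem:induced_J}, and the functorial short exact sequence $0\to j_{!}k_{\nu\rho}\to k_{\rho}j_{!}\to i_{*}i^{*}\to 0$ from Section~\ref{ss:representations}, one obtains a short exact sequence
\[
0\to j_{!}\bigl(k_{\nu\rho}(\mathbb{E}[\nu^{2}\rho])\bigr)\to \widetilde{I}\to i_{*}\bigl(i^{*}(\mathbb{E}[\nu^{2}\rho])\bigr)\to 0\,.
\]
The $S$-action on any $j_{!}$-module is via the nontrivial additive character $\psi$, so $(j_{!}Y)^{S}=0$; consequently $\widetilde{I}^{S}$ lies entirely in the $i_{*}$-quotient and the $j_{!}$-submodule injects into $\widetilde{I}/\widetilde{I}^{S}$. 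Applying $k_{\chi}$ reduces the target, on this $j_{!}$-summand, to the double-coinvariant space $k_{\chi}k_{\nu\rho}(\mathbb{E}[\nu^{2}\rho])$, which for unramified $\chi$ is one-dimensional by an explicit computation with the fully-induced structure of $\mathbb{E}[\nu^{2}\rho]$.

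The last step is to track $f_{0}$ through the diagram. Since $i^{*}(I)=0$, the image of $f_{0}$ in the $i_{*}i^{*}$-quotient of $\widetilde{I}$ vanishes automatically, so its class already lies in the $j_{!}$-submodule and maps isomorphically to $\widetilde{I}/\widetilde{I}^{S}$ on that summand; applying $k_{\chi}$ evaluates to a nonzero scalar multiple of $f_{0}(\id)$. The main obstacle is verifying nonvanishing of the class of $f_{0}$ in $k_{\rho}(I)$ itself, i.e.\ that the $\widetilde{T}$-coinvariant image of the spherical vector does not collapse. This reduces to a short explicit calculation of the orbit integral over $\widetilde{T}(\mathfrak{o}_{k})$, which is nonzero precisely because $\rho$ is unramified and so the inducing character takes the same value on all $\widetilde{T}(\mathfrak{o}_{k})$-shifts.
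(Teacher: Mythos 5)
Your construction of the spherical vector $f_0$ agrees with the paper's $f^\circ$, and the idea of tracking it through a structural description of $\widetilde{I}$ is sound in principle, but the structural step has a genuine error. You invoke the short exact sequence
\[
0\to j_{!}\bigl(k_{\nu\rho}(\mathbb{E}[\nu^{2}\rho])\bigr)\to \widetilde{I}\to i_{*}\bigl(i^{*}(\mathbb{E}[\nu^{2}\rho])\bigr)\to 0
\]
and then assert that ``since $i^{*}(I)=0$, the image of $f_{0}$ in the $i_{*}i^{*}$-quotient of $\widetilde{I}$ vanishes automatically.'' This conflates two different objects. The vanishing $i^{*}(I)=0$ from Lemma~\ref{lem:induced_J} concerns the $\Gl_a(2)$-module $I$ and the functor $i^{*}\colon\CCC_{2}\to\CCC_{1}$; the quotient in your sequence is $i_{*}\bigl(i^{*}(\mathbb{E}[\nu^{2}\rho])\bigr)$, where $i^{*}$ now acts $\CCC_{1}\to\CCC_{0}$ on the $\Gl_a(1)$-module $\mathbb{E}[\nu^{2}\rho]$. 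These are unrelated: the quotient term is in fact nonzero (for $\rho=1$ it is $i_{*}(\nu^{2})$, which is exactly $\widetilde{I}^{S}$), so nothing ``vanishes automatically,'' and the claim that the class of $f_{0}$ already lies in the $j_{!}$-submodule has no justification. Without this, you cannot conclude that the image of $f_0$ survives in $(\widetilde{I}/\widetilde{I}^{S})_\chi$, and the remaining sentence reduces the real content to an unproved ``short explicit calculation of the orbit integral.''

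The paper avoids all of this by using the splitting $\widetilde{I}\cong\mathbb{S}_{1}\oplus i_{*}(\nu^{2})$ from \cite[thm.~4.27]{RW} together with the $B_{\Gl(2)}S$-equivariant evaluation map $\eval_\id\colon I\to\sigma$, $f\mapsto f(\id)$. Because $\widetilde{T}U$ acts trivially on $\sigma$, $\eval_\id$ factors through $\widetilde{I}$, and since $\sigma\cong\mathbb{S}_{1}$, the kernel of $\widetilde{I}\twoheadrightarrow\sigma$ is precisely $\widetilde{I}^{S}\cong i_{*}(\nu^{2})$. This makes the composition $I\to\widetilde{I}\to\widetilde{I}/\widetilde{I}^{S}\to(\widetilde{I}/\widetilde{I}^{S})_\chi$ literally the map $f\mapsto f(\id)\mapsto \int_{k^\times}f(\id)(x)\chi^{-1}(x)\,d^\times x$, and nonvanishing on $f^\circ$ with $f^\circ(\id)=\phi_0=\mathbf{1}_{\mathfrak{o}^\times}$ is immediate. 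If you want to salvage your argument you would need either to prove that $\widetilde{I}^{S}$ maps isomorphically onto the $i_{*}i^{*}$-quotient (so that the $j_!$-part really is a complementary summand of $\widetilde{I}^S$) or, more simply, to replace the abstract devissage by the explicit evaluation-at-identity description as in the paper.
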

\begin{proof}
We can assume $\rho=1$ by a suitable twist.
By lemma~\ref{lem:induced_J} it is sufficient to show the assertion for the $\Gl_a(2)$-module $I=\ind_{B_{\Gl(2)}S}^{\Gl_a(2)}({\sigma})$ for $\sigma=\mathbb{S}_1\boxtimes\rho\in \CCC_{TS\times \widetilde{T}}$.
Evaluation at identity defines a $B_{\Gl(2)}S$-equivariant projection \begin{equation*}\eval_\id:I\longrightarrow {\sigma}\quad,\quad f\longmapsto f(\id)\ .\end{equation*}
The action of $\widetilde{T}U= \left\{\left[\begin{smallmatrix}1&\ast\\0&\ast\end{smallmatrix}\right]\right\}\subseteq \Gl(2)$ on $\sigma$ is trivial,
so $\eval_\id$ factorizes over the projection to the Bessel module $\widetilde{I}$.
By \cite[theorem~4.27]{RW} there is an isomorphism $\widetilde{I} \cong \mathbb{S}_1\oplus i_*(\nu^2)$. The $TS$-module $\sigma$ is isomorphic to $\mathbb{S}_1$, so the kernel of $\widetilde{I}\twoheadrightarrow\sigma$ equals $\widetilde{I}^S\cong i_*(\nu^2)$.
This defines a commutative diagram of $TS$-modules with an exact horizontal sequence
\begin{equation*}
 \xymatrix{
 & & I\ar[d]\ar[dr]^{\eval_\id} & &\\
0\ar[r]&\widetilde{I}^S\ar[r]&\widetilde{I}\ar[r]&\sigma\ar[r]&0
 }
\end{equation*}
It remains to be shown that $I^K\longrightarrow I\stackrel{\eval_{\id}}{\longrightarrow} \sigma\longrightarrow\sigma_\chi$
is non-trivial.
Indeed, for the characteristic function $\phi_0\in C_c^\infty(k^\times)\cong\mathbb{S}_1$ of $\mathfrak{o}^\times$, define $f^\circ\in I$ by
\begin{equation*}
 f^\circ(g)=\begin{cases}
 {\sigma}(bs)\phi_0 & g=bsk\in B_{\Gl(2)}SK\ ,
 \\0& \text{ otherwise}\ .
\end{cases}
\end{equation*}
This is well-defined because $\phi_0$ is invariant under $B_{\Gl(2)}S\cap K$ 
by construction, so $f^\circ$ is a $K$-invariant element in $I^K$.
The evaluation $\eval_\id$ sends $f^\circ$ to $f^\circ(\id)=\phi_0$.
It is clear that $\phi_0$ has non-trivial image under the projection \begin{equation*}
\mathbb{S}_1\to (\mathbb{S}_1)_\chi\quad,\quad\phi\mapsto \int_{k^\times}\phi(x) \chi^{-1}(x)\Diff^\times x \ .\qedhere
\end{equation*}
\end{proof}

\begin{lem}\label{lem:comp_J_version2}
Fix an unramified character $\rho$ of $\Gl(1)$ and a finite-dimensional $\Gl(1)$-module $A$ that does not contain $\nu\rho$ as a constituent.
For $I=j_!i_*(A)\in\CCC_{2}$ and $K=\Gl(2,\mathfrak{o}_k)$, the composition of canonical morphisms
$$I^K\longrightarrow I \longrightarrow k_\rho(I) \longrightarrow k_{\nu^2\rho} k_\rho (I)$$
is surjective.
It is non-zero if and only if $\nu^2\rho$ occurs in $A$ as a constituent.
\end{lem}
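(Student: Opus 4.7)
The plan is to compute $k_{\nu^2\rho}k_\rho(I)$ using the functorial excision sequences from section~\ref{ss:representations}, identify it with the $\nu^2\rho$-isotypic quotient of $A$, and then produce an explicit $K$-invariant vector in $I$ that generates this target.

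First, apply the functorial short exact sequence $0\to j_!k_{\nu\rho}\to k_\rho j_!\to i_*i^*\to 0$ to $i_*(A)\in\CCC_1$. Lemma~\ref{lem:Gelfand_Kazdhan} gives $j^!i_*=0$, and unwinding the excision sequence $0\to j_!j^!\to\id\to i_*i^*\to 0$ yields $i^*i_*=\id$. Our sequence thus specializes to
\begin{equation*}
0\to j_!k_{\nu\rho}(i_*(A))\to k_\rho(I)\to i_*(A)\to 0\ .
\end{equation*}
The functor $k_{\nu\rho}\circ i_*$ computes the maximal quotient of the $\Gl(1)$-module $A$ on which $T$ acts by $\nu\rho$; by the hypothesis that $\nu\rho$ is not a constituent of $A$ this vanishes, so $k_\rho(I)\cong i_*(A)$. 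Applying $k_{\nu^2\rho}$ once more yields $k_{\nu^2\rho}k_\rho(I)\cong k_{\nu^2\rho}(i_*(A))$, the $\nu^2\rho$-isotypic quotient of $A$. This is non-zero precisely when $\nu^2\rho$ occurs as a constituent of $A$, settling the characterization of non-vanishing.

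For surjectivity, I adapt the construction of $f^\circ$ from the proof of proposition~\ref{prop:key_result}. Assuming $\nu^2\rho$ occurs as a constituent of $A$, pick $v_0\in A$ whose image in $k_{\nu^2\rho}(i_*(A))$ is non-zero; since $\rho$ (hence $\nu^2\rho$) is unramified, $v_0$ can be chosen $(K\cap B_{\Gl(2)}S)$-invariant. Define $f^\circ\in I=j_!i_*(A)$ by $f^\circ(bsk)=i_*(A)(bs)\,v_0$ for $bsk\in B_{\Gl(2)}S\cdot K$ and $f^\circ(g)=0$ otherwise. By construction $f^\circ$ is well-defined and $K$-invariant. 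Chasing $f^\circ$ through the four-step composition recovers, up to a non-zero scalar, the class of $v_0$ in $k_{\nu^2\rho}(i_*(A))$, which is non-zero by the choice of $v_0$. Hence the composition is surjective.

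The main obstacle lies in identifying the composite functors $k_{\nu\rho}\circ i_*$ and $k_{\nu^2\rho}\circ i_*$ with the isotypic-coinvariant functors on $\Gl(1)$-modules. This is essentially a rank-one computation following from the definitions of the Bessel functors on objects of $\CCC_1$ supported at the closed point, but the remainder of the argument hinges on this identification.
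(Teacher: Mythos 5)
Your first two steps are a genuinely different (and arguably cleaner) route to the ``non-zero iff $\nu^2\rho$ occurs in $A$'' part of the statement. Using the functorial sequence $0\to j_!k_{\nu\rho}\to k_\rho j_!\to i_*i^*\to 0$ together with $i^*i_*\cong\id$ to conclude $k_\rho(I)\cong i_*(A)$ (since $k_{\nu\rho}i_*(A)=A_{\nu\rho}=0$ by the hypothesis on $A$), and hence $k_{\nu^2\rho}k_\rho(I)\cong A_{\nu^2\rho}$, is a nice mirabolic-level computation. The paper instead passes first to the central specialization $\widehat{I}=\zeta_{\nu^2\rho^2}(I)$, uses lemma~\ref{lem:cent_loc_j_!i_*} to recognise $\widehat{I}$ as a genuine parabolic induction on $\Gl(2)$, and reads off the vanishing from the Jacquet module; your identification of the target is more direct. (One small gripe: your sentence ``unwinding the excision sequence $\ldots$ yields $i^*i_*=\id$'' is not an actual derivation — $i^*i_*\cong\id$ is the standard Bernstein--Zelevinsky fact $\Psi^-\Psi^+\cong\id$ and should just be quoted as such.)

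The surjectivity argument, however, has a real gap. You import $f^\circ$ from the proof of proposition~\ref{prop:key_result}, but that construction relies on the isomorphism $j_!(\mathbb{E}[\nu^2\rho])\cong\ind_{B_{\Gl(2)}S}^{\Gl_a(2)}(\sigma)$ from lemma~\ref{lem:induced_J}, which is special to the perfect module $\mathbb{E}[\nu^2\rho]$. The module $I=j_!i_*(A)$ is \emph{not} an induction from $B_{\Gl(2)}S$; it is $\ind_{\Gl_a(1)V}^{\Gl_a(2)}(i_*(A)\otimes\theta)$ with the non-trivial character $\theta$ on the unipotent $V$, and $B_{\Gl(2)}$ does not stabilise $\theta$. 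So the formula $f^\circ(bsk)=i_*(A)(bs)v_0$, supported on $B_{\Gl(2)}SK$, is not \emph{a priori} an element of $j_!i_*(A)$: you have not checked the required $\theta$-equivariance under $V$, and it genuinely fails for general $A$. Moreover, when $\nu^2\rho$ occurs in $A$ with multiplicity, the target $A_{\nu^2\rho}$ can have dimension $>1$, so a single vector $f^\circ$ cannot establish surjectivity; some inductive device over the length of $A$ is unavoidable. This is exactly what the paper's proof supplies: it reduces to the central specialization $\widehat{I}$, handles the length-one case $A=\mu$ by identifying $\widehat{I}$ with a principal series and invoking lemma~5.3 of \cite{W_Excep} for the surjectivity of $\widehat{I}^K\to\widehat{I}_{\widetilde{T},\rho}$, and then propagates the result to general $A$ by right-exactness of $k_\rho$, exactness of $K$-invariants, and the five lemma. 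Your approach would need a similar induction on the length of $A$ and, in the base case, a construction of a $K$-invariant vector that is verifiably in $j_!i_*(\mu)$ (or, equivalently, an appeal to the $\Gl(2)$-level surjectivity after central specialization, which is essentially the paper's route).
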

\begin{proof}
By compactness of $K$ and transitivity of coinvariant functors, it is sufficient to prove that the composition $\widehat{I}^K\to\widehat{I}\to k_\rho(\widehat{I})$ is surjective for the central specialization $\widehat{I}=\zeta_{\rho^2\nu^2}(I)$.
We use induction over the length of $A$.
For $A=0$ the assertion is obvious. If $A=\mu$ is a character, then lemma~\ref{lem:cent_loc_j_!i_*} implies
$$\widehat{I}\cong\ind_{B_{\Gl(2)}}^{\Gl(2)}(\mu\boxtimes\nu^2\rho^2\mu^{-1})\ .$$
The well-known formulas for the Jacquet-module show that $k_\rho(\widehat{I})$ vanishes for $\mu\neq\nu\rho,\nu^2\rho$.
If $\mu=\nu^2\rho$, then $k_\rho(\widehat{I})\cong\widehat{I}_{\widetilde{T},\rho}$ and lemma~5.3 in \cite{W_Excep} implies
that $\widehat{I}^K\to \widehat{I}\to k_\rho(\widehat{I})$ is surjective.

If $A$ has length greater than one,
then there is an exact sequence of non-zero $\Gl(1)$-modules
$0\to A_1\to A\to A_2\to0$.
Lemma~\ref{list}.4 yields an exact sequence
$0\to \widehat{I_1}\to \widehat{I}\to \widehat{I_2}\to0$ with $\widehat{I_i}=\zeta_{\nu^2\rho^2}j_!i_*(A_i)$.
By compactness of $K$ and right-exactness of $k_\rho$ we obtain a commutative diagram with exact sequences
\begin{equation*}
\xymatrix
{
0\ar[r] & \widehat{I_1}^K\ar[r]\ar[d] & \widehat{I_{\phantom{1}}}^K\ar[r]\ar[d] & \widehat{I_2}^K\ar[r]\ar[d] & 0\\
 \cdots\ar[r] & k_\rho(\widehat{I_1})\ar[r] & k_\rho(\widehat{I})\ar[r] & k_\rho(\widehat{I_2})\ar[r] & 0\ .\\
}
\end{equation*}
The induction hypothesis states that the left and right vertical arrows are surjective.
By the five lemma the middle vertical arrow is surjective.
\end{proof}

\begin{lem}\label{lem:exceptional_B}
Fix a generic irreducible $\Pi\in\CCC_{G}(\omega)$ and a smooth character $\rho$ of $\Gl(1)$ not in $\Delta_+(\Pi)$.
For $B=i^*(\overline{\Pi})\in\CCC_{\Gl(2)}$ there are $\CCC_1$-isomorphisms
\begin{equation*}
 k^\rho(i_*(B))\cong k_\rho(i_*(B))\cong
\begin{cases}
i_*(\nu^2\rho)& \text{type \nosf{IIIa, IVa, VIa}}\ \text{and}\ \rho\in\Delta_-(\Pi),\\
0         & \text{otherwise}\ .
\end{cases}
\end{equation*}
\end{lem}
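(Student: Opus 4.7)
The plan is to reduce the computation of $k_\rho(i_*(B))$ and $k^\rho(i_*(B))$ to a case-by-case analysis on the irreducible constituents $\tau$ of the Klingen--Jacquet module $B=J_Q(\Pi)$, using the explicit lists for each generic type from table~A.4 of Roberts--Schmidt~\cite{Roberts-Schmidt}.

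First, I would establish the single-constituent computation. The key observation is that $i_*(\tau)$ is a $\Gl_a(2)$-module on which the translation part acts trivially, so both Bessel functors $k_\rho$ and $k^\rho$ applied to $i_*(\tau)$ are controlled by $\rho$-twisted (co)invariants of the diagonal torus of $\Gl(2)$ acting on $\tau$. For supercuspidal $\tau$ this vanishes. For a one-dimensional $\tau=\chi\circ\det$ or a Steinberg twist $\tau=\chi\cdot\St$, the output is $i_*(\nu^2\rho)$ precisely when $\chi=\nu\rho$. For a principal series $\tau=\chi_1\times\chi_2$, the output is $i_*(\nu^2\rho)$ precisely when $\rho$ coincides with one of $\chi_i\nu^{-1/2}$. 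In each case $k_\rho(i_*(\tau))$ and $k^\rho(i_*(\tau))$ agree, since only one direction of the torus is involved and there is no extension issue for an irreducible $\tau$.

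Next, I would combine this with the structure of $B$ for each generic type under the hypothesis $\rho\notin\Delta_+(\Pi)$. For types \nosf{I}, \nosf{IIa}, \nosf{Va}, \nosf{X}, \nosf{XIa}, the Jacquet characters of the constituents of $B$ lie entirely in $\Delta_+(\Pi)$, so the hypothesis forces $k_\rho(i_*(B))=0$. For types \nosf{VII}, \nosf{VIIIa}, \nosf{IXa}, the constituents of $B$ are cuspidal, again giving $0$. For types \nosf{IIIa}, \nosf{IVa}, \nosf{VIa}, precisely one constituent of $B$ is a one-dimensional or Steinberg summand whose Jacquet character matches $\nu\rho$ exactly when $\rho\in\Delta_-(\Pi)$, producing the claimed $i_*(\nu^2\rho)$. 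The main obstacle I expect is the bookkeeping to match the explicit Jacquet data listed by Roberts--Schmidt with the partition $\Delta_\pm(\Pi)$, and to verify that in these three exceptional types the contributing constituent splits off as a direct summand of $B$ rather than participating in a nontrivial extension that could yield different answers for $k_\rho$ and $k^\rho$.
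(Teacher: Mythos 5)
Your overall plan (reduce to the irreducible constituents of $B$ via table~A.4 of Roberts--Schmidt, and compare the resulting Jacquet characters with $\Delta_\pm(\Pi)$) is in the right spirit, and your concern at the end correctly identifies where the difficulty lies. However, the proposal has a genuine gap precisely at that point, and it also takes an inefficient detour on the $k^\rho\cong k_\rho$ claim.

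First, the isomorphism $k^\rho(i_*(B))\cong k_\rho(i_*(B))$ should not be argued constituent by constituent. The paper invokes the general natural equivalence $k^\rho\circ i_*\cong k_\rho\circ i_*$ of functors $\CCC^\fin_{\Gl(2)}\to\CCC^\fin_1$ (lemma~4.3 of \cite{RW}, restated in the appendix as lemma~\ref{lem:krho_fin_generated}). Because this is a natural equivalence, it holds for any finite-length $B$ regardless of how its constituents are glued together; your constituent-wise version would still require checking compatibility with the filtration of $B$, which is exactly the kind of extension bookkeeping you flag as uncertain.

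Second — and this is the real gap — for types \nosf{IIIa} and \nosf{IVa} the relevant torus eigencharacter can occur with multiplicity greater than one in the (Borel-)Jacquet module of $B$. A naive constituent-by-constituent count would then predict $\dim k_\rho(i_*(B))\geq 2$, which is \emph{not} the claimed answer $i_*(\nu^2\rho)$ of dimension one. You flag this as something to "verify," but the resolution is not a matter of checking that the constituent splits off as a direct summand; it requires a genuinely different argument. The paper instead uses the Gelfand--Kazhdan filtration $0\to J\to\overline{\Pi}\to i_*(B)\to 0$ with $J=j_!j^!(\overline{\Pi})$, applies the long exact sequence for $k_\rho,k^\rho$ together with the vanishing $k^\rho(\overline{\Pi})=0$ for generic $\Pi$, and then bounds $\dim k^\rho(B)$ by $1$ using that $J\cong j_!(\mathbb{E}[A])$ with $A$ having pairwise distinct constituents, so the maximal finite-dimensional subspace of $k_\rho(J)$ is one-dimensional. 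Without some version of this exact-sequence argument, the case-by-case table reading cannot pin down the answer in types \nosf{IIIa}, \nosf{IVa}.

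As a smaller point: for types \nosf{X}, \nosf{XIa} one has $B=0$ outright (and for cuspidal $\Pi$ as well), so there are no constituents to inspect; your phrasing suggesting the constituents' Jacquet characters all lie in $\Delta_+(\Pi)$ is vacuously misleading there. For types \nosf{I}, \nosf{IIa}, \nosf{Va} the paper also proceeds somewhat differently than pure table lookup: it identifies $\Delta_0(\Pi)$ with the $\widetilde{T}$-eigencharacters on the Borel-Jacquet module of $B$ via a chain of identifications through the Siegel-Jacquet module, using that its irreducible constituents are generic and non-cuspidal and that $T$ is central in the Levi of $P$, and that the constituents of $B$ are fully induced. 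Your more direct bookkeeping may well recover the same conclusion, but the multiplicity problem above is the step that genuinely fails without a new idea.
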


\begin{proof}
$k^\rho i_*(B)\cong k_\rho i_*(B)$ holds by lemma~4.3 in \cite{RW}.
If $\Pi$ is cuspidal or of type \nosf{X} or \nosf{XIa}, there is nothing to show because $B=0$.
For type \nosf{VII, VIIIa, IXa} the assertion is also clear because $B$ is cuspidal.
Suppose $\Pi$ is of type \nosf{I, IIa} or \nosf{Va}. Every irreducible constituent of the normalized Siegel-Jacquet-module $\delta_P^{-1/2}\otimes J_P(\Pi)$ is generic and non-cuspidal \cite[table A.3]{Roberts-Schmidt}.
Since $T=\{\diag(\ast,\ast,1,1)\in G\}$ is in the center of the Levi-quotient of $P$, the following characters coincide:
\begin{enumerate}
\item characters in $\Delta_0(\Pi)$,
\item $T$-eigencharacters of the twisted Jacquet-module of $\delta_P^{-1/2}\otimes J_P(\Pi)_\psi$,
\item $T$-eigencharacters of the Siegel-Jacquet-module $\delta_P^{-1/2}\otimes J_P(\Pi)$,
\item $T$-eigencharacters of the Borel-Jacquet-module $\delta_{B_G}^{-1/2}\otimes J_{B_G}(\Pi)$.
\end{enumerate}

In other words, $\Delta_0(\Pi)$ is the multiset of $\{\diag(\ast,1)\subseteq \Gl(2)\}$-eigencharacters on the normalized Borel-Jacquet-module of the $\Gl(2)$-module $B=J_Q(\Pi)$.
Every constituent of $B$ is fully induced as a $\Gl(2)$-module \cite[table A.4]{Roberts-Schmidt},
so the $T$-eigencharacters on $\delta_{B_G}^{-1/2}\otimes B_U$ coincide with the
$\widetilde{T}$-eigencharacters for the torus $\widetilde{T}=\{\diag(1,\ast)\subseteq \Gl(2)\}$.
On $\widetilde{T}$ the modulus character $\delta_{B_G}$ coincides with the valuation character $\nu$.
That means $k_\rho(B)\neq0$ implies that $\rho$ is a $\widetilde{T}$-eigencharacter of the unnormalized Borel-Jacquet-module of $B$, so $\nu^{-1/2}\rho\in\Delta_0(\Pi)$.
Hence $k_\rho i_*(B)\neq0$ is only possible for $\rho\in \Delta_+(\Pi)=\nu^{1/2}\Delta_0(\Pi)$.

If $\Pi$ is of type \nosf{VIa}, then $B$ is isomorphic to $B\cong (\nu^{3/2}\sigma\circ\det)\otimes(1\times1)\oplus \St(\nu\sigma)$ \cite[table~A.4]{Roberts-Schmidt}.
Hence $k_\rho i_*(B)\neq0$ is only possible for $\rho=\nu\sigma\in\Delta_+(\Pi)$ and $\rho=\sigma\in\Delta_-(\Pi)$. It is straightforward to see $k_{\rho}i_*(B)=i_*(\nu^2\sigma)$ for $\rho=\sigma$. 
For type \nosf{IIIa, IVa} the argument is basically analogous, but for type \nosf{IIIb} the constituent $\nu^2\rho\boxtimes\rho$ may occur more than once in the Jacquet module of $B$. It sufficies to show that $\dim k^\rho(B)\leq1$.
Indeed, for type \nosf{IIIa} there is an exact sequence
$$0= k^\rho(\overline{\Pi})\to k^\rho(B)\to k_\rho(J)\to k_\rho(\overline{\Pi})\to 
k_\rho(\overline\Pi)\to k_\rho(B)\to 0$$ where $J=j_!j^!(\overline{\Pi})\cong j_!(\mathbb{E}[A])$ by lemma~4.14 in \cite{RW}.
The constituents of $A=i^*j^!(\overline{\Pi})$ are pairwise distinct, so lemma~4.27 in \cite{RW} ensures that the maximal finite-dimensional subspace of $k_\rho(J)$ is one-dimensional.
\end{proof}

\subsection{Generic cases}
Let $(\Pi,\rho)$ be a non-ordinary or extra-ordinary exceptional case in the sense of \cite[cor.~A.12]{RW}. In other words, $\Pi$ is of type \nosf{I, IIa, Va, VIa, X, XIa} and $\nu^{1/2}\rho\in\Delta_0(\Pi)$,
where $\Delta_0(\Pi)$ denotes the multiset of characters that occur as constituents of $i^*j^!(\overline{\Pi})\cong J_P(\Pi)_\psi$, see \cite[table~3]{RW}.
We show that the subregular factor attached to $\Lambda=\rho\boxtimes\rho^\divideontimes$ is $L_{\sreg}^{\PS}(s,\Pi,1,\Lambda)=L(s,\chi_{\crit})$ for the critical character $\chi_{\crit}=\nu^{1/2}\rho$ by verifying the sufficient criterion of lemma~\ref{lem:suff_cond_subregular_poles}.
For technical reasons we distinguish the cases where the critical character occurs in $\Delta_0(\Pi)$ once and more than once.
Recall that $\Delta_\pm(\Pi)=\nu^{\pm1/2}\Delta_0(\Pi)$.

\textbf{Case 1: The critical character occurs in $\Delta_0(\Pi)$ exactly once.}

\begin{prop}\label{prop:non_degen_generic_except}
For every generic exceptional case $(\Pi,\rho)$ where the critical character 
$\chi_{\crit}=\nu^{1/2}\rho$ occurs in $\Delta_0(\Pi)$ exactly once,
the composition of the canonical morphisms for $K=\Gl(2,\mathfrak{o}_k)$ and $\widetilde{\Pi}=k_\rho(\Pi)$
\begin{equation*}
 \overline{\Pi}^K\longrightarrow \overline{\Pi}\longrightarrow \widetilde{\Pi}  \longrightarrow k_{\nu^2\rho}(\widetilde{\Pi}/\widetilde{\Pi}^S)
\end{equation*}
is non-trivial.
\end{prop}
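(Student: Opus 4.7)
The plan is to use the Gelfand--Kazhdan $P_3$-filtration of Lemma~\ref{lem:AB-sequence} together with Lemma~\ref{lem:comp_J_version2} in order to produce an explicit $K$-invariant vector whose image is non-zero. Since $\Pi$ is generic we have $m_\Pi=1$, so $\overline{\Pi}$ admits a three-step filtration with successive subquotients $\mathbb{S}_2$, $j_!i_*(A)$, and $i_*(B)$, where $A=i^*j^!(\overline{\Pi})$ has constituents $\nu^{3/2}\Delta_0(\Pi)$. The assumption $\chi_{\crit}=\nu^{1/2}\rho\in\Delta_0(\Pi)$ with multiplicity one translates directly to: $\nu^2\rho$ occurs exactly once as a constituent of $A$. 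By Lemma~\ref{lem:A_cyclic} the corresponding isotypic piece of $A$ is simple and one-dimensional.

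The first step is to apply Lemma~\ref{lem:comp_J_version2} to the submodule $j_!i_*(A)$: the assumption that $\nu^2\rho$ occurs in $A$ yields a $K$-invariant vector in $(j_!i_*(A))^K$ whose image in $k_{\nu^2\rho}k_\rho(j_!i_*(A))$ is non-zero. Since $K$ is compact, the functor of $K$-invariants is exact, hence this $K$-invariant lifts to a vector $v\in \overline{\Pi}^K$ modulo $\mathbb{S}_2^K$. The plan is then to track $v$ through the composition $\overline{\Pi}\to\widetilde{\Pi}\to k_{\nu^2\rho}(\widetilde{\Pi}/\widetilde{\Pi}^S)$ and show the result is non-zero by controlling what the two outer pieces $\mathbb{S}_2$ and $i_*(B)$ contribute.

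The main obstacle will be verifying that neither of these two ancillary contributions interferes with the critical $\nu^2\rho$-piece. For $\mathbb{S}_2$, combine the exact sequence $0\to j_!k_{\nu\rho}\to k_\rho j_!\to i_*i^*\to 0$ from Section~\ref{ss:representations} with $i^*j_!=0$ of Lemma~\ref{lem:Gelfand_Kazdhan}; this yields $k_\rho(\mathbb{S}_2)\cong j_!k_{\nu\rho}(\mathbb{S}_1)$, whose image in $\widetilde\Pi$ lies on the orbit where $S$ acts trivially, hence is absorbed into $\widetilde{\Pi}^S$ and killed in the quotient. For $i_*(B)$, invoke Lemma~\ref{lem:exceptional_B}: for types \nosf{I, IIa, Va, X, XIa} with $\rho\in\Delta_-(\Pi)$ one already has $k_\rho(i_*(B))=0$, while for type \nosf{VIa} with $\rho\in\Delta_-(\Pi)$ one has $k_\rho(i_*(B))\cong i_*(\nu^2\rho)$, on which $S$ acts trivially by the very definition of $i_*$, so its contribution is again absorbed into $\widetilde{\Pi}^S$.

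Once the two cancellations are in place, the multiplicity-one hypothesis enters decisively: the natural map $k_{\nu^2\rho}k_\rho(j_!i_*(A))\to k_{\nu^2\rho}(\widetilde{\Pi}/\widetilde{\Pi}^S)$ is injective on the one-dimensional $\nu^2\rho$-isotypic piece supplied by Lemma~\ref{lem:A_cyclic}, so the non-zero element from Lemma~\ref{lem:comp_J_version2} survives. The analogous higher-multiplicity situation, where several Jordan blocks with the same character could a priori interfere via monodromy, is the reason for the separation of cases and is handled in the subsequent Case~2 of the paper.
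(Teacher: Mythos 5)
Your proposal has a genuine gap, and in fact the roles of $\mathbb{S}_2$ and $j_!i_*(A)$ are essentially reversed from what you assume. The crux of your argument is that the $K$-invariant vector should come from the $j_!i_*(A)$ piece of the Gelfand--Kazhdan filtration (via Lemma~\ref{lem:comp_J_version2}), while the $\mathbb{S}_2$ contribution is claimed to be ``absorbed into $\widetilde{\Pi}^S$ and killed in the quotient.'' This is exactly backwards. From the exact sequence $0\to j_!k_{\nu\rho}\to k_\rho j_!\to i_*i^*\to 0$ applied to $\mathbb{S}_1$ (for which $i^*\mathbb{S}_1=0$), one gets $k_\rho(\mathbb{S}_2)\cong j_!k_{\nu\rho}(\mathbb{S}_1)\cong j_!(\C)=\mathbb{S}_1$, and $\mathbb{S}_1$ has \emph{no} $S$-trivial part (its $S$-action is by $\psi$), so it survives to $\widetilde{\Pi}/\widetilde{\Pi}^S$ intact. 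It is the $j_!i_*(\nu^2\rho)$-contribution that lands in the one-dimensional submodule $\widetilde{\Pi}^S\cong i_*(\nu^2\rho)$ (the $S$-trivial part, cf.\ Theorem 6.6 of \cite{RW}) and is killed. Thus your candidate vector from Lemma~\ref{lem:comp_J_version2}, followed through the projection $\widetilde{\Pi}\to\widetilde{\Pi}/\widetilde{\Pi}^S$, need not survive; you have discarded the piece that actually carries the non-trivial image.

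The paper avoids this pitfall by using a finer subobject: since $j^!(\overline\Pi)\cong\mathbb{E}[A]$ is perfect and $A\cong\nu^2\rho\oplus A'$, one can embed the \emph{perfect} $\Gl_a(1)$-module $\mathbb{E}[\nu^2\rho]$ into $j^!(\overline\Pi)$ and set $J=j_!(\mathbb{E}[\nu^2\rho])\subset\overline\Pi$. This $J$ interlocks $\mathbb{S}_2$ with the $\nu^2\rho$-isotypic part in a single indecomposable piece, and Proposition~\ref{prop:key_result} then produces the $K$-invariant explicitly by evaluation at the identity in the induced model $\ind_{B_{\Gl(2)}S}^{\Gl_a(2)}(\mathbb{S}_1\boxtimes\rho)$; the non-vanishing image lands in the $\mathbb{S}_1$-summand of $\widetilde J$, which projects isomorphically onto $\widetilde\Pi/\widetilde\Pi^S$. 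Your use of Lemma~\ref{lem:comp_J_version2} is the strategy of the paper's \emph{Case~2} (higher multiplicity), where it is paired with the surjectivity Lemmas~\ref{lem:diagonal_arrow_VIa_version2} and \ref{lem:factor_over_breve_version2}; it is not a substitute for Proposition~\ref{prop:key_result} here. Two smaller issues: type \nosf{VIa} is not part of Case~1 (for it $\chi_{\crit}$ occurs with multiplicity two, so your discussion of $k_\rho(i_*(B))$ for \nosf{VIa} is misplaced), and the Jordan-block ``monodromy'' interference you mention at the end is not the relevant distinction between the cases --- it is whether the $\nu^2\rho$-constituent of $A$ is a direct summand, which is what permits the splitting $A\cong\nu^2\rho\oplus A'$ used to define $J$.
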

\begin{proof}By assumption $\Pi$ is of type \nosf{I, IIa, Va, X} or \nosf{XIa}.
Further, $A=i^*j^!(\overline{\Pi})$ splits as a direct sum $A\cong \nu^2\rho\oplus A'$ for some $\Gl(1)$-module $A'$ that does not contain $\nu^2\rho$ as a constituent. By \cite[4.14]{RW}, $j^!(\overline{\Pi})$ is perfect, so there is an embedding $\mathbb{E}[\nu^2\rho]\hookrightarrow j^!(\overline{\Pi})$. Set $J=j_!(\mathbb{E}[\nu^2\rho])$, then exactness of $j_!$ and lemma~\ref{lem:AB-sequence} yield an exact sequence
\begin{equation*}
 0\longrightarrow J\longrightarrow \overline{\Pi}\longrightarrow Y\longrightarrow 0
\end{equation*}
where the cokernel $Y$ is an extension $0\to j_!i_*(A')\to Y\to i_*(B)\to 0$.
By lemma~\ref{lem:exceptional_B},
$k_\rho(B)=0$ and $k^\rho(B)=0$ both vanish, so
$k^\rho(Y)\cong k^\rho j_!i_*(A')$ and $k_\rho (Y)\cong k_\rho j_!i_*(A')$.
The intersection $\Delta_+(\Pi)\cap\Delta_-(\Pi)$ is empty by lemma~A.8 in \cite{RW}, so $\nu\rho$ is not a constituent of $A'$. Lemma~4.28 in \cite{RW} implies
\begin{equation*}k^\rho(Y)=0\qquad,\qquad k_\rho(Y)\cong i_*(A')\ .
\end{equation*}
Consider the following commutative diagram with the canonical arrows

\begin{equation*}
 \xymatrix{
 J^K\ar[r]\ar[d] &J\ar[r]\ar[d] & \widetilde{J}\ar[r]\ar[d] & \widetilde{J}/\widetilde{J}^S\ar[r]\ar[d] & k_{\nu^2\rho}(\widetilde{J}/\widetilde{J}^S)\ar[d]^\cong \\
 \overline{\Pi}^K\ar[r] &\overline{\Pi}\ar[r] & \widetilde{\Pi}\ar[r] & \widetilde{\Pi}/\widetilde{\Pi}^S\ar[r] & k_{\nu^2\rho}(\widetilde{\Pi}/\widetilde{\Pi}^S)\ .
 }
\end{equation*}

We have shown above that the third vertical arrow is injective with cokernel $k_\rho(Y)\cong i_*(A')$.
The fourth vertical arrow is injective with cokernel $i_*(A')$ because $\widetilde{J}^S\cong\widetilde{\Pi}^S$ is the unique submodule isomorphic to $i_*(\nu^2\rho)$ as shown in \cite[thm.~4.27, thm.~6.6]{RW}.
Since $\nu^2\rho$ does not occur in $A'$, both $$k_{\nu^2\rho}(i_*(A'))=0\quad\text{and}\quad k^{\nu^2\rho}(i_*(A'))=0$$ vanish, so the fifth vertical arrow is an isomorphism.
By proposition~\ref{prop:key_result} the composition of the upper horizontal arrows is non-trivial. By commutativity of the diagram the composition of the lower horizontal arrows is non-trivial.
\end{proof}

\textbf{Case 2: The critical character occurs in $\Delta_0(\Pi)$ more than once.}

Now we consider the remaining case and thus make the following

\textbf{Assumption:} Fix a generic irreducible representation $\Pi\in\CCC_G(\omega)$ and a smooth character $\rho$ of $\Gl(1)$ such that
\begin{enumerate}\item $\Hom_{\Gl(2)}(\overline{\Pi},\widetilde{\rho}\circ\det)$ is non-zero for $\widetilde\rho=\nu\rho$,
\item $\chi_\crit=\nu^{1/2}\rho$ occurs in the multiset $\Delta_0$ with multiplicity $\geq2$.
\end{enumerate}
Under these assumptions, the Bessel module $\widetilde{\Pi}=k_\rho(\overline{\Pi})\in\CCC_1$ is not perfect \cite[6.6]{RW}. It contains a unique one-dimensional submodule $\widetilde{\Pi}^S\cong i_*(\nu^2\rho)$.
The main idea now is to consider the commutative diagram of proposition~\ref{prop:VIa_surjection_version2}. Its construction is explained in the following lemmas.
For $B=i^*(\overline{\Pi})$ and $J=j_!j^!(\overline{\Pi})$, lemma~\ref{lem:Gelfand_Kazdhan} gives an exact sequence in $\CCC_2$
 \begin{equation*}
  0\longrightarrow J \stackrel{a}{\longrightarrow} \overline{\Pi} \longrightarrow i_*(B) \longrightarrow 0\ .
 \end{equation*}

\begin{lem}\label{lem:VIa_second_filtration}
 $J\cong j_!(\mathbb{E}[A])$ is perfect.
\end{lem}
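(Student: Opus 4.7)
The plan is to invoke the perfection result already established in \cite{RW} and then transport it through the functor $j_!$. Specifically, \cite[4.14]{RW} asserts that for a generic irreducible $\Pi\in\CCC_G(\omega)$, the $\Gl_a(1)$-module $j^!(\overline{\Pi})$ is perfect of degree one. This fact has already been used implicitly throughout the paper (for instance in the proof of lemma~\ref{lem:A_cyclic} and in proposition~\ref{prop:non_degen_generic_except}), so the hypothesis that $\Pi$ is generic is essential here.

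Next I would identify $j^!(\overline{\Pi})$ explicitly. The classification of perfect $\Gl_a(1)$-modules (see \cite[example~3.4]{RW}, where the notation $\mathbb{E}[A]$ is introduced) produces a canonical isomorphism
\begin{equation*}
j^!(\overline{\Pi}) \;\cong\; \mathbb{E}[A]
\end{equation*}
where $A = i^*j^!(\overline{\Pi})$. Applying $j_!$ and using $J = j_!j^!(\overline{\Pi})$ by definition, functoriality yields the isomorphism
\begin{equation*}
J \;\cong\; j_!(\mathbb{E}[A])
\end{equation*}
asserted in the lemma.

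The remaining claim that $J$ is perfect as a $\Gl_a(2)$-module should follow from the general principle, developed in \cite[§4]{RW}, that $j_!$ sends perfect modules to perfect modules. Concretely, one verifies that the natural counit $j_!j^!(J) \to J$ is an isomorphism (immediate from $j^!j_! = \id$ on $\CCC_1$ applied to $\mathbb{E}[A]$) and that $j^!(J)\cong \mathbb{E}[A]$ is itself perfect, which is automatic by construction. The structural description of $j_!(\mathbb{E}[A])$ in \cite[thm.~4.27]{RW} (already cited in the proof of lemma~\ref{lem:exceptional_B} and in proposition~\ref{prop:key_result}) packages exactly this information.

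The main obstacle, if there is one, is purely bookkeeping: confirming that the definition of perfection used for $\Gl_a(2)$-modules in \cite{RW} is compatible with propagation from the $\Gl_a(1)$-level via $j_!$. Since the formalism was set up in \cite{RW} precisely to make such transport transparent, I expect this step to be essentially a citation rather than an independent argument.
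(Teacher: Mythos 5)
Your proposal is correct and follows essentially the same route as the paper: cite the perfection of $j^!(\overline{\Pi})$ for generic $\Pi$ from \cite{RW}, identify it with $\mathbb{E}[A]$ via the classification of perfect $\Gl_a(1)$-modules, and conclude by the fact that $j_!$ preserves perfection (the paper cites \cite[lemma~4.4]{RW} for this last step, so it is indeed a citation rather than an argument you need to re-derive). The only cosmetic difference is the reference number for the perfection of $j^!(\overline{\Pi})$ (the paper uses 4.13 here, 4.14 elsewhere), which does not affect the substance.
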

\begin{proof}
Since $\overline{\Pi}$ is generic, $j^!(\overline{\Pi})$ is perfect of degree one \cite[4.13]{RW}.
It is uniquely determined by $A=i^*j^!(\overline{\Pi})$ and thus isomorphic to $\mathbb{E}[A]$.
The functor $j_!$ sends perfect modules to perfect ones \cite[lemma~4.4]{RW}.
\end{proof}

\begin{lem}\label{lem:diagonal_arrow_VIa_version2}
For the Bessel module $\widetilde{J}=k_\rho(J)$,
the composition $g=p\circ k_\rho(a)$ of the canonical $\CCC_1$-morphisms
\begin{equation*}
\xymatrix@+5mm
{
g: \ \widetilde{J}\ar[r]^{k_\rho(a)} & \widetilde{\Pi}\ar[r]^p & \widetilde{\Pi}/\widetilde{\Pi}^S\ 
}
\end{equation*}
is surjective .
\end{lem}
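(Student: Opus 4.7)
The plan is to apply the right-exact functor $k_\rho$ to the Gelfand-Kazhdan excision sequence $0\to J\stackrel{a}{\to} \overline{\Pi}\to i_*(B)\to 0$ recalled just before the statement. Right-exactness yields an exact sequence
$$\widetilde{J}\xrightarrow{k_\rho(a)}\widetilde{\Pi}\longrightarrow k_\rho(i_*(B))\longrightarrow 0\ ,$$
so $\coker(k_\rho(a))\cong k_\rho(i_*(B))$, and consequently $\coker(g)\cong\widetilde{\Pi}/(\im(k_\rho(a))+\widetilde{\Pi}^S)$ is simultaneously a quotient of $k_\rho(i_*(B))$ and of $\widetilde{\Pi}/\widetilde{\Pi}^S$. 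The strategy is to play these two descriptions against each other.

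Next, I would invoke proposition~\ref{EXGEN}: under the Case~2 hypothesis that $\Pi$ is generic with a non-zero $(H_+,\widetilde{\rho})$-functional, $\Pi$ is of type \nosf{I}, \nosf{IIa}, \nosf{Va}, \nosf{VIa}, \nosf{X}, or \nosf{XIa} and $\rho\in\Delta_-(\Pi)$. For generic $\Pi$ the sets $\Delta_+(\Pi)$ and $\Delta_-(\Pi)$ are disjoint by \cite[lemma~A.8]{RW}, so $\rho\notin\Delta_+(\Pi)$ and lemma~\ref{lem:exceptional_B} applies: $k_\rho(i_*(B))$ vanishes for the five types \nosf{I}, \nosf{IIa}, \nosf{Va}, \nosf{X}, \nosf{XIa}, while $k_\rho(i_*(B))\cong i_*(\nu^2\rho)$ for type \nosf{VIa}. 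In the five vanishing cases $k_\rho(a)$ is already surjective and so is $g$.

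In the remaining type \nosf{VIa} case, $\coker(g)$ is a quotient of the simple one-dimensional module $i_*(\nu^2\rho)$, so it is either zero or itself isomorphic to $i_*(\nu^2\rho)$. I would rule out the non-trivial alternative by appealing to \cite[theorem~6.6]{RW}: since $\widetilde{\Pi}^S$ is the maximal $S$-invariant submodule of $\widetilde{\Pi}$, the quotient $\widetilde{\Pi}/\widetilde{\Pi}^S$ is perfect, so $i^*(\widetilde{\Pi}/\widetilde{\Pi}^S)=0$ and $\widetilde{\Pi}/\widetilde{\Pi}^S$ admits no non-zero quotient of the form $i_*(\chi)$. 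Since $\coker(g)$ is also a quotient of $\widetilde{\Pi}/\widetilde{\Pi}^S$, this forces $\coker(g)=0$ and $g$ is surjective.

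The main anticipated obstacle is the type \nosf{VIa} case, which is the only case with a non-vanishing cokernel coming out of the first step. Its resolution depends crucially on the structural input that $\widetilde{\Pi}/\widetilde{\Pi}^S$ is perfect, a statement from \cite[theorem~6.6]{RW} designed precisely to control the finite-dimensional part of non-perfect Bessel modules encountered in Case~2. If that perfectness were unavailable, one would have to chase the cokernel $i_*(\nu^2\rho)$ more delicately through the socle $\widetilde{\Pi}^S$ by hand.
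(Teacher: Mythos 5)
The first part of your argument—applying $k_\rho$ to the excision sequence, identifying $\coker(k_\rho(a))\cong k_\rho(i_*(B))$, invoking proposition~\ref{EXGEN} and lemma~\ref{lem:exceptional_B} to dispose of all types except \nosf{VIa}—is fine and parallels the paper's proof. The difficulty is the final step for type \nosf{VIa}.

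The claim that $\widetilde{\Pi}/\widetilde{\Pi}^S$, being perfect, satisfies $i^*(\widetilde{\Pi}/\widetilde{\Pi}^S)=0$ is false. Perfectness of a $TS$-module of degree one means (roughly) that the module embeds into $C_b^\infty(k^\times)$, or equivalently that $\widetilde{\Pi}^S=0$ and $j^!$ behaves well; it does \emph{not} kill the untwisted Jacquet functor $i^*$. Indeed, for the generic $\Pi$ under consideration, one has $\widetilde{\Pi}/\widetilde{\Pi}^S\cong\mathbb{E}[\nu^2\rho]\cong\ind_T^{TS}(\nu\rho)$; restricting to $S$ this is just $C_c^\infty(k)$ with translation action, whose $S$-coinvariants are one-dimensional, and $T$ acts on them by $\nu^2\rho$. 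Thus $\widetilde{\Pi}/\widetilde{\Pi}^S$ \emph{does} admit a non-zero quotient isomorphic to $i_*(\nu^2\rho)$, and your argument cannot rule out $\coker(g)\cong i_*(\nu^2\rho)$. (That perfect modules have non-vanishing $i^*$ is also implicit in lemma~\ref{lem:A_cyclic}, where $A=i^*(j^!(\overline{\Pi}))$ is asserted to be non-zero and cyclic precisely because $j^!(\overline{\Pi})$ is perfect of degree one.)

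The paper closes the gap differently: it works with the long exact sequence involving $k^\rho$ as well as $k_\rho$, so that the kernel of $k_\rho(a)$ is identified as the image of $\delta:k^\rho(i_*(B))\to k_\rho(J)$, a one-dimensional submodule $\cong i_*(\nu^2)$. By \cite[lemma~4.29]{RW}, $k_\rho(J)$ has a \emph{unique} such submodule, sitting in $0\to i_*(\nu^2)\to k_\rho(J)\to\mathbb{E}[\nu^2]\to 0$, so $\im(k_\rho(a))\cong\mathbb{E}[\nu^2]$. This module has no $S$-invariants, so $\im(k_\rho(a))\cap\widetilde{\Pi}^S=0$; combined with the one-dimensionality of $\coker(k_\rho(a))$, this forces $\im(k_\rho(a))+\widetilde{\Pi}^S=\widetilde{\Pi}$ and hence the surjectivity of $g$. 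In short, what you need is not a statement about quotients of $\widetilde{\Pi}/\widetilde{\Pi}^S$, but the sharper fact that $\widetilde{\Pi}^S$ is \emph{not contained} in the image of $k_\rho(a)$, and this requires knowing the image (not just the cokernel) of $k_\rho(a)$.
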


\begin{proof}
The functor $k_\rho$ applied to the exact sequence of lemma~\ref{lem:VIa_second_filtration} yields a long exact sequence
\begin{equation*}
\dots\longrightarrow k^\rho(\overline{\Pi}) \longrightarrow k^\rho i_*(B) \stackrel{\delta}{\longrightarrow} k_\rho(J)
\stackrel{k_\rho(a)}{\longrightarrow} k_\rho(\overline{\Pi}) \longrightarrow k_\rho i_*(B)\longrightarrow 0\ .
\end{equation*}
The left term $k^\rho(\overline{\Pi})$ vanishes because $\Pi$ is generic \cite[lemma~5.10]{RW}.
Lemma~\ref{lem:exceptional_B} implies
$$k^\rho(i_*(B))\cong k_\rho(i_*(B))\cong
\begin{cases}
i_*(\nu^2)& \text{type \nosf{VIa}}\ ,\\
0         & \text{otherwise}\ .
\end{cases}$$
For every case not of type \nosf{VIa} this shows that $k_\rho(J)\to k_\rho(\overline{\Pi})$ is an isomorphism,
which yields the assertion.
For type \nosf{VIa}, lemma~4.29 in \cite{RW} implies that $k_\rho(J)$ has a unique one-dimensional submodule isomorphic to $i_*(\nu^2)$ and fits in an exact sequence in $\CCC_1$
\begin{equation*}
 0\to i_*({\nu^2}) \to k_{\rho}(J)\to \mathbb{E}[\nu^2] \to 0\ .
\end{equation*}
Especially, the cokernel of $\delta$, or equivalently the image of $k_\rho(a)$, is isomorphic to $\mathbb{E}[\nu^2]$.
Since $\widetilde{\Pi}\cong i_*(\nu^2)\oplus \mathbb{E}[\nu^2]$ by \cite[theorem~6.6]{RW} and since $i_*(\nu^2)$ is the kernel of $p$, the assertion follows.
\end{proof}

\begin{lem}\label{lem:factor_over_breve_version2}
There is a unique $\CCC_1$-morphism $h$ that makes the following
diagram with the canonical surjective arrows in $\CCC_1$ commute,
\begin{equation*}
 \xymatrix@+3mm
 {
 \widetilde{J}\ar@{->>}[r]\ar@{->>}[d]_g  & (\widetilde{J/\mathbb{S}_2}) \ar@{.>>}[d]^{\exists! h} \\
 \widetilde{\Pi}/\widetilde{\Pi}^S\ar@{->>}[r]&  \pi_0(\widetilde{\Pi}/\widetilde{\Pi}^S)\ .
 }
 \end{equation*}
Especially, $h$ is surjective.
\end{lem}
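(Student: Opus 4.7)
My strategy is to invoke the universal property of the canonical surjection $\widetilde{J}\twoheadrightarrow\widetilde{J/\mathbb{S}_2}$, combined with the observation that the Gelfand-Graev submodule contributes only a \emph{perfect} piece to $\widetilde{J}$, which is therefore killed by $\pi_0$. Uniqueness of $h$ is automatic because the top horizontal arrow is epic, and surjectivity of $h$ follows immediately from surjectivity of $g$ (lemma~\ref{lem:diagonal_arrow_VIa_version2}) together with the surjectivity of the right vertical projection. So the entire content of the lemma is to verify that the composite $p\circ g\colon\widetilde{J}\to\pi_0(\widetilde{\Pi}/\widetilde{\Pi}^S)$ kills the kernel of the top arrow.

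First I identify this kernel. Because $\Pi$ is generic we have $m_\Pi=1$, so lemma~\ref{lem:AB-sequence} places $\mathbb{S}_2$ as a submodule of $\overline{\Pi}$; as $\mathbb{S}_2$ is of $j_!$-type, the adjunction counit factors this embedding canonically through $J=j_!j^!(\overline{\Pi})$. Right-exactness of $k_\rho$ then shows that the kernel of $\widetilde{J}\twoheadrightarrow\widetilde{J/\mathbb{S}_2}$ coincides with the image of $\widetilde{\mathbb{S}_2}:=k_\rho(\mathbb{S}_2)$ in $\widetilde{J}$. Feeding $\mathbb{S}_1=j_!(\C)$ into the natural exact sequence
\begin{equation*}
0\to j_!k_{\nu\rho}(\mathbb{S}_1)\to k_\rho j_!(\mathbb{S}_1)\to i_*i^*(\mathbb{S}_1)\to 0
\end{equation*}
from section~\ref{ss:representations} and using $i^*j_!=0$ from lemma~\ref{lem:Gelfand_Kazdhan}, I obtain $\widetilde{\mathbb{S}_2}\cong j_!k_{\nu\rho}(\mathbb{S}_1)$, which lies in the image of $j_!$ and is therefore perfect.

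It now suffices to observe that the projection $p\colon\widetilde{\Pi}/\widetilde{\Pi}^S\twoheadrightarrow\pi_0(\widetilde{\Pi}/\widetilde{\Pi}^S)$, being the maximal $i_*$-type quotient, annihilates every perfect subobject (equivalently, $i^*j_!=0$ again). Consequently $p\circ g$ vanishes on the image of $\widetilde{\mathbb{S}_2}$, and the desired factorization yields the unique morphism $h$; surjectivity of $h$ then follows as noted above. The main obstacle is precisely the identification $\widetilde{\mathbb{S}_2}\cong j_!k_{\nu\rho}(\mathbb{S}_1)$ in $\CCC_1$, which is what forces the construction to quotient by the Gelfand-Graev submodule $\mathbb{S}_2$ (and not some smaller piece) in the upper-right corner of the diagram.
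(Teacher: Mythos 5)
Your proof is essentially correct and follows the same underlying idea as the paper's: the kernel of $\widetilde{J}\twoheadrightarrow\widetilde{J/\mathbb{S}_2}$ is the image of $k_\rho(\mathbb{S}_2)$, which lies in the essential image of $j_!$, and is therefore annihilated by $\pi_0=i_*i^*$ (via $i^*j_!=0$); the paper packages this as the statement that $\pi_0(\widetilde{J})\to\pi_0(\widetilde{J/\mathbb{S}_2})$ is an isomorphism (since $\pi_0(\mathbb{S}_1)=0$), whereas you package it directly as a lifting criterion, but the two framings are logically equivalent.

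One caveat on wording: you twice use "perfect" to mean "in the essential image of $j_!$" (equivalently $i^*$-acyclic), e.g.\ "$\dots$ lies in the image of $j_!$ and is therefore perfect" and "$\pi_0$ annihilates every perfect subobject." In this paper (and in \cite{RW}) "perfect" is a different, stronger condition on a module in $\CCC_n$ --- it is the one that forces $\widetilde{\Pi}^S=0$ and uniqueness of the Kirillov embedding --- and perfectness is \emph{not} what the factorization argument needs. What you actually use, and what your parenthetical $i^*j_!=0$ makes explicit, is only that the kernel is of $j_!$-type, so that the composite $Z\hookrightarrow\widetilde{\Pi}/\widetilde{\Pi}^S\to\pi_0(\widetilde{\Pi}/\widetilde{\Pi}^S)$ factors through $\pi_0(Z)=0$. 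Replacing "perfect" by "of $j_!$-type" throughout makes the argument match both the intended meaning and the paper's terminology.
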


\begin{proof}It suffices to show the existence of $h$, then uniqueness and surjectivity follow from surjectivity of $g$ shown in lemma~\ref{lem:diagonal_arrow_VIa_version2}.
Consider the following commutative diagram in $\CCC_1$
\begin{equation*}
\xymatrix{
 \widetilde{J/\mathbb{S}_2}\ar[d]\ar@{.>}[drr]_(0.3)h & \widetilde{J}\ar[r]^g\ar[d]\ar[l] & \widetilde{\Pi}/\widetilde{\Pi}^S\ar[d] \\
\pi_0(\widetilde{J/\mathbb{S}_2}) & \pi_0(\widetilde{J})\ar[l]_\cong\ar[r] & \pi_0(\widetilde{\Pi}/\widetilde{\Pi}^S)\ .
}
\end{equation*}
where the tilde denotes the functor $k_\rho$.
Surjectivity of $g$ has been shown in lemma~\ref{lem:diagonal_arrow_VIa_version2} and surjectivity of the other arrows follows from right-exactness of $k_\rho$.
It only remains to be shown that the lower left horizontal arrow is an isomorphism.
Indeed, $k_\rho$ applied to the exact sequence
$0 \to \mathbb{S}_2 \to J \to J/\mathbb{S}_2 \to 0$
yields by \cite[4.7]{RW} a long exact sequence
\begin{equation*}
\dots \longrightarrow  \mathbb{S}_1 \longrightarrow \widetilde{J} \longrightarrow \widetilde{J/\mathbb{S}_2} \longrightarrow 0\ .
\end{equation*}
The exact functor $\pi_0=i_*i^*$ applied to this sequence shows that the natural morphism
$$\pi_0 (\widetilde{J})\to \pi_0 (\widetilde{J/\mathbb{S}_2})$$ is an isomorphism because $\pi_0(\mathbb{S}_1)$ vanishes.
Thus the diagonal arrow $h$ exists.
\end{proof}

\begin{prop}\label{prop:VIa_surjection_version2}
Fix a generic $\Pi\in\CCC_G(\omega)$ and a smooth character $\rho$ of $\Gl(1)$ as in the above assumption.
Then the composition of the canonical morphisms
\begin{equation*}
\overline{\Pi}^K\longrightarrow \overline{\Pi}\longrightarrow\widetilde{\Pi} \longrightarrow k_{\nu^2\rho}(\widetilde{\Pi}/\widetilde{\Pi}^S)
\end{equation*}
is non-zero for $K=\Gl(2,\mathfrak{o})$ and the Bessel module $\widetilde{\Pi}=k_\rho(\overline{\Pi})$.
\end{prop}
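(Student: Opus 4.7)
The plan is to reduce the claim to Proposition~\ref{prop:key_result} via a diagram chase using the surjection $g\colon\widetilde J\twoheadrightarrow\widetilde\Pi/\widetilde\Pi^S$ of Lemma~\ref{lem:diagonal_arrow_VIa_version2}. First I would pass from $\overline\Pi$ to its submodule $J=j_!j^!(\overline\Pi)$ of Lemma~\ref{lem:VIa_second_filtration}; compactness of $K$ makes $(-)^K$ exact, so $J^K\hookrightarrow\overline\Pi^K$ is injective, and by Lemma~\ref{lem:diagonal_arrow_VIa_version2} the claimed composition, restricted to $J^K$, coincides with
\[
J^K\ \longrightarrow\ J\ \longrightarrow\ \widetilde J\ \stackrel{g}{\longrightarrow}\ \widetilde\Pi/\widetilde\Pi^S\ \longrightarrow\ k_{\nu^2\rho}(\widetilde\Pi/\widetilde\Pi^S)\ .
\]
Right-exactness of $k_{\nu^2\rho}$ factors the last two arrows as $\widetilde J\to k_{\nu^2\rho}(\widetilde J)\stackrel{k_{\nu^2\rho}(g)}{\twoheadrightarrow}k_{\nu^2\rho}(\widetilde\Pi/\widetilde\Pi^S)$, so it suffices to exhibit $v\in J^K$ whose image in $k_{\nu^2\rho}(\widetilde J)$ does not lie in $\ker k_{\nu^2\rho}(g)$.

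Next I would use $J\cong j_!(\mathbb E[A])$ together with the hypothesis that $\chi_\crit=\nu^{1/2}\rho\in\Delta_0(\Pi)$ has multiplicity $\geq 2$. Since the constituents of $A$ are $\nu^{3/2}\Delta_0(\Pi)$, the character $\nu^2\rho$ occurs in $A$ with multiplicity $\geq 2$, and cyclicity of $A$ (Lemma~\ref{lem:A_cyclic}) forces these copies to lie in a single Jordan block $(\nu^2\rho)^{(m)}$ with $m\geq 2$. Correspondingly $J$ admits a filtration whose associated graded contains at least two successive copies of $j_!(\mathbb E[\nu^2\rho])$, and Proposition~\ref{prop:key_result}, applied to such a copy for the character $\chi=\nu^2\rho$, produces a distinguished $K$-invariant vector with nontrivial image in the $k_{\nu^2\rho}$-coinvariant of the Bessel quotient modulo $S$-invariants.

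The main obstacle is verifying that the $K$-invariant produced this way is not annihilated by $k_{\nu^2\rho}(g)$, i.e.\ that at least one of the $m\geq 2$ Jordan copies of $j_!(\mathbb E[\nu^2\rho])$ survives both the transition from $\widetilde J$ to $\widetilde\Pi$ and the quotient by $\widetilde\Pi^S$. The kernel of $g$ is controlled by the long exact sequence associated to $0\to J\to\overline\Pi\to i_*(B)\to 0$, which feeds in $k^\rho(i_*(B))$ (computed in Lemma~\ref{lem:exceptional_B}), together with $\widetilde\Pi^S$ (explicit by \cite[thm.~6.6]{RW}). The multiplicity-$\geq 2$ assumption is precisely the condition guaranteeing that at least one Jordan copy remains after these cancellations; when the multiplicity equals one, the nonvanishing is already covered by Proposition~\ref{prop:non_degen_generic_except}, and the present argument is a parallel enhancement in which the extra Jordan block absorbs the additional contributions arising from $i_*(B)$ and $\widetilde\Pi^S$, so that the argument of the multiplicity-one case can still be carried out on a surviving copy.
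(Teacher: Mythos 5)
Your proposal correctly reduces to the submodule $J = j_!j^!(\overline\Pi)$ and identifies the key obstacle — showing that a $K$-invariant vector produced via Proposition~\ref{prop:key_result} has non-zero image after descending to $k_{\nu^2\rho}(\widetilde\Pi/\widetilde\Pi^S)$ — but it does not resolve this obstacle. The final paragraph asserts that ``the multiplicity-$\geq 2$ assumption is precisely the condition guaranteeing that at least one Jordan copy remains after these cancellations,'' yet gives no argument; in particular it is not explained how the image of a single $j_!(\mathbb E[\nu^2\rho])$ inside the $m$-fold Jordan block is traced through the quotient $\widetilde\Pi \twoheadrightarrow \widetilde\Pi/\widetilde\Pi^S$ when $\widetilde\Pi^S$ absorbs one copy and $k^\rho i_*(B)$ may interfere (as in type \nosf{VIa}). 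The intended parallel with the multiplicity-one proof of Proposition~\ref{prop:non_degen_generic_except} breaks precisely here: in Case~1 the critical copy is a direct summand of $A$, so the cokernel $Y$ has $k_\rho(Y)\cong i_*(A')$ with $\nu^2\rho\not\in A'$, and the Bessel quotient's $k_{\nu^2\rho}$-coinvariant is captured entirely by the split-off copy. In Case~2 this clean separation fails, and merely exhibiting one non-vanishing $K$-invariant vector upstream is not enough.

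The paper bypasses this difficulty by replacing your non-vanishing input (Proposition~\ref{prop:key_result}) with the \emph{surjectivity} statement of Lemma~\ref{lem:comp_J_version2} for the full module $J/\mathbb{S}_2 = j_!i_*(A)$, together with the construction in Lemma~\ref{lem:factor_over_breve_version2} of a \emph{surjection} $h:\widetilde{J/\mathbb{S}_2}\twoheadrightarrow \pi_0(\widetilde\Pi/\widetilde\Pi^S)$ compatible with $g$; these two lemmas are not invoked in your proposal. With surjectivity along the entire top row and the vertical maps, the non-vanishing reduces to $k_{\nu^2\rho}\pi_0(\widetilde\Pi/\widetilde\Pi^S)\neq 0$, which is exactly where the multiplicity-$\geq 2$ hypothesis enters (one copy of $\nu^2\rho$ goes to $\widetilde\Pi^S$, at least one remains). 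To close the gap in your argument you would need an analogue of this surjectivity reduction, or at the very least a direct verification that the chosen Jordan copy maps non-trivially past both $\widetilde\Pi^S$ and the contribution from $k^\rho i_*(B)$.
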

\begin{proof}
Let $J=j_!j^!(\overline{\Pi})$ and $\chi=\nu^2\rho$.
The functor of $K$-invariants and the coinvariant functors applied to the embedding $a:J\hookrightarrow\overline{\Pi}$ yield a commutative diagram of complex vector spaces
with the canonical arrows
\begin{equation*}
\xymatrix{
  J^K\ar[r]\ar@{^{(}->}[d]      &   J\ar[r]\ar@{^{(}->}[d]^a  & \widetilde{J}\ar[r]\ar@{->>}[d]^g& \widetilde{J/\mathbb{S}_2}\ar[r]\ar@{->>}[d]^{h} & k_\chi(\widetilde{J/\mathbb{S}_2})\ar@{->>}[d]^{k_\chi(h)}\\
  \overline{\Pi}^K \ar[r]& \overline{\Pi}\ar[r] & \widetilde{\Pi}/\widetilde{\Pi}^S \ar[r]\ar[dr]  &  \pi_0(\widetilde{\Pi}/\widetilde{\Pi}^S)\ar[r] & k_\chi\pi_0(\widetilde{\Pi}/\widetilde{\Pi}^S)\\
               &   &   &     k_\chi(\widetilde{\Pi}/\widetilde{\Pi}^S)\ar[ur] \ .
}
\end{equation*}
The third vertical arrow $g$ is surjective by lemma~\ref{lem:diagonal_arrow_VIa_version2}.
The fourth vertical arrow $h$ exists and is surjective by lemma~\ref{lem:factor_over_breve_version2}.
By right-exactness of $k_\chi$, the fifth vertical arrow $k_\chi(h)$ is also surjective.
The composition of the upper horizontal arrows is surjective by lemma~\ref{lem:comp_J_version2} and because $J^K\to (J/\mathbb{S}_2)^K$ is surjective.
Hence the composition of the lower horizontal arrows is surjective.
By assumption, $\chi$ occurs at least once in $\pi_0(\widetilde{\Pi}/\widetilde{\Pi}^S)$,
so $k_\chi\pi_0(\widetilde{\Pi}/\widetilde{\Pi}^S)$ is non-zero.
The lower triangle is commutative by transitivity of coinvariant functors.
Going over the diagonal arrows shows the assertion.
\end{proof}

\begin{thm}\label{thm:main_generic}
Fix a generic irreducible $\Pi\in\CCC_G(\omega)$ and a smooth character $\rho$ of $\Gl(1)$.
If $(\Pi,\rho)$ is non-ordinary exceptional or extra-ordinary exceptional in the sense of \cite{RW},
then the subregular $L$-factor with split Bessel model attached to the Bessel character $\Lambda=\rho\boxtimes\rho^\divideontimes$ is
\begin{equation*}L_{\sreg}^{\PS}(s,\Pi,1,\Lambda)=L(s,\chi_{\crit})\quad,\quad \chi_{\crit}=\nu^{1/2}\rho\ .
\end{equation*}
If neither $(\Pi,\rho)$ nor $(\Pi,\rho^\divideontimes)$ are non-ordinary exceptional or extra-ordinary exceptional,
then the subregular $L$-factor $L_\sreg^\PS(s, \Pi,1,\Lambda)=1$ is trivial.
\end{thm}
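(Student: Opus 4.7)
The plan is to combine the sufficient condition of Lemma~\ref{lem:suff_cond_subregular_poles} with the necessary condition of Proposition~\ref{prop:poles_give_functionals} and the functional classification from Proposition~\ref{EXGEN}. After the usual twist reducing to $\mu=1$, the corollary following Proposition~\ref{2.1} shows that every subregular pole is a Tate factor of the form $L(s,\nu^{1/2}\rho)$ or $L(s,\nu^{1/2}\rho^\divideontimes)$. Since the Weyl reflection $\mathbf{s}_1$ interchanges $\rho$ and $\rho^\divideontimes$, it suffices to decide when the factor $L(s,\chi_\crit)$ with $\chi_\crit=\nu^{1/2}\rho$ divides $L_\sreg^\PS$; the $\rho^\divideontimes$-case is then entirely symmetric.

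For the existence direction I would proceed as follows. Assume $(\Pi,\rho)$ is non-ordinary or extra-ordinary exceptional, so that $\chi_\crit\in\Delta_0(\Pi)$ and hence $\Pi$ lies in one of the types \nosf{I, IIa, Va, VIa, X, XIa}. I would split according to the multiplicity of $\chi_\crit$ in the multiset $\Delta_0(\Pi)$: multiplicity one invokes Proposition~\ref{prop:non_degen_generic_except}, while higher multiplicity invokes Proposition~\ref{prop:VIa_surjection_version2}; each supplies the non-triviality of the canonical composition appearing in Lemma~\ref{lem:suff_cond_subregular_poles}. In either case that lemma then produces the Tate factor $L(s,\chi_\crit)$ as a divisor of $L_\sreg^\PS$. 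To see that this factor appears simply and is not augmented by any further Tate factor of the same form, one combines Proposition~\ref{prop:poles_give_functionals} with the one-dimensionality $\dim\Hom_{\Gl(2)}(\overline{\Pi},\widetilde{\rho}\circ\det)=1$ given by Proposition~\ref{EXGEN}, forcing a bijective correspondence between poles and $(H_+,\widetilde{\rho})$-functionals.

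For the vanishing direction, if neither $(\Pi,\rho)$ nor $(\Pi,\rho^\divideontimes)$ is exceptional, then $\rho\notin\Delta_-(\Pi)$ and $\rho^\divideontimes\notin\Delta_-(\Pi)$, so Proposition~\ref{EXGEN} yields $\Hom_{\Gl(2)}(\overline{\Pi},\widetilde{\rho}\circ\det)=0$ and the analogous vanishing for $\nu\rho^\divideontimes$. The necessary condition of Proposition~\ref{prop:poles_give_functionals} then rules out any subregular pole, whence $L_\sreg^\PS(s,\Pi,1,\Lambda)=1$. The main obstacle is the existence direction — establishing non-triviality of a concrete composition involving the Bessel module, its central specialisation, and the coinvariant functors — but this work has already been shouldered by Propositions~\ref{prop:non_degen_generic_except} and~\ref{prop:VIa_surjection_version2}, whose geometric arguments are tailored separately to the two multiplicity regimes of $\chi_\crit$ in $\Delta_0(\Pi)$.
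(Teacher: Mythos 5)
Your proof follows the paper's overall structure closely: existence via Propositions~\ref{prop:non_degen_generic_except} and~\ref{prop:VIa_surjection_version2} fed into Lemma~\ref{lem:suff_cond_subregular_poles}, and vanishing via Proposition~\ref{prop:poles_give_functionals} combined with Proposition~\ref{EXGEN}. That part is sound and matches what the paper actually does.

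There is, however, a genuine gap in your uniqueness argument. You appeal to ``symmetry under $\mathbf{s}_1$'' and to a ``bijective correspondence between poles and $(H_+,\widetilde{\rho})$-functionals'', but this only handles one type of candidate pole. The one-dimensionality $\dim\Hom_{\Gl(2)}(\overline{\Pi},\nu\rho\circ\det)=1$ from Proposition~\ref{EXGEN} controls the multiplicity of the factor $L(s,\nu^{1/2}\rho)$, since that space corresponds to poles at $\nu^{-s_0}=\nu^{1/2}\rho$. It says nothing about a possible \emph{additional} factor $L(s,\nu^{1/2}\rho^\divideontimes)$ with $\rho\neq\rho^\divideontimes$, which would instead correspond to the \emph{different} Hom-space $\Hom_{\Gl(2)}(\overline{\Pi},\nu\rho^\divideontimes\circ\det)$. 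Your symmetry remark, read carefully, only tells you that $L(s,\nu^{1/2}\rho^\divideontimes)$ appears \emph{if and only if} $\rho^\divideontimes\in\Delta_-(\Pi)$; it does not rule out that both $\rho$ and $\rho^\divideontimes$ lie in $\Delta_-(\Pi)$ with $\rho\neq\rho^\divideontimes$, in which case $L_\sreg^\PS$ would contain two distinct Tate factors, contradicting the theorem's conclusion $L_\sreg^\PS=L(s,\nu^{1/2}\rho)$.

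To close this gap, you need to run the chain in reverse once more: a hypothetical pole $L(s,\nu^{1/2}\rho^\divideontimes)$ yields an $(H_+,\nu\rho^\divideontimes)$-functional (Proposition~\ref{prop:poles_give_functionals}), and Proposition~\ref{EXGEN} then forces $\rho^\divideontimes\in\Delta_-(\Pi)$. Combined with $\rho\in\Delta_-(\Pi)$ from the exceptional hypothesis, this would put two distinct characters $\rho,\rho^\divideontimes$ of the Bessel datum simultaneously in $\Delta_-(\Pi)$, which is excluded by Lemma~A.9 of \cite{RW}. The paper invokes exactly this lemma to finish the argument, and without it the identification $L_\sreg^\PS=L(s,\nu^{1/2}\rho)$ is not justified.
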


\begin{proof}
Proposition~\ref{prop:non_degen_generic_except} and proposition~\ref{prop:VIa_surjection_version2} imply that the sufficient condition of lemma~\ref{lem:suff_cond_subregular_poles} is satisfied, so there is a subregular pole of the form $L(s,\nu^{1/2}\rho)$.
Any additional subregular pole would be of the form $L(s,\nu^{1/2}\rho^\divideontimes)$ where $\rho\neq\rho^\divideontimes$ by lemma~\ref{2.1}. This would give rise to an $(H_+,\nu\rho^\divideontimes)$-functional by proposition~\ref{prop:poles_give_functionals}.
Then proposition~\ref{EXGEN} implies that the two distinct characters $\rho,\rho^\divideontimes$ are both contained in $\Delta_-(\Pi)$.
This contradicts \cite[lemma~A.9]{RW}.

The last assertion follows from proposition~\ref{prop:poles_give_functionals} and proposition~\ref{EXGEN}.
\end{proof}

Theorem~\ref{thm:main_nongeneric} and theorem~\ref{thm:main_generic} imply
that the subregular poles are exactly the ones listed in table~\ref{tab:subregular_poles}. This completes the results of \cite{RW} and \cite{W_Excep} and shows that
the Piatetski-Shapiro $L$-factors $L^{\PS}(s,\Pi,\mu,\Lambda)$ coincide with the $L$-factors  formally defined in \cite{Roberts-Schmidt}, table A.8 using the Langlands classification (see \cite{Roberts-Schmidt}, section 2.4 of loc.~cit.~as well as Gan-Takeda \cite{Gan_Takeda_LLC}.
This proves our final result:

\begin{thm}\label{thm:main}
For all irreducible smooth representation $\Pi$ of $\GSp(4,k)$ and all
split Bessel models $\Lambda=\rho\boxtimes\rho^\divideontimes$ of $\Pi$
with Bessel module $\widetilde{\Pi}=k_\rho(\overline{\Pi})$,
the Piatetski-Shapiro $L$-factor $$L^{\PS}(s,\Pi,\mu,\Lambda)=
L(s,\mu\nu^{-3/2}\otimes (\widetilde{\Pi}/\widetilde{\Pi}^S))L_{\sreg}^{\PS}(s,\Pi,\mu,\Lambda)L^{\PS}_{\ex}(s,\Pi,\mu,\Lambda)$$
is the expected $L$-factor defined for $(\Pi,\mu)$ given by the Langlands philosophy. It does not depend on the particular choice of the split Bessel model. 
\end{thm}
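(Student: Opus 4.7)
The proof is essentially an assembly and bookkeeping result. The factorization $L^{\PS}=L(s,\mu\otimes M)\cdot L_{\sreg}^{\PS}\cdot L_{\ex}^{\PS}$ is built into the construction: the three-step filtration of $C_c^\infty(k^4)$ described in section~\ref{s:filtration_of_poles} with successive quotients $C_c^\infty((k^2\setminus\{0\})\times(k^2\setminus\{0\}))$, $C_c^\infty(k^2\setminus\{0\})^{\oplus 2}$ and $\C$ decomposes any $Z^{\PS}(s,v,\Lambda,\Phi,\mu)$ as a sum of zeta-integrals whose regularization $L$-factors are precisely the three named pieces. So the only content to prove is (i) the explicit values of the three factors, (ii) their independence of the split Bessel model, and (iii) agreement with the representation of the Weil--Deligne group supplied by the local Langlands correspondence.

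For (i), my plan is to substitute: $L(s,\mu\otimes M)$ is tabulated in \cite{RW}; $L_{\sreg}^{\PS}$ is exactly Table~\ref{tab:subregular_poles}, now verified in full by combining Theorem~\ref{thm:main_nongeneric} (non-generic $\Pi$) with Theorem~\ref{thm:main_generic} (generic $\Pi$); $L_{\ex}^{\PS}$ is tabulated in \cite{W_Excep}. For (ii), the exceptional factor is known to be independent of the split Bessel model by \cite{W_Excep} (with the anisotropic analogue in \cite{Anisotropic_Exceptional}), so it suffices to treat the regular factor. Although neither $L(s,\mu\otimes M)$ nor $L_{\sreg}^{\PS}$ is individually invariant under $\rho\leftrightarrow\rho^\divideontimes$, their product is: the Weyl involution $\mathbf{s}_1$ acts on the space of Bessel functionals exchanging the two characters \cite[lem.~4.17]{RW} and preserves the $TS$-module $M$ by \cite[4.19]{RW}, while Table~\ref{tab:subregular_poles} together with Proposition~\ref{2.1} shows that the subregular part absorbs the missing Tate factor. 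Concretely, one checks that for each row of the table the combined regular factor depends only on the unordered pair $\{\rho,\rho^\divideontimes\}$ and on $\Pi$.

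For (iii), the last step is a case-by-case inspection against Roberts--Schmidt Table~A.8, treating each Sally--Tadi\'{c} type (\nosf{I}, \nosf{IIa/b}, \ldots, \nosf{XIa/b}) in turn. The idea is to expand the three factors in terms of Tate $L$-factors of the characters appearing in the inducing data and the Langlands quotient labelling of $\Pi$, and verify that the product coincides with the Euler factor of $\phi_\Pi\otimes\mu$ for the Langlands parameter $\phi_\Pi$ assigned by \cite{Gan_Takeda_LLC}. Once this is done for one choice of $\Lambda$, independence from (ii) transports the identity to every split Bessel model and, combined with the anisotropic results of \cite{Danisman, Anisotropic_Exceptional}, yields the full statement.

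The principal obstacle is the bookkeeping in step (iii) for the cases where several entries of the table conspire. The sensitive cases are \nosf{IIIb} with $\chi_1=\nu^{\pm 1}$ and the extended Saito--Kurokawa types \nosf{VIc}, \nosf{VId}, since there $\Delta_+(\Pi)\cap\nu\Delta_+(\Pi)$ is non-empty or the subregular functional originates from the Klingen side (cf.~the remarks after Corollary~\ref{cor:functionals_with_Bessel_models}): one must check that the multiplicities of repeated Tate factors in $L(s,\mu\otimes M)$, $L_{\sreg}^{\PS}$ and $L_{\ex}^{\PS}$ add up correctly, without double-counting against the monodromy Jordan blocks that already appear inside $M$. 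Types \nosf{IVc} and \nosf{IVd} are similarly delicate. Once these cases are handled, all remaining types reduce to a direct multiplication of Tate factors matching the tabulated Langlands data.
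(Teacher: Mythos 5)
Your overall strategy matches the paper's: assemble $L(s,\mu\otimes M)$ from \cite{RW}, $L_{\sreg}^{\PS}$ from Table~\ref{tab:subregular_poles} (proved by Theorems~\ref{thm:main_nongeneric} and \ref{thm:main_generic}), and $L_{\ex}^{\PS}$ from \cite{W_Excep}, then verify the product agrees with Roberts--Schmidt Table~A.8 case by case. However, your step~(ii) contains a misconception that leaves a real hole in the logic as you structure it. You assert that ``neither $L(s,\mu\otimes M)$ nor $L_{\sreg}^{\PS}$ is individually invariant under $\rho\leftrightarrow\rho^\divideontimes$'', but the very citation you supply contradicts this: \cite[4.19]{RW} states that the Weyl involution $\mathbf{s}_1$ preserves the $TS$-module $M$, so $L(s,\mu\otimes M)$ \emph{is} invariant under the switch, and Table~\ref{tab:subregular_poles} is visibly symmetric in $(\rho,\rho^\divideontimes)$, so $L_{\sreg}^{\PS}$ is too. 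The genuine dependence on the Bessel datum occurs only for \emph{generic} $\Pi$, where arbitrary $\rho$ (many different $\mathbf{s}_1$-orbits) give valid split Bessel models and both $L(s,\mu\otimes M)$ and $L_{\sreg}^{\PS}$ vary across these orbits, with the product remaining constant. For non-generic $\Pi$, the admissible $\rho$ form a single $\mathbf{s}_1$-orbit inside $\Delta_+(\Pi)$, so the switch suffices there.

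This matters for your step~(iii). You propose doing the Roberts--Schmidt comparison ``for one choice of $\Lambda$'' and then transporting the identity via the independence established in~(ii). But since (ii) only handles the $\mathbf{s}_1$-orbit, this does not reach every split Bessel model when $\Pi$ is generic. The fix is exactly what the paper does: carry out the case-by-case verification against Table~A.8 for \emph{every} $\rho$ in the relevant range, using the tabulated values of $L(s,\mu\otimes M)$ in \cite{RW} and of $L_{\sreg}^{\PS}$ in Table~\ref{tab:subregular_poles}; Table~\ref{tab:regular_poles} records that the final product is uniform. Independence then falls out automatically because the target (the Langlands $L$-factor) is defined without reference to any Bessel model. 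The rest of your proposal --- the reliance on Theorems~\ref{thm:main_nongeneric} and \ref{thm:main_generic}, the flagging of the sensitive types \nosf{IIIb}, \nosf{IVc}, \nosf{VIc}, \nosf{VId}, and invoking \cite{Danisman, Anisotropic_Exceptional} for the anisotropic complement --- is in the spirit of the paper and correct.
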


For the analogous results on anisotropic Bessel models see Dani\c{s}man~\cite{Danisman, Danisman_Annals, Danisman2, Danisman3} and the authors \cite{Anisotropic_Exceptional}. Summarily, it follows that the Piatetski-Shapiro $L$-factor is independent of any Bessel model, the anisotropic or split.

\appendix

\section{Appendix}
\subsection{Central specialization}\label{ss:central_specialization}
Instead of the category $\CCC_{\Gl(n)}$ it is sometimes preferable to consider its full subcategory $\CCC=\CCC_{\Gl(n)}(\mu)$ of smooth representations with central character $\mu$.
For example, $\CCC^\fin_{\Gl(n)}(\mu)$ has cohomological dimension $\leq n$ \cite[II.3.3]{Schneider-Stuhler_Bruhat_Tits} and simplifies the consideration of the Euler characteristic.
For the fixed smooth character $\mu$ of the center $Z\cong k^\times$ of $\Gl(n)$, consider the functor
$$\zeta_\mu: {\cal C}_{\Gl(n)} \to {\cal C}_{\Gl(n)}(\mu)$$
sending $\Gl(n)$-modules $M$ to their $(Z,\mu)$-coinvariant quotient $M_{(Z,\mu)}$.
The functor $\zeta_\mu$ is right-exact and its left-derived functor is the functor of $(Z,\mu)$-invariants
$\zeta^\mu: \CCC_{\Gl(n)}\to \CCC_{\Gl(n)}(\mu)\,,\ M\mapsto M^{(Z,\mu)}$, compare \cite[lemma~A.2]{RW}.
By composition with the forgetful functor
${\CCC}_n \to {\CCC}_{\Gl(n)}$
we obtain functors ${\cal C}_{n} \to {\cal C}_{\Gl(n)}(\mu)$, also denoted $\zeta_\mu, \zeta^\mu$.
We use the analogous notation for representations of subgroups of $\Gl(n)$ containing $Z$.
The meaning will be clear from the context.

\begin{lem}\label{list}
\begin{enumerate}
\item The functor $\zeta_\mu$ is left-adjoint and the functor $\zeta^\mu$ is right-adjoint to the natural embedding $\CCC_{\Gl(n)}(\mu)\to\CCC_{\Gl(n)}$.
\item For every closed subgroup $M\subseteq\Gl(n)$ with $Z\subseteq M$ there are natural equivalences 
\begin{equation*}
\zeta_\mu \circ\ind_M^{\Gl(n)} \cong \ind_M^{\Gl(n)}\circ\zeta_\mu\qquad,\qquad\zeta^\mu \circ\ind_M^{\Gl(n)} \cong \ind_M^{\Gl(n)}\circ\zeta^\mu \ .
\end{equation*}
\item 
The functor $\zeta^\mu:\CCC_n\to\CCC_{\Gl(n)}(\mu)$ factorizes over the functor $\kappa:\CCC_n\to\CCC_{\Gl(n)}$
of invariants under $\ker(\Gl_a(n)\to\Gl(n))$.

\item The functor $\zeta_\mu\circ j_!$ is exact and 
$\zeta^\mu\circ j_!=0$ is zero.
\item There is an isomorphism $\zeta_\mu(X)\cong \zeta^\mu (X) $ for irreducible $X\in\CCC_{\Gl(n)}$.
\end{enumerate}
\end{lem}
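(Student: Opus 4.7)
The plan is to dispatch the five assertions in roughly the order listed, since parts~3 and~4 share a common key calculation on the translation subgroup $\ker(\Gl_a(n)\to\Gl(n))$, and part~5 follows from central-character considerations that also inform part~1.

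For part~1, I would appeal to the universal properties of coinvariants and invariants. A module $N\in\CCC_{\Gl(n)}(\mu)$ sees $Z$ acting by $\mu$, so for any $M\in\CCC_{\Gl(n)}$ every morphism $M\to N$ factors uniquely through the largest quotient of $M$ on which $Z$ acts by $\mu$, namely $\zeta_\mu M$; dually, every morphism $N\to M$ factors uniquely through the largest submodule with this property, namely $\zeta^\mu M$. For part~2, the point is that $Z\subseteq M$, so taking the $(Z,\mu)$-coinvariants (resp.\ $(Z,\mu)$-invariants) of $\sigma\in\CCC_M$ may be performed either before or after the functor $\ind_M^{\Gl(n)}$: sections of $\ind_M^{\Gl(n)}(\sigma)$ supported in a single $M$-orbit transform under $Z\subseteq M$ exactly as $\sigma$ does, and compact induction is exact, so the two natural comparison morphisms are isomorphisms.

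The technical heart lies in parts~3 and~4, both of which rest on the following observation. Write $Z\cong k^\times$ as the scalar subgroup of $\Gl(n)\subseteq\Gl_a(n)$ and let $V=\ker(\Gl_a(n)\to\Gl(n))\cong k^n$ denote the translation subgroup. Conjugation by $z\in Z$ scales $V$: if $t(w)\in V$ corresponds to $w\in k^n$, then $zt(w)z^{-1}=t(\lambda w)$ where $z=\lambda I$. Given $v\in M$ with $zv=\mu(z)v$ for all $z\in Z$, smoothness of the action gives $\lambda_0\in k^\times$ with $t(\lambda_0 w)v=v$; rewriting this as $zt(w)z^{-1}v=v$ and using the $Z$-eigenequation twice yields $t(w)v=v$. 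This gives part~3: every vector in $\zeta^\mu M$ is $V$-invariant, so $\zeta^\mu$ factors through the $V$-invariants functor $\kappa$. For part~4, the same calculation applied to $M=j_!(\tau)$ forces $\zeta^\mu j_!(\tau)=0$: the functor $j_!$ is (up to normalization) compact induction with a nontrivial $\psi$-twist on a subgroup of $V$, and the identity $\psi(\lambda w)=\psi(w)$ forced by the argument above is impossible for a nontrivial additive character $\psi$ and $w\neq0$, so any $Z$-eigenvector must vanish. The main obstacle then reduces to deducing left-exactness of $\zeta_\mu\circ j_!$; I would use the standard six-term exact sequence coming from the fact that $Z/\mathfrak{o}_k^\times\cong\Z$, which for an exact sequence $0\to A\to B\to C\to 0$ of $Z$-modules with central character $\mu$ relates invariants and coinvariants by
\begin{equation*}
0\to \zeta^\mu A\to \zeta^\mu B\to \zeta^\mu C\to \zeta_\mu A\to \zeta_\mu B\to \zeta_\mu C\to 0.
\end{equation*}
Applying this with $A,B,C$ in the image of $j_!$ (where the first three terms vanish by the claim just proved) yields exactness of $\zeta_\mu\circ j_!$.

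Finally, for part~5, any irreducible $X\in\CCC_{\Gl(n)}$ admits a central character $\omega_X$ by the smooth analogue of Schur's lemma. If $\omega_X\neq\mu$, then both $\zeta_\mu X$ and $\zeta^\mu X$ are zero, whereas if $\omega_X=\mu$ then already $X\in\CCC_{\Gl(n)}(\mu)$, so the unit and counit of the adjunctions of part~1 exhibit $\zeta_\mu X\cong X\cong\zeta^\mu X$. In either case the two functors agree on $X$, as required.
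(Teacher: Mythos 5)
Your proposal is correct, and it is genuinely more detailed than what the paper does: the paper disposes of this lemma with a one-line citation to Borel--Wallach, Chapter~X, whereas you supply a direct, self-contained argument. Your route is the natural one: universal properties of $(Z,\mu)$-invariants/coinvariants for part~1; the observation that $Z\subseteq M$ acts ``fibrewise'' on compactly induced sections for part~2; the conjugation-scaling trick $z\,t(w)\,z^{-1}=t(\lambda w)$ combined with smoothness and the eigen-equation for part~3; and then part~4 by feeding part~3 into the nondegeneracy of $\psi$ (a $(Z,\mu)$-eigenvector of $j_!\tau$ is invariant under the whole translation group $V$, while in the induced model $t(w)$ multiplies the value at $g=[h\,|\,x]$ by the nontrivial character $w\mapsto\psi(hw)$, so such a vector vanishes), after which exactness of $\zeta_\mu\circ j_!$ follows from exactness of $j_!$ and the invariants--coinvariants six-term sequence (snake lemma via $Z/\mathfrak{o}_k^\times\cong\Z$, exactly the derived-functor statement the paper quotes from \cite{RW}); part~5 is Schur's lemma. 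What the paper's citation buys is brevity; what your argument buys is that every ingredient is checked on the spot, including the key scaling trick that is also the mechanism behind parts~3 and~4. Two small points of wording should be tightened: the six-term sequence holds for \emph{arbitrary} smooth $Z$-modules (writing ``$Z$-modules with central character $\mu$'' would trivialize it; you in fact apply it to $j_!\tau_i$, which do not have central character $\mu$), and in part~4 the character $\psi$ is a nontrivial character of the full translation subgroup $V\subseteq\Gl_a(n)$, the inducing subgroup being $\Gl_a(n-1)\ltimes V$, not ``a subgroup of $V$''; with those clarifications the argument is complete.
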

\begin{proof}
This is a straightforward consequence of results in \cite[\S X]{Borel_Wallach}.
\end{proof}
\begin{rmk}\label{rmk:cent_loc_fin_length}
The functor $\zeta^\mu: {\cal C}_n^{\fin} \to {\cal C}_{\Gl(n)}^{\fin}$ preserves finite length.
The functor $\zeta_\mu:\CCC^\fin_n\to\CCC_{\Gl(n)}(\mu)$ does not preserve finite length.
Indeed, $\zeta_\mu(\mathbb{S}_2)\in\CCC_{\Gl(2)}(\mu)$ does not have finite length because $k_\rho\zeta_{\mu}(\mathbb{S}_2)$ is non-zero for every $\rho$ by corollary~\ref{cor:krho_of_centS2}.
However, for $X\in\CCC^{\fin}_2$ with $j^!\circ j^!(X)=0$ the central specialization $\zeta_\mu(X)\in{\CCC}_{\Gl(2)}^{\fin}(\rho)$ does have finite length.
\end{rmk}

\begin{lem}\label{lem:cent_loc_j_!i_*}%
For a smooth character $\chi$ of $k^\times$ let $A=\chi^{(n)}\in\CCC_{\Gl(1)}$
be the attached Jordan block of length $n$.
Then the $\Gl(1)$-module $\zeta_\mu(j_!i_*(A))$ has a
composition series of modules of lenght $n$ with graded pieces isomorphic to
$$\zeta_\mu j_!i_*(\chi) \cong \chi\nu^{-1/2} \times \chi^{-1}\nu^{1/2}\mu\ .$$
Furthermore $\zeta^\mu(j_!i_*(A))=0$.
\end{lem}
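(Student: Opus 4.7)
The statement has two parts, and I would handle them separately. The vanishing $\zeta^\mu(j_!i_*(A))=0$ is immediate: lemma~\ref{list}(4) asserts $\zeta^\mu\circ j_!=0$, applied here to $i_*(A)\in\CCC_1$, with no dependence on the structure of $A$.

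For the composition series, I plan to induct on the length $n$ of $A=\chi^{(n)}$. The defining short exact sequence of $\Gl(1)$-modules
\begin{equation*}
  0\longrightarrow \chi^{(n-1)}\longrightarrow \chi^{(n)}\longrightarrow \chi\longrightarrow 0
\end{equation*}
is preserved by the exact functor $i_*$, by the exact functor $j_!$, and, crucially, by $\zeta_\mu\circ j_!$ since lemma~\ref{list}(4) asserts exactness of this composite. Hence I obtain a short exact sequence
\begin{equation*}
  0\to \zeta_\mu j_!i_*(\chi^{(n-1)})\to \zeta_\mu j_!i_*(\chi^{(n)})\to \zeta_\mu j_!i_*(\chi)\to 0
\end{equation*}
in $\CCC_{\Gl(2)}(\mu)$. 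By the induction hypothesis, the left term has a length-$(n-1)$ composition series with graded pieces $\chi\nu^{-1/2}\times\chi^{-1}\nu^{1/2}\mu$, and the right term is the base case, so concatenation yields the required filtration of length $n$.

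The base case is the heart of the argument. Here I would compute $\zeta_\mu j_!i_*(\chi)$ explicitly. The plan is to unfold the Bernstein--Zelevinsky description of $j_!=\Phi^+$ as a compact induction from the affine Borel subgroup $B_{\Gl(2)}S\subset \Gl_a(2)$ (exactly as used in lemma~\ref{lem:induced_J}) along a character assembled from $\chi$, the additive character $\psi$ on $S$, and the modulus twists built into the definition of $i_*$ and $j_!$. Since $i_*(\chi)$ carries the trivial action of the translation subgroup $S$, after forming $(Z,\mu)$-coinvariants the result is the unnormalized compact induction $\mathrm{ind}_{B_{\Gl(2)}}^{\Gl(2)}(\chi\boxtimes\chi^{-1}\mu)$, which in the normalized notation of section~\ref{ss:representations} is precisely $\chi\nu^{-1/2}\times\chi^{-1}\nu^{1/2}\mu$. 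The central character $(\chi\nu^{-1/2})(\chi^{-1}\nu^{1/2}\mu)=\mu$ matches, confirming that the output lies in $\CCC_{\Gl(2)}(\mu)$.

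The main obstacle will be the careful bookkeeping of modulus characters in the base case: the functor $j_!$ is unnormalized while the product $\times$ is normalized, so the identification requires correctly inserting the modulus character $\delta_{B_{\Gl(2)}}=\nu\boxtimes\nu^{-1}$ at the right place, and interpreting the $(Z,\mu)$-coinvariance as freeing the second component of the inducing character. A useful consistency check to organize the computation is to compare with the Jacquet module of $\chi\nu^{-1/2}\times\chi^{-1}\nu^{1/2}\mu$: only one of its two Weyl-conjugate constituents is compatible with the nontrivial $\psi$-action on $S$ recorded inside $j_!$, which pins down the inducing character and hence the identification.
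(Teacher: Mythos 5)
Your proposal follows the paper's own proof step for step: vanishing via lemma~\ref{list}(4), the length-$n$ filtration from exactness of $\zeta_\mu\circ j_!\circ i_*$, and the base case by realizing $j_!i_*(\chi)|_{\Gl(2)}\cong\ind^{\Gl(2)}_{B_{\Gl(2)}}(\chi\boxtimes\mathcal{S})$, commuting $\zeta_\mu$ past $\ind^{\Gl(2)}_{B_{\Gl(2)}}$ via lemma~\ref{list}(2), and computing $\zeta_\mu(\chi\boxtimes\mathcal{S})=\chi\boxtimes\mu\chi^{-1}$. One small caveat: since $i_*(\chi)$ has trivial $S$-action and the $\psi$-twist in $j_!$ sits on the column subgroup $V_2\subset\Gl_a(2)$, which disappears on restriction to $\Gl(2)$, there is no nontrivial $\psi$-action on $S$ to single out a Weyl-conjugate — that proposed consistency check is a red herring, though it does not affect the rest of the argument.
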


\begin{proof}
$\zeta^\mu\circ j_!=0$ vanishes 
and the functor $\zeta_\mu\circ j_! i_*$ is exact by lemma~\ref{list}.
Therefore $\zeta_\mu \circ j_!i_*(\chi^{(n)})$ has a composition series of length $n$  with graded pieces $\zeta_\mu j_!i_*(\chi)$.
Lemma~\ref{list} and transitivity of induction shows that there is an isomorphism
\begin{equation*}
\zeta_\mu j_!i_*(\chi)=\zeta_\mu(\ind^{\Gl(2)}_{B_{\Gl(2)}}(\chi\boxtimes\mathcal{S}))\cong \ind^{\Gl(2)}_{B_{\Gl(2)}}(\zeta_\mu(\chi\boxtimes\mathcal{S}))\ .
\end{equation*} Since $\zeta_\mu(\chi\boxtimes\mathcal{S})=\chi\boxtimes\mu\chi^{-1}$ this shows the statement.
\end{proof}

\begin{lem}\label{lem:cent_loc_S2}
For smooth characters $\mu$ of $k^\times$, the central specializations of $\mathbb{S}_n\in\CCC_n$ are $\zeta^\mu(\mathbb{S}_{n})=0$
and 
\begin{equation*}
\zeta_\mu(\mathbb{S}_n)\cong \ind_{Z_{\Gl(n)}\times\Gl_a(n-1)}^{\Gl(n)}
  \left( \mu\boxtimes\mathbb{S}_{n-1}\right)
\end{equation*}
induced from the parabolic subgroup $Z_{\Gl(n)}\times \Gl_a(n-1)\subseteq \Gl(n)$ where the mirabolic subgroup is $\Gl_a(n-1)=\left(\begin{smallmatrix}\ast&\ast\\0&1\end{smallmatrix}\right)\subseteq \Gl(n)$.
\end{lem}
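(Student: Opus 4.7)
The plan is to exhibit $\mathbb{S}_n$ as a concrete Gelfand--Graev compact induction and then compute the central coinvariants via a Mackey restriction followed by the commutation formulas of Lemma~\ref{list}.

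The vanishing $\zeta^\mu(\mathbb{S}_n)=0$ is immediate: by the definition $\mathbb{S}_n=j_!(\mathbb{S}_{n-1})$, and Lemma~\ref{list}(4) states $\zeta^\mu\circ j_!=0$. For the main claim, the first step is to recall from Bernstein--Zelevinsky's description of $\Phi^+$ that iterating $j_!$ yields $\mathbb{S}_n\cong\ind_N^{\Gl_a(n)}(\psi)$, where $N$ is the full upper unitriangular subgroup of $\Gl_a(n)\subseteq\Gl(n+1)$ and $\psi$ is the standard non-degenerate character. The group $N$ has the semidirect product decomposition $N = N_{\Gl(n)}\ltimes k^n$, where $k^n=\ker(\Gl_a(n)\to\Gl(n))$ is the translation subgroup and $N_{\Gl(n)}$ is the upper unitriangular subgroup of $\Gl(n)$ embedded via $g\mapsto\left(\begin{smallmatrix}g&0\\0&1\end{smallmatrix}\right)$.

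Next, I restrict along the forgetful functor $\CCC_n\to\CCC_{\Gl(n)}$. Because $k^n\subseteq N$, we have $\Gl_a(n)=\Gl(n)\cdot N$, so the double coset space $\Gl(n)\backslash\Gl_a(n)/N$ is a single point, and $\Gl(n)\cap N=N_{\Gl(n)}$. Mackey's formula therefore yields the $\Gl(n)$-module isomorphism
\begin{equation*}
\mathbb{S}_n\big|_{\Gl(n)}\cong\ind_{N_{\Gl(n)}}^{\Gl(n)}\bigl(\psi|_{N_{\Gl(n)}}\bigr),
\end{equation*}
and the restricted character remains a standard Whittaker character on $N_{\Gl(n)}$.

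To apply $\zeta_\mu$, I observe that $Z_{\Gl(n)}\cap N_{\Gl(n)}=\{1\}$, so $Z_{\Gl(n)}\cdot N_{\Gl(n)}=Z_{\Gl(n)}\times N_{\Gl(n)}$ as a direct product. Using transitivity of induction through this intermediate group, the computation of $\zeta_\mu$ reduces to the $(Z_{\Gl(n)},\mu)$-coinvariants of the regular representation of $Z_{\Gl(n)}$ tensored with $\psi$, which is $\mu\boxtimes\psi$; Lemma~\ref{list}(2) then allows one to pass $\zeta_\mu$ through the outer induction, giving $\zeta_\mu(\mathbb{S}_n)\cong\ind_{Z_{\Gl(n)}\cdot N_{\Gl(n)}}^{\Gl(n)}(\mu\boxtimes\psi)$.

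Finally, one checks that $N_{\Gl(n)}\subseteq\Gl_a(n-1)\subseteq\Gl(n)$, since any upper unitriangular matrix in $\Gl(n)$ automatically has bottom row $(0,\dots,0,1)$; moreover $N_{\Gl(n)}$ coincides with the upper unitriangular subgroup of $\Gl_a(n-1)$, and the restricted character $\psi|_{N_{\Gl(n)}}$ is precisely the generic character used to define $\mathbb{S}_{n-1}=\ind_{N_{\Gl(n)}}^{\Gl_a(n-1)}(\psi|_{N_{\Gl(n)}})$. A final application of transitivity of induction then rewrites $\zeta_\mu(\mathbb{S}_n)$ in the asserted form. The main obstacle is keeping the several incarnations of the Whittaker character consistent and verifying that all unipotent subgroup identifications across the ambient groups $\Gl(n+1)$, $\Gl_a(n)$, $\Gl(n)$ and $\Gl_a(n-1)$ agree with the inductive definition of $\mathbb{S}_{n-1}$.
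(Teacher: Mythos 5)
Your proof is correct and lands on the same underlying computation as the paper's, but it routes through a fully unwound Gelfand--Graev realization rather than a single-step peel. The paper's proof keeps $\mathbb{S}_{n-1}$ opaque: it uses only $\mathbb{S}_n=j_!(\mathbb{S}_{n-1})$ to deduce $\mathbb{S}_n|_{\Gl(n)}\cong\ind_{\Gl_a(n-1)}^{\Gl(n)}(\mathbb{S}_{n-1})$, then bubbles up the factor $\mathcal{S}=\ind_{\{1\}}^{Z}(1)=C_c(k^\times)$ via transitivity of induction through $Z\times\Gl_a(n-1)$, computes $\zeta_\mu(\mathcal{S}\boxtimes\mathbb{S}_{n-1})\cong\mu\boxtimes\mathbb{S}_{n-1}$ and $\zeta^\mu(\mathcal{S}\boxtimes\mathbb{S}_{n-1})=0$, and commutes $\zeta_\mu,\zeta^\mu$ with induction using Lemma~\ref{list}(2). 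You instead unwind all the way to $\mathbb{S}_n\cong\ind_{N_{\Gl(n+1)}}^{\Gl_a(n)}(\psi)$, perform one Mackey restriction to $\Gl(n)$, and then re-assemble $\mathbb{S}_{n-1}$ at the end by identifying $\ind_{N_{\Gl(n)}}^{\Gl_a(n-1)}(\psi|_{N_{\Gl(n)}})$ with $\mathbb{S}_{n-1}$ — which is really the same iterated $\Phi^+$ fact applied one degree lower. Your treatment of $\zeta^\mu(\mathbb{S}_n)=0$ via Lemma~\ref{list}(4) is slightly cleaner than rederiving it, and it is indeed the route the paper already made available. The trade-off is that your version carries more bookkeeping (several incarnations of $N$ and $\psi$ across $\Gl(n+1)$, $\Gl_a(n)$, $\Gl(n)$, $\Gl_a(n-1)$), which you acknowledge; the paper's one-step unwinding avoids that by design while relying on the same compact-induction Mackey restriction across $\Gl(n)\subseteq\Gl_a(n)$.
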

\begin{proof}
By definition $\mathbb{S}_n=j_!(\mathbb{S}_{n-1})$, so the restriction to $\Gl(n)$ is isomorphic to the compactly induced representation
\begin{equation*}
\mathbb{S}_n|_{\Gl(n)}\cong \ind_{\Gl_a(n-1)}^{\Gl(n)} \mathbb{S}_{n-1}\cong \ind^{\Gl(n)}_{Z\times \Gl_a(n-1)} (\mathcal{S}\boxtimes\mathbb{S}_{n-1})
\end{equation*}
where $\mathcal{S}=C_c(k^\times)=\ind_{\{1\}}^Z(1)\in\CCC_{Z}$.
The central specializations of $\mathcal{S}\boxtimes\mathbb{S}_{n-1}$ are $\zeta_\mu(\mathcal{S}\boxtimes\mathbb{S}_{n-1})\cong \mu\boxtimes \mathbb{S}_{n-1}$ and $\zeta^\mu(\mathcal{S}\boxtimes\mathbb{S}_{n-1})=0$.
By lemma~\ref{list}, central specialization commutes with $\ind_{Z\times \Gl_a(n-1)}$ and this implies the statement.
\end{proof}

The functors $k_\rho, k^\rho:\CCC_n\to\CCC_{n-1}$ are defined in \cite{RW}.
By restriction they define functors $\CCC_{\Gl(n)}\to \CCC_{\Gl(n-1)}$, also denoted $k_\rho$, $k^\rho$.
\begin{lem}\label{lem:cent_loc_com_diagram}
There is a commutative diagram
$$ \xymatrix{ {\cal C}_n \ar[d]_{k_\rho}\ar[r]^{\text{forget}} & {\cal C}_{\Gl(n)} \ar[d]^{k_\rho}\ar[r]^{\zeta_\mu} &  {\cal C}_{\Gl(n)}(\mu) \ar[d]^{k_\rho} \cr
 {\cal C}_{n-1} \ar[r]^{\text{forget}} & {\cal C}_{\Gl(n-1)} \ar[r]^{\zeta_{\mu/\rho}} &  {\cal C}_{\Gl(n-1)}(\mu/\rho) \cr} $$
The functors $k_\rho:\CCC_1\to\CCC_0$ and $\zeta_{\rho}:\CCC_1\to\CCC_{k^\times}(\rho)\cong\CCC_0$ are naturally equivalent.
\end{lem}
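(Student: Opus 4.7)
The plan is to prove this essentially structural lemma by unwinding the constructions of $k_\rho$, $\zeta_\mu$, and the forgetful functors, and checking the compatibilities on the level of coinvariants.

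\textbf{Left square.} The paper explicitly defines $k_\rho$ on $\CCC_{\Gl(n)}$ as the restriction of $k_\rho$ on $\CCC_n$. In the construction of \cite{RW}, $k_\rho(X)$ is formed by coinvariants with respect to a subgroup of $\Gl_a(n)$ (twisted by a character built from $\rho$) whose action factors through the $\Gl(n)$-structure. Consequently, for every $X \in \CCC_n$, the underlying vector space and residual $\Gl(n-1)$-action of $k_\rho(X)$ depend only on the forgetful image of $X$ in $\CCC_{\Gl(n)}$, and commutativity of the left square is immediate.

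\textbf{Right square.} The core computation is to identify how the center acts on $k_\rho(X)$. Since $Z_{\Gl(n)} = \{\lambda I_n\}$ commutes with the subgroup defining $k_\rho$, its action descends to $k_\rho(X)$. The torus generated by $Z_{\Gl(n)}$ and $Z_{\Gl(n-1)}$ inside $\Gl(n)$ contains the element $\diag(1,\dots,1,t)$, which by the defining property of $k_\rho$ acts on $k_\rho(X)$ through $\rho(t)$. Combined with the decomposition
$$
\lambda I_n \;=\; \diag(\lambda,\dots,\lambda,1)\cdot\diag(1,\dots,1,\lambda),
$$
this shows that if $X$ has $Z_{\Gl(n)}$-character $\mu$, then $Z_{\Gl(n-1)}$ acts on $k_\rho(X)$ through $\mu/\rho$. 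Taking $(Z_{\Gl(n)},\mu)$-coinvariants of $k_\rho(X)$ therefore produces the same quotient as taking $(Z_{\Gl(n-1)},\mu/\rho)$-coinvariants, and the natural isomorphism $\zeta_{\mu/\rho}\circ k_\rho \cong k_\rho\circ \zeta_\mu$ follows from the universal property of coinvariants. For a general $X \in \CCC_{\Gl(n)}$ without a fixed central character, one passes from this pointwise statement to a natural isomorphism of functors by naturality and right-exactness of both sides.

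\textbf{The equivalence $k_\rho\cong \zeta_\rho$ on $\CCC_1$.} Here $\Gl_a(1) = k^\times \ltimes k$ and $\Gl_a(0)=\{1\}$. The key observation is that at this terminal level, the construction of $k_\rho$ from \cite{RW} simplifies: since there is no residual $\Gl_a(0)$-unipotent to carry a non-trivial $\psi$-twist, $k_\rho(X)$ is the $(k^\times,\rho)$-coinvariant quotient of $X$. On the other side, $\zeta_\rho$ first forgets the $\Gl_a(1)$-action down to a $k^\times$-action and then takes $(k^\times,\rho)$-coinvariants inside $\CCC_{k^\times}(\rho)$; the equivalence $\CCC_{k^\times}(\rho)\cong \CCC_0$ identifies this with the same quotient vector space. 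The natural equivalence $k_\rho\cong \zeta_\rho$ is then the identity on underlying vector spaces, and is natural by the universal property of the coinvariant construction.

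The main obstacle is the bookkeeping: one must verify from the precise definitions in \cite{RW} (i) that the coinvariant-forming subgroup for $k_\rho$ lies in $\Gl(n)$ (needed for the left square), and (ii) that $\diag(1,\dots,1,t)$ acts by $\rho(t)$ on $k_\rho(X)$ (needed for the central-character computation). Once these explicit compatibilities are in place, everything else is formal manipulation of coinvariant functors and their universal properties.
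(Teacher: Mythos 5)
Your argument is correct and matches the paper's proof, which is a one-line appeal to transitivity of coinvariant functors: your decomposition $\lambda I_n=\diag(\lambda,\dots,\lambda,1)\cdot\diag(1,\dots,1,\lambda)$ and the resulting character bookkeeping (together with the observation that the subgroup defining $k_\rho$ lies in $\Gl(n)$, which is exactly why the paper can define $k_\rho$ on $\CCC_{\Gl(n)}$ by restriction) is precisely that transitivity made explicit. The only superfluous step is the appeal to right-exactness to pass from modules with central character $\mu$ to general $X\in\CCC_{\Gl(n)}$; this is unnecessary, since for arbitrary $X$ both composites are the coinvariant functor for the same subgroup and the same character, so the universal property you already invoke gives the natural isomorphism directly.
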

\begin{proof}
This follows by transitivity of coinvariant functors.
\end{proof}

\begin{cor}\label{cor:krho_of_centS2}
For $n\geq2$ and smooth characters $\rho$ and $\mu$ of $k^\times$,
\begin{equation*}
 k_\rho\zeta_\mu(\mathbb{S}_n) \cong \zeta_{\mu/\rho}(\mathbb{S}_{n-1})\    \qquad,\qquad  k^\rho\zeta_\mu(\mathbb{S}_n)=0\qquad\qquad \text{ in }\ \CCC_{\Gl(n)}(\mu/\rho)\  .
\end{equation*}
\end{cor}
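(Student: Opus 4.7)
The first isomorphism reduces, by the commutative diagram of Lemma~\ref{lem:cent_loc_com_diagram} applied to $X=\mathbb{S}_n$, to the identification $k_\rho(\mathbb{S}_n)\cong\mathbb{S}_{n-1}$ in $\CCC_{n-1}$; once this is established we get $k_\rho\zeta_\mu(\mathbb{S}_n)\cong \zeta_{\mu/\rho}(k_\rho(\mathbb{S}_n))\cong\zeta_{\mu/\rho}(\mathbb{S}_{n-1})$ directly. I will prove the identification by induction on $n$. The base case $n=1$ follows from the last sentence of Lemma~\ref{lem:cent_loc_com_diagram}, which identifies $k_\rho$ with $\zeta_\rho$ on $\CCC_1$, together with the direct calculation that the $(k^\times,\rho)$-coinvariants of $\mathbb{S}_1=C_c^\infty(k^\times)$ form a one-dimensional space $\C=\mathbb{S}_0$. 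For the inductive step I apply the functorial short exact sequence $0\to j_!k_{\nu\rho}\to k_\rho j_!\to i_*i^*\to 0$ from Section~\ref{ss:representations} to $\mathbb{S}_{n-1}=j_!(\mathbb{S}_{n-2})$. The right-hand term vanishes because $i^*j_!=0$ by Lemma~\ref{lem:Gelfand_Kazdhan}, so $k_\rho(\mathbb{S}_n)\cong j_!(k_{\nu\rho}(\mathbb{S}_{n-1}))$, which by the inductive hypothesis is isomorphic to $j_!(\mathbb{S}_{n-2})=\mathbb{S}_{n-1}$.

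For the vanishing $k^\rho\zeta_\mu(\mathbb{S}_n)=0$, the parallel inductive argument based on the natural equivalence $j_!k^{\nu\rho}\cong k^\rho j_!$ (also from Section~\ref{ss:representations}) unwinds to $k^\rho(\mathbb{S}_n)\cong (j_!)^{n-1}k^{\nu^{n-1}\rho}(\mathbb{S}_1)$. The base case $k^\rho(\mathbb{S}_1)=0$ vanishes because $\mathbb{S}_1=C_c^\infty(k^\times)$ contains no nonzero eigenvector under the $k^\times$-translation action. Hence $k^\rho(\mathbb{S}_n)=0$ in $\CCC_{n-1}$.

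The main obstacle is the passage from $k^\rho(\mathbb{S}_n)=0$ to $k^\rho\zeta_\mu(\mathbb{S}_n)=0$: Lemma~\ref{lem:cent_loc_com_diagram} only addresses the coinvariants pair $(k_\rho,\zeta_\mu)$, and there is no a priori reason for the invariants-type functor $k^\rho$ to commute with the coinvariants functor $\zeta_\mu$. I would close this gap in one of two ways. The first option is to establish an analogous commutative square for $(k^\rho,\zeta_\mu)$ directly from the module-theoretic definition of $k^\rho$ in \cite{RW}, mimicking the transitivity-of-coinvariants argument that underlies Lemma~\ref{lem:cent_loc_com_diagram}. The second option is to invoke the explicit formula $\zeta_\mu(\mathbb{S}_n)\cong \ind_{Z_{\Gl(n)}\times\Gl_a(n-1)}^{\Gl(n)}(\mu\boxtimes\mathbb{S}_{n-1})$ of Lemma~\ref{lem:cent_loc_S2} and use Lemma~\ref{list}(2) to commute $k^\rho$ past the parabolic induction, thereby reducing to a suitably shifted $k^?$-computation on $\mathbb{S}_{n-1}$, which vanishes by the preceding induction.
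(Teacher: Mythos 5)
Your argument for the first isomorphism is sound and essentially matches the paper's: the paper invokes Lemma~\ref{lem:cent_loc_com_diagram} together with the citation $k_\rho(\mathbb{S}_n)\cong\mathbb{S}_{n-1}$ (\cite[cor.~4.7]{RW}), while you re-derive that identification inductively from the short exact sequence $0\to j_!k_{\nu\rho}\to k_\rho j_!\to i_*i^*\to 0$. That re-derivation is correct.

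For the vanishing $k^\rho\zeta_\mu(\mathbb{S}_n)=0$, you correctly identify that the real difficulty is passing from $k^\rho(\mathbb{S}_n)=0$ to $k^\rho\zeta_\mu(\mathbb{S}_n)=0$, since $k^\rho$ is an invariants-type (left-derived) functor and $\zeta_\mu$ is a coinvariants functor. However, neither of your two proposed fixes actually closes the gap. Option~1, a commutative square $k^\rho\zeta_\mu\cong\zeta_{\mu/\rho}k^\rho$, cannot be obtained by ``mimicking the transitivity-of-coinvariants argument'': transitivity applies to two coinvariants functors, not to an invariants functor against a coinvariants functor; indeed, Lemma~\ref{lem:cent_loc_com_diagram} is stated and proved only for $k_\rho$, and remark~\ref{rmk:cent_loc_fin_length} makes clear that $\zeta_\mu(\mathbb{S}_2)$ is a genuinely different (infinite-length) module from anything obtained by applying $\zeta$-functors after $k^\rho$, so this is not a routine omission in the paper. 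Option~2 misreads Lemma~\ref{list}(2): that lemma commutes $\zeta^\mu$ and $\zeta_\mu$ with induction, not $k^\rho$. The functor $k^\rho$ is a (twisted) Jacquet-type functor, and Jacquet functors do not commute with parabolic induction; what one gets instead is a Bruhat filtration (geometric lemma). The paper's proof is precisely that computation: starting from $\zeta_\mu(\mathbb{S}_2)\cong\ind_{Z\times\Gl_a(1)}^{\Gl(2)}(\mu\boxtimes\mathbb{S}_1)$, it restricts to $B_{\Gl(2)}$, uses Bruhat decomposition to build a two-step exact sequence, kills the open-cell quotient upon taking $U$-coinvariants, identifies the remaining $U$-coinvariants of the closed cell with $\mathcal{S}\boxtimes\mu$, and finally observes $\mathcal{S}^\rho=0$. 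So your reduction to the explicit induced model of $\zeta_\mu(\mathbb{S}_n)$ is headed in the right direction, but the crucial next step is the orbit filtration, not a commutation of functors; as written, the proposal does not contain a proof of the second assertion.
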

\begin{proof}
Lemma~\ref{lem:cent_loc_com_diagram} and \cite[cor.~4.7]{RW} show $k_\rho\circ\zeta_\mu(\mathbb{S}_n)\cong \zeta_{\mu/\rho}\circ k_\rho(\mathbb{S}_n)\cong \zeta_{\mu/\rho}(\mathbb{S}_{n-1})$ and thus the first assertion.
For the second assertion we only give the proof for $n=2$.
The proof for $n>2$ is similar using \cite[5.2]{Bernstein-Zelevinsky77}.
By Bruhat decomposition there is an exact sequence of $B_{\Gl(2)}=\Gl_a(1)\times Z$-modules
\begin{equation*}
 0\to \ind_{\Gl(1)\times Z}^{B}(\mathcal{S}\boxtimes\mu)^w \to \zeta_\mu(\mathbb{S}_2) \to (\mathbb{S}_1\boxtimes\mu)\to 0\ .
\end{equation*}
with the Weyl group element $w=\left(\begin{smallmatrix}0&1\\1&0\end{smallmatrix}\right)$ and the $\Gl(1)$-module $\mathcal{S}=\mathbb{S}_1|_{\Gl(1)}$.
Let $U$ be the unipotent radical of $B_{\Gl(2)}$.
The $U$-coinvariants of the right hand side are zero because $(\mathbb{S}_1)_U \cong i^*\mathbb{S}_1$ vanishes.
By integration over $U$, the $U$-coinvariant quotient of the left hand side is isomorphic to $\mathcal{S}\boxtimes \mu$ as a representation of $\{\left(\begin{smallmatrix}1&0\\0&\ast\end{smallmatrix}\right)\}\times Z$. Finally, $\mathcal{S}^\rho=0$ vanishes.
\end{proof}

\begin{lem}{\cite[4.3]{RW}}\label{lem:krho_fin_generated}
There is a natural equivalence  of functors
$$\CCC^\fin_{\Gl(n)}\to \CCC^\fin_{n-1}\qquad\text{by} \qquad k^\rho i_*\cong k_\rho i_*\ .$$
\end{lem}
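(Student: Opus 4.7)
My plan is to reduce the claimed natural equivalence $k^\rho i_*\cong k_\rho i_*$ to the general observation that, for a locally finite torus action on a finite-length module, the $\rho$-invariant subspace and the $\rho$-coinvariant quotient coincide via the canonical morphism.

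First I would unwind the definitions of \cite{RW}. The functor $i_*:\CCC_{\Gl(n)}\to\CCC_n$ extends a smooth $\Gl(n)$-module to $\Gl_a(n)=\Gl(n)\ltimes k^n$ by letting the translation subgroup $k^n$ act trivially. The functors $k_\rho,k^\rho:\CCC_n\to\CCC_{n-1}$ are respectively the coinvariants and the invariants of a twisted action involving the translation direction, the residual twist being given by the character $\rho$ of a transverse one-parameter torus sitting inside $\Gl(n)$.

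The key point is that on $i_*(X)$ the translation subgroup acts trivially, so the twisted (co)invariants in that direction degenerate: both $k_\rho i_*(X)$ and $k^\rho i_*(X)$ reduce to the $\rho$-coinvariant quotient, respectively the $\rho$-invariant subspace, of the residual torus action inherited from $\Gl(n)$ on the underlying vector space of $X$. Since $X\in\CCC_{\Gl(n)}^{\fin}$ has finite length, this one-parameter torus acts locally finitely, so the action decomposes as a finite direct sum of generalised character spaces, and the canonical map from the $\rho$-invariant subspace into the $\rho$-coinvariant quotient is an isomorphism. Naturality in $X$ follows from functoriality of each step.

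The main obstacle is the finite-length hypothesis, which is essential: without it the torus action can have infinite Jordan blocks in the $\rho$-direction and the canonical morphism from invariants to coinvariants need not be an isomorphism. An alternative route would be induction on composition length, reducing to irreducible $X$ where the residual torus acts through a single character and the claim is immediate; this requires first verifying exactness of both $k^\rho i_*$ and $k_\rho i_*$ on short exact sequences in $\CCC^{\fin}_{\Gl(n)}$, which is again where the finite-length assumption enters.
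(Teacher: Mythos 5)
There is a genuine gap in the passage from $\rho$-invariants to $\rho$-coinvariants.

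The claim that for a locally finite action of a one-parameter torus on a finite-length module the canonical morphism from $\rho$-invariants to $\rho$-coinvariants is an isomorphism is false. Take a single Jordan block $\rho^{(m)}$ with $m\geq 2$ for the monodromy operator $\tau_\rho$. Both $(\rho^{(m)})^\rho$ and $(\rho^{(m)})_\rho$ are one-dimensional, but $(\rho^{(m)})^\rho=\ker\tau_\rho=\operatorname{im}\tau_\rho$ is precisely the kernel of the projection to coinvariants, so the canonical composite $(\rho^{(m)})^\rho\hookrightarrow\rho^{(m)}\twoheadrightarrow(\rho^{(m)})_\rho$ is \emph{zero}, not an isomorphism. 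Local finiteness (which only gives a Jordan block decomposition) is strictly weaker than the semisimplicity your argument would actually require; finite length alone does not supply it. Your ``alternative route'' by induction on composition length runs into the same problem at the inductive step, because the would-be natural transformation on irreducibles does not extend across nontrivial extensions.

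Indeed the obstruction is not merely to the canonical map: for $n=1$ no natural isomorphism of this kind can exist on all of $\CCC^{\fin}_{\Gl(1)}$. Consider the monomorphism $f\colon\rho\hookrightarrow\rho^{(2)}$ onto the socle. Then $k^\rho i_*(f)$ is an isomorphism, while $k_\rho i_*(f)=0$ because the socle lies in the kernel of the projection to coinvariants. Any natural transformation $\eta\colon k^\rho i_*\Rightarrow k_\rho i_*$ would have to satisfy $\eta_{\rho^{(2)}}\circ k^\rho i_*(f)=k_\rho i_*(f)\circ\eta_\rho=0$, forcing $\eta_{\rho^{(2)}}=0$. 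So if your reduction to abstract torus invariants/coinvariants were an accurate description of $k^\rho i_*$ and $k_\rho i_*$, the lemma itself would be false for $n=1$. This tells you that the unipotent direction cannot be discarded as crudely as ``it acts trivially, so ignore it''; the definitions of $k_\rho$ and $k^\rho$ in \cite{RW} involve the translation subgroup in an essential way, and what has to be exploited is that, on modules in the image of $i_*$, the specific mechanism by which $k^\rho$ and $k_\rho$ differ in general (a Gelfand--Graev-type contribution coming from the nontrivial unipotent action) collapses. You would need to work with those explicit definitions and with the Bernstein--Zelevinsky orbit filtration on $i_*(X)$ rather than with a bare torus action; your sketch does not touch that structure, so the key step is missing.

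A secondary issue: for $n\geq 2$, the diagonal one-parameter torus of $\Gl(n)$ does not act locally finitely on an arbitrary smooth finite-length $\Gl(n)$-module, so even the preliminary reduction to a ``locally finite torus action on a finite-length module'' needs justification — it only becomes true after first applying the relevant Jacquet-type coinvariant functor, and then one again faces the Jordan block problem above.
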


\subsection{Ext-functor}\label{s:Ext}
For every connected reductive group $X$ over $k$,
the categories ${\CCC}_X$ and ${\CCC}_X(\omega)$ have enough projectives and injectives \cite[\S5.9f]{Vigneras_Representations_Modulaire} and we obtain a well-defined bifunctor $\Ext^n_{\CCC_{X}}(-,-)$.

\begin{lem}[Dual Frobenius reciprocity]\label{lem:Dual_Frob_Ext}
For a closed subgroup $Y\subseteq X$, there is a natural isomorphism
 \begin{equation*}
  \Ext^n_{\CCC_{X}}(V,\Ind_Y^X(W)) \cong \Ext^n_{\CCC_Y}(V|_{Y},W)\ 
 \end{equation*}
 for every $V\in\CCC_X$ and $W\in\CCC_Y$ and every $n\geq0$.
\end{lem}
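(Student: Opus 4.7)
The plan is to deduce the higher-degree $\Ext$ isomorphism from the classical degree-zero Frobenius reciprocity by a standard derived-functor argument.

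First, I would record that restriction $\mathrm{Res}\colon\CCC_X\to\CCC_Y$ is exact and that by smooth Frobenius reciprocity one has the adjunction $\mathrm{Res}\dashv\Ind_Y^X$, i.e.\ a natural isomorphism $\Hom_X(V,\Ind_Y^X W)\cong \Hom_Y(V|_Y,W)$. This settles the case $n=0$ and, since right adjoints of exact functors preserve injectives, shows in particular that $\Ind_Y^X$ sends injective objects of $\CCC_Y$ to injective objects of $\CCC_X$.

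Next, I would invoke the fact that smooth induction from a closed subgroup of a totally disconnected locally compact group is exact, which is standard (cf.~\cite[\S5.9]{Vigneras_Representations_Modulaire}); the only non-formal point is right exactness, which follows from the existence of smooth partitions of unity on the $l$-space $Y\backslash X$ that allow one to lift smooth equivariant functions through surjections. Combined with the previous observation, this ensures that $\Ind_Y^X$ carries injective resolutions in $\CCC_Y$ to injective resolutions in $\CCC_X$.

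To finish, I would choose an injective resolution $W\hookrightarrow J^\bullet$ in $\CCC_Y$ and apply $\Ind_Y^X$ to obtain an injective resolution $\Ind_Y^X W\hookrightarrow \Ind_Y^X J^\bullet$ in $\CCC_X$. Applying $\Hom_X(V,-)$ termwise and using the degree-zero Frobenius isomorphism in each term produces an isomorphism of cochain complexes $\Hom_X(V,\Ind_Y^X J^\bullet)\cong \Hom_Y(V|_Y,J^\bullet)$, whose cohomology in degree $n$ yields the asserted natural isomorphism of $\Ext^n$-groups; naturality in $V$ and $W$ is inherited from the naturality of the degree-zero adjunction. The only non-formal ingredient, and hence the main potential obstacle, is the exactness of smooth induction; everything else is routine bookkeeping with derived functors. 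An alternative route would be to set up the Grothendieck spectral sequence $\Ext^p_X(V,R^q\Ind_Y^X W)\Rightarrow \Ext^{p+q}_Y(V|_Y,W)$ and observe that exactness of $\Ind_Y^X$ forces it to collapse, but this is essentially a repackaging of the same argument.
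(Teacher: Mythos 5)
Your argument is correct and is the standard derived-functor proof of higher Frobenius reciprocity; the paper's own ``proof'' is just a pointer to Vign\'eras~5.10 and Borel--Wallach~X.1.7, which contain essentially this argument (exactness of $\Ind_Y^X$ for $\ell$-groups plus preservation of injectives from the $\mathrm{Res}\dashv\Ind$ adjunction, then apply $\Hom_X(V,-)$ to $\Ind_Y^X$ of an injective resolution). You correctly isolate exactness of smooth induction as the one non-formal input.
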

\begin{proof}
See \cite[5.10]{Vigneras_Representations_Modulaire} and \cite[X.1.7]{Borel_Wallach}. %
\end{proof}

\begin{lem}\label{lem:Ext_Gl1}
For every smooth character $\chi$ of $\Gl(1)$ and $X\in\CCC_{\Gl(1)}$, there are natural isomorphisms
$$\Hom_{\Gl(1)}(X,\chi)\cong \Hom_\C(X_\chi,\C)\quad ,\quad \Ext^1_{\Gl(1)}(X,\chi)\cong \Hom_\C(X^\chi,\C)\ .$$
\end{lem}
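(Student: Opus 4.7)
The first isomorphism is immediate from the universal property of $\chi$-coinvariants: a $\Gl(1)$-equivariant functional $X\to(\C,\chi)$ kills the submodule spanned by $\{gx-\chi(g)x:g\in\Gl(1),\,x\in X\}$ and hence factors uniquely through a $\C$-linear functional on $X_\chi$; conversely, any such $\C$-linear functional lifts to a $\chi$-equivariant map on $X$.

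For the second isomorphism I would classify Yoneda extensions directly. Any smooth $\Gl(1)$-module $X$ decomposes canonically into $\mathfrak{o}_k^\times$-isotypic components, and because $\mathfrak{o}_k^\times$ is profinite with smooth coefficients its higher group cohomology vanishes; hence only the $\chi|_{\mathfrak{o}_k^\times}$-isotypic component of $X$ contributes to $\Ext^1_{\Gl(1)}(X,\chi)$. Let $T$ denote the $\C$-linear automorphism by which a fixed uniformizer $\varpi$ acts on this component, and set $\lambda=\chi(\varpi)$. The problem then reduces to computing $\Ext^1_{\C[T,T^{-1}]}(Y,\C_\lambda)$ where $Y=X[\chi|_{\mathfrak{o}_k^\times}]$.

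Given an extension $0\to\C_\lambda\to E\to Y\to 0$, any $\C$-linear splitting $E=\C\oplus Y$ presents the $T$-action in the form $T(c,y)=(\lambda c+\phi(y),Ty)$ for a unique $\phi\in\Hom_\C(Y,\C)$. A direct computation shows that changing the splitting by some $\psi\in\Hom_\C(Y,\C)$ modifies $\phi$ by $(T^*-\lambda)\psi:=\psi\circ T-\lambda\psi$. Hence
\[
\Ext^1_{\C[T,T^{-1}]}(Y,\C_\lambda)\;=\;\coker\bigl(T^*-\lambda:\Hom_\C(Y,\C)\to\Hom_\C(Y,\C)\bigr).
\]

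The concluding step is the linear-algebra duality: for any $\C$-linear endomorphism $f$ of a $\C$-vector space $V$, the map $\psi\mapsto\psi|_{\ker f}$ descends to a canonical isomorphism $\coker(f^*)\cong\Hom_\C(\ker f,\C)$. Indeed, a functional $\psi$ lies in the image of $f^*$ if and only if it vanishes on $\ker f$, since any functional on a subspace of a $\C$-vector space extends to the whole space. Applied to $f=T-\lambda$ this identifies $\Ext^1_{\Gl(1)}(X,\chi)$ with $\Hom_\C(\ker(T-\lambda),\C)=\Hom_\C(X^\chi,\C)$. The only real delicacy is this infinite-dimensional duality between cokernels of transposes and duals of kernels, which goes through routinely because $\C$ is a field and complements always exist.
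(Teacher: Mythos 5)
Your proof is correct, but it takes a genuinely different route from the paper's. The paper handles the second isomorphism in two abstract steps: it cites \cite[lemma~A.2]{RW} for the fact that the invariants functor $X\mapsto X^\chi$ is the first (and only) nontrivial left-derived functor of the coinvariants functor $X\mapsto X_\chi$, and then uses that $\C$-linear dualization is exact and contravariant to transpose this into the statement that $\Hom_\C(X^\chi,\C)$ is the first right-derived functor of $\Hom_\C(X_\chi,\C)\cong\Hom_{\Gl(1)}(X,\chi)$, which by definition is $\Ext^1_{\Gl(1)}(X,\chi)$. You instead give a self-contained, elementary computation: split $k^\times\cong\mathfrak{o}_k^\times\times\varpi^\Z$, use smoothness and compactness of $\mathfrak{o}_k^\times$ to reduce to $\C[T,T^{-1}]$-module extensions on the relevant isotypic piece, parametrize extensions by $1$-cocycles to identify $\Ext^1$ with $\coker(T^*-\lambda)$ on the dual, and close with the linear-algebra fact $\coker(f^*)\cong\Hom_\C(\ker f,\C)$, which uses only that every functional on a subspace of a $\C$-vector space extends. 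Both arguments are sound; the paper's is shorter and leans on derived-functor machinery already in use throughout \cite{RW}, whereas your cocycle computation effectively reproves the cited fact that $L_1((-)_\chi)=(-)^\chi$ and $L_i=0$ for $i\geq 2$ in this special case and then dualizes by hand, making the mechanism transparent at the cost of more bookkeeping (the $\mathfrak{o}_k^\times$-isotypic reduction, which you justify via vanishing of higher profinite cohomology and which one should really phrase as collapsing a Hochschild--Serre spectral sequence for $1\to\mathfrak{o}_k^\times\to k^\times\to\Z\to1$).
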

\begin{proof}The first assertion is clear.
The $\chi$-invariant functor $X\mapsto X^\chi$ is left-derived to the $\chi$-coinvariant functor $X\mapsto X_\chi$, compare \cite[A.2]{RW}.
Dualization is exact and contravariant, so $X\mapsto\Hom(X^\chi,\C)$ is the right-derived functor of $X\mapsto\Hom(X_\chi,\C)$.
\end{proof}
Especially, for each smooth character $\chi$ there is a unique indecomposable $\Gl(1)$-module $\chi^{(n)}$ of length $n$ whose constituents are isomorphic to $\chi$. It is realized as the Jordan block attached to the monodromy operator $\tau_\chi$. These are all the indecomposable $\Gl(1)$-modules of finite length.
\begin{lem}\label{ext-lemma}
If $\omega=\rho\chi$ is a square, then for every $X\in\CCC_{\Gl(2)}(\omega)$ there are natural isomorphisms
$$  \Ext_{ {\cal C}_{\Gl(2)}(\omega)}^1(X, \ind_B^G(\chi\boxtimes \rho))  \cong \Hom_\C(k^\rho(X),\C) \ ,$$
$$  \Hom_{ {\cal C}_{\Gl(2)}(\omega)}  (X, \ind_B^G(\chi\boxtimes \rho))  \cong \Hom_\C(k_\rho(X),\C) \ .$$
\end{lem}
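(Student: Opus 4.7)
The plan is to apply dual Frobenius reciprocity, reduce to the torus using the structure of the Borel, and finally invoke lemma~\ref{lem:Ext_Gl1}. First, lemma~\ref{lem:Dual_Frob_Ext} yields
$$\Ext^n_{\CCC_{\Gl(2)}(\omega)}(X, \ind_B^{\Gl(2)}(\chi \boxtimes \rho)) \cong \Ext^n_{\CCC_B(\omega)}(X|_B, \chi \boxtimes \rho)$$
for the standard Borel $B = T\widetilde{T} U$, where the common central character $\omega = \chi \rho$ ensures compatibility with the fixed-central-character subcategory. Since $\chi \boxtimes \rho$ is trivial on the unipotent radical $U$, it is inflated from the Levi $T\widetilde{T}$. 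The $U$-coinvariant functor is exact on smooth representations (as $U$ is a union of open compact pro-$p$ subgroups and we work over $\C$), so its right adjoint (inflation) preserves injectives, and a Grothendieck spectral sequence argument yields
$$\Ext^n_{\CCC_B(\omega)}(X|_B, \chi \boxtimes \rho) \cong \Ext^n_{\CCC_{T\widetilde{T}}(\omega)}(X_U, \chi \boxtimes \rho).$$

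The equivalence $\CCC_{T\widetilde{T}}(\omega) \cong \CCC_{\Gl(1)}$, induced by the projection $T\widetilde{T} \to T\widetilde{T}/Z \cong \widetilde{T}$, sends $\chi \boxtimes \rho$ to $\rho$. Applying lemma~\ref{lem:Ext_Gl1} then yields $\Hom_\C((X_U)_{\widetilde{T}, \rho}, \C)$ for $n = 0$ and $\Hom_\C((X_U)^{\widetilde{T}, \rho}, \C)$ for $n = 1$.

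It remains to identify $(X_U)_{\widetilde{T}, \rho}$ with $k_\rho(X)$ and $(X_U)^{\widetilde{T}, \rho}$ with $k^\rho(X)$. For the coinvariants this is straightforward: a short computation shows that $\widetilde{T}$ acts transitively by conjugation on $U \setminus \{1\}$, which forces $U$ to act trivially on $X_{\widetilde{T}, \rho}$; hence the $U$-coinvariants and $(\widetilde{T}, \rho)$-coinvariants commute and the composite equals $k_\rho(X) = X_{\widetilde{T}, \rho}$. The parallel identification $(X_U)^{\widetilde{T}, \rho} = k^\rho(X)$ is the main technical obstacle, since invariant functors do not commute with quotients in general; this will require either a more careful spectral-sequence analysis for $U$-cohomology on the invariants side, or an exploitation of the derived-functor symmetry between invariants and coinvariants for the one-dimensional torus that already underlies lemma~\ref{lem:Ext_Gl1}.
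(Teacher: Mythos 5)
Your overall line of attack matches the paper's proof exactly: dual Frobenius reciprocity via lemma~\ref{lem:Dual_Frob_Ext}, pass from the Borel to the Levi using exactness of $U$-coinvariants, reduce to a rank-one torus, and apply lemma~\ref{lem:Ext_Gl1}. (The paper additionally twists so that $\omega=1$ and works in $\CCC_{\mathrm{PGl}(2)}$, but this is purely a notational simplification and your version with general $\omega$ is equivalent.)

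However, the ``main technical obstacle'' you flag at the end is not actually an obstacle, and you should not leave the proof open at that point. The functor $k_\rho$ on $\Gl(2)$-modules is, by its construction in \cite{RW}, the composition of the exact functor of $U$-coinvariants with the $\widetilde T$-$\rho$-coinvariant functor, so $k_\rho(X) = (X_U)_{\widetilde T,\rho}$ on the nose; your digression about $\widetilde T$ acting transitively on $U\setminus\{1\}$ (which shows $k_\rho$ agrees with the Waldspurger--Tunnell quotient $X_{\widetilde T,\rho}$) is true but not needed here. For $k^\rho$, the key point is precisely the one you gesture at in your second alternative: since $(\cdot)_U$ is exact, the left-derived functor of $k_\rho = ((\cdot)_U)_{\widetilde T,\rho}$ is $L_1 k_\rho = ((\cdot)_U)^{\widetilde T,\rho}$, using that $\rho$-invariants is the left derived functor of $\rho$-coinvariants on $\CCC_{\Gl(1)}$ (the fact underlying lemma~\ref{lem:Ext_Gl1}). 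This is how $k^\rho$ is constructed, so $(X_U)^{\widetilde T,\rho}\cong k^\rho(X)$ is definitional, not something requiring a ``more careful spectral-sequence analysis.'' The worry that ``invariant functors do not commute with quotients'' never arises because no such commutation is being asserted: the $U$-coinvariants are always taken first, and the only invariants appearing are those for the one-dimensional torus, where they are the correct derived functor. The paper therefore treats this final step as a single application of lemma~\ref{lem:Ext_Gl1}.
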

\begin{proof}
Since $\omega$ is a square, we may assume $\omega=1$ by a twist.
Then ${\cal C}_{\Gl(2)}(1)= {\cal C}_{\mathrm{PGl}(2)}$ and we can apply lemma~\ref{lem:Dual_Frob_Ext},
\begin{gather*} \Ext^1_{\mathrm{PGl}(2)}( X , \ind_B^G( \chi\boxtimes \rho)) = \Ext^1_{B/Z}( X,
 \chi\boxtimes \rho)\\
 = \Ext^1_{T\widetilde{T}/Z}(  X_U,  \chi\boxtimes \rho)
=   \Hom_\C(k^{\rho} (X) , \C) \ ,
\end{gather*}
where the last step is lemma~\ref{lem:Ext_Gl1}.
The second assertion is analogous.
\end{proof}

\begin{lem}\label{lem:cent_spez_indecomposable}
For a smooth character $\chi$ of $\Gl(1)$ consider an extension of smooth representations of $G=\Gl(2)$
\begin{equation*}
 0\to j_!i_\ast(\chi)|_{G}\to Q \to (\chi\circ\det)\to 0\ .
\end{equation*}
If $\zeta^{Z,\mu}(Q)=0$ for $\mu=\chi^2$, then $\zeta_{\mu}(Q)$ is the unique indecomposable extension %
$$0\to \chi\otimes\St \to \zeta_\mu(Q) \to (\chi\circ\det)\to 0 \ .$$
Likewise, if an extension
\begin{equation*}
 0\to j_!i_\ast(\nu\chi)|_{G}\to Q \to \chi\otimes \St \to 0\ .
\end{equation*}
satisfies $\zeta^{Z,\mu}(Q)=0$ for $\mu=\chi^2$, then $\zeta_{\mu}(Q)$ is the unique indecomposable extension 
$$0\to (\chi\circ\det) \to \zeta_\mu(Q) \to \chi\otimes\St\to 0 \ .$$
\end{lem}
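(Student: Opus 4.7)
The plan is to apply the six-term exact sequence for the central specialization functor $\zeta_\mu$ from Lemma~\ref{list} to the given short exact sequence, read off the structure of $\zeta_\mu(Q)$ using Lemma~\ref{lem:cent_loc_j_!i_*}, and then address the uniqueness and the nonsplitness of the resulting extension separately.

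For the first case, applying $\zeta_\mu$ and its left-derived functor $\zeta^\mu$ to $0\to j_!i_*(\chi)\to Q\to (\chi\circ\det)\to 0$ yields the six-term sequence
\begin{equation*}
0 \to \zeta^\mu(j_!i_*(\chi)) \to \zeta^\mu(Q) \to \zeta^\mu(\chi\circ\det) \to \zeta_\mu(j_!i_*(\chi)) \to \zeta_\mu(Q) \to \zeta_\mu(\chi\circ\det) \to 0.
\end{equation*}
By Lemma~\ref{list}.4 the first term vanishes; by Lemma~\ref{lem:cent_loc_j_!i_*} we have $\zeta_\mu(j_!i_*(\chi))\cong \chi\nu^{-1/2}\times\chi\nu^{1/2}$, which is $\chi\otimes M_{(1:\St)}$ with socle $\chi\circ\det$ and quotient $\chi\otimes\St$; and since $\chi\circ\det$ has central character $\mu=\chi^2$ it is fixed by both $\zeta^\mu$ and $\zeta_\mu$. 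The hypothesis $\zeta^\mu(Q)=0$ therefore collapses the sequence to
\begin{equation*}
0 \to \chi\circ\det \hookrightarrow \chi\otimes M_{(1:\St)} \to \zeta_\mu(Q) \to \chi\circ\det \to 0,
\end{equation*}
in which the leftmost map is forced, by one-dimensionality of $\Hom_{\Gl(2)}(\chi\circ\det,\chi\otimes M_{(1:\St)})$, to be the embedding into the socle. Its cokernel $\chi\otimes\St$ therefore injects into $\zeta_\mu(Q)$, yielding the desired short exact sequence $0\to \chi\otimes\St\to \zeta_\mu(Q)\to \chi\circ\det\to 0$.

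For the uniqueness of the indecomposable extension, I would twist to reduce to $\chi=\mathbf 1$ and verify $\Ext^1_{\CCC_{\mathrm{PGl}(2)}}(\mathbf 1,\St)\cong\C$. Applying $\Hom(\mathbf 1,-)$ to the reducible principal series $0\to \St\to \nu^{1/2}\times\nu^{-1/2}\to \mathbf 1\to 0$ and using Frobenius reciprocity together with Lemma~\ref{ext-lemma} to see that $\Hom(\mathbf 1,\nu^{1/2}\times\nu^{-1/2})=0$ and $\Ext^1(\mathbf 1,\nu^{1/2}\times\nu^{-1/2})=0$, one obtains $\Ext^1(\mathbf 1,\St)\cong\Hom(\mathbf 1,\mathbf 1)=\C$, so there is a unique nonsplit extension up to isomorphism.

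For the nonsplitness of $\zeta_\mu(Q)$, which is the most subtle point, observe that $\zeta_\mu(Q)$ coincides with the pushout of the defining sequence along the canonical surjection $\phi\colon j_!i_*(\chi)\twoheadrightarrow \chi\otimes\St$, so $[\zeta_\mu(Q)]=\phi_*[Q]$ in $\Ext^1(\chi\circ\det,\chi\otimes\St)$. The class $[Q]$ is itself nonzero because the hypothesis $\zeta^\mu(Q)=0$ rules out a splitting $Q\cong j_!i_*(\chi)\oplus(\chi\circ\det)$, which would produce $\chi\circ\det\subseteq\zeta^\mu(Q)$. The hard step will be to argue that $\phi_*$ does not annihilate $[Q]$; I plan to do this by contradiction. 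A splitting $\zeta_\mu(Q)\cong(\chi\otimes\St)\oplus(\chi\circ\det)$ means $[Q]$ lies in the image of $\Ext^1(\chi\circ\det,\ker\phi)\to\Ext^1(\chi\circ\det,j_!i_*(\chi))$, so there exists a subextension $Q''\subseteq Q$ with $0\to\ker\phi\to Q''\to\chi\circ\det\to 0$ and $\zeta^\mu(Q'')\subseteq\zeta^\mu(Q)=0$; the six-term LES for $Q''$ then forces an injection $\chi\circ\det\hookrightarrow\zeta_\mu(\ker\phi)$, which in turn forces the analogously derived extension $0\to\chi\otimes\St\to\zeta_\mu(\ker\phi)\to\chi\circ\det\to 0$ to split. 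One then reaches a contradiction by computing $\zeta_\mu(\ker\phi)$ directly from the explicit description of $j_!i_*(\chi)$ as a compactly induced representation and verifying that $\zeta_\mu(\ker\phi)$ is in fact indecomposable. The second case, starting from $0\to j_!i_*(\nu\chi)\to Q\to \chi\otimes\St\to 0$, is proved by the same argument, now using $\zeta_\mu(j_!i_*(\nu\chi))\cong\chi\nu^{1/2}\times\chi\nu^{-1/2}=\chi\otimes M_{(\St:1)}$, whose socle is $\chi\otimes\St$ and quotient is $\chi\circ\det$, producing the extension $0\to\chi\circ\det\to\zeta_\mu(Q)\to\chi\otimes\St\to 0$ with unique indecomposable form $\chi\otimes M_{(1:\St)}$.
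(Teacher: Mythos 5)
Your analysis of the central specialization is correct: the six-term sequence for $\zeta_\mu,\zeta^\mu$, the identification $\zeta_\mu(j_!i_*(\chi))\cong\chi\otimes M_{(1:\St)}$ from Lemma~\ref{lem:cent_loc_j_!i_*}, the fact that the connecting map $\chi\circ\det\to\chi\otimes M_{(1:\St)}$ must be the socle embedding because $\Hom(\chi\circ\det,\chi\otimes M_{(1:\St)})$ is one-dimensional, and the resulting short exact sequence $0\to\chi\otimes\St\to\zeta_\mu(Q)\to\chi\circ\det\to 0$ all check out, as does the analogous analysis in the second case. Your computation $\Ext^1_{\mathrm{PGl}(2)}(\mathbf 1,\St)\cong\C$ via the long exact sequence of $\Hom(\mathbf 1,-)$ applied to $0\to\St\to M_{(\St:1)}\to\mathbf 1\to 0$, killing the middle terms with Lemma~\ref{ext-lemma}, is also correct, and it establishes uniqueness of the indecomposable extension. (For the record, the paper does not give a self-contained argument here at all: it cites \cite[lem.~6.4]{W_Excep} for the first case and declares the second analogous, so there is no internal proof to compare against.)

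However, the indecomposability of $\zeta_\mu(Q)$ --- the actual content of the lemma, without which ``$\zeta_\mu(Q)$ is \emph{the unique indecomposable} extension'' cannot be concluded --- is not established in your proposal. Identifying $\zeta_\mu(Q)$ with the pushout $\phi_*[Q]$ is fine, and $[Q]\neq 0$ follows from $\zeta^\mu(Q)=0$ as you say, but this does not exclude $\phi_*[Q]=0$. Your reduction by contradiction is logically sound step by step (the subextension $Q''$ with kernel $\ker\phi$, the vanishing $\zeta^\mu(Q'')=0$ by left-exactness of $\zeta^\mu$, the forced injection $\chi\circ\det\hookrightarrow\zeta_\mu(\ker\phi)$, and the implied splitting of $0\to\chi\otimes\St\to\zeta_\mu(\ker\phi)\to\chi\circ\det\to 0$), but it terminates in the unproved assertion that $\zeta_\mu(\ker\phi)$ is indecomposable, which you only say you ``plan'' to verify ``by computing $\zeta_\mu(\ker\phi)$ directly.'' That auxiliary claim is itself an indecomposability statement of essentially the same type (the $\mu$-coinvariants of a submodule of $j_!i_*(\chi)$), so the reduction does not visibly buy you an easier problem, and without the explicit computation it is not clear it is even easier than working with $Q$ directly. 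As written, the crux of the lemma is asserted rather than proved, which is a genuine gap.
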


\begin{proof}
The first assertion has been proved in lemma~6.4 of \cite{W_Excep}. The proof for the second assertion is completely analogous.
\end{proof}

\subsection{Euler characteristic}\label{ss:Euler_char}
Fix a smooth central character $\mu$ and write $\widehat{X}=\zeta_\mu(X)\in\CCC:={\CCC}^{\fin}_{\Gl(2)}(\mu)$ for the central specialization of $X\in\CCC^\fin_2$.
We consider the Euler characteristic
of $X$ and $Y\in \CCC$
to be that of $\widehat{X}$ and $Y$ in $\CCC$, i.e.
\begin{equation*}
\chi(X,Y) = \dim \Hom_{\CCC}(\widehat{X},Y)-\dim \Ext^1_{\CCC}(\widehat{X},Y)\ .
\end{equation*}
The higher $\Ext_\CCC^i$-classes for $i\geq2$ vanish \cite[II.3.3]{Schneider-Stuhler_Bruhat_Tits}. In other words, the cohomological dimension of $\CCC$ is one.

\begin{lem}
The Euler characteristic $\chi(X,Y)$ is well-defined.
\end{lem}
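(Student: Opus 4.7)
The content of the statement is that both $\dim\Hom_\CCC(\widehat X,Y)$ and $\dim\Ext^1_\CCC(\widehat X,Y)$ are finite; combined with the stated vanishing $\Ext^{\geq 2}_\CCC=0$, this makes $\chi(X,Y)$ well-defined. The subtlety is that $\widehat X=\zeta_\mu(X)$ need not have finite length in $\CCC$, as the preceding remark illustrates with $\widehat{\mathbb{S}_2}$, so one cannot simply invoke admissibility of finite-length smooth $\Gl(2)$-modules.

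The strategy is a d\'evissage to model cases using the Gelfand--Kazhdan excision sequence of lemma~\ref{lem:Gelfand_Kazdhan}, applied first in $\CCC_2$ to $X$ and then in $\CCC_1$ to $j^!X$. Using that $j_!$ is exact, that $\zeta_\mu$ is right-exact with left derived functor $\zeta^\mu$, that $\zeta^\mu\circ j_!=0$ (lemma~\ref{list}), and that $\zeta^\mu$ preserves finite length (remark~\ref{rmk:cent_loc_fin_length}), iterating these excision sequences together with the associated four-term long exact sequences of derived functors of $\zeta_\mu$ yields a finite filtration of $\widehat X$ in $\CCC$ whose graded pieces are either of finite length in $\CCC$ or of the form $\zeta_\mu(\mathbb{S}_2)$. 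Applying the long exact $\Ext^*_\CCC(-,Y)$-sequence, truncated at $\Ext^1$ by the cohomological dimension bound, reduces finiteness of $\Hom_\CCC$ and $\Ext^1_\CCC$ against $Y$ to each graded piece separately.

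For the finite-length graded pieces, the base cases arise as $\zeta_\mu(i_*(B))$ (trivially of finite length for $B\in\CCC_{\Gl(2)}^\fin$) and $\zeta_\mu(j_!i_*(A))$ (an iterated extension of principal series, of finite length by lemma~\ref{lem:cent_loc_j_!i_*}). Finite-length smooth $\Gl(2)$-representations are admissible, so $\Hom_\CCC$ and $\Ext^1_\CCC$ between two such are finite-dimensional by standard arguments. The crux is the $\mathbb{S}_2$-case: by lemma~\ref{lem:cent_loc_S2},
$$\zeta_\mu(\mathbb{S}_2)\cong \ind_{B_{\Gl(2)}}^{\Gl(2)}(\mu\boxtimes\mathbb{S}_1).$$
Exactness of compact induction combined with Frobenius reciprocity (lemma~\ref{lem:Frobenius}) identifies $\Ext^i_\CCC(\widehat{\mathbb{S}_2},Y)$ with $\Ext^i_{\Gl_a(1)}(\mathbb{S}_1,\,Y|_{\Gl_a(1)}\otimes \delta_{B_{\Gl(2)}}/\delta_{\Gl(2)})$; a second Frobenius step using $\mathbb{S}_1\cong\ind_U^{\Gl_a(1)}(\psi)$ for the one-dimensional unipotent radical $U\cong(k,+)$ of $\Gl_a(1)$ further reduces the computation to a twisted $U$-cohomology group of a finite-length smooth $\Gl(2)$-module. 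Finite-dimensionality there is the classical Whittaker/Kirillov finiteness on $\Gl(2)$, with $\Ext^1$ controlled in terms of $\Hom$ via Aubert--Zelevinsky duality.

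\textbf{Main obstacle.} The $\mathbb{S}_2$-case is the heart of the argument: since $\widehat{\mathbb{S}_2}$ is of infinite length, one cannot appeal to admissibility, and the resolution rests on the explicit compact-induction description of lemma~\ref{lem:cent_loc_S2}, which allows iterated Frobenius reciprocity to transport the Ext-computation all the way down to the abelian group $U$, where well-known Whittaker-theoretic finiteness applies.
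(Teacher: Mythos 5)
Your overall architecture matches the paper's: reduce to the case where the graded pieces of $X$ are either of finite length after central specialization (where admissibility gives finiteness) or equal to $\mathbb{S}_2$, and then treat $\mathbb{S}_2$ via the compact-induction description $\zeta_\mu(\mathbb{S}_2)\cong\ind_{Z\times\Gl_a(1)}^{\Gl(2)}(\mu\boxtimes\mathbb{S}_1)$ from lemma~\ref{lem:cent_loc_S2}. The paper accomplishes the reduction more economically, just asserting one may take $X$ and $Y$ irreducible and splitting by whether $j^!j^!(X)=0$ (then $\widehat X$ has finite length by remark~\ref{rmk:cent_loc_fin_length}) or $j^!j^!(X)\neq0$ (then $X\cong\mathbb{S}_2$); your explicit Gelfand--Kazhdan d\'evissage does the same work at greater length and is fine.

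The genuine discrepancy is in your second reduction step for $\mathbb{S}_2$. You propose a second application of compact-induction Frobenius reciprocity, writing $\mathbb{S}_1\cong\ind_U^{\Gl_a(1)}(\psi)$ and transporting $\Ext^i$ to ``twisted $U$-cohomology'' over $U\cong(k,+)$, then invoking Whittaker finiteness and Aubert--Zelevinsky duality to control $\Ext^1$. This is shakier than it needs to be: $\CCC_U$ is not semisimple, and the claim that Aubert--Zelevinsky duality bounds $\Ext^1_U$ in terms of $\Hom_U$ is not a standard fact you can just cite. The paper (in the proof of lemma~\ref{lem:S_2_Euler}, which is exactly this computation) instead uses the adjunction $j_!\dashv j^!$, i.e.\ $\Phi^+\dashv\Phi^-$, both exact, to pass from $\CCC_1$ all the way down to $\CCC_0$, which is the semisimple category of vector spaces. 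There $\Ext^{\geq1}$ vanishes identically, and one is left with a plain dimension count. In short: replace your second Frobenius step, which targets $\CCC_U$, with the $\Phi^+\dashv\Phi^-$ adjunction targeting $\CCC_0$; this is what makes the $\mathbb{S}_2$-case close cleanly, and it is what the paper does.
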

\begin{proof}
It has to be shown that $\Hom_{\CCC}(\zeta_\mu(X),Y)$ and $\Ext^1_{\CCC}(\zeta_\mu(X),Y)$ is finite-dimensional.
We can assume that $X$ and $Y$ are irreducible.
If $j^!\circ j^!(X)=0$, then $\widehat{X}$ has finite length as a $\Gl(2)$-module by remark~\ref{rmk:cent_loc_fin_length} and the assertion follows.
If $j^!\circ j^!(X)\neq0$ then $X\cong\mathbb{S}_2$
and the assertion follows from lemma~\ref{lem:cent_loc_S2}.
\end{proof}

\begin{lem}\label{lem:Euler_char_additive}
If $Y\in\CCC$ is parabolically induced from a character of $B_{\Gl(2)}$, then
for every exact sequence $0\to X_1\to X\to X_2\to 0$ in $\CCC^\fin_2$ we have
\begin{equation*}
  \chi(X,Y) = \chi(X_1,Y)+\chi(X_2,Y)\ .
\end{equation*}
For every $X\in\CCC^\fin_2$ and every short exact sequence 
$0\to Y_1\to Y\to Y_2\to 0$ in $\CCC$,
\begin{equation*}
  \chi(X,Y) = \chi(X,Y_1)+\chi(X,Y_2)\ .
\end{equation*}
\end{lem}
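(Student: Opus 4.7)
The plan is to establish the two assertions separately, by different routes.

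For the second assertion, the argument is the standard Euler-characteristic computation. I would apply the covariant left-exact functor $\Hom_{\CCC}(\widehat{X}, -)$ to the short exact sequence $0 \to Y_1 \to Y \to Y_2 \to 0$ to obtain the long exact sequence of $\Hom$ and $\Ext^*$ groups. Because $\CCC$ has cohomological dimension at most one by \cite[II.3.3]{Schneider-Stuhler_Bruhat_Tits}, this sequence terminates after $\Ext^1$, yielding a six-term exact sequence whose terms are all finite-dimensional (thanks to the well-definedness of $\chi$ established in the preceding lemma). The vanishing of the alternating sum of dimensions rearranges directly to $\chi(X, Y) = \chi(X, Y_1) + \chi(X, Y_2)$. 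Notice that the hypothesis on $Y$ is not needed here.

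For the first assertion, the hypothesis that $Y = \ind_{B_{\Gl(2)}}^{\Gl(2)}(\chi \boxtimes \rho)$ is essential: by lemma~\ref{ext-lemma},
\[ \chi(X, Y) = \dim_{\C} k_\rho(\widehat{X}) - \dim_{\C} k^\rho(\widehat{X}). \]
Given a short exact sequence $0 \to X_1 \to X \to X_2 \to 0$ in $\CCC^{\fin}_2$, the right-exact functor $\zeta_\mu$ together with its first left-derived $\zeta^\mu$ (higher derived vanish since $Z \cong k^\times$ has cohomological dimension one, cf.\ lemma~\ref{list}) yields a six-term exact sequence in $\CCC$. Composing with the right-exact functor $k_\rho$ and its first left-derived $k^\rho$ --- using lemma~\ref{lem:cent_loc_com_diagram} and the analogous commutativity for $k^\rho$ --- then produces a longer exact complex whose alternating sum of dimensions vanishes. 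Tracking the signs and matching the $\zeta^\mu$-contributions against the $\widehat{(-)}$-contributions, one recovers exactly the identity $\chi(X, Y) = \chi(X_1, Y) + \chi(X_2, Y)$.

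The hardest step will be the double bookkeeping: combining the derived pair $(\zeta_\mu, \zeta^\mu)$ with the derived pair $(k_\rho, k^\rho)$ into a single exact sequence (rather than merely a spectral sequence) requires the vanishing of all relevant higher derived functors and a clean compatibility between the two pairs. The vanishing statements rest on the one-dimensional cohomological dimensions of $Z$ and of the unipotent-plus-torus subgroup underlying $k_\rho$; once these are in place the alternating-sum argument proceeds as in the second assertion.
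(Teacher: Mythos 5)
Your treatment of the second assertion is correct: apply $\Hom_{\CCC}(\widehat{X},-)$ to the short exact sequence of $Y$'s, use that $\CCC$ has cohomological dimension one to get a six-term exact sequence of finite-dimensional spaces, and read off the vanishing of the alternating sum. Your remark that the hypothesis on $Y$ is unused here is also correct.

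For the first assertion there is a genuine gap, and it is exactly where you flag difficulty. After producing the six-term exact sequence
\[0\to\zeta^\mu(X_1)\to\zeta^\mu(X)\to\zeta^\mu(X_2)\to\zeta_\mu(X_1)\to\zeta_\mu(X)\to\zeta_\mu(X_2)\to0\]
in $\CCC$, you want to also derive with respect to $(k_\rho,k^\rho)$ and read off an alternating-sum identity among twelve terms, but that ``longer exact complex'' does not exist as a single exact sequence; you would need a spectral sequence, and you do not say how the ``matching of $\zeta^\mu$-contributions against $\widehat{(-)}$-contributions'' yields the desired three-term identity. There is no cancellation between the $\zeta^\mu$- and $\zeta_\mu$-terms to exploit: the only way to reduce the six-term alternating sum $\sum_i(-1)^i\chi(A_i,Y)=0$ to $\chi(X,Y)=\chi(X_1,Y)+\chi(X_2,Y)$ is to prove that each of the three terms $\chi(\zeta^\mu(X_i),Y)$ \emph{vanishes individually}. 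That is the crux, and it is what the paper does: $\zeta^\mu(X_i)$ has finite length (remark~\ref{rmk:cent_loc_fin_length}), so lemma~\ref{lem:krho_fin_generated} gives $k^\rho(\zeta^\mu(X_i))\cong k_\rho(\zeta^\mu(X_i))$, and lemma~\ref{ext-lemma} then forces $\dim\Hom_{\CCC}(\zeta^\mu(X_i),Y)=\dim\Ext^1_{\CCC}(\zeta^\mu(X_i),Y)$, i.e.\ $\chi(\zeta^\mu(X_i),Y)=0$. You cite lemma~\ref{lem:cent_loc_com_diagram} (the commutativity of $k_\rho$ with $\zeta_\mu$), but that lemma is not what is needed; it neither produces the claimed long exact complex nor supplies the vanishing. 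Once you have the three vanishings in hand, no second derived pair, no spectral sequence, and no ``double bookkeeping'' are required: the alternating-sum identity for the single six-term sequence already gives the result, because $\chi(X_i,Y)=\chi(\zeta_\mu(X_i),Y)$ by definition.
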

\begin{proof}
Central specialization yields a long exact sequence in $\CCC$
\begin{equation*}
0\to \zeta^\mu(X_1)\to \zeta^\mu(X) \to \zeta^\mu(X_2) \to \zeta_\mu(X_1)\to \zeta_\mu(X) \to \zeta_\mu(X_2) \to 0\ .
\end{equation*}
Since $\zeta^\mu(X)$ has finite length for every $X\in\CCC_2^\fin$
by remark~\ref{rmk:cent_loc_fin_length},
\begin{equation*}
\chi(\zeta^\mu(X),Y)=\sum_{i=0}^1(-1)^i\dim(\Ext^i_{\CCC}(\zeta^\mu(X),Y)=0
\end{equation*}
vanishes by lemma~\ref{lem:krho_fin_generated} and lemma~\ref{ext-lemma}.
The well-known formulas for the Euler characteristic imply the statement.
\end{proof}

\begin{prop}\label{Euler}
For $X\in {\cal C}_{2}^{\fin}$ and $Y=\ind_{B_{\Gl(2)}}^{\Gl(2)}(\mu\rho^{-1}\boxtimes \rho)$,
the Euler characteristic is $$\chi(X,Y)=  \dim (j^!\circ j^!)(X)\ $$
with the functor $(j^!\circ j^!):\CCC_2\to\CCC_0$. Especially,
$$\dim \Ext^1_{\CCC}(\widehat{X}, Y) = \dim k_\chi k_\rho (X)  - \dim (j^!\circ j^!)(X) \ .$$
\end{prop}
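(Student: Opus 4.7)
The plan is to prove the identity $\chi(X,Y)=\dim(j^!\circ j^!)(X)$ by reducing to three canonical building blocks, and then deduce the second formula from the first via Lemma~\ref{ext-lemma}. The first step is to observe that both sides of the identity are additive on short exact sequences in $\CCC_2^{\fin}$. For the left side this is Lemma~\ref{lem:Euler_char_additive}, applicable because $Y=\ind_{B_{\Gl(2)}}^{\Gl(2)}(\mu\rho^{-1}\boxtimes\rho)$ is parabolically induced from a Borel character. For the right side, this follows from exactness of $j^!$ (Bernstein--Zelevinsky's $\Phi^-$). Hence it suffices to check the identity on any set of generators of the Grothendieck group of $\CCC_2^{\fin}$.

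Applying the excision sequence of Lemma~\ref{lem:Gelfand_Kazdhan} twice, any $X\in\CCC_2^{\fin}$ carries a canonical three-step filtration $0\subseteq j_!j_!(j^!j^!X)\subseteq j_!j^!X\subseteq X$ with successive subquotients $\mathbb{S}_2\otimes W$ (where $W=j^!j^!X$ is a finite-dimensional vector space), $j_!i_*(A)$ (where $A=i^*j^!X\in\CCC_{\Gl(1)}^{\fin}$), and $i_*(\tau)$ (where $\tau=i^*X\in\CCC_{\Gl(2)}^{\fin}$). By additivity it suffices to verify $\chi(\cdot,Y)=\dim j^!j^!(\cdot)$ on each of these three building blocks.

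In case (i), $X=\mathbb{S}_2\otimes W$, the natural identity $j^!j_!\cong\id$ (which follows from the excision sequence together with $i^*j_!=0$) gives $j^!j^!X\cong W$. Corollary~\ref{cor:krho_of_centS2} shows that $k_\rho\zeta_\mu(\mathbb{S}_2)\cong\zeta_{\mu/\rho}(\mathbb{S}_1)\cong\C$ while $k^\rho\zeta_\mu(\mathbb{S}_2)=0$, so by Lemma~\ref{ext-lemma} one obtains $\dim\Hom_{\CCC}(\widehat X,Y)=\dim W$ and $\dim\Ext^1_{\CCC}(\widehat X,Y)=0$, giving $\chi(X,Y)=\dim W$ as required. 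In cases (ii) and (iii), $j^!j^!X=0$ because $j^!\circ i_*=0$, so the formula requires $\chi(X,Y)=0$. For case (iii), Lemma~\ref{lem:krho_fin_generated} gives $k^\rho i_*\cong k_\rho i_*$ as functors on $\CCC_{\Gl(2)}$, which after central specialization yields $\dim k_\rho(\widehat X)=\dim k^\rho(\widehat X)$ and hence $\chi(X,Y)=0$ via Lemma~\ref{ext-lemma}. For case (ii), Lemma~\ref{lem:cent_loc_j_!i_*} describes $\zeta_\mu(j_!i_*A)$ as an iterated principal series of finite length in $\CCC$; passing to the Jacquet module (a finite-length $T$-module), Lemma~\ref{lem:Ext_Gl1} applied character by character yields the same cancellation $\dim k_\rho(\widehat X)=\dim k^\rho(\widehat X)$, and thus $\chi(X,Y)=0$.

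The second formula then follows by rearrangement: $\dim\Ext^1_{\CCC}(\widehat X,Y)=\dim\Hom_{\CCC}(\widehat X,Y)-\chi(X,Y)$. By Lemma~\ref{ext-lemma}, $\dim\Hom_{\CCC}(\widehat X,Y)=\dim k_\rho(\widehat X)$; combining the natural equivalence $k_\rho\zeta_\mu\cong\zeta_{\mu/\rho}k_\rho$ of Lemma~\ref{lem:cent_loc_com_diagram} with the identification $\zeta_\chi\cong k_\chi$ on $\CCC_1$ (using $\chi=\mu/\rho$) yields $\dim k_\rho(\widehat X)=\dim k_\chi k_\rho(X)$, which gives the stated expression. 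The main obstacle I expect is the handling of case (i), where $\zeta_\mu(\mathbb{S}_2)$ is of infinite length in $\CCC$ (see Remark~\ref{rmk:cent_loc_fin_length}) and one must verify that $\Hom_{\CCC}(\widehat X,Y)$ and $\Ext^1_{\CCC}(\widehat X,Y)$ are nonetheless finite-dimensional so that the Euler characteristic is well-defined; this is exactly what Corollary~\ref{cor:krho_of_centS2} guarantees. A secondary technical point is ensuring that the coinvariant--invariant cancellation in case (ii) is literal and not just up to semisimplification, which is why one must work with Jordan blocks as in Lemma~\ref{lem:Ext_Gl1} rather than merely with Jordan--H\"older multiplicities.
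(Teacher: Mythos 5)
Your proposal is correct and follows essentially the same route as the paper's proof: both reduce via additivity (Lemma~\ref{lem:Euler_char_additive} plus exactness of $j^!$) to the three building blocks $\mathbb{S}_2$, $j_!i_*(A)$, $i_*(\tau)$ and invoke Corollary~\ref{cor:krho_of_centS2}, Lemma~\ref{lem:cent_loc_j_!i_*}, and Lemma~\ref{lem:krho_fin_generated} respectively, together with Lemma~\ref{ext-lemma}. The only cosmetic differences are that the paper reduces directly to irreducible constituents instead of your three-step filtration, and leaves the ``especially'' formula implicit, which you spell out via Lemma~\ref{lem:cent_loc_com_diagram}.
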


\begin{proof}
By lemma~\ref{lem:Euler_char_additive} it suffices to show the assertion for irreducible $X$.
Note that $\dim\chi(X,Y)=\dim \Hom(k_\rho\zeta_\mu X,\C)-\dim\Hom(k^\rho \zeta_\mu X,\C)$ as in lemma~\ref{ext-lemma}.
For $X=i_*(\pi)$ with irreducible $\pi\in\CCC_{\Gl(2)}$ use lemma~\ref{lem:krho_fin_generated}. The argument for $X=j_!i_*(\chi)$ is similar using lemma~\ref{lem:cent_loc_j_!i_*}.
Finally, for $X=\mathbb{S}_2$ the assertion follows from corollary~\ref{cor:krho_of_centS2}.
\end{proof}

\begin{lem} \label{lem:S_2_Euler}\label{lem:Ext_S_2}
For every $Y\in \CCC$ we have
 \begin{equation*}
  \dim \Ext^i_{\Gl(2)}(\widehat{{\mathbb{S}_2}},Y)=
  \begin{cases}m_{Y} & i=0\ ,\\
               0     & i>0
  \end{cases}\qquad\text{and}\qquad \chi(\mathbb{S}_2, Y)=m_Y\ ,
 \end{equation*}
 where $m_Y$ denotes the multiplicity of Whittaker models for $Y$.
\end{lem}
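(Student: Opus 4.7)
The plan is to identify $\widehat{\mathbb{S}_2}$ explicitly as a centrally-specialized Gelfand--Graev representation, and then combine an Euler characteristic computation with a case-by-case vanishing argument to establish both claims.

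First I would apply Lemma~\ref{lem:cent_loc_S2} to $\mathbb{S}_2 = j_!(\mathbb{S}_1) \in \CCC_2$, obtaining $\widehat{\mathbb{S}_2} \cong \ind_{Z \Gl_a(1)}^{\Gl(2)}(\mu \boxtimes \mathbb{S}_1)$. Since $\mathbb{S}_1 = j_!(\C) \cong \ind_U^{\Gl_a(1)}(\psi)$ for the upper-unipotent subgroup $U \subseteq \Gl(2)$, transitivity of compact induction yields $\widehat{\mathbb{S}_2} \cong \ind_{ZU}^{\Gl(2)}(\mu \boxtimes \psi)$, the Gelfand--Graev representation of $\Gl(2,k)$ with central character $\mu$. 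This places the problem squarely inside standard Whittaker theory.

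Next I would compute the Euler characteristic via Proposition~\ref{Euler}. Using the natural equivalence $j^! \circ j_! \cong \id$ on $\CCC_{n-1}$, we obtain $(j^! \circ j^!)(\mathbb{S}_2) \cong \C$, so Proposition~\ref{Euler} gives $\chi(\mathbb{S}_2, Y) = 1$ whenever $Y$ is a fully induced principal series, matching $m_Y = 1$. Both $\chi(\mathbb{S}_2, -)$ (by Lemma~\ref{lem:Euler_char_additive}) and the Whittaker multiplicity $m_{(-)}$ (by exactness of the twisted Jacquet functor) are additive on short exact sequences in $Y$, so together with the direct check $\Hom_\CCC(\widehat{\mathbb{S}_2}, \mathbf{1}) = 0$ for the trivial one-dimensional representation, this reduces the identity $\chi(\mathbb{S}_2, Y) = m_Y$ to the case of supercuspidal $Y$.

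For supercuspidal $Y \in \CCC^\fin_{\Gl(2)}(\mu)$ each Bernstein block is semisimple, so $Y$ is both projective and injective in $\CCC$; in particular $\Ext^1_\CCC(\widehat{\mathbb{S}_2}, Y) = 0$. The Hom space is then computed by applying the adjointness of $\zeta_\mu$ from Lemma~\ref{list} to rewrite $\Hom_\CCC(\widehat{\mathbb{S}_2}, Y) \cong \Hom_{\Gl(2)}(\ind_{\Gl_a(1)}^{\Gl(2)} \mathbb{S}_1, Y)$, after which the Kirillov-model identification $Y|_{\Gl_a(1)} \cong \mathbb{S}_1$ together with $\mathrm{End}_{\Gl_a(1)}(\mathbb{S}_1) = \C$ forces the dimension to be $1 = m_Y$. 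Combining all cases and using $\chi(\mathbb{S}_2, Y) = \dim \Hom_\CCC(\widehat{\mathbb{S}_2},Y) - \dim \Ext^1_\CCC(\widehat{\mathbb{S}_2},Y)$ yields both stated identities. The hard part will be the Frobenius reciprocity step in the supercuspidal case: compact induction from the non-open mirabolic subgroup $\Gl_a(1) \subseteq \Gl(2)$ is not left adjoint to restriction in the naive sense, so carrying this out rigorously requires the dual form of Frobenius reciprocity from Lemma~\ref{lem:Frobenius} together with the admissibility of the contragredient.
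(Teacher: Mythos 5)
Your overall strategy is genuinely different from the paper's: the paper reduces at once to irreducible $Y$, applies dual Frobenius reciprocity (Lemma~\ref{lem:Dual_Frob_Ext}) to the compactly-induced description of $\widehat{\mathbb{S}_2}$ from Lemma~\ref{lem:cent_loc_S2}, then uses the $(j_!,j^!)$-adjunction a second time to land in $\CCC_0 = \mathrm{Vect}_\C$, where all higher $\Ext$ vanish for free; the remaining $\Hom$ is a one-line Kirillov computation. That argument covers every irreducible $Y$ uniformly and proves the $\Ext^{>0}$-vanishing and the $\Hom$-dimension simultaneously.

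Your version computes the Euler characteristic (via Proposition~\ref{Euler}, where the identification $(j^!\circ j^!)(\mathbb{S}_2) \cong \C$ is correct) and then tries to reconstitute the separate $\Hom$ and $\Ext^1$ dimensions by case analysis. This leaves a real gap: the Euler characteristic $\chi(\mathbb{S}_2, Y) = \dim\Hom - \dim\Ext^1$ cannot, on its own, determine $\dim\Hom$ and $\dim\Ext^1$ individually. You establish both dimensions only in the supercuspidal case (via projectivity in the block and Kirillov theory). For irreducible principal series you only get $\chi = 1$; for one-dimensional $Y$ you only verify $\Hom = 0$ without addressing $\Ext^1(\widehat{\mathbb{S}_2}, \mathbf{1})$; for twists of Steinberg you produce nothing directly. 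Indeed your reduction to supercuspidal via additivity already stalls here: from $\chi(\mathbb{S}_2, \St) + \chi(\mathbb{S}_2, \mathbf{1}) = 1$ you cannot extract $\chi(\mathbb{S}_2, \mathbf{1}) = 0$ without first knowing $\Ext^1(\widehat{\mathbb{S}_2}, \mathbf{1}) = 0$, which you never prove. To close the gap you would need either the second half of Proposition~\ref{Euler} (which gives $\dim\Ext^1 = \dim k_\chi k_\rho(X) - \dim(j^!j^!)(X)$, so $k_\rho(\mathbb{S}_2) \cong \mathbb{S}_1$ from Cor.~\ref{cor:krho_of_centS2} forces $\Ext^1 = 0$ for principal series), or Lemma~\ref{ext-lemma} together with $k^\rho\zeta_\mu(\mathbb{S}_2) = 0$ from Cor.~\ref{cor:krho_of_centS2}, or simply the paper's direct reduction to $\CCC_0$. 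A minor further imprecision: your ``$\mathrm{End}_{\Gl_a(1)}(\mathbb{S}_1) = \C$'' is not the space that arises after dual Frobenius — one instead computes $\Hom_{\Gl_a(1)}(\mathbb{S}_1, C_b^\infty(k^\times))$, because the contragredient of $\mathbb{S}_1 = C_c^\infty(k^\times)$ is the $\Gl_a(1)$-module $C_b^\infty(k^\times)$, not $\mathbb{S}_1$ itself; the dimension is still one, but the identification should be stated correctly.
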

\begin{proof} We can assume that $Y$ is irreducible. Lemma~\ref{lem:Dual_Frob_Ext} and lemma~\ref{lem:cent_loc_S2} imply isomorphisms
\begin{equation*}
\Ext^i_{\CCC}(\widehat{\mathbb{S}_2},Y) \cong \Ext^i_{\CCC_1}(\mathbb{S}_1,({Y^\vee}|_{\Gl_a(1)})^\vee)\cong \Ext^i_{\CCC_0}(\C,j^!({Y^\vee}|_{\Gl_a(1)})^\vee)\ .
\end{equation*}
Since the category $\CCC_0$ of complex vector spaces is semisimple,
it only remains to be shown that $ \dim j^!({Y^\vee}|_{\Gl_a(1)})^\vee=m_Y$.
By Kirillov theory and since $\mathbb{S}_1$ is projective in $\CCC_{1}$, the $\Gl_a(1)$-module $(Y^\vee)|_{\Gl_a(1)}$ is an extension
$$1\to \mathbb{S}^{m_Y}\to (Y^\vee)|_{\Gl_a(1)}\to i_*i^*(Y^\vee)\to 1\ .$$
The contragredient of $\mathbb{S}$ is the $\Gl_a(1)$-module $C_b^\infty(k^\times)$ of degree one.
\end{proof}

\begin{lem}\label{lem:Euler_char_A}
For every $A\in\CCC^\fin_{\Gl(1)}$ and every $Y\in\CCC$, the Euler characteristic
$\chi(j_!i_*(A),Y)=0 $
vanishes.
\end{lem}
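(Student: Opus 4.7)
The strategy is to reduce the Euler characteristic to a vanishing dimension identity via three moves: additivity to pass to a single character, contragredient duality to swap arguments, and the natural isomorphism $k_\rho \circ i_* \cong k^\rho \circ i_*$ of lemma~\ref{lem:krho_fin_generated}. First, the functor $\zeta_\mu \circ j_!i_* : \CCC^\fin_{\Gl(1)} \to \CCC$ is exact, because $j_!$ and $i_*$ are exact and $\zeta^\mu \circ j_!i_* = 0$ by lemma~\ref{lem:cent_loc_j_!i_*}, so applying $\zeta_\mu$ to a short exact sequence in $\CCC^\fin_{\Gl(1)}$ yields a short exact sequence in $\CCC$ (the connecting map from $\zeta^\mu$ vanishes). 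Consequently the Euler characteristic $\chi(j_!i_*(\cdot), Y)$ is additive on short exact sequences in $\CCC^\fin_{\Gl(1)}$, and by induction on the length of $A$ we may assume $A = \chi$ is a single smooth character. Lemma~\ref{lem:cent_loc_j_!i_*} then identifies $I := \zeta_\mu(j_!i_*(\chi))$ with the full principal series $\chi\nu^{-1/2} \times \chi^{-1}\nu^{1/2}\mu$.

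Next, I apply the contragredient involution to swap the arguments: as an exact anti-equivalence $\CCC \to \CCC^\vee := \CCC^\fin_{\Gl(2)}(\mu^{-1})$, the contragredient preserves Ext, so $\chi_\CCC(I, Y) = \chi_{\CCC^\vee}(Y^\vee, I^\vee)$, where $I^\vee = \chi^{-1}\nu^{1/2} \times \chi\nu^{-1/2}\mu^{-1}$ is again a full principal series. Lemma~\ref{ext-lemma} applied to $Y^\vee \in \CCC^\vee$ and $I^\vee = \ind_B^G(\chi' \boxtimes \rho')$ for suitable characters $\chi'$ and $\rho'$ extracted from $I^\vee$ yields
\[
 \chi_\CCC(I, Y) = \dim k_{\rho'}(Y^\vee) - \dim k^{\rho'}(Y^\vee).
\]
Since $Y^\vee$ has finite length as a $\Gl(2)$-module, lemma~\ref{lem:krho_fin_generated} provides a natural isomorphism $k_{\rho'} \circ i_* \cong k^{\rho'} \circ i_*$, so the two dimensions coincide and $\chi_\CCC(I, Y) = 0$, which gives the claim.

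The main technical obstacle is that lemma~\ref{ext-lemma} is stated under the assumption that the central character $\omega = \mu^{-1}$ is a square, which need not hold for arbitrary smooth $\mu$. However, this squareness is used only to reduce to $\CCC_{\mathrm{PGl}(2)}$ for convenience; the underlying dual Frobenius argument (lemma~\ref{lem:Dual_Frob_Ext}) on $\CCC_B(\omega)$ yields the same formulas for any central character, since modding out by $Z_G$ reduces the computation to the one-dimensional torus $B/Z_G$ where lemma~\ref{lem:Ext_Gl1} applies directly. An alternative path avoiding lemma~\ref{ext-lemma} altogether would use Bernstein's second adjointness $\Ext^i_\CCC(\ind_B^G V, Y) \cong \Ext^i_{\CCC_{T/Z_G}}(V, J_{B^-}(Y))$ combined with the fact that, on $T/Z_G \cong k^\times$, each Jordan block contributes $1-1=0$ to the Euler characteristic against any smooth character, so the Euler characteristic vanishes identically on finite-length modules.
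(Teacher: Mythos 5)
Your proposal is correct and follows essentially the same route as the paper: reduce to $A$ a single character via exactness/additivity (you argue this directly from the exactness of $\zeta_\mu\circ j_!i_*$; the paper cites lemma~\ref{lem:Euler_char_additive}), identify the central specialization with a full principal series via lemma~\ref{lem:cent_loc_j_!i_*}, pass to the contragredient, and then invoke Frobenius reciprocity to push the computation down to the torus where $\Hom$ and $\Ext^1$ have equal dimension for finite-length modules.

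The one place you diverge in packaging is the final step: you route through lemma~\ref{ext-lemma} and then have to patch up its ``$\omega$ is a square'' hypothesis, whereas the paper cites lemma~\ref{lem:Dual_Frob_Ext} directly, which has no such hypothesis. Your patch is correct — the square hypothesis in lemma~\ref{ext-lemma} is only a convenience to pass to $\mathrm{PGl}(2)$, and the underlying computation with lemma~\ref{lem:Dual_Frob_Ext} and lemma~\ref{lem:Ext_Gl1} goes through for arbitrary central characters (the paper itself makes this implicit generalization in the proof of proposition~\ref{Euler}). Still, invoking lemma~\ref{lem:Dual_Frob_Ext} directly, as the paper does, would have saved you the caveat. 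Your use of lemma~\ref{lem:krho_fin_generated} to get $\dim k_{\rho'}(Y^\vee) = \dim k^{\rho'}(Y^\vee)$ is fine; one could equivalently observe that for any finite-length $\Gl(1)$-module the $\rho$-coinvariants and $\rho$-invariants have the same dimension (lemma~\ref{lem:Ext_Gl1}), applied to the Jacquet module of $Y^\vee$, which is what that isomorphism encodes.
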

\begin{proof}
By lemma~\ref{lem:cent_loc_j_!i_*} and lemma~\ref{lem:Euler_char_additive}
we can assume that $A$ is a character and $Y$ is irreducible.
For $X=j_!i_*(A)$, lemma~\ref{lem:cent_loc_j_!i_*} implies
$$\widehat{X}=\zeta_{\mu}(X)=\ind_{B_{\Gl(2)}}^{\Gl(2)}(A\boxtimes \mu A^{-1})\ .$$
By passing to the contragredient, it suffices to show that $\Hom(Y^\vee,\widehat{X}^\vee)\cong\Ext^1(Y^\vee,\widehat{X}^\vee)$.
This holds by Frobenius reciprocity, see lemma~\ref{lem:Dual_Frob_Ext}.
\end{proof}

\begin{prop}\label{Finally}
For irreducible $\Pi\in\CCC_G(\omega)$ with a split Bessel model and not of type \nosf{VII}, \nosf{VIIIa}, \nosf{IXa} and for arbitrary $Y\in\CCC$,
the Euler characteristic is
\begin{equation*}
\chi(\overline{\Pi}, Y) = \chi(j_!j^!(\overline{\Pi}),  Y)=\chi(\mathbb{S}_2^{m_\Pi},Y)=m_Ym_\Pi
\end{equation*}
where $m_Y\in\{0,1\}$ is the dimension of Whittaker models for $Y$ and $m_\Pi\in\{0,1\}$ is the dimension of Whittaker models for $\Pi$.
\end{prop}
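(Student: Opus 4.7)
The plan is to apply the $P_3$-filtration of $\overline\Pi$ from lemma~\ref{lem:AB-sequence}, which exhibits $\overline\Pi$ as a successive extension with subquotients $\mathbb{S}_2^{m_\Pi}$, $j_!i_*(A)$ and $i_*(B)$. First one computes $j^!j^!(\overline\Pi)\cong\C^{m_\Pi}$ by applying $j^!$ to the $\Gl_a(1)$-level $P_3$-filtration $0\to\mathbb{S}_1^{m_\Pi}\to j^!(\overline\Pi)\to i_*(A)\to 0$ and using $j^!j_!\cong\id$ together with $j^!i_*=0$ (lemma~\ref{lem:Gelfand_Kazdhan}). Proposition~\ref{Euler} then delivers the entire statement for every $Y$ parabolically induced from a character of $B_{\Gl(2)}$: for such $Y$ one has $m_Y=1$ and $\chi(\overline\Pi,Y)=\dim (j^!j^!)(\overline\Pi)=m_\Pi=m_\Pi m_Y$.

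To extend to arbitrary $Y\in\CCC$ I would use additivity of $\chi(\overline\Pi,-)$ in the second argument (lemma~\ref{lem:Euler_char_additive}) together with the Bernstein-block decomposition of $\CCC$. The key input is that under the hypotheses $B=J_Q(\Pi)$ has no cuspidal $\Gl(2)$-constituents --- this fails precisely for the excluded types \nosf{VII}, \nosf{VIIIa}, \nosf{IXa}, as visible from table~A.4 of \cite{Roberts-Schmidt} and table~\ref{tab:AB} --- so that $\zeta_\mu(i_*(B))$ and $\zeta_\mu(j_!i_*(A))$ both lie in non-cuspidal blocks. For $Y$ irreducible supercuspidal this means only $\zeta_\mu(\mathbb{S}_2^{m_\Pi})$ contributes to the Bernstein block of $Y$, and since $Y$ is both projective and injective in its own block all $\Ext^1$'s vanish; lemma~\ref{lem:S_2_Euler} then yields $\chi(\overline\Pi,Y)=\dim\Hom(\zeta_\mu(\mathbb{S}_2^{m_\Pi}),Y)=m_\Pi m_Y$.

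The main obstacle will be the remaining case $Y=\St(\chi)$ or $Y=\chi\circ\det$, where $Y$ shares its Bernstein block with the reducible principal series $P=\chi\nu^{1/2}\times\chi\nu^{-1/2}$ and all three filtration quotients can contribute. In the Grothendieck group of this block $[\St(\chi)]=[P]-[\chi\circ\det]$, so additivity in $Y$ together with the already-established $\chi(\overline\Pi,P)=m_\Pi$ reduces the proposition to the single vanishing $\chi(\overline\Pi,\chi\circ\det)=0$. The plan is to establish this by computing the block-projection of each of the three filtration quotients explicitly --- for $\mathbb{S}_2^{m_\Pi}$ and $j_!i_*(A)$ via lemma~\ref{lem:cent_loc_S2} and lemma~\ref{lem:cent_loc_j_!i_*}, and for $i_*(B)$ via the case-by-case description in table~A.4 of \cite{Roberts-Schmidt} and table~\ref{tab:AB} --- and then evaluating $\Hom_{\Gl(2)}(-,\chi\circ\det)$ and $\Ext^1_{\Gl(2)}(-,\chi\circ\det)$ on each piece via dual Frobenius reciprocity (lemma~\ref{lem:Dual_Frob_Ext}) together with lemma~\ref{lem:Ext_Gl1}. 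The hard part will be to verify type-by-type that the dimensions of $\Hom$ and $\Ext^1$ to $\chi\circ\det$ agree.
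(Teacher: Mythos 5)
There is a genuine gap. Your strategy departs substantially from the paper's, and the departure is exactly where the proposal remains a sketch rather than a proof.

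The paper's proof is uniform in $Y$. It applies central specialization to the $P_3$-filtration $0\to J\to\overline{\Pi}\to i_*(B)\to 0$ with $J=j_!j^!(\overline{\Pi})$. Because $A=i^*j^!(\overline{\Pi})\neq 0$ under the stated hypotheses (this is precisely what excluding \nosf{VII}, \nosf{VIIIa}, \nosf{IXa} among the split-Bessel-model cases guarantees), lemma 6.1 of \cite{W_Excep} gives $\zeta^\mu(\overline{\Pi})=0$, and the long exact sequence becomes $0\to\zeta^\mu i_*(B)\to\zeta_\mu(J)\to\zeta_\mu(\overline{\Pi})\to\zeta_\mu i_*(B)\to 0$. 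Since $\zeta^\mu(i_*(B))\cong\zeta_\mu(i_*(B))$ (lemma~\ref{list}), the $i_*(B)$ contribution cancels outright and $\chi(\overline{\Pi},Y)=\chi(J,Y)$ for \emph{every} $Y\in\CCC$. Then $\chi(j_!i_*(A),Y)=0$ for \emph{every} $Y\in\CCC$ by lemma~\ref{lem:Euler_char_A} (you apparently read this lemma as restricted to parabolically induced $Y$; it is not), and lemma~\ref{lem:S_2_Euler} finishes. No Bernstein-block case analysis, no type-by-type check.

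Your proposal handles $Y$ parabolically induced (via proposition~\ref{Euler}) and $Y$ supercuspidal (via block separation), but the remaining case $Y=\chi\circ\det$, $Y=\St(\chi)$ is explicitly deferred: ``The hard part will be to verify type-by-type\ldots''. That is a plan, not an argument, and it is also where the most serious obstacle lies. You propose to evaluate $\chi(\,\cdot\,,\chi\circ\det)$ on the three filtration pieces and add the answers, but additivity of $\chi(X,Y)$ in the first argument (lemma~\ref{lem:Euler_char_additive}) is proved only for $Y$ parabolically induced — its proof needs $\chi(\zeta^\mu(X),Y)=0$, which relies on lemma~\ref{ext-lemma} and hence on $Y$ being a full induced representation. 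For one-dimensional $Y$ you cannot just sum the contributions of the pieces; you must track the $\zeta^\mu$ boundary terms from the long exact sequence, which is precisely the bookkeeping the paper avoids by the cancellation above. Also note that the statement asserts the intermediate equalities $\chi(\overline{\Pi},Y)=\chi(j_!j^!(\overline{\Pi}),Y)=\chi(\mathbb{S}_2^{m_\Pi},Y)$ for arbitrary $Y\in\CCC$, and your proposal never addresses these; they are not consequences of computing the two outer terms for special classes of $Y$.

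In short: your first two cases are fine, your reduction for the one-dimensional case to $\chi(\overline{\Pi},\chi\circ\det)=0$ is correct, but the completion of that reduction is missing and the tool you propose to use (additivity in $X$) is not available for such $Y$. The clean route is the paper's: observe $\zeta^\mu(\overline{\Pi})=0$ so the $i_*(B)$ term drops out, observe that lemma~\ref{lem:Euler_char_A} already kills the $j_!i_*(A)$ term for every $Y\in\CCC$, and conclude with lemma~\ref{lem:S_2_Euler}.
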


\begin{proof}
Recall that lemma~\ref{lem:Gelfand_Kazdhan} yields an exact sequence
$$0\to J\to \overline{\Pi}\to i_*(B) \to 0$$
with $B=i^*(\overline{\Pi})$ and $J=j_!j^!(\overline{\Pi})$.
By \cite[lemma~6.1]{W_Excep}, the associated long exact sequence of central specialization is
$$0 \to \zeta^\mu i_*(B)\to\zeta_\mu(J) \to \zeta_\mu{(\overline\Pi)} \to\zeta_\mu i_*(B)\to 0 \ .$$
Lemma~\ref{list} says $\zeta^\mu (i_*(B))\cong \zeta_\mu (i_*(B))$ and this implies $\chi(\overline{\Pi}, Y) = \chi(J,  Y)$  by lemma~\ref{lem:Euler_char_additive}. 
The functor $\zeta_\mu\circ j_!$ is exact by lemma~\ref{list}, so there is a short exact sequence where $A=i^*j^!(\overline{\Pi})$
$$0\to\zeta_\mu(\mathbb{S}_2^{m_\Pi})\to \zeta_\mu(J) \to \zeta_\mu\circ j_!i_* (A)\to 0\ .$$
Since $\chi(j_!i_*(A),Y)=0$ by lemma~\ref{lem:Euler_char_A},
we obtain $\chi(J,Y)={m_\Pi}\cdot\chi(\mathbb{S}_2,Y)$.
By lemma~\ref{lem:S_2_Euler} this equals $m_Y m_\Pi$.
\end{proof}

\begin{cor}\label{cor:central_specialization_generic}
For every generic irreducible $\Pi\in \CCC_{G}(\omega)$ and every generic irreducible $\pi\in \CCC_{\Gl(2)}(\mu)$,
we have $$1\leq\dim\Hom_{\Gl(2)}(\overline{\Pi},\pi)\leq2\ .$$
This dimension equals one if $\Pi$ is not of type \nosf{VII, VIIIa, IXa}.

Especially, there is a non-zero $\Gl(2)$-equivariant homomorphism $\overline{\Pi}\to\pi$ for every generic irreducible $\pi$,
so $\zeta_\mu(\overline{\Pi})$ does not have finite length.
\end{cor}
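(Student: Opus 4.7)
The plan is to exploit the Euler characteristic identity $\chi(\overline\Pi,\pi)=m_\Pi m_\pi=1$ from Proposition~\ref{Finally} and combine it with a case analysis based on the $P_3$-filtration of Lemma~\ref{lem:AB-sequence}.

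Since $\Pi$ and $\pi$ are generic, $m_\Pi=m_\pi=1$. Assume first that $\Pi$ is not of type \nosf{VII, VIIIa, IXa}. Then Proposition~\ref{Finally} gives $\chi(\overline\Pi,\pi)=1$, hence $\dim\Hom_{\Gl(2)}(\overline\Pi,\pi)\geq 1$. For the upper bound I distinguish two cases. If $\pi$ is non-cuspidal, then $\pi$ embeds into a fully induced representation $\chi_1\times\chi_2$: either $\pi$ is already of that form, or $\pi\cong\chi\otimes\St$ embeds into $\chi\nu^{1/2}\times\chi\nu^{-1/2}$. Dual Frobenius reciprocity (Lemma~\ref{lem:Dual_Frob_Ext}) then gives an inclusion
\[
\Hom_{\Gl(2)}(\overline\Pi,\chi_1\times\chi_2)\hookrightarrow\Hom_T(k_\rho(\overline\Pi),\chi)
\]
for suitable characters, and Lemma~\ref{Nice} bounds the right-hand side by one; this reproduces the argument used in Lemma~\ref{mult} and Proposition~\ref{EXGEN}. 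If $\pi$ is cuspidal, the $P_3$-filtration $0\to j_!i_*(A)\to\overline\Pi/\mathbb{S}_2\to i_*(B)\to 0$ together with the vanishing $\Hom_{\Gl(2)}(j_!i_*(A),\pi)=0$ (as $j_!i_*(A)$ is parabolically induced while $\pi$ is cuspidal) gives $\Hom(\overline\Pi/\mathbb{S}_2,\pi)\hookrightarrow\Hom_{\Gl(2)}(B,\pi)$. For $\Pi$ outside the excluded types the module $B$ contains no cuspidal constituents (inspection of Table~A.4 of \cite{Roberts-Schmidt}), so $\Hom(B,\pi)=0$, and together with $\dim\Hom(\mathbb{S}_2,\pi)=m_\pi=1$ from Lemma~\ref{lem:S_2_Euler} the long exact $\Hom$-sequence yields $\dim\Hom(\overline\Pi,\pi)\leq 1$, forcing equality.

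For $\Pi$ of type \nosf{VII, VIIIa, IXa} one has $A=0$, and the filtration collapses to $0\to\mathbb{S}_2\to\overline\Pi\to i_*(B)\to 0$ with $B$ either zero (type \nosf{VII}) or an irreducible cuspidal $\Gl(2)$-module. For type \nosf{VII}, $\overline\Pi\cong\mathbb{S}_2$ and $\dim\Hom(\overline\Pi,\pi)=m_\pi=1$. For types \nosf{VIIIa} and \nosf{IXa}, the long exact sequence gives
\[
\dim\Hom(\overline\Pi,\pi)\leq\dim\Hom(B,\pi)+\dim\Hom(\mathbb{S}_2,\pi)\leq 1+1=2;
\]
the lower bound of one follows from the Euler characteristic computation $\chi(\overline\Pi,\pi)=\chi(\mathbb{S}_2,\pi)+\chi(i_*(B),\pi)\geq 1$, where the second summand is non-negative because irreducible cuspidal representations are projective in $\CCC^\fin_{\Gl(2)}(\mu)$, so the corresponding $\Ext^1$ vanishes.

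The final assertion then follows immediately: for every generic irreducible $\pi\in\CCC_{\Gl(2)}(\mu)$ a non-zero $\Gl(2)$-morphism $\overline\Pi\to\pi$ factors through $\zeta_\mu(\overline\Pi)$ and realises $\pi$ as an irreducible quotient; since there are infinitely many pairwise non-isomorphic generic irreducibles in $\CCC_{\Gl(2)}(\mu)$ -- for instance the family of unramified principal series $\chi\times\mu\chi^{-1}$ as $\chi$ varies -- the central specialization $\zeta_\mu(\overline\Pi)$ admits infinitely many distinct irreducible quotients and cannot have finite length. The main technical obstacle I anticipate is controlling the relevant $\Hom$- and $\Ext$-terms in the cuspidal block of $\CCC^\fin_{\Gl(2)}(\mu)$ for types \nosf{VIIIa} and \nosf{IXa}, where the extension $0\to\mathbb{S}_2\to\overline\Pi\to i_*(B)\to 0$ must be analysed carefully to separate the lower bound of one from the upper bound of two.
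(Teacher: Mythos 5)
Your plan is largely the same as the paper's — Euler characteristic from Proposition~\ref{Finally} for the lower bound in the ``ordinary'' cases, the $P_3$-filtration of Lemma~\ref{lem:AB-sequence} for the upper bound, and a separate analysis for types \nosf{VII, VIIIa, IXa} — and your treatment of the non-excluded types (splitting into non-cuspidal vs.\ cuspidal $\pi$, and observing $B$ has no cuspidal constituents there) correctly fills in what the paper leaves terse. But the excluded case has two genuine errors.

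First, the claim that for type \nosf{VII} one has $B=0$ and $\overline\Pi\cong\mathbb{S}_2$ is false. For $\Pi=\chi\rtimes\pi$ Klingen-induced from a cuspidal $\pi$, the Klingen-Jacquet module is non-zero (by Frobenius reciprocity it surjects onto $\chi\boxtimes\pi$), so $B=i^*(\overline\Pi)$ is a non-zero cuspidal $\Gl(2)$-module. The paper groups \nosf{VII} together with \nosf{VIIIa}, \nosf{IXa} precisely because in all three cases $B$ is cuspidal and non-zero.

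Second, the additivity $\chi(\overline\Pi,\pi)=\chi(\mathbb{S}_2,\pi)+\chi(i_*(B),\pi)$ is not justified. Lemma~\ref{lem:Euler_char_additive} gives additivity in the first variable only for $Y$ parabolically induced from the Borel, which fails for cuspidal $\pi$; the argument there hinges on $\chi(\zeta^\mu(X),Y)=0$, which is proved via Lemma~\ref{ext-lemma}, again only for induced $Y$. If you run the long exact sequence of central specialization applied to $0\to\mathbb{S}_2\to\overline\Pi\to i_*(B)\to 0$, using $\zeta^\mu(\mathbb{S}_2)=0$, $\zeta^\mu(i_*B)\cong\zeta_\mu(i_*B)=\widehat B$, and the fact that cuspidal $\pi$ is injective in $\CCC_{\Gl(2)}(\mu)$ so $\Hom(-,\pi)$ is exact, the correct identity is
\begin{equation*}
\dim\Hom(\zeta_\mu(\overline\Pi),\pi)=\dim\Hom(\zeta_\mu(\mathbb{S}_2),\pi)+\dim\Hom(\zeta^\mu(\overline\Pi),\pi)=1+\dim\Hom(\zeta^\mu(\overline\Pi),\pi),
\end{equation*}
not $1+\dim\Hom(\widehat B,\pi)$: the $\widehat B$-contributions from the two adjacent terms cancel, and the correction term is governed by $\zeta^\mu(\overline\Pi)$, which is only known to be a submodule of $\widehat B$. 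Your formula would require $\dim\Hom(\zeta^\mu(\overline\Pi),\pi)=\dim\Hom(\widehat B,\pi)$, which is not automatic (for instance $\zeta^\mu(\overline\Pi)$ could vanish while $\widehat B\neq 0$). The bound $\chi\geq 1$ still follows, but not by the route you state. The paper's proof instead manipulates the four-term sequence $\widehat B\to\zeta_\mu(\mathbb{S}_2)\to\zeta_\mu(\overline\Pi)\to\widehat B\to 0$ directly, applying $\Hom(-,\pi)$ and the projectivity/injectivity of $\widehat B$, and in the sub-case $\Hom(B,\pi)=0$ even gets the exact value $\dim\Hom=1$ — a sharper conclusion your Euler-characteristic estimate does not deliver, and exactly the point you flag as the ``main technical obstacle'' without resolving.
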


\begin{proof}
If $\Pi$ is not of type \nosf{VII, VIIIa, IXa}, the assertion holds by proposition~\ref{Finally}.
For $\Pi$ of type \nosf{VII, VIIIa, IXa}, lemma~\ref{lem:AB-sequence} yields an exact sequence $$0\to\mathbb{S}_2\to\overline{\Pi}\to i_*(B)\to 0\ ,$$
with cuspidal $B$. 
Note that $\Hom_{\Gl(2)}(\mathbb{S}_2,\pi)$ is one-dimensional by lemma~\ref{lem:S_2_Euler}, while Frobenius reciprocity implies that
$\Hom_{\Gl(2)}(B,\pi)$ is at most one-dimensional.
If it is non-zero the assertion is clear, so we assume there
is no non-zero $\Gl(2)$-equivariant morphism $B\to\pi$.
Central specialization satisfies $\widehat{B}=\zeta_\mu(i_*(B)) \cong \zeta^\mu(i_*(B))$ by lemma~\ref{list}.5 and because indecomposable extensions can only occur between isomorphic constituents of $B$.
This yields an exact sequence in $\CCC_{\Gl(2)}(\omega_\pi)$
$$\widehat{B}\stackrel{\delta}{\to}\zeta_\mu(\mathbb{S}_2)\to \zeta_\mu(\overline{\Pi})\to\widehat{B}\to0\quad\ .$$
Our assumption implies $\Hom(\widehat{B}/\ker(\delta),\pi)=0$\,, so the exact sequence
 $$0\to \widehat{B}/\ker(\delta)\to \zeta_\mu(\mathbb{S}_2) \to \coker(\delta)\to0$$
yields an isomorphism
\begin{equation*}
\C\cong\Hom(\zeta_\mu(\mathbb{S}_2),\pi)\cong \Hom(\coker(\delta),\pi)\ .
\end{equation*}
This is isomorphic to $\Hom(\zeta_\mu(\overline{\Pi}),\pi))$
because $\Ext^1_{\Gl(2)}(\zeta_\mu(B),\pi)=0$.
\end{proof}

This is in contrast to the situation of non-generic $\Pi$ where $\zeta_\mu(\overline\Pi)$ always has finite length, see~proposition~\ref{prop:central_specialization_non_generic}.
For the case of non-generic $\pi$ see theorem~\ref{MainL}.

\begin{table}
\caption{Central specializations for non-generic $\Pi$. \label{tab:central_specialization}}
\begin{footnotesize}
\begin{tabular}{lll}
\toprule
Type        & $\Pi\in\CCC_G$          &  $\zeta_\mu(\overline{\Pi})\in\CCC_{\Gl(2)}(\mu)$\\
\midrule
\nosf{IIb}  & $\chi_1\mathbf{1}\rtimes\sigma$ %
&  $\nu^{1/2}\chi_1\sigma\times\nu^{-1/2}\chi_1^{-1}\sigma^{-1}\mu$ for all $\mu$\\
\nosf{IIIb} & $\chi\rtimes\sigma\mathbf{1}$ for $\chi \neq\nu^{\pm1}$ %
&
$\begin{cases}
\nu^{1/2}\sigma M^c_{(\chi\times1:\chi\times1)} & \text{for}\ \mu=\nu\chi\sigma^2\\
\nu\sigma M_{(\St:1)}\oplus\nu^{1/2}\sigma(\chi\times\nu\chi^{-1}) & \text{for}\ \mu=\nu^2\sigma^2\text{ , }\chi^2\neq1\\
\nu\chi\sigma M_{(\St:1)}\oplus\nu^{1/2}\sigma(1\times\nu\chi^2) & \text{for}\ \mu=\nu^2\sigma^2\chi^2 \text{ , }\chi^2\neq1\\
\nu\chi\sigma M_{(\St:1)}\oplus\nu\sigma M_{(\St:1)} & \text{for } \mu=\nu^2\sigma^2\ \text{,}\ \chi^2=1\\
\nu^{1/2}\sigma\times\nu^{-1/2}\sigma^{-1}\mu 
& \text{else}\\
\qquad\oplus \nu^{1/2}\chi\sigma\times\nu^{-1/2}\chi^{-1}\sigma^{-1}\mu
\end{cases}$ \\
\nosf{IIIb} & $\nu\rtimes\sigma\mathbf{1}$ %
& $\begin{cases}\sigma(\nu^{1/2}\times\nu^{7/2})\oplus\nu^2\sigma M_{(\St:1)}&\text{for}\ \mu=\nu^4\sigma^2\\
\nu\sigma M^{c}_{(\St:1:\St:1)}&\text{for}\ \mu=\nu^2\sigma^2\\
\nu^{1/2}\sigma\times\nu^{-1/2}\sigma^{-1}\mu 
& \text{else}\\
\qquad\oplus \nu^{3/2}\sigma\times\nu^{-3/2}\sigma^{-1}\mu
\end{cases}$\\
\nosf{IVb}  & $L(\nu^2,\nu^{-1}\sigma\St)$ %
& $\begin{cases}\sigma M_{(1:\St)} & \text{for }\mu=\sigma^2\\\nu^{1/2}\sigma\times \nu^{-1/2 }\sigma^{-1}\mu&\text{else}\end{cases}$\\
\nosf{IVc}  & $L(\nu^{3/2}\St,\nu^{-3/2}\sigma)$ %
& $\begin{cases}
\nu^{-1/2}\sigma\times\nu^{9/2}\sigma\oplus \nu^2\sigma M_{(\St:1)} & \text{for}\ \mu=\nu^4\sigma^2\\
\nu^{-1/2}\sigma M^c_{(\nu^2\times1:\nu^2\times1)} & \text{for}\ \mu=\nu\sigma^2 \\
(\nu^{-1/2}\sigma\times\nu^{1/2}\sigma^{-1}\mu) 
& \text{else}\\
\qquad\oplus (\nu^{3/2}\sigma\times\nu^{-3/2}\sigma^{-1}\mu)
\end{cases}$ \\
\nosf{IVd}  & $\sigma\circ\lambda_G$         %
& $\begin{cases}(\sigma\circ\det)&\text{for}\ \mu=\sigma^2\\ 0 & \text{else}\end{cases}$\\
\nosf{Vb}   & $L(\nu^{1/2}\xi\St,\nu^{-1/2}\sigma)$ %
&  $\nu^{1/2}\sigma\times\nu^{-1/2}\sigma^{-1}\mu$ for all $\mu$\\
\nosf{Vd}   & $L(\nu\xi,\xi\rtimes\nu^{-1/2}\sigma)$%
& $\begin{cases}\nu\sigma(1\times\xi)& \text{for}\ \mu=\nu^2\xi\sigma^2\\0 & \text{else}\end{cases}$\\
\nosf{VIb}  & $\tau(T,\nu^{-1/2}\sigma)$        %
& $\begin{cases}\nu\sigma\St & \text{for}\ \mu=\nu^2\sigma^2\\0&\text{else}\end{cases}$            \\

\nosf{VIc}  & $L(\nu^{1/2}\St,\nu^{-1/2}\sigma)$
& $\begin{cases}\nu\sigma M_{(\St:1)}&\text{for}\ \mu=\nu^2\sigma^2\\\nu^{1/2}\sigma\times \nu^{-1/2}\sigma^{-1}\mu&\text{else}\end{cases}$\\
\nosf{VId}  & $L(\nu,1\rtimes\nu^{-1/2}\sigma)$ %
& $\begin{cases}%
\nu\sigma M_{(\St:1)}&\text{for}\ \mu=\nu^2\sigma^2\\ \nu^{1/2}\sigma\times\nu^{-1/2}\sigma^{-1}\mu&\text{else}\end{cases}$\\
\nosf{VIIIb}& $\tau(T,\pi_c)$                   %
& $\begin{cases}\nu\pi_c&\text{for}\ \mu=\nu^2\omega_{\pi_c}\\0&\text{else}     \end{cases}$\\
\nosf{IXb}  &$L(\nu\xi,\nu^{-1/2}\pi_c)$        %
&$\begin{cases}\nu^{1/2}\xi\pi_c &\text{for}\ \mu=\nu\omega_{\pi_c}\\0&\text{else}\end{cases}$\\ %
\nosf{XIb}  &$L(\nu^{1/2}\pi_c,\nu^{-1/2}\sigma)$%
& $\nu^{1/2}\sigma\times \nu^{-1/2}\sigma^{-1}\mu$ for all $\mu$                        \\
\bottomrule
\end{tabular}
\end{footnotesize}
$\ $

For the notation see proposition~\ref{prop:central_specialization_non_generic}.

\end{table}

\begin{table}\caption{Piatetski-Shapiro's spinor $L$-factor for split Bessel models \label{tab:regular_poles}}
\begin{footnotesize}
\begin{center}
 \begin{tabular}{llll}
\toprule
Type & $\Pi$                      &  $\rho$  & $L^{\mathrm{PS}}(s,\Pi,\Lambda,1)\ ,\ \  \Lambda=\rho\boxtimes\rho^\divideontimes$  split\\
\midrule
\nosf{I}    & $\chi_1\times\chi_2\rtimes\sigma$  & all & $L(s,\sigma)L(s,\chi_1\sigma)L(s,\chi_2\sigma)L(s,\chi_1\chi_2\sigma)$\\
\nosf{IIa}  & $\chi \St\rtimes\sigma$   & all          & $L(s,\sigma)L(s,\chi^2\sigma)L(s,\nu^{1/2}\chi\sigma)$\\
\nosf{IIb}  &$\chi\mathbf{1}\rtimes \sigma$& $\chi\sigma$     &$L(s,\sigma)L(s,\chi^2\sigma)L(s,\nu^{-1/2}\chi\sigma)L(s,\nu^{1/2}\chi\sigma)$\\
\nosf{IIIa} &$\chi\rtimes \sigma \St$    & all                & $L(s,\nu^{1/2}\chi\sigma)L(s,\nu^{1/2}\sigma)$\\
\nosf{IIIb} &$\chi\rtimes \sigma\mathbf{1}$ & $\sigma$, $\chi\sigma$        & $L(s,\nu^{-1/2}\chi\sigma)L(s,\nu^{-1/2}\sigma)L(s,\nu^{1/2}\chi\sigma)L(s,\nu^{1/2}\sigma)$\\     
\nosf{IVa}  &$\sigma \St_{G}$            & all                & $L(s,\nu^{3/2}\sigma)$\\
\nosf{IVb}  &$L(\nu^2,\nu^{-1}\sigma \St)$& $\sigma$          & $L(s,\nu^{3/2}\sigma)L(s,\nu^{-1/2}\sigma)$\\
\nosf{IVc}  &$L(\nu^{3/2} \St,\nu^{-3/2}\sigma)$& $\nu^{\pm1}\sigma$ & $L(s,\nu^{1/2}\sigma)L(s,\nu^{-3/2}\sigma)L(s,\nu^{3/2}\sigma)$\\
\nosf{IVd}  &$\sigma\mathbf{1}_G$        & none               & ---\\
\nosf{Va}   &$\delta([\xi,\nu\xi],\nu^{-1/2}\sigma)$&  all    & $L(s,\nu^{1/2}\sigma)L(s,\nu^{1/2}\xi\sigma)$\\
\nosf{Vb}   &$L(\nu^{1/2}\xi \St, \nu^{-1/2}\sigma)$ &$\sigma$& $L(s,\nu^{-1/2}\sigma)L(s,\nu^{1/2}\xi\sigma)L(s,\nu^{1/2}\sigma)$\\ 
\nosf{Vc}   &$L(\nu^{1/2}\xi \St, \nu^{-1/2}\xi\sigma)$ & $\xi\sigma$ & $L(s,\nu^{1/2}\sigma)L(s,\nu^{-1/2}\xi\sigma)L(s,\nu^{1/2}\xi\sigma)$\\
\nosf{Vd}   &$L(\nu\xi,\xi\rtimes\nu^{-1/2}\sigma)$ & none    & ---\\
\nosf{VIa}  &$\tau(S,\nu^{-1/2}\sigma)$  %
                                        & all                & $L(s,\nu^{1/2}\sigma)^2$\\
\nosf{VIb}  &$\tau(T,\nu^{-1/2}\sigma)$  & none               & ---\\
\nosf{VIc}  &$L(\nu^{1/2} \St,\nu^{-1/2}\sigma)$ & $\sigma$   & $L(s,\nu^{-1/2}\sigma)L(s,\nu^{1/2}\sigma)^2$\\
\nosf{VId}  &$L(\nu,1\rtimes\nu^{-1/2}\sigma)$ & $\sigma$     & $L(s,\nu^{-1/2}\sigma)^2L(s,\nu^{1/2}\sigma)^2$\\
\nosf{VII}  &$\chi\rtimes\pi$            & all                & $1$\\
\nosf{VIIIa}&$\tau(S,\pi)$               & all                & $1$\\
\nosf{VIIIb}&$\tau(T,\pi)$               & none               & ---\\
\nosf{IXa}  &$\delta(\nu\xi,\nu^{-1/2}\pi)$& all              & $1$\\
\nosf{IXb}  & $L(\nu\xi,\nu^{-1/2}\pi)$  & none               & ---\\
\nosf{X}    &$\pi\rtimes\sigma$           %
                                        & all          & $L(s,\sigma)L(s,\omega_{\pi}\sigma)$\\
\nosf{XIa}  &$\delta(\nu^{1/2}\pi,\nu^{-1/2}\sigma)$&%
                                        all    & $L(s,\nu^{1/2}\sigma)$\\
\nosf{XIb}  &$L(\nu^{1/2}\pi, \nu^{-1/2}\sigma)$ & $\sigma$   & $L(s,\nu^{-1/2}\sigma)L(s,\nu^{1/2}\sigma)$\\
            & cuspidal generic           & all                & $1$\\
            & cuspidal non-generic       & none               & ---\\
\bottomrule
\end{tabular}
\end{center}
\end{footnotesize}
For every irreducible smooth representation $\Pi$ of $\GSp(4,k)$ with central character $\omega$, the column $\rho$ lists the smooth characters such that the character $\Lambda=\rho\boxtimes\rho^\divideontimes$ of $\widetilde{T}\cong k^\times\times k^\times$ where $\rho^\divideontimes=\omega\rho^{-1}$
yields a split Bessel model for $\Pi$.
The right column lists Piatetski-Shapiro's spinor $L$-factors attached to this split Bessel model.
For non-cuspidal $\Pi$ we use the classification symbols of \cite{Sally_Tadic} and \cite{Roberts-Schmidt}.
\end{table}

\begin{footnotesize}
\bibliographystyle{amsalpha}

\end{footnotesize}

\vskip 18 pt
\begin{footnotesize}
\centering{Mirko R\"osner\\ Mathematisches Institut, Universit\"at Heidelberg\\ Im Neuenheimer Feld 205, 69120 Heidelberg\\ email: mroesner@mathi.uni-heidelberg.de}

\vskip 8 pt
\centering{Rainer Weissauer\\ Mathematisches Institut, Universit\"at Heidelberg\\ Im Neuenheimer Feld 205, 69120 Heidelberg\\ email: weissauer@mathi.uni-heidelberg.de}

\end{footnotesize}

\end{document}